\documentclass[11pt]{article}

\usepackage[margin=1in]{geometry}
\usepackage{amsmath,amssymb,amsfonts,amsthm,mathtools,mathrsfs}
\usepackage{bm}
\usepackage{enumitem}
\usepackage{xcolor}
\usepackage{graphicx}
\usepackage{booktabs}
\usepackage{tikz}
\usetikzlibrary{arrows.meta}
\usepackage[colorlinks=true,linkcolor=blue,citecolor=blue,urlcolor=blue]{hyperref}
\providecommand{\keywords}[1]{%
  \par\smallskip\noindent\textbf{Keywords: }#1\par}
\providecommand{\subjclass}[2][]{%
  \par\smallskip\noindent\textbf{#1 Mathematics Subject Classification: }#2\par}

\numberwithin{equation}{section}

\theoremstyle{plain}
\newtheorem{theorem}{Theorem}[section]
\newtheorem{proposition}[theorem]{Proposition}
\newtheorem{lemma}[theorem]{Lemma}

\theoremstyle{definition}
\newtheorem{assumption}[theorem]{Assumption}
\newtheorem{definition}[theorem]{Definition}

\theoremstyle{remark}
\newtheorem{remark}[theorem]{Remark}

\newcommand{\R}{\mathbb R}
\newcommand{\eps}{\varepsilon}
\renewcommand{\Cap}{\operatorname{Cap}}
\newcommand{\dist}{\operatorname{dist}}
\newcommand{\E}{\mathbb E}
\newcommand{\Pbb}{\mathbb P}
\newcommand{\calD}{\mathcal D}
\newcommand{\calL}{\mathcal L}
\newcommand{\loc}{\mathrm{loc}}

\title{An Eyring--Kramers Law for the Hypoelliptic Third-Order\\
Langevin Diffusion}
\author{
Yingli Wang\thanks{School of Mathematical Sciences, Fudan University, Shanghai, People's Republic of China; \texttt{yingliwang@fudan.edu.cn}}
\and
Lingjiong Zhu\thanks{Department of Mathematics, Florida State University, Tallahassee, Florida, United States of America; \texttt{zhu@math.fsu.edu}}}
\date{\today}

\begin{document}
\maketitle

\begin{abstract}
We prove an Eyring--Kramers law for metastable transitions of the
hypoelliptic third-order Langevin diffusion in the low-temperature limit.
This diffusion is a three-level Markovian lifting of Langevin dynamics:
the Brownian noise acts only on the highest auxiliary variable and reaches
the position variable through a third-order H\"ormander chain.  For a
double-well potential with a unique index-one transition saddle, we
determine both the Arrhenius exponential scale and the sharp prefactor of
the mean transition time.  The prefactor is governed by the unique positive
unstable rate of the deterministic linearization at the saddle,
equivalently the positive root of a cubic polynomial.  Our proof combines
a weak-capacity framework with a saddle-adapted boundary layer, an explicit
Gaussian current calculation, committor localization, and intrawell
flatness.
Under matched kinetic normalizations, the resulting metastable prefactor is
strictly smaller than its underdamped counterpart.
A numerical experiment for a one-dimensional double well illustrates the
Arrhenius scaling and the predicted prefactor comparison.
\end{abstract}

\keywords{Eyring--Kramers law; third-order Langevin diffusion;
hypoellipticity; metastability; weak capacity;
saddle-point asymptotics; small-noise asymptotics.}

\subjclass[2020]{Primary 60J60;
Secondary 60F10, 60H10, 60J45, 35H10.}

\tableofcontents

\section{Introduction}

Langevin diffusions form a central class of Markov processes for sampling from
target distributions.  They have applications in
Bayesian learning
\cite{gelman1995bayesian,stuart2010inverse,andrieu2003introduction,teh2016consistency,DistMCMC19,GIWZ2024}
and in non-convex optimization problems arising in machine learning
\cite{Raginsky,xu2018global,Chau2019,GGZ,Chau2022,Zhang2019}.
Given a potential function $U:\R^d\to\R$, the target distribution, also known as the Gibbs distribution, is of the form
\begin{align}\label{eqn:target}
\pi_\eps^\theta(d\theta)
=
Z_{\eps}^{-1} e^{-U(\theta)/\eps}\,d\theta ,
\end{align}
where $\eps>0$ is the temperature parameter and $Z_{\eps}>0$ is a normalizing factor.
The classical \textit{overdamped Langevin diffusion} satisfies the following stochastic differential equation (SDE) \cite{Dalalyan,DM2017,DK2017,Raginsky}:
\begin{equation}\label{eq:overdamped-2}
d\theta_{t}=-\nabla U(\theta_{t})dt+\sqrt{2\varepsilon}dB_{t},
\end{equation}
where $B_{t}$ is a standard $d$-dimensional Brownian motion,
and it is known that under mild conditions,
the overdamped Langevin diffusion \eqref{eq:overdamped-2} admits
\eqref{eqn:target} as the unique invariant distribution \cite{chiang1987diffusion,stroock-langevin-spectrum}.

When $\eps$ is small and $U$ is non-convex, the distribution concentrates near the global minima of $U$,
which is why Langevin diffusions have also been widely used
to obtain global convergence guarantees for solving non-convex optimization problems that often arise in machine learning \cite{Raginsky,xu2018global,Chau2019,Zhang2019}.
In the meantime, the dynamics exhibits metastability: the process remains trapped near a local minimum for a long time before making a rare transition through a saddle point to another well \cite{BovierDenHollander2015}.

At the logarithmic scale, rare transitions between wells are described by the
Freidlin--Wentzell theory of small random perturbations
\cite{FreidlinWentzell2012}.  For reversible overdamped Langevin dynamics, the
sharp low-temperature transition law is the classical Eyring--Kramers formula,
and the potential-theoretic approach expresses metastable transition times in
terms of equilibrium potentials, equilibrium measures, and capacities
\cite{Berglund2013Kramers,BovierEckhoffGayrardKlein2004,BovierDenHollander2015}.
More precisely, suppose that $m$ and $s$ are local minima separated by a
unique index-one saddle $\sigma$, and let $-\lambda_\sigma<0$ be the unique
negative eigenvalue of $\nabla^2U(\sigma)$.  If
$\tau_{B(s,\eps)}$ denotes the first hitting time of the ball $B(s,\eps)$,
then the Eyring--Kramers law for the overdamped Langevin diffusion \eqref{eq:overdamped-2} provides the leading order asymptotics
for the expected time of hitting the ball $B(s,\varepsilon)$ starting at the local minimum $m$:
\begin{equation}
\label{eq:overdamped-EK-intro}
\E_m\left[\tau_{B(s,\eps)}\right]
=
[1+o_\eps(1)]\,
\frac{2\pi}{\lambda_\sigma}
\sqrt{\frac{-\det\nabla^2U(\sigma)}{\det\nabla^2U(m)}}
\exp\!\left(\frac{U(\sigma)-U(m)}{\eps}\right),
\end{equation}
as $\eps\downarrow0$; see also
\cite[Section~1.1]{LeeRamilSeo2026}.  Thus the barrier height determines the
Arrhenius exponential scale, while the Hessians at the starting minimum and
transition saddle determine the sharp prefactor.
Geometric capacity methods provide a complementary route and also accommodate
more general critical-point geometries \cite{AvelinJulinViitasaari2023}.

Non-reversibility and degeneracy require further tools.  Landim and Seo derived
an Eyring--Kramers transition formula for non-reversible random walks in a
potential field \cite{LandimSeo2018}.  For non-self-adjoint elliptic operators,
Landim, Mariani, and Seo established Dirichlet and Thomson principles and
applied them to non-reversible metastable diffusions
\cite{LandimMarianiSeo2019}, while Lee and Seo developed a test-function method
for sharp capacities of non-reversible Gibbs diffusions \cite{LeeSeo2022}.
Related irreversible Eyring--Kramers and exit-time asymptotics appear in
\cite{BouchetReygner2016,LePeutrecMichelNectoux2024}.

For kinetic Langevin diffusions and their associated Fokker--Planck operators,
spectral approaches give sharp
low-temperature information through the low-lying spectrum
\cite{HerauHitrikSjostrand2008,BonyLePeutrecMichel2025}.
Bony, Le~Peutrec, and Michel proved spectral Eyring--Kramers formulas for a
broad class of Fokker--Planck type operators and noted that their framework
applies to generalized Langevin generators
\cite[Theorem~2 and p.~4359]{BonyLePeutrecMichel2025}.  For the present
third-order generator, the local Kalman chain $r\to p\to\theta$ is compatible
with their condition \textup{(Harmo)}, and their saddle matrix yields the same
unstable rate $\mu_\sigma$.  Thus their theorem gives the corresponding
whole-space spectral formula when its additional global coefficient,
hypocoercivity, finite-critical-set, and generic-labeling assumptions hold.
The present theorem works under
Assumptions~\ref{ass:double-well} and~\ref{ass:growth-lyapunov} and gives the
directed, pointwise mean hitting time of the shrinking set $S_\eps$ for
either ordering of the two well depths.

Delande developed semiclassical hypocoercive estimates and
Eyring--Kramers asymptotics for the low-lying eigenvalues of a class of
degenerate Kramers--Fokker--Planck operators
\cite[Assumption~6 and Theorems~1--2]{Delande2026KFP}.
His stated sufficient condition requires a one-step averaged transport matrix
to be uniformly positive definite.  That matrix is singular at the lifted
saddle for the present third-order chain, where the noise reaches
$\theta$ only through the two successive links $r\to p\to\theta$;
hence the present generator lies outside that stated sufficient condition.
Delande's result, like \cite{BonyLePeutrecMichel2025}, concerns low-lying
whole-space eigenvalues, while Theorem~\ref{thm:EK} concerns a pointwise mean
hitting-time asymptotic.
Metastable hypocoercivity and simulated annealing were studied in
\cite{Monmarche2018}.
Lee, Ramil, and Seo obtained detailed asymptotic-stability and cutoff estimates
for the underdamped dynamics \cite{LeeRamilSeo2023Cutoff}.  Most directly
related to the present work, they then proved an Eyring--Kramers law for the
kinetic Langevin diffusion, also known as the \textit{underdamped Langevin diffusion} and developed a weak-equilibrium-measure treatment
of its characteristic-boundary issues \cite{LeeRamilSeo2026}:
\begin{equation}\label{eqn:underdamped}
\begin{cases}
dr_{t}=-\gamma r_{t}dt-\nabla U(\theta_{t})dt+\sqrt{2\gamma\varepsilon}dB_{t},\\
d\theta_{t}=r_{t}dt,
\end{cases}
\end{equation}
where $B_{t}$ is a standard $d$-dimensional Brownian motion
with $\gamma>0$ being the friction coefficient and $\varepsilon>0$ being a scaling parameter.
Under some mild assumptions on $U$, the diffusion \eqref{eqn:underdamped} admits a unique stationary distribution with the density $\pi^{\varepsilon}(\theta,r) \propto e^{-(U(\theta)+\frac{1}{2}|r|^{2})/\varepsilon}$ \cite{Eberle}, whose $\theta$-marginal distribution is exactly \eqref{eqn:target},
which is the stationary distribution of the overdamped Langevin diffusion \eqref{eq:overdamped-2}.
It is known that the underdamped Langevin diffusion \eqref{eqn:underdamped} might converge to the target distribution \eqref{eqn:target} faster than the overdamped Langevin diffusion \eqref{eq:overdamped-2} \cite{Eberle,JianfengLu}.
When $\eps>0$ is small, 
the $\theta$-marginal of the invariant distribution concentrates around the global minima of $U$, and the stochastic algorithms based on the discretizations of underdamped Langevin diffusion \eqref{eqn:underdamped} have been used
to obtain non-asymptotic global convergence guarantees for non-convex optimization that outperform
the overdamped Langevin-based 
algorithms \cite{GGZ,Chau2022}.

For the same double-well geometry, define the phase-space hitting time
for the underdamped Langevin diffusion \eqref{eqn:underdamped}:
\[
\tau_\eps^{\rm ud}
:=\inf\{t\ge0:(\theta_t,r_t)\in B((s,0),\eps)\}.
\]
Lee--Ramil--Seo~\cite[Theorem~2.6]{LeeRamilSeo2026} proved that as $\eps\downarrow 0$,
\begin{equation}
\label{eq:underdamped-EK-intro}
\E_{(m,0)}\left[\tau_\eps^{\rm ud}\right]
=
[1+o_\eps(1)]\,
\frac{2\pi}{\nu_\sigma}
\sqrt{\frac{-\det\nabla^2U(\sigma)}{\det\nabla^2U(m)}}
\exp\!\left(\frac{U(\sigma)-U(m)}{\eps}\right).
\end{equation}
Here
$\nu_\sigma=(\sqrt{\gamma^2+4\lambda_\sigma}-\gamma)/2>0$;
equivalently, it is the unstable saddle rate satisfying
$\nu_\sigma(\nu_\sigma+\gamma)=\lambda_\sigma$. Hence, the overdamped and
underdamped laws have the same Arrhenius exponent and Hessian factor, but the
unstable rate $\lambda_\sigma$ in \eqref{eq:overdamped-EK-intro} is replaced
by $\nu_\sigma$ in \eqref{eq:underdamped-EK-intro}.
The formula \eqref{eq:underdamped-EK-intro}
was suggested by Remark 5.2 in \cite{BouchetReygner2016}
and also discussed in \cite{GGZ2}.

The present paper studies the next member of this hierarchy: the \textit{third-order Langevin diffusion} \cite{MouMaWainwrightBartlettJordan2021}:
\begin{equation}
\label{eq:third-order}
\begin{cases}
d\theta_t = p_t\,dt,\\
dp_t = -L^{-1}\nabla U(\theta_t)\,dt+\gamma r_t\,dt,\\
dr_t = -\gamma p_t\,dt-\gamma r_t\,dt+\sqrt{2\gamma\eps/L}\,dB_t,
\end{cases}
\end{equation}
where $\theta_t,p_t,r_t\in\R^d$, $L>0$, $\gamma>0$, $\varepsilon>0$, and $B_t$ is a standard $d$-dimensional Brownian motion.  The general third-order family may use separate coupling and thermostat-friction parameters.  In the notation of \cite{MouMaWainwrightBartlettJordan2021}, these are $\gamma$ and $\xi$, respectively.  We adopt the one-parameter convention of \cite{DangGurbuzbalabanIslamYaoZhu2025} and set $\xi=\gamma$, so that $\gamma$ is the dissipative parameter, exactly as in the underdamped Langevin diffusion \eqref{eqn:underdamped}.  This diffusion is motivated by high-order Langevin dynamics introduced for accelerated sampling algorithms \cite{MouMaWainwrightBartlettJordan2021}.
The construction has since been extended to general higher-order Langevin
Monte Carlo schemes \cite{DangGurbuzbalabanIslamYaoZhu2025,high-order-Liu-2025},
while related generalized Langevin diffusions have also been studied by
contraction methods \cite{Monmarche2023AlmostSure}.
The weak-capacity framework for the third-order diffusion \eqref{eq:third-order}, including the
capacity--hitting and weak test-function identities in
Propositions~\ref{prop:hitting-identity}
and~\ref{prop:weak-test-function-capacity}, was subsequently established in
\cite{HeLiWangZhu2026WeakEquilibrium}.

The infinitesimal generator of \eqref{eq:third-order} is given by
\begin{equation}
\label{eq:generator}
\calL_\eps f
=p\cdot\nabla_\theta f
+\left(-L^{-1}\nabla U(\theta)+\gamma r\right)\cdot\nabla_p f
+(-\gamma p-\gamma r)\cdot\nabla_r f
+\frac{\gamma\eps}{L}\Delta_r f.
\end{equation}
Moreover, for the third-order Langevin diffusion \eqref{eq:third-order}, its Hamiltonian is
\begin{equation}
\label{eq:Hamiltonian}
H(\theta,p,r)
=
U(\theta)+\frac L2|p|^2+\frac L2|r|^2,
\end{equation}
and its invariant density is
\begin{equation}
\label{eq:invariant-density}
\pi_\eps(dz)
=
Z_\eps^{-1}e^{-H(z)/\eps}\,dz,
\qquad z=(\theta,p,r)\in\R^{3d}.
\end{equation}

We work with a double-well potential having two non-degenerate local minima
$m,s\in\R^d$ and a unique index-one saddle $\sigma\in\R^d$ realizing the
communication height between them; the precise hypotheses are given in
Assumption~\ref{ass:double-well}.  Their
phase-space lifts are
\[
x_m:=(m,0,0),\qquad x_s:=(s,0,0),\qquad z_\sigma:=(\sigma,0,0),
\]
as illustrated in Figure~\ref{fig:ek-metastable-geometry}.

Let $Z^{\eps,*}$ denote the Markov diffusion whose generator is the
$L^2(\pi_\eps)$-adjoint $\calL_\eps^*$ of $\calL_\eps$.  The measure
$\pi_\eps$ is invariant for both $\calL_\eps$ and $\calL_\eps^*$; thus the
adjoint diffusion is stationary when it is initialized with law $\pi_\eps$.
For a Borel set
$D\subset\R^{3d}$, we write
\[
\tau_D:=\inf\{t\ge0:Z_t^\eps\in D\},
\qquad
\tau_D^*:=\inf\{t\ge0:Z_t^{\eps,*}\in D\},
\]
for the forward and adjoint hitting times, respectively.

The third-order Langevin diffusion \eqref{eq:third-order} is more degenerate than the underdamped Langevin diffusion \eqref{eqn:underdamped}.  The Brownian noise acts only in the $r$-coordinate, and randomness propagates along the chain $r\to p\to\theta$.  Thus the generator is hypoelliptic in the sense of H\"ormander but not elliptic \cite{Hormander1967}.  For metastable sets given by phase-space balls around $(m,0,0)$ and $(s,0,0)$, the boundary contains characteristic points where the diffusion direction is tangent to the boundary.  This characteristic geometry motivates the weak-equilibrium formulation of \cite{LeeRamilSeo2026,HeLiWangZhu2026WeakEquilibrium}.
Related probabilistic foundations for absorbed underdamped Langevin processes,
including boundary behavior, killed transition densities, and quasi-stationary
distributions, were developed in
\cite{LelievreRamilReygner2022KFP,LelievreRamilReygner2022QSD}; the associated
Hill relation for transition statistics is analyzed in
\cite{LelievreRamilReygner2022Hill}.

The related work \cite{HeLiWangZhu2026WeakEquilibrium} constructs weak
equilibrium measures and weak capacities for \eqref{eq:third-order}, using
elliptic regularization on bounded exhaustions.  It provides the weak
capacity--hitting identity and weak test-function representation used here.
The purpose of the present paper is to prove the low-temperature ingredients
for the third-order model under double-well communication geometry:
Hamiltonian sublevel localization, saddle test functions, non-central
negligibility, mass localization, intrawell flatness, and the Gaussian saddle
current.  Starting from the weak-capacity identity established in
\cite{HeLiWangZhu2026WeakEquilibrium}, we then perform the third-order saddle
computation directly.

The resulting transition law has the same exponential (Arrhenius) scale
$\exp\{(U(\sigma)-U(m))/\eps\}$ as the overdamped and underdamped formulas.
The difference lies in the prefactor.  The third-order dynamics changes the
unstable rate at the saddle: if $-\lambda_\sigma<0$ is the unique negative
eigenvalue of $\nabla^2U(\sigma)$, the relevant unstable rate $\mu_\sigma$ is
the unique positive root of
\begin{equation}
\label{eq:cubic-intro}
\mu^3+\gamma\mu^2+\left(\gamma^2-\frac{\lambda_\sigma}{L}\right)\mu
-\frac{\gamma\lambda_\sigma}{L}=0,
\end{equation}
obtained by linearizing \eqref{eq:third-order} at the saddle in the unstable
direction (Subsection~\ref{sec:saddle-reduction}).
As quantified in Remark~\ref{rem:prefactor-comparison}, comparison with the
underdamped dynamics using the same kinetic scaling $L$ shows that the
third-order prefactor is strictly smaller for every
$L,\gamma,\lambda_\sigma>0$.

\subsubsection*{Reduction strategy}
The reduction is potential-theoretic.  Let $A_\eps$ be a small phase-space
ball around $(m,0,0)$ and $S_\eps$ a small phase-space ball around $(s,0,0)$.
Figure~\ref{fig:ek-metastable-geometry} summarizes the relation between the
double-well landscape and these lifted phase-space sets.
\begin{figure}[t]
    \centering
    \begin{tikzpicture}[x=1cm,y=1cm,>=Latex]
        \begin{scope}[shift={(-4.1,-1.30)},x=1.05cm,y=1cm]
            \fill[gray!10]
                (-2.35,-0.05) --
                plot[smooth,tension=0.7] coordinates {
                    (-2.35,2.35) (-1.85,0.95) (-1.25,0.20)
                    (-0.65,0.65) (0,1.50) (0.65,0.55)
                    (1.25,0.08) (1.85,0.85) (2.35,2.25)
                }
                -- (2.35,-0.05) -- cycle;
            \draw[very thick]
                plot[smooth,tension=0.7] coordinates {
                    (-2.35,2.35) (-1.85,0.95) (-1.25,0.20)
                    (-0.65,0.65) (0,1.50) (0.65,0.55)
                    (1.25,0.08) (1.85,0.85) (2.35,2.25)
                };
            \draw[-{Latex[length=1.8mm]},thin]
                (-2.55,-0.05) -- (2.55,-0.05) node[right] {$\theta$};
            \draw[-{Latex[length=1.8mm]},thin]
                (-2.55,-0.05) -- (-2.55,2.65) node[above] {$U(\theta)$};

            \fill[blue!65!black] (-1.25,0.20) circle (2pt);
            \fill (0,1.50) circle (2pt);
            \fill[red!70!black] (1.25,0.08) circle (2pt);
            \node[blue!65!black,below=2pt] at (-1.25,0.20) {$m$};
            \node[above=2pt] at (0,1.50) {$\sigma$};
            \node[red!70!black,below=2pt] at (1.25,0.08) {$s$};

            \draw[-{Latex[length=2mm]},black!65,thick,dashed]
                (-1.0,0.38) .. controls (-0.65,0.72) and (-0.35,1.25)
                .. (-0.08,1.45);
            \draw[-{Latex[length=2mm]},black!65,thick,dashed]
                (0.08,1.45) .. controls (0.35,1.18) and (0.7,0.55)
                .. (1.02,0.22);
        \end{scope}

        \begin{scope}[shift={(3.45,0)},x=0.95cm,y=0.95cm]
            \filldraw[fill=blue!9,draw=blue!45!black,thick]
                plot[smooth cycle,tension=0.75] coordinates {
                    (-3.05,0) (-2.65,1.05) (-1.65,1.35)
                    (-0.72,0.92) (-0.20,0.28) (-0.08,0)
                    (-0.20,-0.28) (-0.72,-0.92) (-1.65,-1.35)
                    (-2.65,-1.05)
                };
            \filldraw[fill=red!9,draw=red!55!black,thick]
                plot[smooth cycle,tension=0.75] coordinates {
                    (3.05,0) (2.65,1.05) (1.65,1.35)
                    (0.72,0.92) (0.20,0.28) (0.08,0)
                    (0.20,-0.28) (0.72,-0.92) (1.65,-1.35)
                    (2.65,-1.05)
                };

            \shade[ball color=blue!30,opacity=0.92]
                (-1.75,-0.12) ellipse (0.48 and 0.38);
            \draw[blue!65!black,thick]
                (-1.75,-0.12) ellipse (0.48 and 0.38);
            \shade[ball color=red!32,opacity=0.92]
                (1.75,-0.12) ellipse (0.48 and 0.38);
            \draw[red!70!black,thick]
                (1.75,-0.12) ellipse (0.48 and 0.38);

            \node[blue!65!black] at (-1.75,-0.12) {$A_\eps$};
            \node[blue!65!black,font=\scriptsize]
                at (-1.75,-0.63) {$x_m=(m,0,0)$};
            \node[red!70!black] at (1.75,-0.12) {$S_\eps$};
            \node[red!70!black,font=\scriptsize]
                at (1.75,-0.63) {$x_s=(s,0,0)$};
            \node[blue!55!black] at (-1.65,0.92) {$W_m$};
            \node[red!60!black] at (1.65,0.92) {$W_s$};

            \fill (0,0) circle (2pt);
            \node[font=\scriptsize,fill=white,fill opacity=0.75,
                  text opacity=1,inner sep=1pt] at (0,0.48)
                {$z_\sigma=(\sigma,0,0)$};
            \draw[-{Latex[length=2.2mm]},black!70,thick]
                (-1.24,-0.08) .. controls (-0.75,0.25) and (-0.35,0.25)
                .. (0,0) .. controls (0.35,-0.25) and (0.75,-0.25)
                .. (1.24,-0.08);

            \coordinate (O) at (0,-1.20);
            \draw[-{Latex[length=1.5mm]},thin] (O) -- ++(0.72,0)
                node[right,font=\scriptsize] {$\theta$};
            \draw[-{Latex[length=1.5mm]},thin] (O) -- ++(-0.38,0.24)
                node[above left=-1pt,font=\scriptsize] {$p$};
            \draw[-{Latex[length=1.5mm]},thin] (O) -- ++(0,0.65)
                node[above,font=\scriptsize] {$r$};
        \end{scope}

        \node[font=\small] at (-4.1,-1.83)
            {(a) Double-well potential landscape};
        \node[font=\small] at (3.45,-1.83)
            {(b) Lifted phase-space geometry};
    \end{tikzpicture}
    \caption{Schematic double-well and phase-space metastable geometry.
    Panel (a) shows the two minima $m,s$ and the unique index-one saddle
    $\sigma$ along a representative transition section of $U$.  Panel (b)
    shows the shrinking balls $A_\eps$ and $S_\eps$ inside the Hamiltonian
    sublevel components $W_m$ and $W_s$; the lifted saddle
    $z_\sigma=(\sigma,0,0)$ is the transition bottleneck.  The right panel is
    a three-dimensional projection of the full space $\mathbb R^{3d}$.
    }
    \label{fig:ek-metastable-geometry}
\end{figure}

The weak capacity--hitting identity
\cite[Proposition~2.49]
{HeLiWangZhu2026WeakEquilibrium}, restated as
Proposition~\ref{prop:hitting-identity}, yields that
\begin{equation}
\label{eq:capacity-hitting-intro}
\int_{\partial A_\eps} \E_z[\tau_{S_\eps}]\,\nu_\eps(dz)
=
\frac{1}{\Cap_\eps(A_\eps,S_\eps)}
\int_{\R^{3d}} h_\eps^*(z)\,\pi_\eps(dz),
\end{equation}
where $h_\eps^*$ is the adjoint committor and $\nu_\eps$ is the normalized weak
equilibrium measure on $\partial A_\eps$.  The weak capacity in the denominator
is evaluated through smooth admissible test functions.  The Eyring--Kramers proof is organized around
the two sharp integral asymptotics
\begin{align}
\label{eq:numerator-asymptotic-intro}
\int_{\R^{3d}} h_\eps^*(z)\,\pi_\eps(dz)
&=
[1+o_\eps(1)]
\frac{1}{Z_\eps}
\frac{(2\pi\eps)^{3d/2}}{L^d\sqrt{\det\nabla^2U(m)}}
e^{-U(m)/\eps},\\
\label{eq:capacity-asymptotic-intro}
\Cap_\eps(A_\eps,S_\eps)
&=
[1+o_\eps(1)]
\frac{1}{Z_\eps}
\frac{(2\pi\eps)^{3d/2}}{L^d}
\frac{\mu_\sigma}{2\pi\sqrt{-\det\nabla^2U(\sigma)}}
e^{-U(\sigma)/\eps},
\end{align}
as $\varepsilon\downarrow 0$.
The first is proved by Hamiltonian sublevel committor localization and Laplace
asymptotics near $m$ (Subsection~\ref{sec:laplace-min}).  The second is proved by
inserting a saddle-adapted smooth test function in the weak capacity
representation and reducing the integral to a Gaussian saddle current
(Subsections~\ref{sec:saddle-reduction}--\ref{sec:gaussian-current}).  The proof
uses the linear coordinate $\Lambda_\sigma$ for the original saddle profile and
current, and patches the saddle box along physical side faces controlled by the
sublevel-component estimates.

\subsubsection*{Contributions}

The contributions of this paper can be summarized as follows.

\begin{itemize}
    \item We identify the complete linear saddle structure needed by the
    weak-capacity problem.  In particular, the Eyring--Kramers rate is the
    unique positive eigenvalue $\mu_\sigma$ of the unstable triple,
    equivalently the positive root of a cubic equation, and we evaluate the
    associated constrained Gaussian current explicitly.  Under the additional
    global hypotheses of \cite{BonyLePeutrecMichel2025}, the same
    $\mu_\sigma$ is already encoded in the whole-space spectral prefactor;
    our calculation supplies the saddle profile and flux needed for the
    directed mean hitting-time problem under the present assumptions.
    \item We construct a compactly supported global saddle test function for
    the weak capacity.  Hamiltonian sublevel components and physical
    side-layer collapse reduce its weak-capacity integral to the local saddle
    current through smooth test-function identities.
    \item We establish the well localization and intrawell flatness estimates
    needed for the numerator and for replacing the weak-equilibrium boundary
    average by the mean transition time.  The proof combines a local
    hypocoercive Lyapunov function with third-order heat-kernel, Malliavin, and
    uniform Harnack estimates.
\end{itemize}

\subsubsection*{Technical novelty relative to the underdamped case}
The underdamped Langevin diffusion \eqref{eqn:underdamped} has a two-level noise-propagation chain, whereas the
present dynamics is a third-order chain $r\to p\to\theta$.  This additional
level changes the saddle algebra from a quadratic to the cubic equation
\eqref{eq:cubic-intro}, produces the anisotropic short-time scales
$t^{1/2},t^{3/2},t^{5/2}$, and requires uniform third-order density and
Malliavin-covariance estimates.  It also changes the global capacity
construction: the local saddle profile must be patched to the two Hamiltonian
sublevel components through physical side layers, and the resulting current
must be transported to a central linear section before the Gaussian
calculation applies.  These steps form the third-order extension of the
two-level underdamped construction.

\subsubsection*{Organization}
Section~\ref{sec:main} presents the assumptions, the main theorem, the
weak-capacity results from \cite{HeLiWangZhu2026WeakEquilibrium}, and the
reduction to three sharp estimates.  Section~\ref{sec:saddle-capacity}
derives the saddle capacity asymptotic, from the unstable linear coordinate to
the Gaussian current.  Section~\ref{sec:well-proof} proves the well-mass and
intrawell-flatness estimates and then combines them with the capacity
asymptotic to prove the main theorem.  Section~\ref{sec:numerics} gives a
numerical illustration, and Section~\ref{sec:conclude} concludes the paper.

The technical arguments are organized in five appendices.  Appendix~\ref{app:weak-capacity-verification}
verifies the regularization interface, Appendix~\ref{app:local-saddle-analysis}
contains the local saddle algebra, Appendix~\ref{app:global-saddle-test}
constructs the global saddle test function, Appendix~\ref{app:well-localization}
develops the well and committor estimates, and Appendix~\ref{app:prefactor-numerics}
contains prefactor expansions and complete numerical details.
\section{Setting, Main Result, and Weak-Capacity Framework}
\label{sec:main}

\subsection{Assumptions}

Throughout, $Z_t^\eps=(\theta_t^\eps,p_t^\eps,r_t^\eps)$ denotes the solution
of \eqref{eq:third-order}, and $\calL_\eps$ its infinitesimal generator.
The weak-capacity construction uses auxiliary elliptic regularizations
of the forward and adjoint dynamics.  Their definition, non-explosion, and
uniform local-moment bounds are verified in Appendix~\ref{app:weak-capacity-verification}.  We
state here only the assumptions on the original third-order process.

Next, we introduce the assumptions that will be used throughout the rest of the paper.

\begin{assumption}[Double-well communication geometry]
\label{ass:double-well}
The potential $U\in C^\infty(\R^d)$ is a Morse function with exactly two local
minima $m,s\in\R^d$.  Let
\[
\Gamma(m,s)
:=\left\{\omega\in C([0,1];\R^d):\omega(0)=m,\ \omega(1)=s\right\},
\]
and define the communication height
\[
h_{m,s}:=\inf_{\omega\in\Gamma(m,s)}\max_{t\in[0,1]}U(\omega(t)).
\]
We assume that $h_{m,s}=U(\sigma)>\max\{U(m),U(s)\}$ and that the height is
realized by a unique index-one saddle point $\sigma\in\R^d$.  Write
$H_m=\nabla^2U(m)$ and $H_\sigma=\nabla^2U(\sigma)$.  Then
$H_m$ is positive definite, and $H_\sigma$ has exactly one negative eigenvalue
$-\lambda_\sigma<0$ and $d-1$ positive eigenvalues.  Let
$\Omega_m,\Omega_s$ be the components of $\{U<U(\sigma)\}$ containing $m,s$.
We assume
\[
\operatorname{Crit}(U)\cap\Omega_i=\{i\},
\qquad i\in\{m,s\},
\]
that $\sigma\in\partial\Omega_m\cap\partial\Omega_s$, and that the only
critical point of $U$ in
$\partial\Omega_i\cap\{U=U(\sigma)\}$ is $\sigma$.
\end{assumption}

\begin{assumption}[Confinement and weak-capacity growth conditions]
\label{ass:growth-lyapunov}
The following conditions are imposed.
\begin{description}[leftmargin=3.2em,labelwidth=2.6em]
\item[\textup{(WC1)}] \label{ass:wc-confinement}
The potential $U$ is bounded below and, after adding a constant, $U\ge0$, with
compact sublevel sets.  There are $c_0,R_0>0$ such that
\[
\theta\cdot\nabla U(\theta)
\ge c_0\left(|\theta|^2+U(\theta)\right),
\qquad |\theta|\ge R_0,
\]
and the force and all its derivatives grow at most polynomially.  In addition,
there exists $\beta>0$ such that the one-sided Laplacian control
\[
\liminf_{|\theta|\to\infty}
\left(|\nabla U(\theta)|-\beta\Delta U(\theta)\right)>0
\]
holds.  This condition controls the It\^o correction created by the auxiliary
$\theta$-noise in the auxiliary regularization in Appendix~\ref{app:weak-capacity-verification}; compare
\cite[Assumption~2.3]{LeeRamilSeo2026}.

\item[\textup{(WC2)}] \label{ass:wc-whole-space-growth}
The whole-space growth condition holds:
\[
\frac{|\nabla U(\theta)|^2}
{(1+U(\theta))(1+\theta\cdot\nabla U(\theta))}
\longrightarrow0,
\qquad |\theta|\longrightarrow\infty.
\]
\end{description}
\end{assumption}

\begin{remark}[Regularization interface]
The regularized well-posedness and local-moment hypotheses required by the
weak-capacity construction follow from \textup{(WC1)}; see Supplementary
Section~S.2.
\end{remark}

\begin{lemma}[Isolated wells and saddle contact]
\label{lem:derived-double-well-geometry}
Under Assumptions~\ref{ass:double-well} and~\ref{ass:growth-lyapunov}, for each
$i\in\{m,s\}$ and every neighborhood $N_i$ of $i$ with compact closure in
$\Omega_i$,
\begin{equation}
\label{eq:derived-no-low-energy-plateau}
\inf_{\theta\in\Omega_i\setminus N_i}U(\theta)>U(i).
\end{equation}
Moreover, every neighborhood of $\sigma$ intersects both $\Omega_m$ and
$\Omega_s$.
\end{lemma}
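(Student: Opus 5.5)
For \eqref{eq:derived-no-low-energy-plateau}, I will argue by contradiction using compactness and the absence of other critical points in $\Omega_i$. Suppose the infimum equals $U(i)$ (it cannot be smaller). First I show $U(i)=\min_{\overline{\Omega_i}}U$. By (WC1), $\overline{\Omega_i}\subset\{U\le U(\sigma)\}$ is compact, so $U$ attains its minimum over $\overline{\Omega_i}$. This minimum is $<U(\sigma)$, hence attained at a point of the open set $\Omega_i$ (boundary points have $U=U(\sigma)$ by continuity, being outside $\{U<U(\sigma)\}$). Such a point is a local minimum of $U$, hence a critical point in $\Omega_i$, hence equal to $i$.

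Next, take $\theta_n\in\Omega_i\setminus N_i$ with $U(\theta_n)\to U(i)$. By compactness, a subsequence converges to some $\theta^*\in\overline{\Omega_i}\setminus N_i$ with $U(\theta^*)=U(i)<U(\sigma)$. Therefore $\theta^*\in\Omega_i$, and it is a global minimizer of $U$ on $\overline{\Omega_i}$. Since $\Omega_i$ is open, $\theta^*$ is a local minimum, so $\nabla U(\theta^*)=0$ and $\theta^*=i\in N_i$, which is a contradiction.

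For the second claim, I use the Morse local form at $\sigma$. In coordinates $y$ near $\sigma$ with $U=U(\sigma)-y_1^2+|y'|^2$, the set $\{U<U(\sigma)\}\cap B(\sigma,\delta)$ consists of two open cones $\{y_1>|y'|\}$ and $\{y_1<-|y'|\}$, each connected and with $\sigma$ in its closure. The hypothesis $\sigma\in\partial\Omega_m\cap\partial\Omega_s$ says that every neighborhood of $\sigma$ meets $\Omega_m$ and $\Omega_s$, which already gives the statement. To be careful, I also check that the two cones lie in different components. If both lay in $\Omega_m$, then $\Omega_s$, being a union of components of $\{U<U(\sigma)\}$, could meet $B(\sigma,\delta)$ only in one of the cones, so it would equal $\Omega_m$, contradicting $m\ne s$ with $h_{m,s}=U(\sigma)>\max\{U(m),U(s)\}$ (a path inside a common component would stay below $U(\sigma)$).

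The main subtle point is the boundary argument showing that the minimizer on $\overline{\Omega_i}$ lies in the open set $\Omega_i$. This uses $\partial\Omega_i\subset\{U=U(\sigma)\}$, which holds because $\Omega_i$ is a connected component of an open set, so its boundary points lie outside $\{U<U(\sigma)\}$ but in $\{U\le U(\sigma)\}$.
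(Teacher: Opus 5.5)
Your proof is correct and follows the paper's argument essentially step for step. Compactness of the sublevel sets together with $\operatorname{Crit}(U)\cap\Omega_i=\{i\}$ forces any limit of a minimizing sequence in $\Omega_i\setminus N_i$ to be an interior minimizer, hence equal to $i$; the second claim is read off directly from $\sigma\in\partial\Omega_m\cap\partial\Omega_s$, and your additional Morse-chart check that the two cones lie in different components is correct but not needed for the statement.
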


\begin{proof}
This elementary consequence of the standing geometry is proved in
Appendix~\ref{app:weak-capacity-verification}.
\end{proof}

\begin{definition}[Third-order unstable rate]
\label{def:unstable-rate}
Let $-\lambda_\sigma<0$ be the unique negative eigenvalue of $H_\sigma$.  Define
$\mu_\sigma>0$ as the unique positive root of
\begin{equation}
\label{eq:cubic}
\mu^3+\gamma\mu^2+\left(\gamma^2-\frac{\lambda_\sigma}{L}\right)\mu
-\frac{\gamma\lambda_\sigma}{L}=0.
\end{equation}
\end{definition}

\begin{remark}
Equation \eqref{eq:cubic} is the characteristic equation of the linearized
deterministic dynamics at the saddle in the unstable eigendirection of
$H_\sigma$; existence and uniqueness of the positive root are proved in
Lemma~\ref{lem:saddle-characteristic-polynomial}.
\end{remark}

\subsection{Notation and Conventions}
\label{sec:notation}

A generic phase-space point is denoted by
$z=(\theta,p,r)\in\R^{3d}$.  We write
\[
x_m:=(m,0,0),\qquad x_s:=(s,0,0),\qquad
z_\sigma:=(\sigma,0,0),
\]
for the lifted minima and saddle.  The symbol $B(x,r)$ denotes the open
Euclidean ball, while $\overline D$ and $\partial D$ denote the closure and
boundary of a set $D$, respectively.  Unless otherwise stated,
$i\in\{m,s\}$.

For a Borel set $D\subset\R^{3d}$, the forward and adjoint hitting times are
\[
\tau_D:=\inf\left\{t\ge0:Z_t^\eps\in D\right\},\qquad
\tau_D^*:=\inf\left\{t\ge0:Z_t^{\eps,*}\in D\right\}.
\]
The metastable balls are
\[
A_\eps:=B(x_m,\eps),\qquad S_\eps:=B(x_s,\eps).
\]
Throughout, $\eps>0$ is taken sufficiently small that
$\overline{A_\eps}$ and $\overline{S_\eps}$ are disjoint and are contained in
fixed disjoint neighborhoods of $x_m$ and $x_s$, respectively.
The forward and adjoint committors are
\[
h_\eps(z):=\Pbb_z\left(\tau_{A_\eps}<\tau_{S_\eps}\right),\qquad
h_\eps^*(z):=\Pbb_z^*\left(\tau_{A_\eps}<\tau_{S_\eps}\right),
\]
extended by $1$ on $\overline{A_\eps}$ and by $0$ on
$\overline{S_\eps}$.

We denote the sublevel set below the communication height by
\[
W:=\left\{z\in\R^{3d}:H(z)<U(\sigma)\right\},
\]
and its components containing $x_m,x_s$ by $W_m,W_s$.  The corresponding
killed domains are
\[
D_\eps^m:=W_m\setminus\overline{A_\eps},\qquad
D_\eps^s:=W_s\setminus\overline{S_\eps},
\]
or, in unified notation,
\[
D_\eps^i:=W_i\setminus\overline{B(x_i,\eps)},\qquad i\in\{m,s\}.
\]
For $\varrho\in(1/2,1]$, we also set
$G_{\eps,\varrho}:=B(x_m,\eps^\varrho)$.

The momentum flip is
\[
\Theta_*(\theta,p,r):=(\theta,-p,r).
\]
It satisfies $H\circ\Theta_*=H$ and $\Theta_*W_i=W_i$.  The symbols
$\eta_\eps$, $\Cap_\eps(A_\eps,S_\eps)$, and
$\nu_\eps=\eta_\eps/\Cap_\eps(A_\eps,S_\eps)$ denote, respectively, the
weak equilibrium measure, weak capacity, and normalized weak equilibrium
measure introduced in Subsection~\ref{sec:weak-capacity}.

Constants denoted by $c,C$ may change from line to line and are independent
of $\eps$ unless a dependence is explicitly indicated.  We write $a_\eps\asymp b_\eps$
if $c b_\eps\le a_\eps\le Cb_\eps$ for constants independent of small
$\eps$, and $o_\eps(1)$ for a quantity tending to zero as
$\eps\downarrow0$.  Any additional uniformity of an error term is stated
where it is used.

\subsection{Main Results}

In this section, we first state the main result of the paper,
which is the Eyring--Kramers law for the third-order Langevin diffusion \eqref{eq:third-order}.

\begin{theorem}[Eyring--Kramers law]
\label{thm:EK}
Under Assumptions~\ref{ass:double-well} and~\ref{ass:growth-lyapunov}, as
$\eps\downarrow0$,
\begin{equation}
\label{eq:EK-main}
\E_{(m,0,0)}[\tau_{S_\eps}]
=
[1+o_\eps(1)]
\frac{2\pi}{\mu_\sigma}
\sqrt{\frac{-\det H_\sigma}{\det H_m}}
\exp\!\left(\frac{U(\sigma)-U(m)}{\eps}\right),
\end{equation}
where $\mu_\sigma>0$ is the unique positive root of the cubic equation \eqref{eq:cubic}.
\end{theorem}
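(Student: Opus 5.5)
The proof follows the potential-theoretic reduction sketched in the introduction. The starting point is the weak capacity--hitting identity \eqref{eq:capacity-hitting-intro} (Proposition~\ref{prop:hitting-identity}, taken from \cite{HeLiWangZhu2026WeakEquilibrium}). It expresses the $\nu_\eps$-average of $z\mapsto\E_z[\tau_{S_\eps}]$ over $\partial A_\eps$ as the ratio of $\int h_\eps^*\,d\pi_\eps$ to $\Cap_\eps(A_\eps,S_\eps)$. The theorem then reduces to three sharp estimates:
\begin{itemize}
\item the numerator asymptotic \eqref{eq:numerator-asymptotic-intro};
\item the capacity asymptotic \eqref{eq:capacity-asymptotic-intro};
\item an intrawell flatness statement, $\E_z[\tau_{S_\eps}]=[1+o_\eps(1)]\,\E_{x_m}[\tau_{S_\eps}]$ uniformly for $z\in\partial A_\eps$.
\end{itemize}
Once these hold, the factors $Z_\eps^{-1}(2\pi\eps)^{3d/2}L^{-d}$ cancel in the ratio. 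The Hessian factor $\sqrt{-\det H_\sigma/\det H_m}$, the prefactor $2\pi/\mu_\sigma$, and the exponential $e^{(U(\sigma)-U(m))/\eps}$ then give exactly \eqref{eq:EK-main}.

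For the numerator, I would show $h_\eps^*\to1$ uniformly on compact subsets of $W_m$ and $h_\eps^*\to0$ on $W_s$, up to sets of negligible $\pi_\eps$-mass. This uses Hamiltonian sublevel localization: from inside $W_m$ the adjoint process hits $A_\eps$ before leaving $W_m$ with overwhelming probability. That in turn needs a local hypocoercive Lyapunov function near $x_m$ to beat the degeneracy. Lemma~\ref{lem:derived-double-well-geometry} shows that no low-energy plateau competes with $m$. The mass outside $W_m\cup W_s$ is $O(e^{-(U(\sigma)-U(m)-\delta)/\eps})$ relative to the well mass, using the confinement condition (WC1) for the tails. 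The remaining integral is a Laplace integral at $x_m$. The factor $L^{-d}$ comes from the Gaussian integrals $\int e^{-L|p|^2/2\eps}\,dp\int e^{-L|r|^2/2\eps}\,dr=(2\pi\eps/L)^{d}$.

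For the capacity, I would use the weak test-function representation (Proposition~\ref{prop:weak-test-function-capacity}), which gives matching upper and lower bounds from one smooth admissible function. Near $z_\sigma$ I take $f(z)=\Phi\bigl(\Lambda_\sigma\cdot(z-z_\sigma)/\sqrt{\eps}\bigr)$ with $\Phi$ a normalized Gaussian error function. Here $\Lambda_\sigma$ is the left eigenvector, for the eigenvalue $\mu_\sigma$, of the adjoint linearization at the saddle, restricted to the unstable eigendirection $v$ of $H_\sigma$. On that direction the linearized drift matrix acts on $(\theta_v,p_v,r_v)$ with characteristic polynomial \eqref{eq:cubic}. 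The scale of $\Phi$ is chosen so that $f$ solves the linearized equation $\calL_\eps^* f\approx0$. This forces the variance to be $\gamma\eps L^{-1}|\Lambda_r|^2/\mu_\sigma$, so the diffusion only sees the $r$-component of $\Lambda_\sigma$, and that component must be nonzero. Nonvanishing follows from the Kalman chain, since a left eigenvector with $\Lambda_r=0$ would be invariant under the drift without ever reaching the noise. Away from the saddle box, $f$ is patched to $1$ on $W_m$ and $0$ on $W_s$ along physical side faces, where $H\ge U(\sigma)+c\delta^2$; the error there is exponentially smaller. Inside the box, a Taylor expansion with a cutoff at $|z-z_\sigma|\le\delta=\eps^{1/2}|\log\eps|$ leaves a Gaussian integral. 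Computing it yields the current $\mu_\sigma/(2\pi\sqrt{-\det H_\sigma})$. The determinant identity $\det$ of the quadratic form of $H$ restricted to $\{\Lambda_\sigma\cdot z=0\}$, combined with the normalization of $\Phi$, has to produce exactly $\mu_\sigma$; this is the explicit algebra I would do in the saddle-reduction subsection.

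The main obstacle is intrawell flatness under the third-order degeneracy. Points of $\partial A_\eps$ differ by $\eps$ in $\theta$, $p$ and $r$, but noise reaches $\theta$ only at scale $t^{5/2}$. I would run the process from $z\in\partial A_\eps$ for a short time $t_\eps$ and couple it to the process from $x_m$. The tools are uniform heat-kernel lower bounds and Malliavin covariance estimates for the anisotropic scales $t^{1/2},t^{3/2},t^{5/2}$, together with a uniform Harnack inequality for $z\mapsto\E_z[\tau_{S_\eps}]$ on $G_{\eps,\varrho}$. The hitting-time function is harmonic away from $S_\eps$, so Harnack comparison at the hypoelliptic scale gives oscillation $o(1)$ relative to its size. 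The short-time excursion costs only $O(t_\eps)$, which is negligible against the exponential time.
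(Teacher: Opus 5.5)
Your four-step architecture matches the paper's. However, the capacity step as you describe it would not produce the current.

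The weak representation $\Cap_\eps(A_\eps,S_\eps)=\int h_\eps^*(-\calL_\eps f)\,d\pi_\eps$ is linear in $f$ and contains the unknown $h_\eps^*$. If $f$ is harmonic to leading order in the saddle box, the box contributes only a normal-form remainder, which is $o(\mathfrak K_{\sigma,\eps})$ (Lemma~\ref{lem:normal-form-stability-saddle-blowup}). So no Gaussian integral ``inside the box'' can yield $\mu_\sigma/(2\pi\sqrt{-\det H_\sigma})$.

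Your claim that the patching faces lie in $\{H\ge U(\sigma)+c\delta^2\}$ is also false. That holds on the tangential faces, but the physical faces $\{q_1=\mp\delta_\eps/\sqrt{\lambda_\sigma}\}$ reach down to $H\approx U(\sigma)-\delta_\eps^2/2$. On the $m$-face, $1-j_\eps$ is only $e^{-cR_\eps^2}$-small. This is exactly compensated by $e^{-H/\eps}\approx e^{-U(\sigma)/\eps}e^{cR_\eps^2}$, because along $\mathfrak u_\sigma$ the quantity $H^{\rm quad}/\eps+\mu_\sigma\Lambda_\sigma^2/(2c_\sigma\eps)$ is constant. Hence the entire capacity is carried by the $m$-side interpolation layer, not by a negligible patching error.

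The missing steps are the following.
\begin{itemize}
\item[(i)] Replace $h_\eps^*$ by $1$ on the $m$-side layer and by $0$ on the $s$-side layer. The relevant points lie within $O(\sqrt{\eps\log(1/\eps)})$ of $z_\sigma$, at energy $U(\sigma)-O(\eps\log(1/\eps))$, so convergence on compact subsets of $W_m$ says nothing there. You need the quantitative bound $1-h_\eps^*\le C\eps^{-M_{\rm com}}e^{(H-U(\sigma))/\eps}$, combined with Lemma~\ref{lem:side-face-alternative} and a large $K$ in $R_\eps=K\sqrt{\log(1/\eps)}$.
\item[(ii)] Collapse the layer integral by the divergence theorem to the flux of $j_\eps$ through $\partial\mathcal K_\eps^-$.
\item[(iii)] Transport this face flux to $\{\Lambda_\sigma=0\}$, using that the quadratic current field is divergence-free and parallel to $\mathfrak u_\sigma$.
\end{itemize}
Only then does your Gaussian determinant computation apply.

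The same quantitative committor estimate is what your numerator argument lacks when $U(s)<U(m)$, which Assumption~\ref{ass:double-well} allows. In that case $\pi_\eps$ gives $W_s$ exponentially more mass than the $m$-well, so ``$h_\eps^*\to0$ on $W_s$'' is not enough. You need $h_\eps^*e^{-H/\eps}\le C\eps^{-M_{\rm com}}e^{-U(\sigma)/\eps}$ there, and a local Lyapunov argument does not deliver the rate $U(\sigma)-H(z)$. The paper obtains it (Proposition~\ref{prop:third-order-lrs-committor-localization}) from three ingredients: a Gibbs $h$-transform of $\calL_\eps^*$ into an anti-dissipative auxiliary process, killed-density duality with Jacobian factor $e^{\gamma d t}$, and a polynomial exit tail for the killed original process.

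Two smaller points.
\begin{itemize}
\item Because the representation pairs $h_\eps^*$ with $\calL_\eps$, your profile must be annihilated by the \emph{forward} linearized generator. That means building it from the left eigenvector $(\alpha_\sigma,1,\zeta_\sigma)$ of $\mathsf A_\sigma$. A $\calL_\eps^*$-harmonic profile (coefficient $-1$ on $p$) leaves $\calL_\eps f$ of leading order in the box, where $h_\eps^*$ is unknown.
\item $\E_\cdot[\tau_{S_\eps}]$ is not harmonic; it solves $\calL_\eps u=-1$. The paper absorbs the source by a time lift together with the bound $\E_{x_m}[\tau_{S_\eps}]\ge c_*$.
\end{itemize}
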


\begin{proof}
The proof will be provided in Section~\ref{sec:proof-main}.
\end{proof}

\begin{remark}
The exponential factor in \eqref{eq:EK-main} is the usual Arrhenius term.
The prefactor in \eqref{eq:EK-main} differs from
the overdamped and underdamped formulas through the unstable rate $\mu_\sigma$,
determined by the third-order linearized dynamics at the saddle.
\end{remark}

\begin{remark}[Comparison with underdamped prefactors]
\label{rem:prefactor-comparison}
For an intrinsic comparison, keep the kinetic scaling in
\eqref{eq:Hamiltonian} fixed and consider the underdamped Langevin diffusion
\[
d\theta_t=p_t\,dt,
\qquad
dp_t=-L^{-1}\nabla U(\theta_t)\,dt-\gamma p_t\,dt
      +\sqrt{2\gamma\eps/L}\,dB_t,
\]
where $\gamma,\eps,L>0$ and $B_{t}$ is a standard $d$-dimensional Brownian motion, 
and set $a_\sigma:=\lambda_\sigma/L$.  Along the unstable eigenvector of
$H_\sigma$, the linearized deterministic dynamics are
\[
    \dot q=v,
    \qquad
    \dot v=a_\sigma q-\gamma v.
\]
Substitution of $(q,v)=e^{\nu t}(q_0,v_0)$ shows that the positive unstable
saddle rate $\nu_{\sigma,L}$ satisfies
\[
\nu_{\sigma,L}(\nu_{\sigma,L}+\gamma)=a_\sigma.
\]
Equivalently,
\[
    \nu_{\sigma,L}
    =\frac{\sqrt{\gamma^2+4a_\sigma}-\gamma}{2},
\]
which is the unstable rate appearing in the underdamped Eyring--Kramers
prefactor; see, for example, \cite[Theorem~2.6]{LeeRamilSeo2026} (with the
corresponding normalization of the kinetic energy).
The third-order rate $\mu_\sigma$ is the unique positive zero of
\[
p_{a_\sigma}(x)
:=x^3+\gamma x^2+(\gamma^2-a_\sigma)x-\gamma a_\sigma.
\]
Using the equation for $\nu_{\sigma,L}$ gives
\[
p_{a_\sigma}(\nu_{\sigma,L})=-\gamma\nu_{\sigma,L}^2<0.
\]
Since $p_{a_\sigma}$ is negative before its unique positive zero, it follows
that
\[
\mu_\sigma>\nu_{\sigma,L},
\]
for all positive choices of $L$, $\gamma$, and $\lambda_\sigma$.  Thus, under
the matched normalization,
the third-order Eyring--Kramers prefactor is always strictly smaller.  The
comparison concerns the leading continuous-time metastable prefactors.
\end{remark}

Further small- and large-damping expansions of the prefactor improvement are
collected in Appendix~\ref{app:prefactor-numerics}.

\begin{remark}[Hamiltonian-sublevel patching]
The global patching is organized by the
Hamiltonian sublevel decomposition $W=\{H<U(\sigma)\}$, its components
$W_m,W_s$, a third-order Lee--Ramil--Seo committor estimate
(Proposition~\ref{prop:third-order-lrs-committor-localization}), and the
path-exclusion/saddle-localization argument near the unique saddle.  These
ingredients give the exponential negligibility away from the saddle.
\end{remark}

\subsection{Weak-Capacity Results}
\label{sec:weak-capacity}

We use several results established in
\cite{HeLiWangZhu2026WeakEquilibrium}.  There the metastable sets are two
phase-space balls $A,B$; we apply the whole-space results at each fixed
temperature with $A=A_\eps$ and $B=S_\eps$.  The forward and adjoint
committors $h_\eps,h_\eps^*$ are as defined in
Subsection~\ref{sec:notation}.
We recall the following fixed-temperature facts.  Their proofs, including the
bounded-domain elliptic regularization and the subsequent outer exhaustion,
are given in \cite{HeLiWangZhu2026WeakEquilibrium}.
The assumptions in the following proposition are those used in
\cite[Propositions~2.45 and~2.49]{HeLiWangZhu2026WeakEquilibrium}.

\begin{proposition}[Imported whole-space weak-capacity consequences]
\label{prop:basic-recurrence-weak-capacity-interface}
Under Assumptions~\ref{ass:double-well} and~\ref{ass:growth-lyapunov},
condition \textup{(WC1)} supplies the
confinement and coefficient control,
Appendix~\ref{app:weak-capacity-verification} verifies the required
regularization/local-moment hypothesis, and condition~\textup{(WC2)} supplies
the whole-space growth hypothesis.  Then the
forward and adjoint processes are non-explosive, admit $\pi_\eps$ as an
invariant probability measure, and return to a compact Lyapunov set in finite
mean time.  Moreover,
\[
\Pbb_z(\tau_{S_\eps}<\infty)=1,
\qquad
\Pbb_z^*(\tau_{A_\eps}\wedge\tau_{S_\eps}<\infty)=1,
\qquad z\in\R^{3d}.
\]
\end{proposition}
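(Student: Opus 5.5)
The proposition is largely an import: the plan is to verify that the hypotheses of \cite[Propositions~2.45 and~2.49]{HeLiWangZhu2026WeakEquilibrium} hold under Assumptions~\ref{ass:double-well} and~\ref{ass:growth-lyapunov}, and then read off the conclusions. I would organize the verification in three steps: invariance of $\pi_\eps$, a Lyapunov function giving non-explosion and positive recurrence, and finally almost-sure hitting.

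\emph{Step 1: invariance.} I would check directly that $\calL_\eps^*$ has the explicit form of a Hamiltonian-plus-dissipation splitting. Write $\calL_\eps=\mathcal A+\mathcal S$ with
\[
\mathcal S=-\gamma r\cdot\nabla_r+\frac{\gamma\eps}{L}\Delta_r ,
\qquad
\mathcal A=p\cdot\nabla_\theta+\left(-L^{-1}\nabla U+\gamma r\right)\cdot\nabla_p-\gamma p\cdot\nabla_r .
\]
Then $\mathcal S$ is symmetric in $L^2(\pi_\eps)$, since it is an Ornstein--Uhlenbeck operator in $r$ reversible for $e^{-L|r|^2/(2\eps)}$. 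Also $\mathcal A$ is a divergence-free vector field with $\mathcal A H=p\cdot\nabla U-p\cdot\nabla U+\gamma L\,r\cdot p-\gamma L\,p\cdot r=0$, so it is antisymmetric. Hence $\calL_\eps^*=-\mathcal A+\mathcal S$, and $\int\calL_\eps f\,d\pi_\eps=\int\calL_\eps^* f\,d\pi_\eps=0$ for $f\in C_c^\infty$. Normalizability of $\pi_\eps$ follows from the compact sublevel sets and the growth in (WC1).

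\emph{Step 2: Lyapunov function.} I would build the standard modified energy
\[
V=e^{H/(2\eps)}\quad\text{or}\quad V=H+\delta_1\,\theta\cdot p+\delta_2\,p\cdot r+\delta_3\,\theta\cdot r ,
\]
with small constants $\delta_j$, chosen so that $\calL_\eps V\le -cV+C\mathbf 1_K$ for a compact $K$. The dissipation $-\gamma L|r|^2$ from $\calL_\eps H$ must be propagated to $p$ via the $p\cdot r$ cross term and to $\theta$ via $\theta\cdot p$, using $\theta\cdot\nabla U\ge c_0(|\theta|^2+U)$ from (WC1). The polynomial growth of the force controls the cross terms. The same $V$ works for the adjoint, since only the sign of $\mathcal A$ changes and the cross terms can be re-signed ($\delta_j\to-\delta_j$ where needed). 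This yields non-explosion and a finite mean return time to $K$ by Dynkin's formula. This step is where I expect the main obstacle: closing the three-level chain of cross terms with compatible $\delta$'s (a hierarchy $\delta_3\ll\delta_1\ll\delta_2\ll1$), and doing so uniformly enough to also serve the elliptic regularization. For the latter I would rely on Appendix~\ref{app:weak-capacity-verification} and the Laplacian control in (WC1), and (WC2) gives exactly the whole-space growth hypothesis that the imported propositions require.

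\emph{Step 3: hitting.} From positive recurrence to $K$, together with a uniform lower bound $\inf_{z\in K}\Pbb_z(\tau_{S_\eps}\le T)>0$, a geometric-trials argument gives $\Pbb_z(\tau_{S_\eps}<\infty)=1$. The same argument applies to the adjoint, with $A_\eps\cup S_\eps$ as target. The lower bound comes from the hypoelliptic support theorem: the H\"ormander brackets along $r\to p\to\theta$ span $\R^{3d}$, so by Stroock--Varadhan the control system can steer any point of $K$ into the open ball $S_\eps$. Strong Feller continuity and compactness then make the bound uniform on $K$. With these verified, the statement follows from \cite[Propositions~2.45 and~2.49]{HeLiWangZhu2026WeakEquilibrium}.
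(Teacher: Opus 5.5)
Your Step~2 is where the argument breaks, and it is the only step that carries real content. The paper does not re-derive these facts. It imports them from \cite{HeLiWangZhu2026WeakEquilibrium}, where recurrence comes from a Lyapunov return estimate, controlled-path accessibility and the strong Markov property. Appendix~\ref{app:weak-capacity-verification} only verifies the regularization hypothesis, via $(1+H)^m$.

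Neither of your candidates is a Lyapunov function. For $V=e^{H/(2\eps)}$ one gets exactly $\calL_\eps V=\frac{\gamma}{2\eps}\bigl(d\eps-\frac L2|r|^2\bigr)V$. This is positive wherever $|r|^2<2d\eps/L$, no matter how large $\theta$ and $p$ are.

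For $V=H+\delta_1\theta\cdot p+\delta_2p\cdot r+\delta_3\theta\cdot r$ with constant $\delta_j$, the argument fails for superquadratic $U$:
\begin{itemize}
\item Confinement in $\theta$ can only enter through $-\delta_1L^{-1}\theta\cdot\nabla U$, so you need $\delta_1>0$.
\item The $|p|^2$ in $\calL_\eps(\theta\cdot p)$ then has the wrong sign. The only source of dissipation in $p$ is $\calL_\eps(p\cdot r)=-L^{-1}r\cdot\nabla U+\gamma|r|^2-\gamma|p|^2-\gamma p\cdot r$, which forces $\delta_2\neq0$.
\item The coupling $-\delta_2L^{-1}r\cdot\nabla U$ can be absorbed by $\gamma L|r|^2$ and $\delta_1L^{-1}\theta\cdot\nabla U$ only if $|\nabla U|^2\lesssim 1+\theta\cdot\nabla U$, i.e.\ for essentially quadratic $U$.
\end{itemize}
For the quartic potential of Section~\ref{sec:numerics}, take $p=0$ and $r=-s\operatorname{sgn}(\delta_2)\nabla U(\theta)$ with a small fixed $s>0$. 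This gives $\calL_\eps V\ge cs|\nabla U|^2-C|\theta||\nabla U|-C\to+\infty$ along a curve where $V\to\infty$. So your claim that polynomial growth of the force controls the cross terms is backwards, and no hierarchy $\delta_3\ll\delta_1\ll\delta_2$ repairs it.

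The missing ingredient is a state-dependent weight on the cross terms. Handling that weight is precisely the role of \textup{(WC2)}, which you name but never use. For example, take $V=H+K(\delta_1\theta\cdot p+\delta_2p\cdot r)/(1+H)$ with $\delta_1\le\gamma\delta_2/4$.
\begin{itemize}
\item After Young's inequality, the $r\cdot\nabla U$ coupling costs $CK^2\delta_2^2|\nabla U|^2/(1+H)^2\le CK^2\delta_2^2(1+H)^{-1}|\nabla U|^2/(1+U)$.
\item By \textup{(WC2)} this is $o\bigl((1+H)^{-1}(1+\theta\cdot\nabla U)\bigr)$, so it is beaten by the gain $K\delta_1L^{-1}\theta\cdot\nabla U/(1+H)$ for large $|\theta|$.
\item Derivatives of the weight are harmless, because $\calL_\eps H=-\gamma L|r|^2+\gamma d\eps$ and $|\theta||p|+|p||r|\lesssim1+H$ by \textup{(WC1)}.
\end{itemize}
One then checks that, for $K$ large, $\calL_\eps V\le-1$ off a compact set. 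Since this $V$ is a bounded perturbation of $H$, it yields finite mean return time, which is all the statement claims. It does not give the geometric drift $-cV$ you were aiming for.

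Non-explosion needs none of this: $\calL_\eps H\le\gamma d\eps$ together with compact sublevel sets already suffices. The uniformity needed for the elliptic regularization is handled in Appendix~\ref{app:weak-capacity-verification} with $(1+H)^m$, not with a recurrence Lyapunov function.

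Steps~1 and~3 are fine in outline. For accessibility, note that H\"ormander's bracket condition alone does not give controllability of a system with drift, but here the chain is flat. Prescribe a smooth path $\theta(t)$, set $p=\dot\theta$ and $r=\gamma^{-1}\bigl(\dot p+L^{-1}\nabla U(\theta)\bigr)$, and read off the control from the $r$-equation.
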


\emph{Weak equilibrium measure and weak capacity.}  By
\cite[Proposition~2.49]
{HeLiWangZhu2026WeakEquilibrium} there is a unique finite positive Borel
measure $\eta_\eps$ on $\partial A_\eps$ (the \emph{weak equilibrium measure})
such that, for every $\varphi\in C^2(\partial A_\eps)$ and every
$\Phi\in C_c^2(\R^{3d})$ with
$\Phi|_{\partial A_\eps}=\varphi$ and $\Phi=0$ near $\overline{S_\eps}$,
\[
\int_{\partial A_\eps}\varphi\,d\eta_\eps
=\int_{\R^{3d}}h_\eps^*(z)\left(-\calL_\eps\Phi\right)(z)\,\pi_\eps(dz),
\]
the right-hand side being independent of the compactly supported admissible
extension.
The \emph{weak capacity} is its total mass,
\[
\Cap_\eps(A_\eps,S_\eps)=\eta_\eps(\partial A_\eps)\in(0,\infty),
\]
and is defined entirely through the preceding weak measure identity.

At each fixed $\eps$, the cited weak-capacity framework yields the following
two identities.  The small-noise ingredients needed with them are established
in Propositions~\ref{prop:mass-asymptotic},
\ref{prop:capacity-asymptotic}, and~\ref{prop:local-flatness} for the
unregularized third-order process.
The following proposition 
is the normalized whole-space form of
\cite[Proposition~2.49]
{HeLiWangZhu2026WeakEquilibrium}; its bounded-domain counterpart is
\cite[Theorem~2.42 and Corollary~2.43]
{HeLiWangZhu2026WeakEquilibrium}.

\begin{proposition}[Weak capacity--hitting identity]
\label{prop:hitting-identity}
The normalized weak equilibrium measure
$\nu_\eps=\eta_\eps/\Cap_\eps(A_\eps,S_\eps)$ is a probability measure on
$\partial A_\eps$, and
\begin{equation}
\label{eq:hitting-identity}
\int_{\partial A_\eps} \E_z[\tau_{S_\eps}]\,\nu_\eps(dz)
=
\frac{1}{\Cap_\eps(A_\eps,S_\eps)}
\int_{\R^{3d}} h_\eps^*(z)\,\pi_\eps(dz).
\end{equation}
\end{proposition}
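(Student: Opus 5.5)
The plan is to obtain \eqref{eq:hitting-identity} as a duality statement: test the weak equilibrium identity against the mean hitting time, and pair the adjoint committor with the Poisson equation for that hitting time. Let $w_\eps(z):=\E_z[\tau_{S_\eps}]$. By Proposition~\ref{prop:basic-recurrence-weak-capacity-interface} it is finite. Formally, $w_\eps$ solves
\[
-\calL_\eps w_\eps=1 \ \text{on } \R^{3d}\setminus\overline{S_\eps},
\qquad
w_\eps=0 \ \text{on } \overline{S_\eps}.
\]
Since $\partial A_\eps$ lies in the open set $\R^{3d}\setminus\overline{S_\eps}$, H\"ormander's theorem makes $w_\eps$ smooth in a neighbourhood of $\partial A_\eps$. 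So the trace $\varphi:=w_\eps|_{\partial A_\eps}$ is a legitimate $C^2$ boundary datum.

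If $w_\eps$ itself were an admissible extension $\Phi$, the definition of $\eta_\eps$ would give
\[
\int_{\partial A_\eps} w_\eps\,d\eta_\eps
=\int h_\eps^*(-\calL_\eps w_\eps)\,d\pi_\eps
=\int_{\R^{3d}\setminus \overline{S_\eps}} h_\eps^*\,d\pi_\eps
=\int h_\eps^*\,d\pi_\eps .
\]
The last equality uses $h_\eps^*=0$ on $\overline{S_\eps}$. Dividing by $\Cap_\eps(A_\eps,S_\eps)=\eta_\eps(\partial A_\eps)$ then yields \eqref{eq:hitting-identity}, and shows that $\nu_\eps$ is a probability measure. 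The real work is to justify this substitution, since $w_\eps$ fails admissibility in two ways: it is not compactly supported, and it does not vanish on a neighbourhood of $\overline{S_\eps}$. Moreover, at characteristic points of $\partial S_\eps$ neither $w_\eps$ nor $h_\eps^*$ need be regular.

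I would carry out the justification through the elliptic regularization and bounded exhaustion used in \cite{HeLiWangZhu2026WeakEquilibrium}.
\begin{enumerate}[label=(\roman*)]
\item Add a small noise $\delta\Delta_{\theta,p}$ to $\calL_\eps$ and its adjoint, and work on a smooth bounded domain $D_R\supset \overline{A_\eps\cup S_\eps}$ with absorption at $\partial D_R$ (or reflecting/Lyapunov truncation). There the Dirichlet problems for $w^{\delta,R}$ and $h^{*,\delta,R}$ are classical, with $C^2$ solutions up to the boundary.
\item Green's formula on $D_R\setminus\overline{A_\eps\cup S_\eps}$ gives the exact bounded identity
\[
\int_{\partial A_\eps} w^{\delta,R}\,d\eta^{\delta,R}
=\int h^{*,\delta,R}\,d\pi_\eps .
\]
Here $\eta^{\delta,R}$ is the classical equilibrium measure, namely the conormal derivative of $h^{*,\delta,R}$ times the density. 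The boundary terms on $\partial S_\eps$ and $\partial D_R$ vanish because $w^{\delta,R}=0$ there.
\item Let $\delta\downarrow0$ and then $R\uparrow\infty$. The uniform local-moment bounds of Appendix~\ref{app:weak-capacity-verification} give local uniform convergence of $w^{\delta,R}\to w_\eps$ near $\partial A_\eps$; interior hypoelliptic estimates upgrade this to $C^2$ convergence there. The right-hand side converges by dominated convergence, since $0\le h^*\le1$ and $\pi_\eps$ is finite.
\item For the measures, I would identify the limit of $\eta^{\delta,R}$ with $\eta_\eps$ by testing against a fixed $\Phi\in C_c^2$ with $\Phi=0$ near $\overline{S_\eps}$. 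In that case the bounded-domain identity $\int\varphi\,d\eta^{\delta,R}=\int h^{*,\delta,R}(-\calL^{\delta}\Phi)\,d\pi$ passes to the limit directly. The total masses stay bounded by taking $\Phi\equiv1$ near $\overline{A_\eps}$, so the measures are weakly precompact, and uniqueness in the definition of $\eta_\eps$ pins down the limit.
\end{enumerate}
Combining the $C^2$ convergence of $w^{\delta,R}$ on $\partial A_\eps$ with the weak convergence of $\eta^{\delta,R}$ gives the limit of the left-hand side.

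The main obstacle is step (iii): controlling $w^{\delta,R}$ uniformly as $R\to\infty$. This requires a Lyapunov bound $\sup_{z\in K}\E_z[\tau_{S_\eps}]<\infty$ that is uniform in $\delta$, for compact $K$. I would obtain it from the return-to-compact-set property in Proposition~\ref{prop:basic-recurrence-weak-capacity-interface}, built from the confinement estimate (WC1) via $H$ plus a cross term $\theta\cdot p$. This is combined with a uniform positive probability of reaching $S_\eps$ from the compact Lyapunov set within bounded time, which follows from a uniform Harnack or controllability argument along the chain $r\to p\to\theta$. These two facts together give a geometric-trials bound on $\tau_{S_\eps}$ that is uniform in the regularization.
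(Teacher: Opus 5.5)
Your plan is essentially the paper's route. The paper imports this identity from \cite{HeLiWangZhu2026WeakEquilibrium}, where it is first proved on bounded domains $\calD$ by elliptic regularization, with $\E_z[\tau_{S_\eps}\wedge\tau_{\partial\calD}]$ on the left (your steps (i)--(ii)), and then extended to the whole space using non-explosion, almost-sure hitting of $S_\eps$, and the Lyapunov-return plus controlled-path accessibility argument, so that $\E_z[\tau_{S_\eps}\wedge\tau_{\partial\calD_R}]\uparrow\E_z[\tau_{S_\eps}]$.

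One correction: Proposition~\ref{prop:basic-recurrence-weak-capacity-interface} only gives $\Pbb_z(\tau_{S_\eps}<\infty)=1$ and finite mean return to a compact set, not $\E_z[\tau_{S_\eps}]<\infty$, so your geometric-trials bound is genuinely needed. Since you let $\delta\downarrow0$ before $R\uparrow\infty$, that bound is only required for the unregularized process, not uniformly in $\delta$.
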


\begin{remark}
In \cite{HeLiWangZhu2026WeakEquilibrium} the identity is first proved on a
bounded domain $\calD$ with $\E_z[\tau_{S_\eps}\wedge\tau_{\partial\calD}]$ on
the left.  The whole-space form \eqref{eq:hitting-identity} follows from the
Lyapunov return-and-controlled-path accessibility argument summarized in
Proposition~\ref{prop:basic-recurrence-weak-capacity-interface}.  Together with
this argument, if $\calD_R=B(0,R)$, non-explosion gives
$\tau_{\partial\calD_R}\uparrow\infty$ as $R\to\infty$, while the
almost-sure target-hitting statements justify removal of the final time
truncation.  In particular,
$\E_z[\tau_{S_\eps}\wedge\tau_{\partial\calD_R}]
\uparrow\E_z[\tau_{S_\eps}]$.
\end{remark}

The following proposition is obtained from
\cite[Proposition~2.49]
{HeLiWangZhu2026WeakEquilibrium} by taking boundary trace
$\varphi\equiv1$.  The bounded-domain version is
\cite[Theorem~2.41]
{HeLiWangZhu2026WeakEquilibrium}.

\begin{proposition}[Weak test-function representation of capacity]
\label{prop:weak-test-function-capacity}
Let $f_\eps\in C_c^2(\R^{3d})$ satisfy $f_\eps=1$ near
$\overline{A_\eps}$ and $f_\eps=0$ near $\overline{S_\eps}$.  Then
\[
\Cap_\eps(A_\eps,S_\eps)
=\int_{\R^{3d}}h_\eps^*(-\calL_\eps f_\eps)\,\pi_\eps(dz).
\]
\end{proposition}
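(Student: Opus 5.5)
The plan is to specialize the weak equilibrium identity quoted just before Proposition~\ref{prop:hitting-identity} to the constant boundary trace $\varphi\equiv1$. I would first verify that $f_\eps$ is an admissible extension in the sense of \cite[Proposition~2.49]{HeLiWangZhu2026WeakEquilibrium}. By hypothesis $f_\eps\in C_c^2(\R^{3d})$, it vanishes on a neighborhood of $\overline{S_\eps}$, and since it equals $1$ on a neighborhood of $\overline{A_\eps}$ its restriction to $\partial A_\eps$ is the constant function $1$. That constant lies trivially in $C^2(\partial A_\eps)$, since $\partial A_\eps$ is a smooth sphere. So $\Phi=f_\eps$ and $\varphi=1$ meet every requirement of the defining identity for $\eta_\eps$.

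Applying the identity then gives
\[
\int_{\partial A_\eps}1\,d\eta_\eps=\int_{\R^{3d}}h_\eps^*(z)\,(-\calL_\eps f_\eps)(z)\,\pi_\eps(dz).
\]
The left-hand side is $\eta_\eps(\partial A_\eps)$, which by definition is $\Cap_\eps(A_\eps,S_\eps)$. This is exactly the claim.

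There are two checks I would include to make the step complete.
\begin{enumerate}
\item The right-hand integral is finite. Since $f_\eps\in C^2_c$, the function $\calL_\eps f_\eps$ is continuous with compact support, and $0\le h_\eps^*\le1$; hence the integrand is bounded and compactly supported.
\item The value does not depend on the choice of $f_\eps$. This is part of the cited statement (independence of the compactly supported admissible extension), so it needs no separate proof.
\end{enumerate}

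There is no genuine obstacle here. The only subtle point is that the weak identity is stated for $\Phi$ with prescribed trace on $\partial A_\eps$, not for $\Phi$ equal to $1$ near $\overline{A_\eps}$. The latter is a special case of the former, because the trace of $f_\eps$ is $1$. Note also that $\calL_\eps f_\eps$ vanishes near $\overline{A_\eps}$, so the integrand carries no mass near the characteristic part of $\partial A_\eps$.
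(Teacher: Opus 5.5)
Your proposal is correct and matches the paper's argument. The paper obtains this proposition by specializing the weak equilibrium identity of \cite[Proposition~2.49]{HeLiWangZhu2026WeakEquilibrium} to the admissible extension $\Phi=f_\eps$ with boundary trace $\varphi\equiv1$, so the left-hand side becomes $\eta_\eps(\partial A_\eps)=\Cap_\eps(A_\eps,S_\eps)$, exactly as you do.
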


For the capacity computation we fix the order of limits once and for all.  Set
\begin{equation}
\label{eq:Ksigma-def}
\mathfrak K_{\sigma,\eps}
=
Z_\eps^{-1}\frac{(2\pi\eps)^{3d/2}}{L^d}e^{-U(\sigma)/\eps}.
\end{equation}

The quantity $\mathfrak K_{\sigma,\eps}$ is the natural Gibbs scale of a
$3d$-dimensional saddle neighborhood.  It contains the common exponential
and Gaussian-volume factors but not the dimensionless unstable-rate and
Hessian contribution.  In particular, Proposition~\ref{prop:quadratic-gaussian-current}
will show that
\[
\mathfrak C_{\sigma,\eps}^{\rm quad}
=\frac{\mu_\sigma}{2\pi\sqrt{-\det H_\sigma}}\,
\mathfrak K_{\sigma,\eps},
\]
and the capacity errors in
Proposition~\ref{prop:saddle-boundary-layer-reduction} and
Lemma~\ref{lem:uniform-saddle-current-stability} are measured relative to
this scale.  Whenever
an auxiliary side-layer parameter $\vartheta$ is present, the convention is to
take $\eps\downarrow0$ first and then $\vartheta\downarrow0$.

\emph{Parameter hierarchy.}
The constants used in the saddle construction are fixed in the following
order.  First choose the fixed saddle-neighborhood width $b$ and the geometric
constants $c_{\rm box}$ and $a_{\rm sf}$ from
Lemmas~\ref{lem:path-exclusion-saddle-box} and
\ref{lem:side-face-alternative}.  Next choose
$0<c_K<\min\{c_{\rm box},1\}$, record the fixed committor-loss exponent
$M_{\rm com}$ from
Proposition~\ref{prop:third-order-lrs-committor-localization}, and determine
the finitely many polynomial-loss exponents $N$ and derivative orders $k$
required by the test function.  Only then choose $K$ larger than all
corresponding thresholds $K_0(N,k)$ and large enough that the Gaussian and
energetic factors dominate those polynomial losses.  Finally fix
$\vartheta>0$, take $\eps\downarrow0$, and only afterward let
$\vartheta\downarrow0$.  All constants without an $\eps$ subscript are
independent of $\eps$; later increases of $K$ do not alter the previously
chosen geometric constants.

\begin{remark}[Reduction to the three asymptotics]
Theorem~\ref{thm:EK} follows by combining the weak capacity--hitting identity
(Proposition~\ref{prop:hitting-identity}) with three sharp estimates: the mass
asymptotic at the well (Proposition~\ref{prop:mass-asymptotic}), the capacity
asymptotic through the saddle (Proposition~\ref{prop:capacity-asymptotic}), and
the intrawell flatness of the mean transition time
(Proposition~\ref{prop:local-flatness}).  The combination is carried out in
Subsection~\ref{sec:proof-main}.
\end{remark}

\section{Saddle Capacity Asymptotic}
\label{sec:saddle-capacity}

\subsection{Linear Saddle Reduction}
\label{sec:saddle-reduction}

This subsection records the saddle facts used downstream: the
linearization and its rate $\mu_\sigma$, the splitting into one unstable and
$d-1$ stable directions, the unstable dual coordinate $\Lambda_\sigma$, and the
effective one-dimensional operator.  All matrix computations, eigenvector
formulas, and the normal form are in
Appendix~\ref{app:local-saddle-analysis}.

Let $e_\sigma\in\R^d$ be a unit eigenvector of $H_\sigma$ for the eigenvalue
$-\lambda_\sigma$, and set, in this unstable configuration direction,
\[
q_1=\langle\theta-\sigma,e_\sigma\rangle,\qquad
v_1=\langle p,e_\sigma\rangle,\qquad
w_1=\langle r,e_\sigma\rangle .
\]

The Eyring--Kramers prefactor is determined by the exponential
growth along the unique unstable saddle mode.  We therefore first identify
the corresponding linearized block and its positive eigenvalue.

\begin{lemma}[Linearization and unstable rate]
\label{lem:saddle-characteristic-polynomial}
The deterministic part of \eqref{eq:third-order}, linearized at
$(\sigma,0,0)$ and projected onto the unstable direction, is
$\dot X_1=\mathsf A_\sigma X_1$ with $X_1=(q_1,v_1,w_1)^\top$ and
\[
\mathsf A_\sigma=
\begin{pmatrix}
0&1&0\\
\lambda_\sigma/L&0&\gamma\\
0&-\gamma&-\gamma
\end{pmatrix},
\qquad
\chi_\sigma(\mu)=\det(\mu I_{3}-\mathsf A_\sigma)
=\mu^3+\gamma\mu^2+\left(\gamma^2-\frac{\lambda_\sigma}{L}\right)\mu
-\frac{\gamma\lambda_\sigma}{L}.
\]
The polynomial $\chi_\sigma$ has a unique positive root, namely the number
$\mu_\sigma$ of Definition~\ref{def:unstable-rate}.
\end{lemma}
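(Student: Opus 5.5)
The plan has three steps: linearize, compute the determinant, and use a sign argument on the cubic.

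\textbf{Linearization.} Write $\theta=\sigma+q$ and expand $\nabla U(\sigma+q)=H_\sigma q+O(|q|^2)$, since $\nabla U(\sigma)=0$. Drop the noise and the quadratic remainder. The linearized system is
\[
\dot q=p,\qquad \dot p=-L^{-1}H_\sigma q+\gamma r,\qquad \dot r=-\gamma p-\gamma r.
\]
The coupling matrices are $I_d$, $-L^{-1}H_\sigma$, and scalar multiples of $I_d$. All of these commute with the orthogonal projection onto $\operatorname{span}\{e_\sigma\}$. Hence the triple $(q_1,v_1,w_1)$ evolves autonomously. Because $H_\sigma e_\sigma=-\lambda_\sigma e_\sigma$, we have $\langle -L^{-1}H_\sigma q,e_\sigma\rangle=(\lambda_\sigma/L)q_1$. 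This yields exactly the matrix $\mathsf A_\sigma$ in the statement.

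\textbf{Characteristic polynomial.} Expand $\det(\mu I_3-\mathsf A_\sigma)$ along the first row. The $(1,1)$ entry contributes $\mu\,[\mu(\mu+\gamma)+\gamma^2]$. The $(1,2)$ entry, equal to $-1$, contributes
\[
+1\cdot\det\begin{pmatrix}-\lambda_\sigma/L&-\gamma\\0&\mu+\gamma\end{pmatrix}=-(\lambda_\sigma/L)(\mu+\gamma).
\]
Summing gives $\mu^3+\gamma\mu^2+(\gamma^2-\lambda_\sigma/L)\mu-\gamma\lambda_\sigma/L$, which matches \eqref{eq:cubic}.

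\textbf{Unique positive root.} Set $a=\lambda_\sigma/L>0$. The coefficient sequence $(+,+,\gamma^2-a,-)$ has exactly one sign change whatever the sign of $\gamma^2-a$. By Descartes' rule of signs there is exactly one positive root.

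A direct argument avoids citing Descartes. We have $\chi_\sigma(0)=-\gamma a<0$ and $\chi_\sigma(\mu)\to+\infty$, so a positive root exists. For uniqueness, write $\chi_\sigma(\mu)=\mu(\mu^2+\gamma\mu+\gamma^2)-a(\mu+\gamma)$. For $\mu>0$ this gives
\[
\chi_\sigma(\mu)=0\iff g(\mu):=\frac{\mu(\mu^2+\gamma\mu+\gamma^2)}{\mu+\gamma}=a .
\]
Rewrite $g(\mu)=\mu\bigl(\mu+\tfrac{\gamma^2}{\mu+\gamma}\bigr)=\mu^2+\gamma^2\tfrac{\mu}{\mu+\gamma}$. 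Each term is strictly increasing on $(0,\infty)$, so $g$ is strictly increasing. Hence the root is unique, and $\chi_\sigma<0$ on $(0,\mu_\sigma)$. This sign property is the one used in Remark~\ref{rem:prefactor-comparison}.

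The computation is routine. The only step needing care is uniqueness when $\gamma^2<a$, where the middle coefficient is negative; the monotonicity of $g$ settles that case.
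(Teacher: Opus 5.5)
Your proposal is correct and follows essentially the paper's own argument: evaluate the determinant directly, then apply Descartes' rule of signs (exactly one sign change, after omitting a possible zero coefficient) together with $\chi_\sigma(0)=-\gamma\lambda_\sigma/L<0$ and $\chi_\sigma(+\infty)=+\infty$. Your monotonicity argument via $g(\mu)=\mu^2+\gamma^2\mu/(\mu+\gamma)$ is a valid self-contained alternative to Descartes. The sign of $\chi_\sigma$ on $(0,\mu_\sigma)$ also follows directly from uniqueness of the root and $\chi_\sigma(0)<0$, so that extra step is not needed.
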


\begin{proof}
The determinant computation and root count are given in
Appendix~\ref{app:local-saddle-analysis}.
\end{proof}

The unstable calculation must next be embedded into the full
phase space.  The following lemma shows that all remaining normal modes are
strictly stable and records the compatible quadratic normal form.

\begin{lemma}[Unstable/stable splitting]
\label{lem:full-saddle-spectrum}
In an eigenbasis of $H_\sigma$, the full $3d\times3d$ linearized deterministic
matrix block-diagonalizes into one unstable triple $\mathsf A_\sigma$ (with
the single positive eigenvalue $\mu_\sigma$) and $d-1$ stable normal triples
whose spectra lie in the open left half-plane.  Lifting the orthogonal
diagonalization of $H_\sigma$ to phase space leaves
$|p|^2+|r|^2$ and the diffusion $\Delta_r$ invariant, so the saddle generator
decomposes accordingly and the quadratic Hamiltonian becomes
\begin{equation}
\label{eq:saddle-quadratic-Hamiltonian}
H_\sigma^{\rm quad}
=
U(\sigma)-\frac{\lambda_\sigma}{2}q_1^2
+\frac12\sum_{i=2}^d\lambda_i^\sigma q_i^2
+\frac L2|v|^2+\frac L2|w|^2 ,
\qquad
-\det H_\sigma=\lambda_\sigma\prod_{i=2}^d\lambda_i^\sigma .
\end{equation}
\end{lemma}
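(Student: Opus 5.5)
The plan is to diagonalize $H_\sigma$ orthogonally and lift this change of variables to phase space, so that the linearization splits into $d$ independent $3\times3$ blocks. Let $O=(e_1,\dots,e_d)$ be an orthogonal matrix with $O^\top H_\sigma O=\operatorname{diag}(-\lambda_\sigma,\lambda_2^\sigma,\dots,\lambda_d^\sigma)$ and $e_1=e_\sigma$. Set $q=O^\top(\theta-\sigma)$, $v=O^\top p$, $w=O^\top r$. The linearized deterministic field at $z_\sigma$ is
\[
\dot\theta=p,\qquad \dot p=-L^{-1}H_\sigma(\theta-\sigma)+\gamma r,\qquad \dot r=-\gamma p-\gamma r .
\]
Conjugating by $\operatorname{diag}(O,O,O)$ gives, for each $i$, the triple $(q_i,v_i,w_i)$ driven by the block
\[
\mathsf A_i=\begin{pmatrix}0&1&0\\-\lambda_i/L&0&\gamma\\0&-\gamma&-\gamma\end{pmatrix},
\]
where $\lambda_1=-\lambda_\sigma$. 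Thus $\mathsf A_1=\mathsf A_\sigma$ from Lemma~\ref{lem:saddle-characteristic-polynomial}, which already identifies its characteristic polynomial as $\chi_\sigma$. By that lemma, $\chi_\sigma$ has exactly one positive root $\mu_\sigma$. The product of the roots is $\gamma\lambda_\sigma/L>0$, so the other two roots are either both negative or a complex pair. In the complex case, their real part is $(-\gamma-\mu_\sigma)/2<0$, because the sum of the roots is $-\gamma$. Either way the remaining spectrum of $\mathsf A_1$ lies in the open left half-plane, so $\mathsf A_1$ carries the single positive eigenvalue $\mu_\sigma$.

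For the stable blocks $i\ge2$, with $a=\lambda_i^\sigma/L>0$, the characteristic polynomial is $\mu^3+\gamma\mu^2+(\gamma^2+a)\mu+\gamma a$. All coefficients are positive, and the Routh--Hurwitz condition for cubics requires $\gamma(\gamma^2+a)>\gamma a$, which reduces to $\gamma^3>0$ and holds. So every root has negative real part. This Hurwitz check is the only step with content beyond bookkeeping, and it is immediate. As an alternative or sanity check, I would use the Lyapunov function $E=\tfrac a2 q^2+\tfrac12v^2+\tfrac12w^2$, which satisfies $\dot E=-\gamma w^2$, together with LaSalle: if $w\equiv0$, then $v\equiv0$, then $q\equiv0$. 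This gives asymptotic stability, hence a Hurwitz block.

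For the generator, $\operatorname{diag}(O,O,O)$ is orthogonal on $\R^{3d}$ and acts on $r$ by the orthogonal map $O^\top$. Hence $\Delta_r=\Delta_w$ and $|p|^2+|r|^2=|v|^2+|w|^2$. The drift transforms as computed above, so the linearized saddle generator is the sum of $d$ commuting one-triple operators in $(q_i,v_i,w_i)$, each with diffusion $\tfrac{\gamma\eps}{L}\partial_{w_i}^2$. Finally, Taylor expansion gives $U(\theta)=U(\sigma)+\tfrac12\langle H_\sigma(\theta-\sigma),\theta-\sigma\rangle+O(|\theta-\sigma|^3)$, which in the new coordinates becomes \eqref{eq:saddle-quadratic-Hamiltonian}. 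The determinant identity follows from $\det H_\sigma=(-\lambda_\sigma)\prod_{i\ge2}\lambda_i^\sigma$.
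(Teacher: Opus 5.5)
Your proposal is correct and follows essentially the same route as the paper: you lift the orthogonal diagonalization of $H_\sigma$ to the coordinates $(q,v,w)$, read off the $3\times3$ blocks, and place the two remaining roots of $\chi_\sigma$ in the open left half-plane. Your Vieta sum/product argument carries the same content as the paper's explicit factorization $\chi_\sigma(x)=(x-\mu_\sigma)\bigl(x^2+(\gamma+\mu_\sigma)x+\gamma a_\sigma/\mu_\sigma\bigr)$, and you verify the stable blocks by the same cubic Routh--Hurwitz condition $\gamma(\gamma^2+b_i)-\gamma b_i=\gamma^3>0$, with your Lyapunov/LaSalle check $\dot E=-\gamma w^2$ being a valid optional extra.
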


\begin{proof}
The block diagonalization, spectral factorization, and Routh--Hurwitz
calculation are given in Appendix~\ref{app:local-saddle-analysis}.
\end{proof}

\begin{definition}[Unstable dual coordinate]
\label{def:linear-saddle-coordinate}
Let $\ell_\sigma=(\alpha_\sigma,\beta_\sigma,\zeta_\sigma)$ be the left
eigenvector of $\mathsf A_\sigma$ for $\mu_\sigma$, normalized by
$\beta_\sigma=1$, so that (Appendix~\ref{app:local-saddle-analysis})
\[
\alpha_\sigma=\frac{\lambda_\sigma}{L\mu_\sigma},\qquad
\beta_\sigma=1,\qquad
\zeta_\sigma=\frac{\gamma}{\mu_\sigma+\gamma}.
\]
Define the unstable dual coordinate
\[
\Lambda_\sigma(\theta,p,r)
=
\alpha_\sigma\langle\theta-\sigma,e_\sigma\rangle
+\langle p,e_\sigma\rangle
+\zeta_\sigma\langle r,e_\sigma\rangle
=\ell_\sigma\cdot X_1 .
\]
Along the linearized flow
$\frac{d}{dt}(\ell_\sigma\cdot X_{1,t})
=\mu_\sigma(\ell_\sigma\cdot X_{1,t})$.
\end{definition}

The left unstable eigenvector provides a scalar coordinate that
evolves autonomously under the linearized drift.  This reduces saddle
profiles depending on $\Lambda_\sigma$ to an effective one-dimensional
operator.

\begin{lemma}[Effective one-dimensional operator]
\label{lem:local-profile-ode}
For $\varphi\in C^2(\R)$ set
$\psi_\eps=\varphi(\Lambda_\sigma/\sqrt\eps)$.  Then the linearized saddle
generator $\calL_{\sigma,\eps}$ acts as
\[
\calL_{\sigma,\eps}\psi_\eps
=
\left(c_\sigma\partial_s^2+\mu_\sigma s\,\partial_s\right)\varphi(s)\Big|_{s=\Lambda_\sigma/\sqrt\eps},
\qquad
c_\sigma=\frac{\gamma}{L}\zeta_\sigma^2=\frac{\gamma^3}{L(\mu_\sigma+\gamma)^2}.
\]
\end{lemma}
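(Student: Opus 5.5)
The plan is a direct chain-rule computation. The linearized saddle generator is
\[
\calL_{\sigma,\eps}f=\big(\mathsf A_{\rm full}\,(z-z_\sigma)\big)\cdot\nabla f+\frac{\gamma\eps}{L}\Delta_r f,
\]
where $\mathsf A_{\rm full}$ is the full linearized drift matrix of Lemma~\ref{lem:full-saddle-spectrum}. Since $\Lambda_\sigma$ is linear in $z$, its gradient is constant:
\[
\nabla_\theta\Lambda_\sigma=\alpha_\sigma e_\sigma,\qquad
\nabla_p\Lambda_\sigma=e_\sigma,\qquad
\nabla_r\Lambda_\sigma=\zeta_\sigma e_\sigma .
\]
For $\psi_\eps=\varphi(\Lambda_\sigma/\sqrt\eps)$ we therefore get
\[
\nabla\psi_\eps=\eps^{-1/2}\varphi'\,\nabla\Lambda_\sigma,\qquad
\Delta_r\psi_\eps=\eps^{-1}\varphi''\,|\nabla_r\Lambda_\sigma|^2=\eps^{-1}\zeta_\sigma^2\varphi''.
\]
Multiplying the second identity by $\gamma\eps/L$ gives the diffusion contribution $\frac{\gamma}{L}\zeta_\sigma^2\varphi''=c_\sigma\varphi''$. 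The closed form $c_\sigma=\gamma^3/(L(\mu_\sigma+\gamma)^2)$ then follows by substituting $\zeta_\sigma=\gamma/(\mu_\sigma+\gamma)$ from Definition~\ref{def:linear-saddle-coordinate}.

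The drift term equals $\eps^{-1/2}\varphi'\cdot\big(\nabla\Lambda_\sigma\cdot \mathsf A_{\rm full}(z-z_\sigma)\big)$. We need $\nabla\Lambda_\sigma\cdot \mathsf A_{\rm full}(z-z_\sigma)=\mu_\sigma\Lambda_\sigma$, i.e.\ that $\nabla\Lambda_\sigma$ is a left eigenvector of $\mathsf A_{\rm full}$ for $\mu_\sigma$. By Lemma~\ref{lem:full-saddle-spectrum}, in the eigenbasis of $H_\sigma$ the matrix $\mathsf A_{\rm full}$ is block diagonal: the triple $(q_1,v_1,w_1)$ evolves under $\mathsf A_\sigma$ with no coupling to the stable triples. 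This holds because the linearized force is $-L^{-1}H_\sigma(\theta-\sigma)$, and $H_\sigma e_\sigma=-\lambda_\sigma e_\sigma$ with $H_\sigma$ symmetric, so $\langle H_\sigma(\theta-\sigma),e_\sigma\rangle=-\lambda_\sigma q_1$; the other couplings are multiples of the identity. Since $\nabla\Lambda_\sigma$ is supported on the $e_\sigma$-components, the drift term reduces to $\ell_\sigma^\top\mathsf A_\sigma X_1$, which equals $\mu_\sigma\ell_\sigma\cdot X_1=\mu_\sigma\Lambda_\sigma$ because $\ell_\sigma$ is a left eigenvector. This is the identity $\frac{d}{dt}(\ell_\sigma\cdot X_1)=\mu_\sigma(\ell_\sigma\cdot X_1)$ recorded in Definition~\ref{def:linear-saddle-coordinate}. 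The drift contribution is thus $\eps^{-1/2}\mu_\sigma\Lambda_\sigma\varphi'=\mu_\sigma s\varphi'(s)$ at $s=\Lambda_\sigma/\sqrt\eps$.

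The only step needing verification is the left-eigenvector claim, i.e.\ the normalization formulas for $\alpha_\sigma,\zeta_\sigma$. The equation $\ell^\top\mathsf A_\sigma=\mu_\sigma\ell^\top$ is three scalar equations:
\[
\frac{\lambda_\sigma}{L}\beta=\mu_\sigma\alpha,\qquad
\alpha-\gamma\zeta=\mu_\sigma\beta,\qquad
\gamma\beta-\gamma\zeta=\mu_\sigma\zeta.
\]
With $\beta=1$, the first gives $\alpha_\sigma=\lambda_\sigma/(L\mu_\sigma)$ and the third gives $\zeta_\sigma=\gamma/(\mu_\sigma+\gamma)$. The second, $\alpha_\sigma-\gamma\zeta_\sigma=\mu_\sigma$, should then hold automatically. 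Multiplying it by $L\mu_\sigma(\mu_\sigma+\gamma)$ shows it is equivalent to $\chi_\sigma(\mu_\sigma)=0$, which holds by Lemma~\ref{lem:saddle-characteristic-polynomial}. Adding the diffusion and drift contributions gives the stated formula.
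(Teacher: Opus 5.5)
Your proof is correct and follows essentially the same route as the paper: a chain-rule computation in which the drift term reduces to $\mu_\sigma s\varphi'(s)$ because $\nabla\Lambda_\sigma$ is the left $\mu_\sigma$-eigenvector of the unstable block, and the diffusion term gives $\frac{\gamma}{L}\zeta_\sigma^2\varphi''$ because $\Lambda_\sigma$ depends on $r$ only through $\zeta_\sigma\langle r,e_\sigma\rangle$. The only difference is that you verify directly that $\alpha_\sigma-\gamma\zeta_\sigma=\mu_\sigma$ is equivalent to $\chi_\sigma(\mu_\sigma)=0$; the paper instead cites this as Proposition~\ref{prop:saddle-eigenvectors}.
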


\begin{proof}
The left-eigenvector and chain-rule calculation is given in
Appendix~\ref{app:local-saddle-analysis}.
\end{proof}

\begin{definition}[Normalized saddle profile]
\label{def:normalized-saddle-profile}
Let $\Phi_\sigma:\R\to(0,1)$ be the unique bounded $C^2$ solution of
$c_\sigma\Phi_\sigma''+\mu_\sigma s\Phi_\sigma'=0$,
$\Phi_\sigma(-\infty)=1$, $\Phi_\sigma(+\infty)=0$, i.e.
\[
\Phi_\sigma(s)
=
\frac{\int_s^\infty e^{-\mu_\sigma y^2/(2c_\sigma)}\,dy}
{\int_{-\infty}^\infty e^{-\mu_\sigma y^2/(2c_\sigma)}\,dy},
\qquad
\Phi_\sigma'(s)=-\sqrt{\frac{\mu_\sigma}{2\pi c_\sigma}}\,e^{-\mu_\sigma s^2/(2c_\sigma)},
\quad
\Phi_\sigma'(0)=-\sqrt{\frac{\mu_\sigma}{2\pi c_\sigma}} .
\]
\end{definition}

\begin{remark}[Linearized harmonicity]
By Lemma~\ref{lem:local-profile-ode},
$\calL_{\sigma,\eps}\Phi_\sigma(\Lambda_\sigma/\sqrt\eps)=0$.
\end{remark}

\begin{remark}[Orientation]
Depending on which side of the saddle is the $m$-well, the local test function
uses either $\Phi_\sigma(\Lambda_\sigma/\sqrt\eps)$ or its complement; the
capacity prefactor depends only on the absolute leading saddle integral and is
unaffected by this convention.
\end{remark}

\subsection{Gaussian Saddle Current and Capacity Asymptotic}
\label{sec:gaussian-current}

Write the linearized saddle generator in divergence form
\begin{align*}
\calL_{\sigma,\eps}f
&=
\eps e^{H_\sigma^{\rm quad}/\eps}\nabla\cdot\left(e^{-H_\sigma^{\rm quad}/\eps}\mathsf M\nabla f\right),
\\
\mathsf M
&=\mathsf D+\mathsf Q
=\begin{pmatrix}
0&L^{-1}I_{d}&0\\
-L^{-1}I_{d}&0&\gamma L^{-1}I_{d}\\
0&-\gamma L^{-1}I_{d}&\gamma L^{-1}I_{d}
\end{pmatrix}.
\end{align*}
with $\mathsf Q$ skew-symmetric and $\mathsf D$ carrying only the $r$-block.
With $\psi_\eps=\Phi_\sigma(\Lambda_\sigma/\sqrt\eps)$ and
$n_\sigma=\nabla\Lambda_\sigma/|\nabla\Lambda_\sigma|$, define the quadratic
saddle current through the linear section
$\Sigma_\sigma^{\rm lin}=\{\Lambda_\sigma=0\}$,
\begin{equation}
\label{eq:quad-current-def}
\mathfrak C_{\sigma,\eps}^{\rm quad}
=
\frac{1}{Z_\eps}
\int_{\Sigma_\sigma^{\rm lin}}
-\eps e^{-H_\sigma^{\rm quad}/\eps}\mathsf M\nabla\psi_\eps\cdot n_\sigma\,dS
\qquad(\text{positive orientation}).
\end{equation}

The normalized profile determines the flux across a section
transverse to the unstable direction.  The next proposition evaluates this
flux by an explicit constrained Gaussian integral.

\begin{proposition}[Gaussian saddle current]
\label{prop:quadratic-gaussian-current}
For the linearized saddle generator $\calL_{\sigma,\eps}$, the quadratic
Hamiltonian $H_\sigma^{\rm quad}$ in
\eqref{eq:saddle-quadratic-Hamiltonian}, and the current defined in
\eqref{eq:quad-current-def},
\[
\mathfrak C_{\sigma,\eps}^{\rm quad}
=
\frac{1}{Z_\eps}
\frac{(2\pi\eps)^{3d/2}}{L^d}
\frac{\mu_\sigma}{2\pi\sqrt{-\det H_\sigma}}
e^{-U(\sigma)/\eps}
=\frac{\mu_\sigma}{2\pi\sqrt{-\det H_\sigma}}\,
\mathfrak K_{\sigma,\eps},
\]
where $\mathfrak K_{\sigma,\eps}$ is defined in \eqref{eq:Ksigma-def}.
\end{proposition}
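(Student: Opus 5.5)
The plan is to evaluate \eqref{eq:quad-current-def} directly: reduce the surface integral to a constrained Gaussian integral, factor it along the block structure of Lemma~\ref{lem:full-saddle-spectrum}, and close the computation with the cubic \eqref{eq:cubic}.

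First I compute the integrand. By the chain rule, $\nabla\psi_\eps=\eps^{-1/2}\Phi_\sigma'(\Lambda_\sigma/\sqrt\eps)\nabla\Lambda_\sigma$. On $\Sigma_\sigma^{\rm lin}$ we have $\Lambda_\sigma=0$, so Definition~\ref{def:normalized-saddle-profile} gives the factor $\Phi_\sigma'(0)=-\sqrt{\mu_\sigma/(2\pi c_\sigma)}$. Since $\mathsf Q$ is skew-symmetric,
\[
\mathsf M\nabla\Lambda_\sigma\cdot\nabla\Lambda_\sigma=\mathsf D\nabla\Lambda_\sigma\cdot\nabla\Lambda_\sigma=\tfrac{\gamma}{L}\zeta_\sigma^2=c_\sigma,
\]
because $\nabla_r\Lambda_\sigma=\zeta_\sigma e_\sigma$. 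Dividing by $|\nabla\Lambda_\sigma|$ for the normal $n_\sigma$, the integrand becomes
\[
\eps^{1/2}\sqrt{\mu_\sigma c_\sigma/(2\pi)}\;e^{-H_\sigma^{\rm quad}/\eps}\,|\nabla\Lambda_\sigma|^{-1},
\]
which is positive, consistent with the stated orientation. By the coarea formula, $\int_{\Sigma_\sigma^{\rm lin}}g\,|\nabla\Lambda_\sigma|^{-1}dS=\int_{\R^{3d}}\delta(\Lambda_\sigma)g\,dz$. Hence
\[
\mathfrak C_{\sigma,\eps}^{\rm quad}=Z_\eps^{-1}\,\eps^{1/2}\sqrt{\tfrac{\mu_\sigma c_\sigma}{2\pi}}\int\delta(\Lambda_\sigma)\,e^{-H_\sigma^{\rm quad}/\eps}\,dz .
\]

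Next I factor the integral using the eigenbasis of Lemma~\ref{lem:full-saddle-spectrum}. The coordinate $\Lambda_\sigma$ depends only on the unstable triple $(q_1,v_1,w_1)$, so the $d-1$ stable triples integrate freely. They contribute
\[
e^{-U(\sigma)/\eps}\,(2\pi\eps)^{(d-1)/2}\Big(\prod_{i\ge2}\lambda_i^\sigma\Big)^{-1/2}(2\pi\eps/L)^{d-1}.
\]
In the unstable triple I eliminate $v_1=-\alpha_\sigma q_1-\zeta_\sigma w_1$ using the delta function; the Jacobian is $1$ because $\beta_\sigma=1$. What remains is a two-dimensional Gaussian in $(q_1,w_1)$ with quadratic-form matrix
\[
\begin{pmatrix}L\alpha_\sigma^2-\lambda_\sigma & L\alpha_\sigma\zeta_\sigma\\ L\alpha_\sigma\zeta_\sigma & L(1+\zeta_\sigma^2)\end{pmatrix},
\qquad \det = L\big(L\alpha_\sigma^2-\lambda_\sigma(1+\zeta_\sigma^2)\big)=:\Delta_\sigma ,
\]
provided this form is positive definite. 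The triple then contributes $2\pi\eps/\sqrt{\Delta_\sigma}$.

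The key algebraic step, and the only real obstacle, is the identity $\Delta_\sigma=c_\sigma L^2\lambda_\sigma/\mu_\sigma$. This identity also shows that $\Delta_\sigma>0$. Since $L(1+\zeta_\sigma^2)>0$ as well, the form is positive definite, which justifies the convergence of the Gaussian integral. To prove it, I substitute $L\alpha_\sigma=\lambda_\sigma/\mu_\sigma$, $\zeta_\sigma=\gamma/(\mu_\sigma+\gamma)$ and $c_\sigma=\gamma\zeta_\sigma^2/L$. After dividing by $\lambda_\sigma$, the identity reduces to
\[
\frac{\lambda_\sigma}{L\mu_\sigma^2}-1=\zeta_\sigma^2\,\frac{\mu_\sigma+\gamma}{\mu_\sigma}=\frac{\gamma^2}{\mu_\sigma(\mu_\sigma+\gamma)}.
\]
Clearing denominators gives $(\lambda_\sigma/L)(\mu_\sigma+\gamma)=\mu_\sigma^3+\gamma\mu_\sigma^2+\gamma^2\mu_\sigma$. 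This is exactly $\chi_\sigma(\mu_\sigma)=0$ from Lemma~\ref{lem:saddle-characteristic-polynomial}. Since the right-hand side of the identity is positive, $\Delta_\sigma>0$ follows.

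Finally I assemble the factors. The unstable triple together with the prefactor gives
\[
\eps^{1/2}\sqrt{\mu_\sigma c_\sigma/(2\pi)}\cdot 2\pi\eps\,\sqrt{\mu_\sigma/(c_\sigma L^2\lambda_\sigma)}=(2\pi\eps)^{3/2}L^{-1}\,\frac{\mu_\sigma}{2\pi\sqrt{\lambda_\sigma}} .
\]
Multiplying by the stable contributions and using $-\det H_\sigma=\lambda_\sigma\prod_{i\ge2}\lambda_i^\sigma$ from \eqref{eq:saddle-quadratic-Hamiltonian} yields
\[
\mathfrak C_{\sigma,\eps}^{\rm quad}=Z_\eps^{-1}(2\pi\eps)^{3d/2}L^{-d}\,\frac{\mu_\sigma}{2\pi\sqrt{-\det H_\sigma}}\,e^{-U(\sigma)/\eps},
\]
which by \eqref{eq:Ksigma-def} equals $\frac{\mu_\sigma}{2\pi\sqrt{-\det H_\sigma}}\,\mathfrak K_{\sigma,\eps}$, as claimed.
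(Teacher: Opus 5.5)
Your proposal is correct and follows essentially the same route as the paper: coarea reduction to a $\delta(\Lambda_\sigma)$-constrained Gaussian, factorization over the $d-1$ stable triples, and an unstable-triple determinant identity closed by $\chi_\sigma(\mu_\sigma)=0$. The only difference is cosmetic. You eliminate $v_1$, which has unit Jacobian, whereas the paper eliminates $q_1$, with Jacobian $\alpha_\sigma^{-1}$. Your determinant therefore satisfies $\Delta_\sigma=\alpha_\sigma^2\det K_\sigma$, and the two computations agree.
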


\begin{proof}
By the coarea identity proved in Appendix~\ref{app:local-saddle-analysis},
\begin{equation}
\label{eq:quadratic-current-proof-factorization}
\mathfrak C_{\sigma,\eps}^{\rm quad}
=\frac{\sqrt\eps\,c_\sigma|\Phi_\sigma'(0)|}{Z_\eps}
e^{-U(\sigma)/\eps}I_{\sigma,\eps},
\end{equation}
where
\[
I_{\sigma,\eps}
=\int_{\R^{3d}}\delta(\Lambda_\sigma)
\exp\!\left(-\frac1{2\eps}\left(
-\lambda_\sigma q_1^2+\sum_{i=2}^d\lambda_i^\sigma q_i^2
+L|v|^2+L|w|^2\right)\right)\,dq\,dv\,dw.
\]
For each stable direction $i\ge2$,
\begin{equation}
\label{eq:stable-triple-proof}
\int_{\R^3}
e^{-(\lambda_i^\sigma q_i^2+Lv_i^2+Lw_i^2)/(2\eps)}
\,dq_i\,dv_i\,dw_i
=\frac{(2\pi\eps)^{3/2}}{L\sqrt{\lambda_i^\sigma}}.
\end{equation}
For the unstable triple, the constraint
$\alpha_\sigma q_1+v_1+\zeta_\sigma w_1=0$ gives
$q_1=-(v_1+\zeta_\sigma w_1)/\alpha_\sigma$ and a Jacobian
$\alpha_\sigma^{-1}$.  Therefore,
\begin{equation}
\label{eq:unstable-triple-proof}
\begin{aligned}
&\int_{\R^3}\delta(\alpha_\sigma q_1+v_1+\zeta_\sigma w_1)
e^{-(-\lambda_\sigma q_1^2+Lv_1^2+Lw_1^2)/(2\eps)}
\,dq_1\,dv_1\,dw_1 \\
&\hspace{4em}
=\frac1{\alpha_\sigma}
\int_{\R^2}e^{-(v_1,w_1)K_\sigma(v_1,w_1)^\top/(2\eps)}
\,dv_1\,dw_1
=\frac1{\alpha_\sigma}\frac{2\pi\eps}{\sqrt{\det K_\sigma}}.
\end{aligned}
\end{equation}
The determinant identity proved in Appendix~\ref{app:local-saddle-analysis} and the definition of
$\Phi_\sigma$ imply that
\[
\det K_\sigma
=\frac{c_\sigma L^2\lambda_\sigma}{\alpha_\sigma^2\mu_\sigma},
\qquad
|\Phi_\sigma'(0)|=\sqrt{\frac{\mu_\sigma}{2\pi c_\sigma}}.
\]
Substitution into \eqref{eq:unstable-triple-proof} yields
\begin{equation}
\label{eq:unstable-current-factor-proof}
\sqrt\eps\,c_\sigma|\Phi_\sigma'(0)|
\frac1{\alpha_\sigma}\frac{2\pi\eps}{\sqrt{\det K_\sigma}}
=\frac{(2\pi\eps)^{3/2}}{L}
\frac{\mu_\sigma}{2\pi\sqrt{\lambda_\sigma}}.
\end{equation}
By multiplying \eqref{eq:unstable-current-factor-proof} by the $d-1$ factors in
\eqref{eq:stable-triple-proof} and using
$-\det H_\sigma=\lambda_\sigma\prod_{i=2}^d\lambda_i^\sigma$, we get
\[
\mathfrak C_{\sigma,\eps}^{\rm quad}
=\frac1{Z_\eps}\frac{(2\pi\eps)^{3d/2}}{L^d}
\frac{\mu_\sigma}{2\pi\sqrt{-\det H_\sigma}}
e^{-U(\sigma)/\eps},
\]
as claimed.
\end{proof}

We now state the central estimate \eqref{eq:capacity-asymptotic-intro}.

\begin{proposition}[Capacity asymptotic through the saddle]
\label{prop:capacity-asymptotic}
As $\eps\downarrow0$,
\begin{equation}
\label{eq:capacity-asymptotic}
\Cap_\eps(A_\eps,S_\eps)
=
[1+o_\eps(1)]
\frac{1}{Z_\eps}
\frac{(2\pi\eps)^{3d/2}}{L^d}
\frac{\mu_\sigma}{2\pi\sqrt{-\det H_\sigma}}
e^{-U(\sigma)/\eps}.
\end{equation}
\end{proposition}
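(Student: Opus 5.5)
The proof uses the weak test-function representation of Proposition~\ref{prop:weak-test-function-capacity}. We build a compactly supported test function $f_\eps$ adapted to the saddle and evaluate $\int h_\eps^*(-\calL_\eps f_\eps)\,d\pi_\eps$ up to a factor $1+o_\eps(1)$ on the scale $\mathfrak K_{\sigma,\eps}$.

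The test function is built as follows. Fix a box $\mathcal B_{\sigma}$ of width $b$ around $z_\sigma$, refined to the scale $\sqrt{K\eps\log(1/\eps)}$ in the stable and kinetic directions. On this box set $f_\eps=\Phi_\sigma(\Lambda_\sigma/\sqrt\eps)$, or its complement according to the orientation remark. Set $f_\eps\equiv1$ on a neighbourhood of the Hamiltonian sublevel component $W_m$ and $f_\eps\equiv0$ near $W_s$. Outside a large energy sublevel set, cut off smoothly so that $f_\eps\in C_c^2$ and $f_\eps=1$ near $\overline{A_\eps}$, $f_\eps=0$ near $\overline{S_\eps}$. The patching across the physical side faces of the box uses layers of width $\vartheta$, placed by Lemmas~\ref{lem:path-exclusion-saddle-box} and~\ref{lem:side-face-alternative}. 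There $H\ge U(\sigma)+c\,\vartheta^2$ or $H\ge U(\sigma)+cK\eps\log(1/\eps)$, so that the Gibbs weight $e^{-H/\eps}$ beats every polynomial loss coming from derivatives of $f_\eps$. Since $0\le h^*_\eps\le1$, we get $|\int_{\text{side layers}\cup\text{far region}}h^*_\eps\calL_\eps f_\eps\,d\pi_\eps|=o_\eps(1)\mathfrak K_{\sigma,\eps}$, with the limit taken first in $\eps$ and then in $\vartheta$, in the order fixed by the parameter hierarchy.

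Inside the box, write $\calL_\eps=\calL_{\sigma,\eps}+(\calL_\eps-\calL_{\sigma,\eps})$. By the linearized harmonicity remark, $\calL_{\sigma,\eps}f_\eps=0$. The remainder comes from the Taylor error of $\nabla U$ and is $O(|z-z_\sigma|^2)\cdot\eps^{-1/2}|\Phi_\sigma'|$. Integrated against $e^{-H/\eps}$ on the box, this is $O(\sqrt\eps\,\mathrm{polylog})\,\mathfrak K_{\sigma,\eps}$, again negligible. This leaves $-\calL_\eps f_\eps$ concentrated in the transition layers between the profile and the constants $0,1$. For this term we use the divergence form: $h^*_\eps$ is $\calL^*_\eps$-harmonic off $A_\eps\cup S_\eps$. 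Integrating by parts in the weak sense (valid because all functions involved are smooth test functions paired against $h^*_\eps$), the pairing becomes a flux $\int \eps e^{-H/\eps}\mathsf M\nabla f_\eps\cdot n$ through a section. We transport this flux to the central linear section $\Sigma_\sigma^{\rm lin}$, replacing $H$ by $H_\sigma^{\rm quad}$ at relative cost $o_\eps(1)$ (Lemma~\ref{lem:uniform-saddle-current-stability}). We also need $h^*_\eps\approx 1$ on the $m$-side and $\approx0$ on the $s$-side of the box up to exponentially small errors, which follows from the committor localization of Proposition~\ref{prop:third-order-lrs-committor-localization} on $W_m,W_s$ together with the loss exponent $M_{\rm com}$ absorbed by the choice of $K$. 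The flux then equals $\mathfrak C_{\sigma,\eps}^{\rm quad}(1+o_\eps(1))$, and Proposition~\ref{prop:quadratic-gaussian-current} gives the claimed constant. These reductions are exactly what Proposition~\ref{prop:saddle-boundary-layer-reduction} packages.

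The main obstacle is this last reduction, from the pairing with the unknown adjoint committor to the explicit Gaussian current. The degenerate diffusion only smooths in $r$, so $h^*_\eps$ has no a priori gradient control near characteristic parts of the box. The first approach to try is to avoid any regularity of $h^*_\eps$ altogether: $f_\eps$ is chosen so that $\calL_\eps f_\eps$ is small except on regions where $h^*_\eps$ is known to be exponentially close to $0$ or $1$. The residual identity is then the exact cancellation $\int(-\calL_{\sigma,\eps}f)\,\mathbf 1_{\{\Lambda_\sigma<0\}}\,d\pi^{\rm quad}=\mathfrak C^{\rm quad}_{\sigma,\eps}$, computed by the divergence theorem.
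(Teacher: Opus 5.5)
Citing Proposition~\ref{prop:weak-test-function-capacity-reduction} and Proposition~\ref{prop:quadratic-gaussian-current}, as you do at the end, is the paper's entire proof. Your own sketch of what those results do, however, contains a step that fails, and it fails exactly where the capacity is produced.

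\textbf{The side layers are not negligible.} You discard the patching layers because there $H\ge U(\sigma)+c\vartheta^2$ or $H\ge U(\sigma)+cK\eps\log(1/\eps)$, so that $0\le h_\eps^*\le1$ makes them $o(\mathfrak K_{\sigma,\eps})$. This is false, because the $m$-side physical face meets $W_m$ below the saddle energy.
\begin{itemize}
\item Along the current direction $\mathfrak u_\sigma$, which spans the kernel of the form $\mathscr Q_\sigma$ of Lemma~\ref{lem:normal-form-stability-saddle-blowup}, one has $H-U(\sigma)\approx-\frac{\mu_\sigma}{2c_\sigma}\Lambda_\sigma^2<0$ on that face.
\item Hence $e^{-H/\eps}|\Phi_\sigma'(\Lambda_\sigma/\sqrt\eps)|$ and $e^{-H/\eps}(1-j_\eps)$ are of order $e^{-U(\sigma)/\eps}$ there, up to powers of $R_\eps$. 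The small profile factor is compensated by a large Gibbs weight.
\item Lemma~\ref{lem:side-face-alternative} is a dichotomy (profile tail \emph{or} high energy), not an energy lower bound.
\end{itemize}
In the paper, the box, the $s$-side layer and the exterior are all $o(\mathfrak K_{\sigma,\eps})$. The whole leading term $\mathfrak J^-_{\sigma,\eps}\approx\mathfrak C^{\rm quad}_{\sigma,\eps}$ comes from the $m$-side layer, through the term $(1-j_\eps)\langle p,e_\sigma\rangle\partial_{q_1}\chi^-_{\eps,\vartheta}$ in $\calL_\eps\bar f_{\eps,\vartheta}$. If you discard both the box and the layers, as you do, you get $\Cap_\eps(A_\eps,S_\eps)=o(\mathfrak K_{\sigma,\eps})$. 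Your next sentence, that $-\calL_\eps f_\eps$ is ``concentrated in the transition layers'', contradicts this.

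\textbf{Extracting the flux does not use harmonicity of $h_\eps^*$.} The working mechanism is Lemma~\ref{lem:collapse-physical-side-layer}:
\begin{itemize}
\item On the $m$-side layer, replace $h_\eps^*$ by the constant $1$. The committor bound $1-h_\eps^*\le C\eps^{-M_{\rm com}}e^{(H-U(\sigma))/\eps}$ only cancels the Gibbs weight; it is not small near the saddle. The gain $e^{-cR_\eps^2}$ comes from the profile tail on the low-energy part and from the energy cost elsewhere (Lemma~\ref{lem:side-layer-mismatch}).
\item Then apply the divergence theorem to $\int_{\mathcal A^-_{\eps,\vartheta}}(-\calL_\eps\bar f_{\eps,\vartheta})\,d\pi_\eps$, with no $h_\eps^*$ left. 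The outer face carries zero flux because $\bar f_{\eps,\vartheta}\equiv1$ there, and the tangential faces are negligible.
\item The inner face yields the physical current of $j_\eps$, which is then transported to $\{\Lambda_\sigma=0\}$.
\end{itemize}
Your route, integrating by parts against $h_\eps^*$ via $\calL_\eps^*h_\eps^*=0$, leaves boundary terms containing the trace of $h_\eps^*$ and its $\mathsf M^\top$-conormal derivative on a section through the saddle. Nothing is known about $h_\eps^*$ there.

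\textbf{Two smaller points.}
\begin{itemize}
\item Your closing identity returns $\mathfrak C^{\rm quad}_{\sigma,\eps}$ only for the bent function. For $f=\Phi_\sigma(\Lambda_\sigma/\sqrt\eps)$ its integrand vanishes identically.
\item If your box keeps width $b$ in $q_1$ while $v,w$ are logarithmic, the tube along $\mathfrak u_\sigma\propto(1,-\mu_\sigma,-\mu_\sigma\zeta_\sigma)$ exits through low-energy $v_1$-faces that you never patch. The paper takes $|q_1|\le\delta_\eps/\sqrt{\lambda_\sigma}$. Using $\mu_\sigma^2<\lambda_\sigma/L$, the tube then exits through the $q_1$-faces, while the tangential faces sit at $H-U(\sigma)\ge\eps R_\eps^2$.
\end{itemize}
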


\begin{proof}
Proposition~\ref{prop:weak-test-function-capacity-reduction} gives
\[
\Cap_\eps(A_\eps,S_\eps)
=\left[1+o_\eps(1)\right]
\mathfrak C_{\sigma,\eps}^{\rm quad}.
\]
Proposition~\ref{prop:quadratic-gaussian-current} evaluates the quadratic
current as
\[
\mathfrak C_{\sigma,\eps}^{\rm quad}
=\frac{\mu_\sigma}{2\pi\sqrt{-\det H_\sigma}}
\mathfrak K_{\sigma,\eps},
\]
and substituting \eqref{eq:Ksigma-def} gives
\eqref{eq:capacity-asymptotic}.
\end{proof}

\section{Well Asymptotics and Proof of the Main Theorem}
\label{sec:well-proof}

\subsection{Laplace Asymptotics at the Minimum}
\label{sec:laplace-min}

Choose $r_m>0$ sufficiently small that, with
\[
\mathcal W_m:=B(m,r_m)\times B(0,r_m)\times B(0,r_m),
\]
we have $\overline{\mathcal W_m}\subset W_m$.

The numerator of the capacity--hitting identity is concentrated
near $x_m$.  We begin by computing the local Gibbs mass of a fixed
neighborhood of this minimum.

\begin{lemma}[Local Laplace asymptotic at the well]
\label{lem:local-laplace-well}
As $\eps\downarrow0$,
\[
\int_{\mathcal W_m}e^{-H(z)/\eps}\,dz
=
[1+o_\eps(1)]
\frac{(2\pi\eps)^{3d/2}}{L^d\sqrt{\det H_m}}
e^{-U(m)/\eps}.
\]
\end{lemma}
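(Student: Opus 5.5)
The plan is a standard Laplace argument that exploits the product structure of $H$ and the box shape of $\mathcal W_m$. Since $H(\theta,p,r)=U(\theta)+\frac L2|p|^2+\frac L2|r|^2$ and $\mathcal W_m=B(m,r_m)\times B(0,r_m)\times B(0,r_m)$, Fubini factors the integral exactly:
\[
\int_{\mathcal W_m}e^{-H/\eps}\,dz
=\int_{B(m,r_m)}e^{-U(\theta)/\eps}\,d\theta\cdot
\left(\int_{B(0,r_m)}e^{-L|p|^2/(2\eps)}\,dp\right)^2 .
\]
We then treat the two kinds of factors separately.

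The kinetic factors are elementary. The full Gaussian integral is $\int_{\R^d}e^{-L|p|^2/(2\eps)}\,dp=(2\pi\eps/L)^{d/2}$. The complement $\{|p|\ge r_m\}$ contributes at most $(2\pi\eps/L)^{d/2}\,\Pbb(|N(0,\eps L^{-1}I_d)|\ge r_m)$, and this tail probability is $O(e^{-c/\eps})$. Hence the square of the momentum factor equals $[1+o_\eps(1)](2\pi\eps)^{d}L^{-d}$.

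The potential factor is the classical Laplace asymptotic at a nondegenerate minimum. By Assumption~\ref{ass:double-well}, $H_m$ is positive definite. Shrinking $r_m$ if needed (this is harmless, since $\overline{\mathcal W_m}\subset W_m$ only requires $r_m$ small), we may assume $m$ is the unique minimizer of $U$ on $\overline{B(m,r_m)}$. Indeed $m$ is a strict local minimum by Morse nondegeneracy, and Lemma~\ref{lem:derived-double-well-geometry} also gives this directly. We then split $B(m,r_m)$ into $B(m,\eps^{1/3})$ and the annulus $B(m,r_m)\setminus B(m,\eps^{1/3})$.

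On the inner ball, Taylor's theorem gives $U(\theta)=U(m)+\frac12\langle H_m(\theta-m),\theta-m\rangle+O(|\theta-m|^3)$, and the cubic error is $O(\eps)$, so it contributes $O(\eps)/\eps$ in the exponent. We therefore do not use $|\theta-m|\le\eps^{1/3}$ to kill it. Instead we substitute $\theta=m+\sqrt\eps y$, which turns the error into $O(\sqrt\eps|y|^3)$. Dominated convergence, together with the bound $U(\theta)-U(m)\ge c|\theta-m|^2$ on $B(m,r_m)$, then gives
\[
\int_{B(m,\eps^{1/3})}e^{-U/\eps}\,d\theta=[1+o_\eps(1)](2\pi\eps)^{d/2}(\det H_m)^{-1/2}e^{-U(m)/\eps}.
\]
On the annulus, the same quadratic lower bound gives a contribution of at most $C\,e^{-U(m)/\eps}e^{-c\eps^{-1/3}}$, which is negligible relative to $\eps^{d/2}e^{-U(m)/\eps}$.

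Multiplying the potential factor by the two kinetic factors gives $(2\pi\eps)^{d/2}(2\pi\eps)^{d}L^{-d}(\det H_m)^{-1/2}e^{-U(m)/\eps}$, which is the claimed formula. The only point needing care is to state the quadratic lower bound uniformly on $B(m,r_m)$, and nondegeneracy of $H_m$ together with a small enough $r_m$ supplies it. No real obstacle is expected.
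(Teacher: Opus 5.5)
Your proof is correct and follows the paper's route: you factor the integral by Fubini, apply Laplace's method at the nondegenerate minimum $m$ for the $\theta$-factor, and show that the two truncated Gaussian $p$- and $r$-integrals differ from $(2\pi\eps/L)^{d/2}$ by exponentially small relative errors. Your dominated-convergence treatment of the $\theta$-integral, with the uniform quadratic lower bound as majorant, fills in details the paper leaves implicit; once you have that majorant on all of $B(m,r_m)$, the split at radius $\eps^{1/3}$ is not needed.
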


\begin{proof}
Laplace's method in $\theta$ near $m$ (where
$U=U(m)+\frac12(\theta-m)^\top H_m(\theta-m)+\mathcal O(|\theta-m|^3)$,
{with $H_m=\nabla^2U(m)\in\R^{d\times d}$ positive definite}) gives the
factor $(2\pi\eps)^{d/2}/\sqrt{\det H_m}\,e^{-U(m)/\eps}$.  The two Gaussian
integrals are over fixed balls, and their complements have exponentially
small Gaussian mass.  In particular,
\[
\int_{B(0,r_m)}e^{-L|p|^2/(2\eps)}\,dp
=\left[1+\mathcal O\left(e^{-c/\eps}\right)\right]
\left(\frac{2\pi\eps}{L}\right)^{d/2},
\]
for some $c>0$, and the same estimate holds for the $r$-integral.  These two
factors produce $L^{-d}$.  Multiplying the three factors proves the claim.
\end{proof}

To pass from the local well integral to the global
committor-weighted mass, we also need an exponentially small bound for high
Hamiltonian levels.

\begin{lemma}[Global Gibbs tail]
\label{lem:global-gibbs-tail}
Fix $\eps_0>0$.  For every $h_0>\inf_{z\in\R^{3d}}H(z)$ and
$0<\eps<\eps_0$, where $\inf_{z\in\R^{3d}}H(z)=\inf_{\theta\in\R^d}U(\theta)$,
\[
\int_{\{H(z)\ge h_0\}}e^{-H(z)/\eps}\,dz
\le Z_{\eps_0}
\exp\!\left(-\left(\frac1\eps-\frac1{\eps_0}\right)h_0\right).
\]
In particular, if $h_0>U(m)$, this tail is
$o(e^{-U(m)/\eps}\eps^N)$ for every fixed $N$.
\end{lemma}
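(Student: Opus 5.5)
The plan is a Chernoff-type comparison of Boltzmann weights at two temperatures. On the set $\{H\ge h_0\}$, for $0<\eps<\eps_0$ we have $1/\eps-1/\eps_0>0$, so
\[
e^{-H(z)/\eps}
=e^{-H(z)/\eps_0}\,e^{-(1/\eps-1/\eps_0)H(z)}
\le e^{-H(z)/\eps_0}\,e^{-(1/\eps-1/\eps_0)h_0}.
\]
Integrating over $\{H\ge h_0\}$ and then enlarging the domain of the $\eps_0$-integral to all of $\R^{3d}$ gives the bound
\[
\exp\!\left(-\left(\frac1\eps-\frac1{\eps_0}\right)h_0\right)\int_{\R^{3d}}e^{-H/\eps_0}\,dz .
\]
The last integral is exactly the (unnormalized) partition function $Z_{\eps_0}$, with the convention implicit in \eqref{eq:invariant-density}.

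The only thing to check is that $Z_{\eps_0}<\infty$. The $p$- and $r$-integrals are Gaussian, so it remains to show $\int e^{-U/\eps_0}\,d\theta<\infty$. This follows from (WC1). For $|\theta|\ge R_0$, the inequality $\theta\cdot\nabla U\ge c_0|\theta|^2$ gives, along rays, $\frac{d}{d\rho}U(\rho\omega)\ge c_0\rho$. Hence $U(\theta)\ge c_0|\theta|^2/2-C$, and the integral converges. The identity $\inf H=\inf U$ is immediate, since $\frac L2(|p|^2+|r|^2)\ge0$ with equality at $p=r=0$.

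For the ``in particular'' statement, take $h_0>U(m)$. The bound equals
\[
Z_{\eps_0}e^{h_0/\eps_0}\,e^{-h_0/\eps}
=C\,e^{-U(m)/\eps}\,e^{-(h_0-U(m))/\eps}.
\]
Since $h_0-U(m)>0$, the factor $e^{-(h_0-U(m))/\eps}$ decays faster than any power $\eps^N$. This gives $o(e^{-U(m)/\eps}\eps^N)$.

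No step is a real obstacle. The only point needing care is finiteness of $Z_{\eps_0}$, which is where the confinement hypothesis (WC1) enters. One should also note the normalization convention for $Z_\eps$: if it includes extra constants, they are absorbed harmlessly.
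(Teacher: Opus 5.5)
Your proposal is correct and follows the paper's proof essentially verbatim: the same factorization $e^{-H/\eps}=e^{-H/\eps_0}e^{-(1/\eps-1/\eps_0)H}$ on $\{H\ge h_0\}$, extension of the $\eps_0$-integral to all of $\R^{3d}$ to get $Z_{\eps_0}$, and the fixed gap $h_0-U(m)>0$ to beat every power of $\eps$. The one addition is your explicit check that $Z_{\eps_0}<\infty$ via the radial bound from \textup{(WC1)}; the paper takes this for granted, though it uses the same ray-integration argument in the proof of Proposition~\ref{prop:wc-regularized}.
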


\begin{proof}
On $\{H\ge h_0\}$,
\[
e^{-H/\eps}=e^{-H/\eps_0}
e^{-(1/\eps-1/\eps_0)H}
\le e^{-H/\eps_0}
e^{-(1/\eps-1/\eps_0)h_0}.
\]
Integrating over $H\geq h_{0}$ proves the first assertion because $Z_{\eps_0}<\infty$; the fixed
gap $h_0-U(m)>0$ dominates every power of $\eps$, which proves the second assertion.
\end{proof}

The local Laplace expansion, global tail bound, and committor
localization now combine to identify the numerator in the capacity--hitting
identity.

\begin{proposition}[Mass asymptotic near the metastable well]
\label{prop:mass-asymptotic}
As $\eps\downarrow0$,
\begin{equation}
\label{eq:mass-asymptotic}
\int_{\R^{3d}} h_\eps^*(z)\,\pi_\eps(dz)
=
[1+o_\eps(1)]
\frac{1}{Z_\eps}
\frac{(2\pi\eps)^{3d/2}}{L^d\sqrt{\det H_m}}
e^{-U(m)/\eps}.
\end{equation}
\end{proposition}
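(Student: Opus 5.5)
We split $\R^{3d}$ into three regions and estimate $\int h_\eps^*\,d\pi_\eps$ on each. Since $0\le h_\eps^*\le1$, the lower and upper bounds require different information. On the local well box $\mathcal W_m$ we need $h_\eps^*\to1$ uniformly. On $\{H\ge h_0\}$, with $h_0\in(U(m),U(\sigma))$ or larger, the trivial bound $h_\eps^*\le1$ together with Lemma~\ref{lem:global-gibbs-tail} already suffices. On the remaining low-energy region we need either smallness of the Gibbs weight or smallness of $h_\eps^*$. Dividing by $Z_\eps$ and comparing with Lemma~\ref{lem:local-laplace-well} then gives \eqref{eq:mass-asymptotic}.

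\textbf{Upper bound.} Fix $\delta>0$ small so that the energy level $U(\sigma)-\delta$ still separates $W_m$ from $W_s$. We decompose $\R^{3d}$ into the following pieces:
\begin{itemize}
\item $\mathcal W_m$;
\item $W_m\setminus\mathcal W_m$;
\item $W_s$;
\item $\{H\ge U(\sigma)\}$.
\end{itemize}
The pieces are handled as follows.
\begin{itemize}
\item On $\mathcal W_m$ we use $h_\eps^*\le1$ and Lemma~\ref{lem:local-laplace-well}.
\item On $W_m\setminus\mathcal W_m$, Lemma~\ref{lem:derived-double-well-geometry}, applied in $\theta$ together with the quadratic kinetic energy, gives $\inf H>U(m)+c$. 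The Gibbs mass of this piece is therefore $O(e^{-(U(m)+c)/\eps})$, which is negligible.
\item On $\{H\ge U(\sigma)\}$, Lemma~\ref{lem:global-gibbs-tail} shows the contribution is negligible, because $U(\sigma)>U(m)$.
\item On $W_s$ we cannot rely on the Gibbs weight when $U(s)<U(m)$, so here we must show that $h_\eps^*$ is exponentially small. This is the committor localization from the third-order Lee--Ramil--Seo estimate (Proposition~\ref{prop:third-order-lrs-committor-localization}). For $z\in W_s$ (or a sublevel slightly below $U(\sigma)$), the adjoint process must climb to energy near $U(\sigma)$ before reaching $A_\eps$. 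That proposition gives $h_\eps^*(z)\le C\eps^{-M_{\rm com}}e^{-(U(\sigma)-H(z))/\eps}$, or a comparable bound. Multiplying by $e^{-H/\eps}$ and integrating over $W_s$ yields $O(\eps^{-M}e^{-U(\sigma)/\eps}\,{\rm vol})$, which is $o(e^{-U(m)/\eps})$.
\end{itemize}

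\textbf{Lower bound.} We use $h_\eps^*\ge \inf_{\mathcal W_m}h_\eps^*$ on $\mathcal W_m$ and must show that this infimum is $1-o(1)$. The complement $1-h_\eps^*(z)=\Pbb_z^*(\tau_{S_\eps}<\tau_{A_\eps})$ for $z\in\mathcal W_m$ is controlled by the same committor localization applied to the reversed pair, or equivalently by the flipped committor via $\Theta_*$, since the adjoint is the momentum-flipped forward dynamics. Leaving $W_m$ costs $e^{-(U(\sigma)-U(m)-o(1))/\eps}$ in probability relative to the time needed to hit $A_\eps$. The sharper ingredient is that the adjoint process started in $\mathcal W_m$ reaches the shrinking ball $A_\eps=B(x_m,\eps)$ before escaping $W_m$ with probability tending to one.

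\textbf{Main obstacle.} The hard step is this last claim: hitting a ball of radius $\eps$ for a degenerate hypoelliptic diffusion whose noise acts only through the chain $r\to p\to\theta$. Our first approach is a local hypocoercive Lyapunov function near $x_m$, which keeps the process in a neighbourhood of size $\sqrt\eps$ for times that are polynomial in $1/\eps$. We would combine this with uniform third-order heat-kernel and Harnack lower bounds at anisotropic scales $t^{1/2},t^{3/2},t^{5/2}$. These bounds show that, from any point of $B(x_m,C\sqrt\eps)$, the ball $A_\eps$ is hit within time $O(\eps^{-k})$ with probability bounded below. Iterating these attempts gives success with probability $1-o(1)$ before the exponentially rare escape. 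This bound is the same intrawell input as Proposition~\ref{prop:local-flatness}, and we would establish it for both the forward and adjoint processes simultaneously.
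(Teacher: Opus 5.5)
Your proposal is correct and matches the paper's proof. The paper splits $\int h_\eps^*e^{-H/\eps}\,dz$ over $\{H\le U(\sigma)-\delta\}\cap W_m$, over $\{H\le U(\sigma)-\delta\}\cap W_s$, and over the complement, and it uses the same ingredients you do:
\begin{itemize}
\item the energy gap off $\mathcal W_m$ from Lemma~\ref{lem:derived-double-well-geometry};
\item Proposition~\ref{prop:third-order-lrs-committor-localization} on $W_s$;
\item Lemma~\ref{lem:global-gibbs-tail} at high energy;
\item Lemma~\ref{lem:local-laplace-well} on the well box.
\end{itemize}
Cutting at $H=U(\sigma)$ instead of $U(\sigma)-\delta$ is harmless. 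In either version you need $W=W_m\cup W_s$ for your four pieces to exhaust $\R^{3d}$. The paper proves this: every component of $W$ contains a local minimum of $H$, and the only such minima are $x_m$ and $x_s$.

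Your ``main obstacle'' is not an obstacle here. Proposition~\ref{prop:third-order-lrs-committor-localization} is stated for $h_\eps^*$ itself on $W_m$, so no reversed pair is needed. Because $\overline{\mathcal W_m}\subset W_m$, there is $\delta>0$ with $\sup_{\mathcal W_m}H\le U(\sigma)-\delta$. The proposition then gives $\sup_{\mathcal W_m}(1-h_\eps^*)\le C\eps^{-M_{\rm com}}e^{-\delta/\eps}\to0$ immediately, and this is exactly how the paper obtains the lower bound.

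The shrinking-ball hitting estimate you sketch is already built into the proof of that proposition. It comes from Lemmas~\ref{lem:local-hypocoercive-lyapunov}, \ref{lem:scaled-local-density-lower-bound}, \ref{lem:original-log-hit-shrinking-ball} and Proposition~\ref{prop:original-killed-exit-tail}, which feed the stopped $h$-transform of Lemma~\ref{lem:lrs-stopped-time-reversal}. It therefore belongs to the committor-localization chain, not to the Harnack and non-degeneracy argument behind Proposition~\ref{prop:local-flatness}.

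If you did want to prove the hitting estimate directly, two points in your sketch need fixing.
\begin{itemize}
\item The local Lyapunov function does not keep the process in $B(x_m,C\sqrt\eps)$ for polynomial times. It only returns the process there within logarithmic time, with probability bounded below.
\item The obvious martingale $e^{H(Z_t)/\eps-\gamma d t}$ makes escape from $W_m$ unlikely only over windows of length at most a small multiple of $1/\eps$. Your roughly $\eps^{-3d/2}$ hitting attempts would have to be restarted from low energy window by window. The paper avoids this bookkeeping by using time reversal and killed-density duality.
\end{itemize}
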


\begin{proof}
Since $\overline{\mathcal W_m}\subset W_m$ and $U(s)<U(\sigma)$, choose
$\delta>0$ such that
\[
\max\left\{U(s),\sup_{z\in\overline{\mathcal W_m}}H(z)\right\}
<U(\sigma)-\delta.
\]
Every connected component of $W=\{H<U(\sigma)\}$ contains a local minimum of
$H$: indeed, compactness of the sublevel sets makes the infimum on a component
attainable below $U(\sigma)$.  The local minima of $H$ are exactly $x_m$ and
$x_s$, and the communication geometry places them in distinct components.
Thus $W=W_m\cup W_s$.  Since
$A_\delta:=\{H\le U(\sigma)-\delta\}\subset W$, we may split
$\int_{\R^{3d}}h_\eps^*(z)e^{-H(z)/\eps}\,dz$
over $A_\delta\cap W_m$, $A_\delta\cap W_s$, and $A_\delta^c$.

On $A_\delta\cap W_m$ the Lee--Ramil--Seo
localization (Proposition~\ref{prop:third-order-lrs-committor-localization})
gives 
\[
1-h_\eps^*\le C\eps^{-M_{\rm com}}e^{-\delta/\eps}, 
\]
and $H\ge U(m)$ on $W_m$. Therefore,
\[
\int_{A_\delta\cap W_m}(1-h_\eps^*)e^{-H/\eps}\,dz
\le C|A_\delta|\eps^{-M_{\rm com}}e^{-(U(m)+\delta)/\eps}
=o\left(e^{-U(m)/\eps}\eps^{3d/2}\right).
\]
Moreover,
Lemma~\ref{lem:derived-double-well-geometry} and the positive kinetic terms in
$H$ give a constant $\kappa_m>0$ such that
\[
\inf_{(A_\delta\cap W_m)\setminus\mathcal W_m}H
\ge U(m)+\kappa_m.
\]
Because $A_\delta$ is compact, the contribution of this complement is also
$o(e^{-U(m)/\eps}\eps^{3d/2})$; explicitly, it is bounded by
$|A_\delta|e^{-(U(m)+\kappa_m)/\eps}$.

On $A_\delta\cap W_s$ the localization bounds
$h_\eps^* e^{-H/\eps}\le C\eps^{-M_{\rm com}}e^{-U(\sigma)/\eps}$, and on $A_\delta^c$
one uses $0\le h_\eps^*\le1$ and
Lemma~\ref{lem:global-gibbs-tail} with
$h_0=U(\sigma)-\delta>U(m)$; both are
$o(e^{-U(m)/\eps}\eps^{3d/2})$.
Hence, as $\eps\downarrow 0$,
\[
\int_{\R^{3d}}h_\eps^*(z)e^{-H(z)/\eps}\,dz
=[1+o_\eps(1)]\int_{\mathcal W_m}e^{-H(z)/\eps}\,dz,
\]
and
Lemma~\ref{lem:local-laplace-well} with the factor $Z_\eps^{-1}$ proves
\eqref{eq:mass-asymptotic}.
\end{proof}

\subsection{Intrawell Flatness}
\label{sec:intrawell-flatness}

\subsubsection*{Whole-space weak-capacity assumptions}
Appendix~\ref{app:weak-capacity-verification} verifies the regularization and
local-moment conditions used in the bounded-domain regularization argument, while
condition~\textup{(WC2)} in
Assumption~\ref{ass:growth-lyapunov} supplies the additional Lyapunov condition
required for outer exhaustion.  Together they
allow us to apply the whole-space weak-capacity theorem of
\cite{HeLiWangZhu2026WeakEquilibrium}, as summarized in
Subsection~\ref{sec:weak-capacity}.  In that work the required global
recurrence property follows from a Lyapunov finite-mean return estimate,
explicit controlled-path accessibility, and the strong Markov property.  The
H\"ormander bracket condition supplies the local hypoelliptic smoothness used
in Appendix~\ref{app:lyapunov} and the local well-density estimates developed
in Appendix~\ref{app:well-localization}.

\subsubsection*{Flatness mechanism}
Proposition~\ref{prop:local-flatness} is proved by comparing the mean hitting
time from $G_{\eps,\varrho}=B((m,0,0),\eps^\varrho)$ with that from $x_m$ after the
well scaling $z=x_m+\sqrt\eps y$.  The rescaled mean hitting time solves a
uniformly H\"ormander Poisson equation on every fixed $y$-ball.  A uniform
Harnack--regularity estimate
(Lemma~\ref{lem:uniform-local-harnack-holder}), together with the fixed-time
lower bound (Lemma~\ref{lem:nondegenerate-mean-transition-time})
\[
\liminf_{\eps\downarrow0}\E_{x_m}[\tau_{S_\eps}]>0,
\]
yields
\eqref{eq:local-flatness}; see
Lemma~\ref{lem:intrawell-flatness-reduction}.  This follows the flatness
argument of \cite[Proposition~2.10 and Section~6]{LeeRamilSeo2026}.
The local kernel estimates and the killed-process analysis are developed
in Appendix~\ref{app:well-localization}.

We record the intrawell flatness estimate; it is proved in
Appendix~\ref{app:lyapunov} from a uniform local Harnack--regularity estimate
and the elementary fixed-time lower bound above.

\begin{proposition}[Local flatness of the mean transition time]
\label{prop:local-flatness}
For each $\varrho\in(1/2,1]$ there are $\delta,\eps_0>0$ such that, for all
$\eps\in(0,\eps_0)$ and $z\in B((m,0,0),\eps^\varrho)$,
\begin{equation}
\label{eq:local-flatness}
\E_z[\tau_{S_\eps}]
=
\E_{(m,0,0)}[\tau_{S_\eps}]\left[1+\mathcal O(\eps^\delta)\right],
\end{equation}
and consequently
$\int_{\partial A_\eps}\E_z[\tau_{S_\eps}]\,\nu_\eps(dz)
=\E_{(m,0,0)}[\tau_{S_\eps}]\,[1+o_\eps(1)]$.
\end{proposition}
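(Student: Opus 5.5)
The plan is to follow the flatness mechanism described just before the statement, working with $u_\eps(z):=\E_z[\tau_{S_\eps}]$. Step one is to derive the equation for $u_\eps$. By Proposition~\ref{prop:basic-recurrence-weak-capacity-interface}, $\tau_{S_\eps}<\infty$ almost surely, and by hypoellipticity $u_\eps$ is smooth away from $\overline{S_\eps}$ and solves $-\calL_\eps u_\eps=1$ there. In particular this holds on a fixed neighbourhood $\mathcal W_m$ of $x_m$.

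Step two is the rescaling. Set $v_\eps(y):=u_\eps(x_m+\sqrt\eps\,y)$. Taylor expanding $\nabla U(m+\sqrt\eps\,y_\theta)=\sqrt\eps H_m y_\theta+\mathcal O(\eps|y|^2)$ gives
\[
-\widetilde{\calL}_\eps v_\eps=1,
\qquad
\widetilde{\calL}_\eps
=y_p\cdot\nabla_{y_\theta}
+\left(-L^{-1}H_my_\theta+\gamma y_r+\mathcal O(\sqrt\eps|y|^2)\right)\cdot\nabla_{y_p}
+(-\gamma y_p-\gamma y_r)\cdot\nabla_{y_r}
+\frac{\gamma}{L}\Delta_{y_r}.
\]
On every fixed ball $B(0,R)$, this is a family of H\"ormander operators whose coefficients converge smoothly to the linear Ornstein--Uhlenbeck-type operator at $x_m$. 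The bracket condition $\partial_{y_r}\to\partial_{y_p}\to\partial_{y_\theta}$ holds with constants uniform in $\eps$. The point $z\in B(x_m,\eps^\varrho)$ corresponds to $|y|\le\eps^{\varrho-1/2}\to0$; this is where $\varrho>1/2$ is used.

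Step three applies the uniform Harnack--regularity estimate, Lemma~\ref{lem:uniform-local-harnack-holder}. The function $v_\eps$ is positive and $1+\widetilde{\calL}_\eps$-type perturbations are harmless: $w_\eps:=v_\eps-v_\eps(0)$ solves $-\widetilde{\calL}_\eps w_\eps=1$. I would split $v_\eps=v_\eps^{\rm h}+v_\eps^{\rm p}$ on $B(0,2)$. Here $v_\eps^{\rm p}$ is a bounded particular solution of $-\widetilde{\calL}_\eps v^{\rm p}=1$, for instance the mean exit time of $B(0,2)$, which is $\mathcal O(1)$ uniformly. The remainder $v_\eps^{\rm h}$ is $\widetilde{\calL}_\eps$-harmonic and positive up to $\mathcal O(1)$. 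The Harnack inequality gives $\sup_{B(0,1)}v^{\rm h}_\eps\le C\inf_{B(0,1)}v^{\rm h}_\eps\le Cv_\eps(0)+C$. Hypoelliptic H\"older regularity in the anisotropic (homogeneous) metric with exponent $\alpha>0$ then yields
\[
|v_\eps(y)-v_\eps(0)|\le C|y|^{\alpha'}\bigl(v_\eps(0)+1\bigr)+C|y|^{\alpha'},
\]
where $\alpha'>0$ accounts for the conversion from the anisotropic metric (scales $t^{1/2},t^{3/2},t^{5/2}$) to the Euclidean one. With $|y|\le\eps^{\varrho-1/2}$ this gives
\[
|u_\eps(z)-u_\eps(x_m)|\le C\eps^{\delta}\bigl(u_\eps(x_m)+1\bigr),
\qquad \delta=\alpha'(\varrho-1/2).
\]

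Step four converts this to a relative error. Lemma~\ref{lem:nondegenerate-mean-transition-time} gives $\liminf_{\eps\downarrow0}u_\eps(x_m)>0$, so $1\le Cu_\eps(x_m)$ for small $\eps$, which yields \eqref{eq:local-flatness}. For the consequence, $\nu_\eps$ is a probability measure on $\partial A_\eps=\partial B(x_m,\eps)\subset\overline{G_{\eps,1}}$ (Proposition~\ref{prop:hitting-identity}). The case $\varrho=1$, applied to the closed ball or by continuity of $u_\eps$, then shows that integrating \eqref{eq:local-flatness} against $\nu_\eps$ gives the claim.

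The main obstacle is the uniformity in Step three. The Harnack and H\"older constants must not degenerate as $\eps\to0$, even though the drift contains the unbounded growth terms $\mathcal O(\sqrt\eps|y|^2)$; on fixed balls these are harmless. The difficulty is to obtain a Harnack inequality for nonnegative solutions of inhomogeneous equations for third-order (step-three Kolmogorov) operators with constants depending only on finitely many $C^k$ norms of the coefficients. I would try to get this from the Kolmogorov-group dilation and translation invariance of the limiting operator plus perturbation, following \cite[Section~6]{LeeRamilSeo2026}; the paper packages exactly this as Lemma~\ref{lem:uniform-local-harnack-holder}. It is also necessary that $S_\eps$ stays away from the rescaled ball, which holds since $S_\eps$ lies in a fixed neighbourhood of $x_s$.
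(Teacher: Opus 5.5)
Your proposal follows the paper's proof in its core steps, but Step~1 contains a real gap. Almost sure finiteness of $\tau_{S_\eps}$ (Proposition~\ref{prop:basic-recurrence-weak-capacity-interface}) does not imply $u_\eps(z)=\E_z[\tau_{S_\eps}]<\infty$. Hypoellipticity can only be invoked once $u_\eps$ is known to be locally integrable and to satisfy $\calL_\eps u_\eps=-1$ in the distributional sense. Without this you have no finite, smooth, nonnegative solution near $x_m$ to feed into Lemma~\ref{lem:uniform-local-harnack-holder}.

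The paper devotes Step~1 of Lemma~\ref{lem:intrawell-flatness-reduction} to exactly this point:
\begin{itemize}
\item The identity \eqref{eq:hitting-identity} gives $\int_{\partial A_\eps}u_\eps\,d\nu_\eps<\infty$, hence some $z_\eps^\partial\in\partial A_\eps$ with $u_\eps(z_\eps^\partial)<\infty$.
\item The truncated means $u_{\eps,R}(z)=\E_z[\tau_{S_\eps}\wedge\tau_{\partial\mathscr D_R}]$ are bounded and solve the Poisson equation near $x_m$.
\item A fixed-$\eps$ Bony Harnack chain, run in the forward admissible direction from $z_\eps^\partial$ into a fixed neighbourhood $\mathscr U_m$ of $x_m$, gives $\sup_{\mathscr U_m}u_{\eps,R}\le C_\eps\bigl(u_{\eps,R}(z_\eps^\partial)+1\bigr)$ uniformly in $R$.
\item Since $u_{\eps,R}\uparrow u_\eps$, dominated convergence passes the distributional equation to the limit, and hypoellipticity then gives $u_\eps\in C^\infty(\mathscr U_m)$.
\end{itemize}
You need to supply some argument of this kind. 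An alternative would be Lyapunov return combined with a uniform positive probability of reaching $S_\eps$ from the return set.

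Once that is in place, the rest of your proposal is the paper's argument. You rescale at scale $\sqrt\eps$ around $x_m$, apply Lemma~\ref{lem:uniform-local-harnack-holder}, absorb the source term using $\E_{x_m}[\tau_{S_\eps}]\ge c_*$ from Lemma~\ref{lem:nondegenerate-mean-transition-time}, and restrict to $|y|\le\eps^{\varrho-1/2}$. Your closed-ball/continuity handling of $\partial A_\eps$ is fine; the paper instead takes $\varrho<1$.

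Your particular/harmonic splitting is unnecessary. That lemma is already stated for nonnegative solutions of $\mathscr L_\eps v=-a$ with $a\ge0$; its proof removes the source by the time lift $w=v+a(t+T)$. It also yields a gradient bound on $B_1$, so you get $\delta=\varrho-\tfrac12$ directly rather than through an anisotropic H\"older exponent.

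If you keep the splitting, note that Bony's Harnack inequality is one-sided in general: it bounds values on the propagation set of a point by the value at that point. The form you actually use, $\sup_{B_2}(\cdot)\le C\,(\cdot)(0)$, comes from controllability of the linearization and is what the lemma states. Apply it to the nonnegative function $v^{\rm h}_\eps+C$ on the lemma's ball $B_R$ with $R>4$, not to $v^{\rm h}_\eps$ itself.
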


\begin{proof}
The estimate \eqref{eq:local-flatness} is proved in
Appendix~\ref{app:lyapunov}.  To deduce the boundary-average conclusion, fix
$\varrho\in(1/2,1)$.  Since $\eps<\eps^\varrho$ for all sufficiently small
$\eps$, one has
\[
\partial A_\eps\subset B\left((m,0,0),\eps^\varrho\right).
\]
The error in \eqref{eq:local-flatness} is uniform on this ball.  Integrating
it against the probability measure $\nu_\eps$ on $\partial A_\eps$ proves the
stated boundary-average asymptotic.
\end{proof}

\subsection{Proof of the Main Theorem}
\label{sec:proof-main}

\begin{proof}[Proof of Theorem~\ref{thm:EK}]
We divide the proof into four steps.

\emph{Step 1 (capacity ratio).}  The weak capacity--hitting identity
\eqref{eq:hitting-identity} implies that
\[
\int_{\partial A_\eps} \E_z[\tau_{S_\eps}]\,\nu_\eps(dz)
=
\frac{\int_{\R^{3d}} h_\eps^*(z)\,\pi_\eps(dz)}{\Cap_\eps(A_\eps,S_\eps)}.
\]

\emph{Step 2 (numerator).}  Proposition~\ref{prop:mass-asymptotic} gives
\[
\int_{\R^{3d}}h_\eps^*\,d\pi_\eps
=\left[1+o_\eps(1)\right]
\frac{(2\pi\eps)^{3d/2}}{Z_\eps L^d\sqrt{\det H_m}}
e^{-U(m)/\eps},\qquad\text{as $\eps\downarrow 0$}.
\]

\emph{Step 3 (denominator).}  Proposition~\ref{prop:capacity-asymptotic}
evaluates the denominator by the Gaussian saddle current at $\sigma$:
\[
\Cap_\eps(A_\eps,S_\eps)
=\left[1+o_\eps(1)\right]
\frac{(2\pi\eps)^{3d/2}}{Z_\eps L^d}
\frac{\mu_\sigma}{2\pi\sqrt{-\det H_\sigma}}
e^{-U(\sigma)/\eps},\qquad\text{as $\eps\downarrow 0$}.
\]
The denominator is positive, so the quotient of the two relative error terms
is again $1+o_\eps(1)$.  Dividing and cancelling the common factor
$Z_\eps^{-1}(2\pi\eps)^{3d/2}L^{-d}$ gives
\[
\int_{\partial A_\eps} \E_z[\tau_{S_\eps}]\,\nu_\eps(dz)
=[1+o_\eps(1)]
\frac{2\pi}{\mu_\sigma}
\sqrt{\frac{-\det H_\sigma}{\det H_m}}
\exp\!\left(\frac{U(\sigma)-U(m)}{\eps}\right),\qquad\text{as $\eps\downarrow 0$}.
\]

\emph{Step 4 (flatness).}  Proposition~\ref{prop:local-flatness} gives
\[
\int_{\partial A_\eps}\E_z[\tau_{S_\eps}]\,\nu_\eps(dz)
=\E_{x_m}[\tau_{S_\eps}]\,[1+o_\eps(1)],\qquad\text{as $\eps\downarrow 0$}.
\]
Substituting this identity into the result of Step~3 proves
\eqref{eq:EK-main}.
\end{proof}

\section{Numerical Illustration}
\label{sec:numerics}

Consider the one-dimensional double well
\begin{equation}
\label{eq:numerical-potential}
U(\theta)=\frac14(\theta^2-1)^2,
\qquad m=-1,\quad s=1,\quad \sigma=0,
\end{equation}
and set $L=\gamma=1$.  Then $U(\sigma)-U(m)=1/4$,
$U''(m)=2$, and $U''(\sigma)=-1$.  The third-order and matched
underdamped unstable rates are
\[
\mu=0.7549,
\qquad
\nu=\frac{\sqrt5-1}{2}=0.6180,
\]
and hence
\begin{equation}
\label{eq:numerical-theory-constants}
C_3=\frac{2\pi}{\mu\sqrt2}=5.8856,
\qquad
C_2=\frac{2\pi}{\nu\sqrt2}=7.1887,
\qquad
\frac{C_3}{C_2}=\frac{\nu}{\mu}=0.8187.
\end{equation}

We simulated both diffusions from their lifted left minima using symmetric
splitting schemes with exact Ornstein--Uhlenbeck substeps.  The target was
$\{\theta\ge0.9\}$, the time step was $\Delta t=0.005$, and each model and
temperature used $20{,}000$ independent trajectories.  Figure
\ref{fig:numerical-prefactor-position} shows the Arrhenius plot, normalized
prefactors, and ratio of the two mean transition times.

\begin{figure}[t]
\centering
\includegraphics[width=\linewidth]{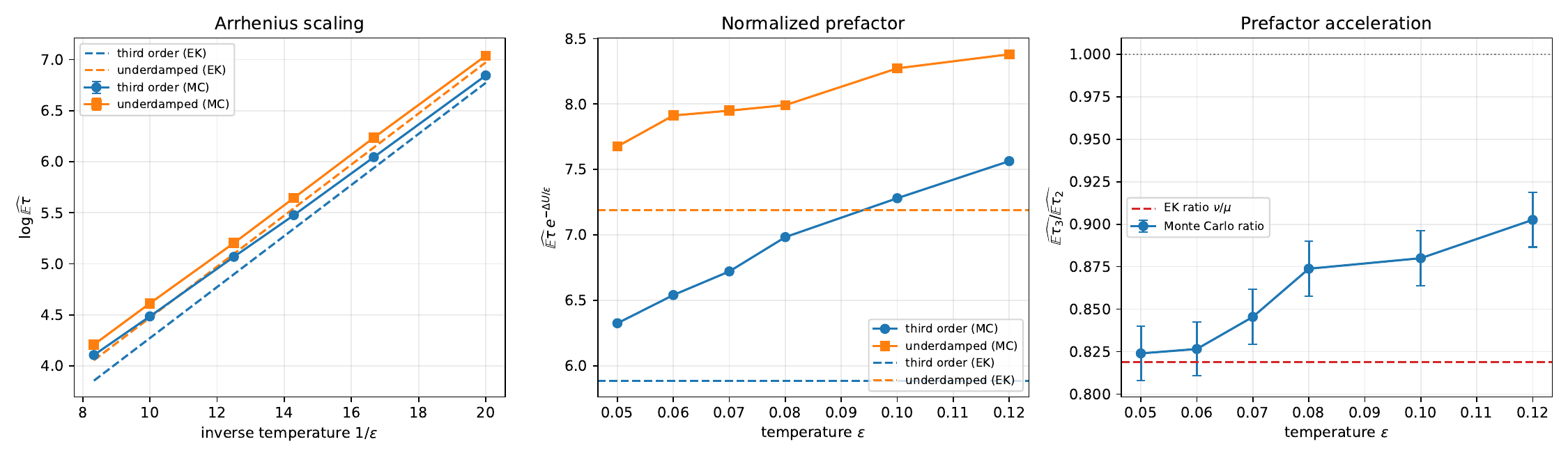}
\caption{Fixed right-well target.  Left: logarithms of the sample mean
transition times against $1/\eps$.  Center: normalized prefactors
$\overline\tau_j e^{-1/(4\eps)}$.  Right: ratio of the sample means.  Dashed
lines show the theoretical values in \eqref{eq:numerical-theory-constants};
error bars are pointwise $95\%$ confidence intervals.}
\label{fig:numerical-prefactor-position}
\end{figure}

The empirical ratio decreases toward $\nu/\mu$: it is
$0.827\;(0.811,0.842)$ at $\eps=0.06$ and
$0.824\;(0.808,0.840)$ at $\eps=0.05$, and both confidence intervals contain
the theoretical ratio $0.819$.  The full table, splitting formulas,
estimators, and step-size checks are given in Appendix~\ref{app:prefactor-numerics}.

\section{Conclusion}
\label{sec:conclude}

We established an Eyring--Kramers law for the hypoelliptic third-order
Langevin diffusion under double-well communication geometry.  For the
directed transition from $m$ to $s$, the mean hitting time has Arrhenius
factor
\[
\exp\!\left(\frac{U(\sigma)-U(m)}{\eps}\right),
\]
and sharp prefactor
\[
\frac{2\pi}{\mu_\sigma}
\sqrt{\frac{-\det H_\sigma}{\det H_m}},
\]
where $\mu_\sigma$ is the unique positive unstable rate of the third-order
linearization at the saddle, equivalently the positive root of the cubic
equation \eqref{eq:cubic}.  The result applies to either ordering of the two
well depths.
The proof combines the whole-space weak-capacity identity of
\cite{HeLiWangZhu2026WeakEquilibrium} with a saddle-adapted global test
function, an explicit Gaussian current calculation, Hamiltonian-sublevel
committor localization, and intrawell flatness.  The third-order propagation
chain $r\to p\to\theta$ appears both in the cubic saddle algebra and in the
anisotropic short-time scales underlying the local kernel and Harnack
estimates.  Under matched kinetic normalization, the third-order unstable
rate is larger than its underdamped counterpart, and hence its leading
metastable prefactor is strictly smaller.  The double-well simulations
illustrate the common Arrhenius exponent and this prefactor comparison.

\clearpage
\appendix

\section{Technical Verification for the Weak-Capacity Interface}
\label{app:weak-capacity-verification}
We record the auxiliary elliptic regularizations used by the
weak-capacity framework.  For
$\varsigma\in\{+1,-1\}$ and $0\le\delta\le1$, let
$Z_t^{\eps,\delta,\varsigma}
=\left(\theta_t^{\eps,\delta,\varsigma},p_t^{\eps,\delta,\varsigma},
r_t^{\eps,\delta,\varsigma}\right)$ solve
\begin{equation}\label{eq:ek-regularized-sde}
\begin{cases}
d\theta_t^{\eps,\delta,\varsigma}
=\left(\varsigma p_t^{\eps,\delta,\varsigma}
-\delta\nabla U\left(\theta_t^{\eps,\delta,\varsigma}\right)\right)dt
+\sqrt{2\eps\delta}\,dW_t^\theta,\\[1mm]
dp_t^{\eps,\delta,\varsigma}
=\left(-\varsigma L^{-1}\nabla U\left(\theta_t^{\eps,\delta,\varsigma}\right)
+\varsigma\gamma r_t^{\eps,\delta,\varsigma}
-\delta Lp_t^{\eps,\delta,\varsigma}\right)dt
+\sqrt{2\eps\delta}\,dW_t^p,\\[1mm]
dr_t^{\eps,\delta,\varsigma}
=\left(-\varsigma\gamma p_t^{\eps,\delta,\varsigma}
-\gamma r_t^{\eps,\delta,\varsigma}\right)dt
+\sqrt{2\gamma\eps/L}\,dB_t,
\end{cases}
\end{equation}
where $W^\theta,W^p,B$ are independent standard $d$-dimensional Brownian
motions.  We write
\[
    Z^{\eps,\delta}:=Z^{\eps,\delta,+1},
    \qquad
    Z^{\eps,\delta,*}:=Z^{\eps,\delta,-1},
\]
for the forward and adjoint regularized processes.  Their expectations when
started from $z$ are denoted by $\E_z^{\eps,\delta}$ and
$\E_z^{\eps,\delta,*}$, respectively.  Their generators are, respectively,
\begin{align}
\calL_{\eps,\delta}
&:=\calL_\eps
+\delta\left(\eps\Delta_\theta
-\nabla U(\theta)\cdot\nabla_\theta\right)
+\delta\left(\eps\Delta_p-Lp\cdot\nabla_p\right),
\label{eq:ek-forward-regularized-generator}\\
\calL_{\eps,\delta}^*
&:=\calL_\eps^*
+\delta\left(\eps\Delta_\theta
-\nabla U(\theta)\cdot\nabla_\theta\right)
+\delta\left(\eps\Delta_p-Lp\cdot\nabla_p\right),
\label{eq:ek-adjoint-regularized-generator}
\end{align}
where the second operator is the $L^2(\pi_\eps)$-adjoint of the first.
Equivalently, with
$\calL_{\eps,\delta}^{(+)}:=\calL_{\eps,\delta}$ and
$\calL_{\eps,\delta}^{(-)}:=\calL_{\eps,\delta}^*$,
\begin{equation}\label{eq:ek-signed-regularized-generator}
\begin{aligned}
\calL_{\eps,\delta}^{(\varsigma)}f
=&(\varsigma p-\delta\nabla U(\theta))\cdot\nabla_\theta f+\left(-\varsigma L^{-1}\nabla U(\theta)
    +\varsigma\gamma r-\delta Lp\right)\cdot\nabla_p f\\
&\qquad+(-\varsigma\gamma p-\gamma r)\cdot\nabla_r f
+\eps\delta(\Delta_\theta+\Delta_p)f
+\frac{\gamma\eps}{L}\Delta_r f .
\end{aligned}
\end{equation}

\begin{proposition}[Regularized well-posedness and local moments]
\label{prop:wc-regularized}
Suppose that $U\in C^2(\R^d)$ is non-negative with compact sublevel sets and
that, for some $c_0,R_0,\beta>0$,
\[
\theta\cdot\nabla U(\theta)
\ge c_0\left(|\theta|^2+U(\theta)\right),
\qquad |\theta|\ge R_0,
\]
and
\[
\liminf_{|\theta|\to\infty}
\left(|\nabla U(\theta)|-\beta\Delta U(\theta)\right)>0.
\]
For every fixed $\eps>0$ and $0\le\delta\le1$, the forward and adjoint
stochastic differential equations in \eqref{eq:ek-regularized-sde} then admit
unique non-explosive strong solutions.
Moreover, for every $T<\infty$, $q\ge1$, and compact
$K\subset\R^{3d}$,
\[
\sup_{0\le\delta\le1}\sup_{z\in K}
\E_z^{\eps,\delta}\left[
\sup_{0\le t\le T}\left(1+\left|Z_t^{\eps,\delta}\right|\right)^q
\right]<\infty,
\]
and
\[
\sup_{0\le\delta\le1}\sup_{z\in K}
\E_z^{\eps,\delta,*}\left[
\sup_{0\le t\le T}\left(1+\left|Z_t^{\eps,\delta,*}\right|\right)^q
\right]<\infty.
\]
The endpoint $\delta=0$ denotes the original degenerate forward and adjoint
processes.  Consequently, the regularization/local-moment hypothesis used in
the bounded-domain weak-capacity construction is a consequence of the
corresponding parts of \textup{(WC1)}.
\end{proposition}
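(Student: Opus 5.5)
We plan to prove Proposition~\ref{prop:wc-regularized} by a Lyapunov argument that is uniform in $\delta\in[0,1]$ and in $\varsigma\in\{\pm1\}$. The coefficients of \eqref{eq:ek-regularized-sde} are $C^1$, hence locally Lipschitz, so pathwise uniqueness and a local strong solution up to the explosion time $\zeta$ follow from the standard localization argument. Everything else rests on one function $V$ that satisfies $V\ge c(1+|z|)^{q_0}$ for some $q_0>0$ and
\[
\calL^{(\varsigma)}_{\eps,\delta}V\le C_\eps V
\]
with $C_\eps$ independent of $\delta$ and $\varsigma$. We take $V=e^{\kappa\tilde H}$ with a small $\kappa>0$, or a large power of $\tilde H$, where
\[
\tilde H = 1+H+a\,\theta\cdot p
\]
and $a>0$ is a small constant. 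This is the usual hypocoercive cross term, needed because the $\delta=0$ dynamics only dissipates in $r$.

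The first step computes $\calL^{(\varsigma)}_{\eps,\delta}H$ from \eqref{eq:ek-signed-regularized-generator}. The Hamiltonian part cancels exactly for either sign, because the skew terms are built to preserve $H$. The $r$-friction contributes $-\gamma L|r|^2+\gamma\eps d$. The regularization contributes
\[
-\delta|\nabla U|^2+\eps\delta\Delta U-\delta L^2|p|^2+\eps\delta Ld .
\]
The term $\eps\delta\Delta U$ is the Itô correction that \textup{(WC1)} is designed to absorb. Outside a compact set, the one-sided Laplacian condition gives $\Delta U\le \beta^{-1}|\nabla U|$. Hence $\eps\delta\Delta U\le \tfrac{\delta}{2}|\nabla U|^2+C\delta$ wherever $|\nabla U|$ is large, and on a compact set the term is bounded. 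So $\calL H\le C$ uniformly in $\delta$. The second step adds the cross term. Here
\[
\calL(\theta\cdot p)=\varsigma|p|^2-\varsigma L^{-1}\theta\cdot\nabla U+\varsigma\gamma\theta\cdot r-\delta(\nabla U\cdot p+L\theta\cdot p).
\]
For $\varsigma=+1$, the coercivity condition $\theta\cdot\nabla U\ge c_0(|\theta|^2+U)$ yields the dissipation $-a c_0L^{-1}(|\theta|^2+U)$. The remaining terms, $a|p|^2$, $a\gamma\theta\cdot r$, and the $\delta$-terms, are absorbed by Young's inequality once $a$ is small. The $\delta$-terms use $-\delta|\nabla U|^2-\delta L^2|p|^2$ from the first step. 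For $\varsigma=-1$ we use $-a\,\theta\cdot p$ instead.

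This produces $\calL\tilde H\le C$, which gives $\E\tilde H(Z_{t\wedge\zeta_R})\le\tilde H(z)+Ct$. Because $\tilde H$ is comparable to $1+U+|p|^2+|r|^2$ and tends to infinity with $|z|$, non-explosion follows for all $\delta\in[0,1]$, including the degenerate endpoint $\delta=0$.

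For the moment bound with the supremum inside the expectation, we apply Itô's formula to $\tilde H^{k}$ and use Burkholder--Davis--Gundy. The martingale part has quadratic variation
\[
\int k^2\tilde H^{2k-2}\bigl(2\eps\delta|\nabla_\theta\tilde H|^2+2\eps\delta|\nabla_p\tilde H|^2+\tfrac{2\gamma\eps}{L}|\nabla_r\tilde H|^2\bigr)\,dt .
\]
The $r$ and $p$ gradients are of size $\tilde H^{1/2}$. The $\theta$ gradient contains $\nabla U$, which grows only polynomially, and the estimate still closes via the dissipative term $\delta|\nabla U|^2$ in the drift. The main obstacle is the third step: keeping every constant independent of $\delta$, so that $\delta|\nabla U|^2$ simultaneously controls the Itô correction and the BDG term without any factor $\delta^{-1}$. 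Once that holds, Gronwall's inequality gives $\sup_{\delta}\sup_{z\in K}\E\sup_{t\le T}\tilde H^k<\infty$. Finally, $|z|^q\lesssim\tilde H^{k}$ for $k$ large, using $|\theta|^2\lesssim U$ outside a compact set (which follows from the coercivity condition integrated along rays). This gives both displayed bounds.
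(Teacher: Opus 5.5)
Your core argument is the paper's proof. You compute $\calL^{(\varsigma)}_{\eps,\delta}H$ and absorb the It\^o correction $\eps\delta\Delta U$ by the one-sided Laplacian bound and Young's inequality. You then apply It\^o, Burkholder--Davis--Gundy and Gr\"onwall to powers of the Lyapunov function, using the retained dissipation, and finally get $U\ge c|\theta|^2-C$ along rays.

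The step that fails is the cross term. For $\varsigma=+1$, $\calL^{(+)}_{\eps,\delta}(a\,\theta\cdot p)$ contains $+a|p|^2$, and your claim that Young's inequality absorbs it for small $a$ is false. At $\delta=0$ the only negative term in $\calL H$ is $-\gamma L|r|^2$. So with $\theta$ bounded, $r=0$ and $|p|\to\infty$, one gets $\calL\tilde H\to+\infty$. For $\delta>0$, the term $-\delta L^2|p|^2$ absorbs $a|p|^2$ only when $a\lesssim\delta L^2$, which is not uniform in $\delta$. The underdamped cross term works because friction acts on $p$; in the chain $r\to p\to\theta$ friction acts only on $r$, so $\theta\cdot p$ leaves an unabsorbed $|p|^2$ rather than producing dissipation. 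Hence $\calL\tilde H\le C$ is false; only $\calL\tilde H\le C\tilde H$ holds. That weaker bound still closes with $V=\tilde H^k$ on $[0,T]$. It rules out your alternative $V=e^{\kappa\tilde H}$, however, since $\kappa e^{\kappa\tilde H}a|p|^2$ is not bounded by $Ce^{\kappa\tilde H}$.

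The cross term is also unnecessary, so the simplest repair is $a=0$. Non-explosion and moments on a finite horizon need only a coercive $V$ with $\calL V\le CV$, not dissipation in every variable. $H$ is already coercive, and your first step actually gives
\[
\calL^{(\varsigma)}_{\eps,\delta}H\le C_\eps-\tfrac{\delta}{2}|\nabla U|^2-\delta L^2|p|^2-\gamma L|r|^2
\]
uniformly in $\delta$ and $\varsigma$. Keep these negative terms rather than discarding them as in ``$\calL\tilde H\le C$''. With $S=1+H$ and $V_m=S^m$, two things then happen:
\begin{itemize}
\item The chain-rule term $m(m-1)\eps S^{m-2}(\delta|\nabla U|^2+\delta L^2|p|^2+\gamma L|r|^2)$ is absorbed by $mS^{m-1}$ times that dissipation wherever $S\ge 4(m-1)\eps+1$.
\item The BDG quadratic variation $2m^2\eps S^{2m-2}(\delta|\nabla U|^2+\delta L^2|p|^2+\gamma L|r|^2)$ carries exactly the same factors $\delta$ and $\gamma$ as the integrated dissipation.
\end{itemize}
So your ``main obstacle'' disappears with no factor $\delta^{-1}$; this is exactly the paper's argument.
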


\begin{proof}
Set $S=1+H$, where $H$ is defined in \eqref{eq:Hamiltonian}.  The
antisymmetric Hamiltonian part of either signed generator annihilates $H$.
Hence a direct computation from
\eqref{eq:ek-signed-regularized-generator} gives, for
$\varsigma\in\{+1,-1\}$,
\begin{equation}
\label{eq:ek-regularized-H-drift}
\calL_{\eps,\delta}^{(\varsigma)}H
=-\gamma L|r|^2-\delta|\nabla U(\theta)|^2
  -\delta L^2|p|^2
  +\gamma d\eps+\delta\eps\Delta U(\theta)+\delta\eps Ld.
\end{equation}
The one-sided Laplacian control above and $C^2$-regularity on compact sets
imply the global bound
\begin{equation}
\label{eq:ek-laplacian-gradient-bound}
\Delta U(\theta)\le \beta^{-1}|\nabla U(\theta)|+C_\beta,
\qquad \theta\in\R^d.
\end{equation}
Young's inequality therefore yields, for fixed $\eps>0$,
\begin{equation}
\label{eq:ek-gradient-trace-absorption}
\eps\Delta U(\theta)-|\nabla U(\theta)|^2
\le-\frac12|\nabla U(\theta)|^2+C_{\eps,\beta}.
\end{equation}

For an integer $m\ge1$, let $V_m=S^m$.  The chain rule and
\eqref{eq:ek-regularized-H-drift} give
\begin{align}
\calL_{\eps,\delta}^{(\varsigma)}V_m
=mS^{m-1}\calL_{\eps,\delta}^{(\varsigma)}H
+m(m-1)\eps S^{m-2}
\left(\delta|\nabla U|^2+\delta L^2|p|^2+\gamma L|r|^2\right).
\label{eq:ek-regularized-Vm-chain}
\end{align}
On the set $S\ge 4(m-1)\eps+1$, the second line is absorbed by the three
negative quadratic terms in the first line, after also using
\eqref{eq:ek-gradient-trace-absorption}.  The complementary set is compact,
because $H$ has compact sublevel sets, and all the coefficients in
\eqref{eq:ek-regularized-Vm-chain} are bounded there.  Consequently, there
are constants $C_{m,\eps}<\infty$ and $c_{m,\eps}>0$, independent of
$\delta\in[0,1]$ and $\varsigma\in\{+1,-1\}$, such that
\begin{equation}
\label{eq:ek-regularized-Vm-drift}
\calL_{\eps,\delta}^{(\varsigma)}V_m
\le C_{m,\eps}V_m
-c_{m,\eps}S^{m-1}
\left(\delta|\nabla U|^2+\delta L^2|p|^2+\gamma L|r|^2\right)
\end{equation}
outside a fixed compact set; enlarging $C_{m,\eps}$ makes the corresponding
upper bound valid on all of $\R^{3d}$.

Since $U\in C^2$, the coefficients of
\eqref{eq:ek-regularized-sde} are locally Lipschitz, so there is a unique
strong solution up to its explosion time.  Apply It\^o's formula to $V_m$
up to the exit time from a phase-space ball.  Estimate
\eqref{eq:ek-regularized-Vm-drift}, followed by localization and Gr\"onwall's
inequality, excludes explosion uniformly in $\delta$ and in the two signs.
The quadratic variation of the martingale term is controlled by
\[
2m^2\eps S^{2m-2}
\left(\delta|\nabla U|^2+\delta L^2|p|^2+\gamma L|r|^2\right)dt.
\]
The dissipative term in \eqref{eq:ek-regularized-Vm-drift}, together with the
Burkholder--Davis--Gundy and Young inequalities, therefore gives, with
$\E_z^{\eps,\delta,+1}:=\E_z^{\eps,\delta}$ and
$\E_z^{\eps,\delta,-1}:=\E_z^{\eps,\delta,*}$,
\begin{equation}
\label{eq:ek-regularized-Vm-moment}
\sup_{\substack{0\le\delta\le1,\ \varsigma\in\{+1,-1\}\\z\in K}}
\E_z^{\eps,\delta,\varsigma}
\left[\sup_{0\le t\le T}V_m
\left(Z_t^{\eps,\delta,\varsigma}\right)\right]
\le C_{m,T,K,\eps}.
\end{equation}
Finally, the radial bound above, integrated along rays, implies
$U(\theta)\ge c|\theta|^2-C$.  Thus $S$ controls
$1+|\theta|^2+|p|^2+|r|^2$.  Taking $m$ sufficiently large in
\eqref{eq:ek-regularized-Vm-moment} proves both asserted moment bounds.
\end{proof}

\begin{remark}[Global derivative growth and localization]
\label{rem:global-derivative-localization}
Proposition~\ref{prop:wc-regularized} follows from $C^2$ regularity and the
confinement and one-sided Laplacian conditions in its statement.  The polynomial growth of
higher force derivatives in \textup{(WC1)} enters through the hypotheses of
the cited weak-capacity theorem.  Locally, the density argument uses a finite
number of coefficient derivatives after a chain-compatible cutoff on a
bounded region; see Lemma~\ref{lem:finite-order-pigato}.  The finite-order
Malliavin estimates and uniform hypoelliptic regularity argument use local
smoothness above order two.  Thus the global polynomial-growth condition
serves the weak-capacity interface, while the local higher regularity serves
the density and Harnack estimates.
\end{remark}

\begin{proof}[Proof of Lemma~\ref{lem:derived-double-well-geometry}]
The second assertion follows immediately from
$\sigma\in\partial\Omega_m\cap\partial\Omega_s$.  Fix $i\in\{m,s\}$.  First,
$U\ge U(i)$ on $\Omega_i$: otherwise compactness of the sublevel sets would
give an interior minimizer of $U$ in $\Omega_i$ with value below $U(i)$,
contradicting $\operatorname{Crit}(U)\cap\Omega_i=\{i\}$.  If
\eqref{eq:derived-no-low-energy-plateau} failed, there would be
$\theta_n\in\Omega_i\setminus N_i$ with $U(\theta_n)\to U(i)$.  Compactness of
a fixed sublevel set gives, along a subsequence, $\theta_n\to\theta_*$.  Since
$U(\theta_*)=U(i)<U(\sigma)$, the limit belongs to $\Omega_i$; since $N_i$ is
open, it does not belong to $N_i$.  The preceding lower bound makes
$\theta_*$ an interior minimizer of $U$ in $\Omega_i$, hence a critical point.
Thus $\theta_*=i$, contradicting $i\in N_i$.
\end{proof}

\section{Local Saddle Analysis}
\label{app:local-saddle-analysis}

\subsection{Saddle Linear Algebra}
\label{app:linear-algebra}

We record the explicit computations behind Subsection~\ref{sec:saddle-reduction}.
In the unstable configuration direction $e_\sigma$ the linearized deterministic
dynamics is $\dot X_1=\mathsf A_\sigma X_1$, with
$X_1=(q_1,v_1,w_1)^\top$ and $\mathsf A_\sigma$ as in
Lemma~\ref{lem:saddle-characteristic-polynomial}; a direct evaluation of
$\det(\mu I_{3}-\mathsf A_\sigma)$ gives
\[
\chi_\sigma(\mu)=\mu^3+\gamma\mu^2+(\gamma^2-a_\sigma)\mu-\gamma a_\sigma,
\]
with
$a_\sigma=\lambda_\sigma/L>0$.
After omitting a possible zero coefficient, the coefficient sequence has
exactly one sign change, so Descartes' rule implies that there is at most one
positive root; since $\chi_\sigma(0)=-\gamma a_\sigma<0$ and
$\chi_\sigma(+\infty)=+\infty$, there is exactly one, namely $\mu_\sigma$.
Since $\chi_\sigma(\mu_\sigma)=0$, polynomial division implies that
\[
\chi_\sigma(x)
=(x-\mu_\sigma)
\left(x^2+(\gamma+\mu_\sigma)x
+\frac{\gamma a_\sigma}{\mu_\sigma}\right).
\]
Both coefficients of the quadratic factor are strictly positive.  If its
roots are real, their negative sum and positive product show that both are
negative; if they are non-real, their common real part is
$-(\gamma+\mu_\sigma)/2<0$.  Hence the unstable block has exactly one
eigenvalue in the open right half-plane, while its other two eigenvalues lie
in the open left half-plane.

To construct the unstable coordinate used in the saddle profile
and current, we record compatible right and left eigenvectors of the unstable
block.

\begin{proposition}[Right and left unstable eigenvectors]
\label{prop:saddle-eigenvectors}
Let $a_\sigma=\lambda_\sigma/L$ and let $\mu_\sigma>0$ be the unique positive
root of $\chi_\sigma$.  A right eigenvector of $\mathsf A_\sigma$ for
$\mu_\sigma$ is
\[
u_\sigma=\left(1,\ \mu_\sigma,\ -\frac{\gamma\mu_\sigma}{\mu_\sigma+\gamma}\right)^\top ,
\]
and a left eigenvector $\ell_\sigma=(\alpha_\sigma,\beta_\sigma,\zeta_\sigma)^\top$
with $\mathsf A_\sigma^\top\ell_\sigma=\mu_\sigma\ell_\sigma$ is given by
$\beta_\sigma=1$,
\[
\alpha_\sigma=\frac{a_\sigma}{\mu_\sigma}=\frac{\lambda_\sigma}{L\mu_\sigma},
\qquad
\zeta_\sigma=\frac{\gamma}{\mu_\sigma+\gamma}.
\]
\end{proposition}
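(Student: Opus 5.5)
The statement is a direct verification, so I will check the two eigenvalue equations $\mathsf A_\sigma u_\sigma=\mu_\sigma u_\sigma$ and $\mathsf A_\sigma^\top\ell_\sigma=\mu_\sigma\ell_\sigma$ row by row. The only input needed is $\chi_\sigma(\mu_\sigma)=0$, i.e.
\[
\mu_\sigma^3+\gamma\mu_\sigma^2+\gamma^2\mu_\sigma=a_\sigma(\mu_\sigma+\gamma).
\]
I will rewrite this in the equivalent form $\mu_\sigma^2+\dfrac{\gamma^2\mu_\sigma}{\mu_\sigma+\gamma}=a_\sigma$. This is legitimate because $\mu_\sigma+\gamma>0$, which holds since $\mu_\sigma,\gamma>0$ by Lemma~\ref{lem:saddle-characteristic-polynomial}.

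\emph{Right eigenvector.} Write $u=(1,u_2,u_3)^\top$ and read off the three rows of $\mathsf A_\sigma$.
\begin{itemize}
\item The first row gives $u_2=\mu_\sigma$.
\item The third row gives $-\gamma u_2-\gamma u_3=\mu_\sigma u_3$. Hence $u_3=-\gamma\mu_\sigma/(\mu_\sigma+\gamma)$.
\item The second row requires $a_\sigma+\gamma u_3=\mu_\sigma u_2=\mu_\sigma^2$. Substituting $u_3$, this reads $a_\sigma=\mu_\sigma^2+\gamma^2\mu_\sigma/(\mu_\sigma+\gamma)$, which is exactly the rewritten characteristic identity.
\end{itemize}

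\emph{Left eigenvector.} The matrix is
\[
\mathsf A_\sigma^\top=\begin{pmatrix}0&a_\sigma&0\\1&0&-\gamma\\0&\gamma&-\gamma\end{pmatrix}.
\]
Take $\beta_\sigma=1$ and check each row.
\begin{itemize}
\item The first row gives $a_\sigma=\mu_\sigma\alpha_\sigma$, so $\alpha_\sigma=a_\sigma/\mu_\sigma$.
\item The third row gives $\gamma-\gamma\zeta_\sigma=\mu_\sigma\zeta_\sigma$, so $\zeta_\sigma=\gamma/(\mu_\sigma+\gamma)$.
\item The second row requires $\alpha_\sigma-\gamma\zeta_\sigma=\mu_\sigma$. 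This is $a_\sigma/\mu_\sigma-\gamma^2/(\mu_\sigma+\gamma)=\mu_\sigma$. Multiplying by $\mu_\sigma>0$ recovers the same identity.
\end{itemize}
So both vectors are eigenvectors for the eigenvalue $\mu_\sigma$.

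There is no real obstacle; the only care needed is the sign bookkeeping in the $(3,2)$ and $(2,3)$ entries, which are $-\gamma$ and $\gamma$ in $\mathsf A_\sigma$ and swap places in the transpose. As a consistency check, $\ell_\sigma\cdot u_\sigma=a_\sigma/\mu_\sigma+\mu_\sigma-\gamma^2\mu_\sigma/(\mu_\sigma+\gamma)^2$ is nonzero. This follows because $\mu_\sigma$ is a simple eigenvalue, the other two roots lying in the left half-plane, and it is consistent with the normalization used downstream.
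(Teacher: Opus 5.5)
Your proposal is correct and matches the paper's proof: you substitute the vectors into both eigenvalue equations, let two rows determine the components, and observe that the remaining row in each case is equivalent to $\chi_\sigma(\mu_\sigma)=0$, using $\mu_\sigma+\gamma>0$ for the normalizations. The closing remark on $\ell_\sigma\cdot u_\sigma\neq0$ is not needed for the statement; if you keep it, note that it also follows directly, since each term is positive because $\gamma^2/(\mu_\sigma+\gamma)^2<1$.
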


\begin{proof}
Substituting the vectors into $\mathsf A_\sigma u_\sigma=\mu_\sigma u_\sigma$
and $\mathsf A_\sigma^\top\ell_\sigma=\mu_\sigma\ell_\sigma$ reduces the only
nontrivial component of each identity to $\chi_\sigma(\mu_\sigma)=0$.  Since
$\mu_\sigma+\gamma>0$ the normalizations are well defined.
\end{proof}

\emph{Stable spectrum and normal form.}  In the eigenbasis of $H_\sigma$ each
triple $(q_i,v_i,w_i)$ evolves independently; the unstable block is
$\mathsf A_\sigma$, and for $i\ge2$ the block has characteristic polynomial
\[
x^3+\gamma x^2+(\gamma^2+b_i)x+\gamma b_i,
\]
where
$b_i=\lambda_i^\sigma/L>0$.  Its coefficients are positive and
\[
\gamma(\gamma^2+b_i)-\gamma b_i=\gamma^3>0,
\]
so the cubic Routh--Hurwitz criterion
\cite[Chapter~XV, \S6]{Gantmacher1959} shows that it is Hurwitz.  This proves
Lemma~\ref{lem:full-saddle-spectrum}.  The lifted orthogonal transformation
\[
q=O_\sigma^\top(\theta-\sigma),
\quad v=O_\sigma^\top p,
\quad w=O_\sigma^\top r,
\]
has
Jacobian one and preserves $|p|^2+|r|^2$ and $\Delta_r$,
and hence preserves the normal form
\eqref{eq:saddle-quadratic-Hamiltonian}.

\emph{Effective one-dimensional operator.}  For
$\psi_\eps=\varphi(\Lambda_\sigma/\sqrt\eps)$, since $\Lambda_\sigma$ depends
only on $(q_1,v_1,w_1)$, denote the linearized saddle drift field by
$b_\sigma^{\rm lin}$.  Proposition~\ref{prop:saddle-eigenvectors} yields
\[
(\nabla\Lambda_\sigma)\cdot b_\sigma^{\rm lin}=\mu_\sigma\Lambda_\sigma.
\]
Write
\[
\calL_{\sigma,\eps}=b_\sigma^{\rm lin}\cdot\nabla+(\gamma\eps/L)\Delta_w.
\]
The chain rule and $b_\sigma^{\rm lin}\cdot\nabla\Lambda_\sigma
=\mu_\sigma\Lambda_\sigma$ imply that
\[
b_\sigma^{\rm lin}\cdot\nabla\psi_\eps
=\frac1{\sqrt\eps}
\varphi'\!\left(\frac{\Lambda_\sigma}{\sqrt\eps}\right)
b_\sigma^{\rm lin}\cdot\nabla\Lambda_\sigma
=\mu_\sigma\frac{\Lambda_\sigma}{\sqrt\eps}
\varphi'\!\left(\frac{\Lambda_\sigma}{\sqrt\eps}\right).
\]
Since $\Lambda_\sigma$ depends on $w$ only through
$\zeta_\sigma w_1$,
\[
\nabla_w\psi_\eps
=\frac{\zeta_\sigma}{\sqrt\eps}
\varphi'\!\left(\frac{\Lambda_\sigma}{\sqrt\eps}\right)e_1,
\qquad
\Delta_w\psi_\eps
=\frac{\zeta_\sigma^2}{\eps}
\varphi''\!\left(\frac{\Lambda_\sigma}{\sqrt\eps}\right).
\]
Consequently,
\[
\calL_{\sigma,\eps}\psi_\eps
=\mu_\sigma s\varphi'(s)+c_\sigma\varphi''(s),
\qquad
s=\frac{\Lambda_\sigma}{\sqrt\eps},
\quad c_\sigma=\frac{\gamma\zeta_\sigma^2}{L},
\]
which proves Lemma~\ref{lem:local-profile-ode}.

Imposing the constraint $\Lambda_\sigma=0$ reduces the unstable
Gaussian integral to a two-dimensional quadratic form.  We show it is positive definite and compute out its
determinant in the following lemma.

\begin{lemma}[Unstable Gaussian determinant]
\label{lem:unstable-gaussian-determinant}
Let
\[
K_\sigma=
\begin{pmatrix}
L-\frac{\lambda_\sigma}{\alpha_\sigma^2} & \frac{-\lambda_\sigma\zeta_\sigma}{\alpha_\sigma^2}\\
\frac{-\lambda_\sigma\zeta_\sigma}{\alpha_\sigma^2} & L-\frac{\lambda_\sigma\zeta_\sigma^2}{\alpha_\sigma^2}
\end{pmatrix}.
\]
Then $K_\sigma$ is positive definite and
$\det K_\sigma=\dfrac{c_\sigma L^2\lambda_\sigma}{\alpha_\sigma^2\mu_\sigma}$.
\end{lemma}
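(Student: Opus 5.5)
The plan is a direct computation: expand $\det K_\sigma$ and reduce the claimed identity to the characteristic equation $\chi_\sigma(\mu_\sigma)=0$ from Lemma~\ref{lem:saddle-characteristic-polynomial}. Positive definiteness will then follow from $\det K_\sigma>0$ together with positivity of one diagonal entry. Throughout, write $a_\sigma=\lambda_\sigma/L$ and use the explicit left eigenvector of Proposition~\ref{prop:saddle-eigenvectors}: $\alpha_\sigma=a_\sigma/\mu_\sigma$ and $\zeta_\sigma=\gamma/(\mu_\sigma+\gamma)$. This gives the convenient form
\[
\frac{\lambda_\sigma}{\alpha_\sigma^2}=\frac{L\mu_\sigma^2}{a_\sigma}.
\]

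\emph{Step 1 (determinant).} Expanding the $2\times2$ determinant, the cross terms $\lambda_\sigma^2\zeta_\sigma^2/\alpha_\sigma^4$ cancel, leaving
\[
\det K_\sigma=L^2-\frac{L\lambda_\sigma(1+\zeta_\sigma^2)}{\alpha_\sigma^2}.
\]
With $c_\sigma=\gamma\zeta_\sigma^2/L$, the target identity $\det K_\sigma=c_\sigma L^2\lambda_\sigma/(\alpha_\sigma^2\mu_\sigma)$ is therefore equivalent to
\[
L\alpha_\sigma^2=\lambda_\sigma\Bigl(1+\zeta_\sigma^2+\frac{\gamma\zeta_\sigma^2}{\mu_\sigma}\Bigr).
\]
Substituting $L\alpha_\sigma^2=\lambda_\sigma a_\sigma/\mu_\sigma^2$ and dividing by $\lambda_\sigma$, this becomes
\[
\frac{a_\sigma}{\mu_\sigma^2}=1+\zeta_\sigma^2\,\frac{\mu_\sigma+\gamma}{\mu_\sigma}
=1+\frac{\gamma^2}{\mu_\sigma(\mu_\sigma+\gamma)}.
\]
Multiplying through by $\mu_\sigma^2(\mu_\sigma+\gamma)$ yields
\[
a_\sigma(\mu_\sigma+\gamma)=\mu_\sigma^3+\gamma\mu_\sigma^2+\gamma^2\mu_\sigma,
\]
which is exactly $\chi_\sigma(\mu_\sigma)=0$.

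\emph{Step 2 (positive definiteness).} The right-hand side of the determinant formula is manifestly positive, so $\det K_\sigma>0$. It then suffices to show that the diagonal entries are positive. Using $\lambda_\sigma/\alpha_\sigma^2=L\mu_\sigma^2/a_\sigma$, the two diagonal entries are
\[
L\Bigl(1-\frac{\mu_\sigma^2}{a_\sigma}\Bigr)
\quad\text{and}\quad
L\Bigl(1-\frac{\zeta_\sigma^2\mu_\sigma^2}{a_\sigma}\Bigr).
\]
By the identity of Step~1, $a_\sigma/\mu_\sigma^2=1+\gamma^2/(\mu_\sigma(\mu_\sigma+\gamma))>1$. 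Since also $\zeta_\sigma<1$, both diagonal entries are positive, and Sylvester's criterion gives positive definiteness.

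The only potentially delicate point is organizing the algebra so that it collapses onto $\chi_\sigma(\mu_\sigma)=0$. Rewriting everything in terms of $a_\sigma/\mu_\sigma^2$, as above, makes this transparent.
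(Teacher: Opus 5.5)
Your proposal is correct and follows essentially the same route as the paper. Both expand $\det K_\sigma=L^2-L\lambda_\sigma(1+\zeta_\sigma^2)/\alpha_\sigma^2$, rewrite it via $\alpha_\sigma=a_\sigma/\mu_\sigma$, collapse the identity onto $a_\sigma(\mu_\sigma+\gamma)=\mu_\sigma(\mu_\sigma^2+\gamma\mu_\sigma+\gamma^2)$, and conclude positive definiteness from a positive leading minor and positive determinant. The only differences are cosmetic: you use a reversible chain of equivalences rather than evaluating both sides, and you bound the minor through $a_\sigma/\mu_\sigma^2>1$ instead of its closed form.
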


\begin{proof}
Set $a_\sigma=\lambda_\sigma/L$. 
Since $\alpha_\sigma=a_\sigma/\mu_\sigma$,
\[
\det K_\sigma
=L^2-\frac{L\lambda_\sigma}{\alpha_\sigma^2}\left(1+\zeta_\sigma^2\right)
=L^2\left\{1-\frac{\mu_\sigma^2}{a_\sigma}\left(1+\zeta_\sigma^2\right)\right\}.
\]
Using $\chi_\sigma(\mu_\sigma)=0$, equivalently rewriting
\eqref{eq:cubic} as
$a_\sigma(\mu_\sigma+\gamma)
=\mu_\sigma(\mu_\sigma^2+\gamma\mu_\sigma+\gamma^2)$,
and using $\zeta_\sigma=\gamma/(\mu_\sigma+\gamma)$,
\[
1-\frac{\mu_\sigma^2}{a_\sigma}(1+\zeta_\sigma^2)
=\frac{\gamma^3\mu_\sigma}
{a_\sigma(\mu_\sigma+\gamma)^2},
\qquad
\frac{c_\sigma\lambda_\sigma}{\alpha_\sigma^2\mu_\sigma}
=\frac{\gamma^3\mu_\sigma}
{a_\sigma(\mu_\sigma+\gamma)^2},
\]
so multiplying by $L^2$ gives the determinant.  The leading minor is
$L-\lambda_\sigma/\alpha_\sigma^2
=L\gamma^2/(\mu_\sigma^2+\gamma\mu_\sigma+\gamma^2)>0$, and the
determinant is positive, so $K_\sigma>0$.
\end{proof}

The side-face estimates require the kinetic energy forced by
the unstable constraint to dominate the negative potential curvature.  The
inequality in the following lemma provides exactly this comparison.

\begin{lemma}[Kinetic domination on the side faces]
\label{lem:side-face-kinetic-domination}
With $\alpha_\sigma=\lambda_\sigma/(L\mu_\sigma)$ and
$\zeta_\sigma=\gamma/(\mu_\sigma+\gamma)$,
we have $\dfrac{L\alpha_\sigma^2}{1+\zeta_\sigma^2}>\lambda_\sigma$.
\end{lemma}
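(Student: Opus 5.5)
The inequality reduces to the positivity identity already obtained in the proof of Lemma~\ref{lem:unstable-gaussian-determinant}. Set $a_\sigma=\lambda_\sigma/L>0$ and recall from Proposition~\ref{prop:saddle-eigenvectors} that $\alpha_\sigma=a_\sigma/\mu_\sigma$ with $\mu_\sigma>0$. Then
\[
\frac{L\alpha_\sigma^2}{1+\zeta_\sigma^2}>\lambda_\sigma
\iff
L\,\frac{a_\sigma^2}{\mu_\sigma^2}>L a_\sigma\,(1+\zeta_\sigma^2)
\iff
1-\frac{\mu_\sigma^2}{a_\sigma}\bigl(1+\zeta_\sigma^2\bigr)>0 .
\]
The first equivalence multiplies by the positive quantity $1+\zeta_\sigma^2$ and uses $\lambda_\sigma=La_\sigma$. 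The second divides by $La_\sigma^2/\mu_\sigma^2>0$.

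It therefore suffices to evaluate the bracket. As in the proof of Lemma~\ref{lem:unstable-gaussian-determinant}, write the characteristic equation $\chi_\sigma(\mu_\sigma)=0$ in the form
\[
a_\sigma(\mu_\sigma+\gamma)=\mu_\sigma\bigl(\mu_\sigma^2+\gamma\mu_\sigma+\gamma^2\bigr),
\]
and insert $\zeta_\sigma=\gamma/(\mu_\sigma+\gamma)$. Then
\[
\frac{\mu_\sigma^2}{a_\sigma}\bigl(1+\zeta_\sigma^2\bigr)
=\frac{\mu_\sigma\bigl((\mu_\sigma+\gamma)^2+\gamma^2\bigr)}{(\mu_\sigma^2+\gamma\mu_\sigma+\gamma^2)(\mu_\sigma+\gamma)} .
\]
It remains to check that this ratio is strictly less than $1$. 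After clearing the positive denominator, this is
\[
\mu_\sigma(\mu_\sigma+\gamma)^2+\mu_\sigma\gamma^2
<(\mu_\sigma^2+\gamma\mu_\sigma+\gamma^2)(\mu_\sigma+\gamma).
\]
Expanding both sides, the left-hand side is $\mu_\sigma^3+2\gamma\mu_\sigma^2+2\gamma^2\mu_\sigma$, and the right-hand side is $\mu_\sigma^3+2\gamma\mu_\sigma^2+2\gamma^2\mu_\sigma+\gamma^3$. The difference is exactly $\gamma^3>0$. Equivalently,
\[
1-\frac{\mu_\sigma^2}{a_\sigma}\bigl(1+\zeta_\sigma^2\bigr)=\frac{\gamma^3\mu_\sigma}{a_\sigma(\mu_\sigma+\gamma)^2}>0,
\]
which is the identity recorded in Lemma~\ref{lem:unstable-gaussian-determinant}.

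The only step needing care is using the cubic relation in the correct rearranged form; no estimate is lost. Strictness comes from $\gamma>0$, and the margin $\gamma^3\mu_\sigma/(a_\sigma(\mu_\sigma+\gamma)^2)$ is an explicit $\eps$-independent constant, which is what the side-face estimates require.
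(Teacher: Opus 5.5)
Your proof is correct and follows the paper's argument: you reduce the inequality to $a_\sigma>\mu_\sigma^2(1+\zeta_\sigma^2)$ and then invoke the identity $1-\frac{\mu_\sigma^2}{a_\sigma}(1+\zeta_\sigma^2)=\frac{\gamma^3\mu_\sigma}{a_\sigma(\mu_\sigma+\gamma)^2}$ from Lemma~\ref{lem:unstable-gaussian-determinant}. The one addition is that you verify this identity explicitly from the cubic relation (the two sides differ by exactly $\gamma^3$), whereas the paper only asserts it.
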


\begin{proof}
With $a_\sigma=\lambda_\sigma/L$, the claimed inequality is equivalent to
$a_\sigma>\mu_\sigma^2(1+\zeta_\sigma^2)$.  The determinant computation in
Lemma~\ref{lem:unstable-gaussian-determinant} gives
\[
1-\frac{\mu_\sigma^2}{a_\sigma}(1+\zeta_\sigma^2)
=\frac{\gamma^3\mu_\sigma}
{a_\sigma(\mu_\sigma+\gamma)^2}>0,
\]
which is exactly the required inequality.
\end{proof}

\subsection{Saddle Scaling and the Compact Weak Interface}
\label{app:regularization}

Let $z_\sigma=(\sigma,0,0)$.  For $y=(Q,V,W)\in\R^{3d}$ set
$z=z_\sigma+\sqrt\eps\,y$ and, for a test function $f_\eps$, write its blow-up
\[
\widehat f_\eps(y):=f_\eps(z_\sigma+\sqrt\eps\,y).
\]

We now pass from physical variables to the natural
$\sqrt\eps$-scale near the saddle.  The first step is to identify the limiting
rescaled generator and its nonlinear remainder.

\begin{lemma}[Rescaled generator near the saddle]
\label{lem:rescaled-generator-saddle}
For smooth compactly supported $\widehat f$ with
$f(z_\sigma+\sqrt\eps\,y)=\widehat f(y)$, on compact subsets of $y$-space,
\[
(\calL_\eps f)(z_\sigma+\sqrt\eps\,y)
=(\calL_{\sigma,1}\widehat f)(y)+\sqrt\eps\,(\mathcal R_\eps\widehat f)(y),
\]
where
\begin{align*}
&\calL_{\sigma,1}
=V\cdot\nabla_Q-L^{-1}H_\sigma Q\cdot\nabla_V+\gamma W\cdot\nabla_V
+(-\gamma V-\gamma W)\cdot\nabla_W+\frac{\gamma}{L}\Delta_W,
\\
&\mathcal R_\eps\widehat f=\mathcal O_{\loc}\left(|Q|^2|\nabla_V\widehat f|\right).
\end{align*}
\end{lemma}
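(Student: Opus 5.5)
The proof is a direct chain-rule computation under the affine change of variables $z=z_\sigma+\sqrt\eps\,y$, i.e.
\[
\theta=\sigma+\sqrt\eps\,Q,\qquad p=\sqrt\eps\,V,\qquad r=\sqrt\eps\,W,
\]
followed by a Taylor expansion of the force $\nabla U$ at $\sigma$. If $f(z)=\widehat f((z-z_\sigma)/\sqrt\eps)$, then $\nabla_\theta f=\eps^{-1/2}\nabla_Q\widehat f$, and likewise $\nabla_p f=\eps^{-1/2}\nabla_V\widehat f$ and $\nabla_r f=\eps^{-1/2}\nabla_W\widehat f$. Second derivatives pick up a factor $\eps^{-1}$, so $\Delta_r f=\eps^{-1}\Delta_W\widehat f$. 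Substituting into \eqref{eq:generator} term by term:
\begin{itemize}
\item The transport term gives $p\cdot\nabla_\theta f=V\cdot\nabla_Q\widehat f$.
\item The coupling term gives $\gamma r\cdot\nabla_p f=\gamma W\cdot\nabla_V\widehat f$.
\item The friction term gives $(-\gamma p-\gamma r)\cdot\nabla_r f=(-\gamma V-\gamma W)\cdot\nabla_W\widehat f$.
\item The diffusion term gives $(\gamma\eps/L)\Delta_r f=(\gamma/L)\Delta_W\widehat f$.
\end{itemize}
All four are exactly the corresponding terms of $\calL_{\sigma,1}$, with no remainder.

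The only nonlinear contribution is the force term $-L^{-1}\nabla U(\sigma+\sqrt\eps Q)\cdot\eps^{-1/2}\nabla_V\widehat f$. Since $\nabla U(\sigma)=0$ and $U\in C^\infty$, Taylor's theorem with integral remainder gives
\[
\nabla U(\sigma+\sqrt\eps Q)=\sqrt\eps\,H_\sigma Q+\eps\,\mathcal T_\eps(Q),
\qquad
\mathcal T_\eps(Q)=\int_0^1(1-t)\,D^3U(\sigma+t\sqrt\eps Q)[Q,Q]\,dt .
\]
Multiplying by $\eps^{-1/2}$, the force term becomes
\[
-L^{-1}H_\sigma Q\cdot\nabla_V\widehat f\;-\;\sqrt\eps\,L^{-1}\mathcal T_\eps(Q)\cdot\nabla_V\widehat f .
\]
The first piece is the remaining drift term of $\calL_{\sigma,1}$. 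The second defines $\sqrt\eps\,\mathcal R_\eps\widehat f$ with
\[
\mathcal R_\eps\widehat f:=-L^{-1}\mathcal T_\eps(Q)\cdot\nabla_V\widehat f .
\]

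For the bound on $\mathcal R_\eps$, fix a compact set $K$ in $y$-space and take $\eps\le1$. Then $\sigma+t\sqrt\eps Q$ ranges over the fixed compact set $\{\theta:|\theta-\sigma|\le\sup_K|Q|\}$, on which $D^3U$ is bounded by a constant $C_K$. Hence $|\mathcal T_\eps(Q)|\le\frac12 C_K|Q|^2$ uniformly in $\eps\in(0,1]$ and $y\in K$, which gives $\mathcal R_\eps\widehat f=\mathcal O_{\loc}(|Q|^2|\nabla_V\widehat f|)$. Since $U$ is smooth, the same argument, using higher Taylor remainders, gives bounds on derivatives of $\mathcal R_\eps\widehat f$ as well, should later sections need them.

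There is no genuine obstacle here. The one point needing care is the uniformity of the remainder in $\eps$: the Taylor expansion is evaluated at the moving points $\sigma+\sqrt\eps Q$, so the constant must be taken over a fixed compact neighborhood of $\sigma$. This is also why the statement is asserted only on compact subsets of $y$-space, not globally.
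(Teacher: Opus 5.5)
Your proposal is correct and follows essentially the same route as the paper: chain rule under $\theta=\sigma+\sqrt\eps Q$, $p=\sqrt\eps V$, $r=\sqrt\eps W$, exact matching of the four linear terms, and a Taylor expansion of $\nabla U$ at $\sigma$ using $\nabla U(\sigma)=0$ to isolate the $\sqrt\eps\,\mathcal O_{\loc}(|Q|^2|\nabla_V\widehat f|)$ remainder. Your explicit integral remainder and the uniform bound on $D^3U$ over a fixed compact neighborhood of $\sigma$ simply make precise the $\mathcal O(\eps|Q|^2)$ step that the paper states without detail.
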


\begin{proof}
The chain rule gives
$\nabla_\theta f=\eps^{-1/2}\nabla_Q\widehat f$, and similarly for $p,r$, with
$\Delta_r f=\eps^{-1}\Delta_W\widehat f$.  Hence
\begin{align*}
&p\cdot\nabla_\theta f=V\cdot\nabla_Q\widehat f,
\\
&\gamma r\cdot\nabla_p f=\gamma W\cdot\nabla_V\widehat f,
\\
&(-\gamma p-\gamma r)\cdot\nabla_r f=(-\gamma V-\gamma W)\cdot\nabla_W\widehat f,
\\
&\frac{\gamma\eps}{L}\Delta_r f=\frac{\gamma}{L}\Delta_W\widehat f.
\end{align*}
Finally, $\nabla U(\sigma+\sqrt\eps Q)=H_\sigma\sqrt\eps Q+\mathcal O(\eps|Q|^2)$, so that
\[
-L^{-1}\nabla U\cdot\nabla_p f=-L^{-1}H_\sigma Q\cdot\nabla_V\widehat f
+\sqrt\eps\,\mathcal O_{\loc}\left(|Q|^2|\nabla_V\widehat f|\right).
\]
Adding the contributions of
$p\cdot\nabla_\theta$,
$-L^{-1}\nabla U\cdot\nabla_p$,
$\gamma r\cdot\nabla_p$,
$(-\gamma p-\gamma r)\cdot\nabla_r$, and
$(\gamma\eps/L)\Delta_r$ proves the expansion.
\end{proof}

The preceding compact-set expansion must remain valid on the
slowly growing logarithmic saddle box.  The next lemma records the required
uniform version.

\begin{lemma}[Normal-form stability under saddle blow-up]
\label{lem:normal-form-stability-saddle-blowup}
Fix $K>0$ and set $R_\eps=K\sqrt{\log(1/\eps)}$.  Let
$\widehat f$ be smooth on a neighborhood of $\overline{B_{R_\eps}}$, and
suppose that $f(z_\sigma+\sqrt\eps\,Y)=\widehat f(Y)$ there.  Then,
uniformly for $|Y|\le R_\eps$,
\[
(\calL_\eps f)\left(z_\sigma+\sqrt\eps\,Y\right)
=(\calL_{\sigma,1}\widehat f)(Y)+\mathcal O\left(\sqrt\eps R_\eps^2|\nabla_V\widehat f(Y)|\right).
\]
For the saddle profile
$j_\eps(z)=\Phi_\sigma(\Lambda_\sigma(z)/\sqrt\eps)$, set
\begin{equation}
\label{eq:normal-form-logarithmic-tube}
\mathcal T_\eps
:=
\left\{z_\sigma+\sqrt\eps\,Y:|Y|\le R_\eps\right\},
\end{equation}
and
\[
\mathcal R_\eps^{\rm nf}
:=
\int_{\mathcal T_\eps}h_\eps^*(z)
\left(\calL_\eps-\calL_{\sigma,\eps}\right)j_\eps(z)\,d\pi_\eps(z).
\]
Then
\[
|\mathcal R_\eps^{\rm nf}|
\le C\sqrt\eps R_\eps^3\mathfrak K_{\sigma,\eps}
=o_\eps(\mathfrak K_{\sigma,\eps}).
\]
\end{lemma}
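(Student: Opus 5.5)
The plan has two parts: a uniform Taylor estimate for the first display, and an integration of that estimate against the Gibbs weight for the second.

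\emph{First display.} I would redo the chain-rule computation of Lemma~\ref{lem:rescaled-generator-saddle} with explicit control on the logarithmic box. All terms of $\calL_\eps$ except the force term transform exactly into the corresponding terms of $\calL_{\sigma,1}$; this needs no remainder because those coefficients are linear in $(p,r)$. For the force term, use Taylor's formula with integral remainder:
\[
\nabla U(\sigma+\sqrt\eps Q)=\sqrt\eps H_\sigma Q+\eps\,\mathcal E_\eps(Q),
\qquad
|\mathcal E_\eps(Q)|\le \tfrac12\sup_{B(\sigma,\sqrt\eps R_\eps)}|\nabla^3U|\,|Q|^2.
\]
Since $\sqrt\eps R_\eps\to0$, the supremum is over a fixed compact neighborhood of $\sigma$ and is bounded by a constant independent of $\eps$. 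Multiplying by $-L^{-1}\nabla_p f=-L^{-1}\eps^{-1/2}\nabla_V\widehat f$ gives the remainder $\mathcal O(\sqrt\eps|Q|^2|\nabla_V\widehat f|)\le \mathcal O(\sqrt\eps R_\eps^2|\nabla_V\widehat f|)$, uniformly for $|Y|\le R_\eps$.

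\emph{Second display.} Apply the first display to $\widehat j(Y)=\Phi_\sigma(\ell_\sigma\cdot Y_1)$; this uses $\Lambda_\sigma(z_\sigma+\sqrt\eps Y)=\sqrt\eps\,\ell_\sigma\cdot Y_1$, with $Y_1$ the unstable triple of $Y$. Then $\nabla_V\widehat j=\Phi_\sigma'(\ell_\sigma\cdot Y_1)e_\sigma$, which is bounded by $\sup|\Phi_\sigma'|$. Note that $\calL_{\sigma,\eps}j_\eps$ at $z_\sigma+\sqrt\eps Y$ is precisely $\calL_{\sigma,1}\widehat j(Y)$, so
\[
|(\calL_\eps-\calL_{\sigma,\eps})j_\eps|\le C\sqrt\eps R_\eps^2
\quad\text{on }\mathcal T_\eps .
\]
Since $0\le h_\eps^*\le1$, we get
\[
|\mathcal R^{\rm nf}_\eps|\le C\sqrt\eps R_\eps^2\,Z_\eps^{-1}\int_{\mathcal T_\eps}e^{-H/\eps}\,dz.
\]

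It remains to bound the Gibbs mass of the tube. Here is the one subtle point: $H$ is \emph{not} bounded below by $U(\sigma)$ on $\mathcal T_\eps$, because of the negative direction $q_1$. Rescaling gives
\[
\int_{\mathcal T_\eps}e^{-H/\eps}\,dz=\eps^{3d/2}e^{-U(\sigma)/\eps}\int_{|Y|\le R_\eps}e^{-\widehat H_\eps(Y)}\,dY,
\]
where $\widehat H_\eps(Y)=\tfrac12 Y\cdot \mathrm{diag}(H_\sigma,L,L)\,Y+\mathcal O(\sqrt\eps|Y|^3)$ and $\sqrt\eps R_\eps^3=o(1)$. The stable coordinates $q_i$ ($i\ge2$), $V$ and $W$ carry Gaussian weights, so their integrals are bounded by constants. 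The unstable coordinate $Q_1$ contributes
\[
\int_{|Q_1|\le R_\eps}e^{\lambda_\sigma Q_1^2/2+o(1)}\,dQ_1 .
\]
This is where the main obstacle lies: a crude bound $e^{\lambda_\sigma R_\eps^2/2}$ would be polynomially large and destroy the claim.

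To avoid this, I would not bound $|\Phi_\sigma'|$ by a constant. Instead I would keep its Gaussian decay, $|\Phi_\sigma'(s)|\le Ce^{-\mu_\sigma s^2/(2c_\sigma)}$ with $s=\ell_\sigma\cdot Y_1$, and integrate over $Y_1$ the combined weight
\[
\exp\!\Big(-\tfrac12\big(-\lambda_\sigma Q_1^2+L V_1^2+L W_1^2\big)-\tfrac{\mu_\sigma}{2c_\sigma}(\ell_\sigma\cdot Y_1)^2\Big).
\]
This quadratic form should be positive definite. Lemma~\ref{lem:unstable-gaussian-determinant} shows that the form is positive on the hyperplane $\{\ell_\sigma\cdot Y_1=0\}$. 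The transverse direction is controlled by the $\mu_\sigma/c_\sigma$ term, which is exactly the identity $\det K_\sigma=c_\sigma L^2\lambda_\sigma/(\alpha_\sigma^2\mu_\sigma)$ used in Proposition~\ref{prop:quadratic-gaussian-current}. I would verify definiteness via a Schur complement in the $s$ variable, which reduces to a variant of Lemma~\ref{lem:side-face-kinetic-domination}. If this holds, the integral is bounded uniformly and the mass bound becomes $C\mathfrak K_{\sigma,\eps}$. If equality is borderline, I expect at worst an extra factor $R_\eps$ from a Lebesgue integral along one direction. This explains the stated bound $C\sqrt\eps R_\eps^3\mathfrak K_{\sigma,\eps}$, which is $o(\mathfrak K_{\sigma,\eps})$ because $\sqrt\eps(\log(1/\eps))^{3/2}\to0$.
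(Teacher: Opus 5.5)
Your plan is the paper's proof. You bound the Taylor remainder pointwise by $C\sqrt\eps R_\eps^2|\Phi_\sigma'(\Lambda_\sigma/\sqrt\eps)|$, combine the Gaussian decay of $\Phi_\sigma'$ with $e^{-B_\sigma}$, and absorb the cubic Hamiltonian remainder $\mathcal O(\sqrt\eps R_\eps^3)=o(1)$.

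\textbf{One correction.} The "borderline" case you hedged on is exactly what happens: the combined form is not positive definite. With $D_\sigma=\operatorname{diag}(-\lambda_\sigma,L,L)$, the cubic gives
\[
\ell_\sigma^\top D_\sigma^{-1}\ell_\sigma=-\frac{c_\sigma}{\mu_\sigma}.
\]
This is equivalent to the $\det K_\sigma$ formula you cite. By the matrix determinant lemma, $D_\sigma+(\mu_\sigma/c_\sigma)\ell_\sigma\ell_\sigma^\top$ is therefore singular. Its kernel is spanned by $D_\sigma^{-1}\ell_\sigma$, which is parallel to the current direction $\mathfrak u_\sigma$. This is forced: the divergence-free quadratic current $e^{-H_\sigma^{\rm quad}/\eps}\Phi_\sigma'(\Lambda_\sigma/\sqrt\eps)\,\mathfrak u_\sigma$ must have its scalar factor constant along $\mathfrak u_\sigma$.

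Consequently your Schur complement in the $s$ variable will come out exactly zero, not positive. What you actually need is semidefiniteness with a one-dimensional kernel. Either route below gives it:
\begin{itemize}
\item Use positivity on $\ker\ell_\sigma^\top$ (Lemma~\ref{lem:coercivity-ker-Lambda}) together with the vanishing determinant, via Courant--Fischer.
\item Follow the paper: write $Y_1=\eta+t\,D_\sigma^{-1}\ell_\sigma$ with $\eta\in\ker\ell_\sigma^\top$, and note that the cross terms vanish because the updated matrix annihilates $D_\sigma^{-1}\ell_\sigma$.
\end{itemize}
This yields $\int_{|Y|\le R}e^{-\mathscr Q_\sigma}\,dY\le CR$, and hence $C\sqrt\eps R_\eps^3\,\mathfrak K_{\sigma,\eps}$. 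The extra factor $R_\eps$ is genuinely incurred by this estimate, so the uniform $C\sqrt\eps R_\eps^2\,\mathfrak K_{\sigma,\eps}$ bound you anticipated in the definite case is not available.
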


\begin{proof}
This is the logarithmic-box version of
Lemma~\ref{lem:rescaled-generator-saddle}.
The Taylor remainder in Lemma~\ref{lem:rescaled-generator-saddle} implies that,
for $z=z_\sigma+\sqrt\eps Y$ with $|Y|\le R_\eps$,
\[
\left|\left(\calL_\eps-\calL_{\sigma,\eps}\right)j_\eps(z)\right|
\le C\sqrt\eps R_\eps^2
\left|\Phi_\sigma'(\Lambda_\sigma(Y))\right|.
\]
After the change of variables $z=z_\sigma+\sqrt\eps Y$, set
\[
B_\sigma(Y):=
-\frac{\lambda_\sigma}{2}Q_1^2
+\frac12\sum_{i=2}^d\lambda_i^\sigma Q_i^2
+\frac L2|V|^2+\frac L2|W|^2,
\]
and
\[
\mathscr Q_\sigma(Y)
:=B_\sigma(Y)+\frac{\mu_\sigma}{2c_\sigma}
\Lambda_\sigma(Y)^2.
\]
The form $B_\sigma$ is positive definite on $\ker\Lambda_\sigma$ by
Lemma~\ref{lem:coercivity-ker-Lambda}.  On the unstable triple, write
$D_\sigma=\operatorname{diag}(-\lambda_\sigma,L,L)$ and
$\ell_\sigma=(\alpha_\sigma,1,\zeta_\sigma)$.  The cubic identity
\eqref{eq:cubic} implies that
\[
\ell_\sigma^\top D_\sigma^{-1}\ell_\sigma
=-\frac{c_\sigma}{\mu_\sigma}.
\]
Thus the rank-one update
$D_\sigma+(\mu_\sigma/c_\sigma)\ell_\sigma\ell_\sigma^\top$ is singular.
Indeed, with $k_\sigma:=D_\sigma^{-1}\ell_\sigma$, we have
\[
\left(D_\sigma+\frac{\mu_\sigma}{c_\sigma}
\ell_\sigma\ell_\sigma^\top\right)k_\sigma
=\ell_\sigma+\frac{\mu_\sigma}{c_\sigma}\ell_\sigma
\left(\ell_\sigma^\top D_\sigma^{-1}\ell_\sigma\right)=0.
\]
Moreover,
$\ell_\sigma^\top k_\sigma=-c_\sigma/\mu_\sigma\ne0$, so every vector in
the unstable triple decomposes uniquely as $\eta+t k_\sigma$ with
$\eta\in\ker\ell_\sigma^\top$.  On this hyperplane the rank-one term
vanishes, and $D_\sigma$ is positive definite by
Lemma~\ref{lem:coercivity-ker-Lambda}.  Since the updated matrix annihilates
$k_\sigma$, the cross terms in this decomposition also vanish.  Hence the
updated matrix is positive semidefinite with kernel
$\operatorname{span}\{k_\sigma\}$.  The stable triples are positive definite,
so $\mathscr Q_\sigma$ is positive semidefinite with a one-dimensional
kernel.
Consequently, in linear coordinates $(u,\xi)\in\R\times\R^{3d-1}$ adapted to
this kernel,
\[
\mathscr Q_\sigma(Y)\ge c|\xi|^2,
\qquad
\int_{\{|Y|\le R\}}e^{-\mathscr Q_\sigma(Y)}\,dY\le CR
\quad(R\ge1).
\]
Since
$|\Phi_\sigma'(s)|=C_\sigma
e^{-\mu_\sigma s^2/(2c_\sigma)}$, $0\le h_\eps^*\le1$, and the cubic
Hamiltonian remainder is $\mathcal O(\sqrt\eps R_\eps^3)=o(1)$ uniformly on
$\mathcal T_\eps$ defined in \eqref{eq:normal-form-logarithmic-tube},
it follows that
\[
|\mathcal R_\eps^{\rm nf}|
\le C\sqrt\eps R_\eps^3\mathfrak K_{\sigma,\eps}
=o(\mathfrak K_{\sigma,\eps}).
\]
\end{proof}

The weak-capacity representation requires compactly supported
test functions.  We therefore show that compactifying the saddle ansatz
changes the integral only by an exponentially small error.

\begin{lemma}[Compact outer cutoff]
\label{lem:compact-outer-cutoff}
Let $g_{\eps,\vartheta}\in C^2(\R^{3d};[0,1])$ be any of the saddle test
functions constructed in Appendix~\ref{app:truncation}, before imposing compact
support.  There is a fixed $\Xi\in C_c^\infty(\R^{3d};[0,1])$, independent of
$\eps,\vartheta$, such that $\Xi=1$ on $\{H\le U(\sigma)+1\}$ and
\[
f_{\eps,\vartheta}:=\Xi g_{\eps,\vartheta}
\]
is admissible in Proposition~\ref{prop:weak-test-function-capacity}.  Moreover,
for every fixed $\vartheta>0$,
\begin{equation}
\label{eq:outer-cutoff-error}
\int_{\{\nabla\Xi\ne0\}}h_\eps^*
\left|\calL_\eps(\Xi g_{\eps,\vartheta})
-\Xi\calL_\eps g_{\eps,\vartheta}\right|\,d\pi_\eps
=o(\mathfrak K_{\sigma,\eps}).
\end{equation}
\end{lemma}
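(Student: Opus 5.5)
We construct $\Xi$ from the Hamiltonian and then check admissibility and the error bound in turn. Since $H$ has compact sublevel sets (by \textup{(WC1)}, $U\ge c|\theta|^2-C$), we fix $\chi\in C_c^\infty(\R;[0,1])$ with $\chi=1$ on $(-\infty,U(\sigma)+1]$ and $\chi=0$ on $[U(\sigma)+2,\infty)$. We then set $\Xi:=\chi\circ H$. Because $U\in C^\infty$, $\Xi$ is smooth with compact support in $\{H\le U(\sigma)+2\}$, and it is independent of $\eps$ and $\vartheta$.

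\textbf{Admissibility.} The saddle test functions $g_{\eps,\vartheta}$ of Appendix~\ref{app:truncation} are $C^2$, equal to $1$ near $\overline{A_\eps}$ and to $0$ near $\overline{S_\eps}$. Both balls lie inside $W\subset\{H<U(\sigma)\}$, where $\Xi\equiv1$. Hence $f_{\eps,\vartheta}=\Xi g_{\eps,\vartheta}\in C_c^2$ has the same boundary behaviour, and Proposition~\ref{prop:weak-test-function-capacity} applies.

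\textbf{Commutator.} Expanding the product rule for the second-order operator $\calL_\eps$ gives
\[
\calL_\eps(\Xi g)-\Xi\calL_\eps g
=g\,\calL_\eps\Xi+\frac{2\gamma\eps}{L}\nabla_r\Xi\cdot\nabla_r g .
\]
This expression is supported on $\{\nabla\Xi\ne0\}\subset\{U(\sigma)+1\le H\le U(\sigma)+2\}$, which is a fixed compact set. On it we use the following bounds.
\begin{itemize}
\item Since $0\le g\le1$ and $\calL_\eps\Xi$ is a smooth function with $\eps$-bounded coefficients on a compact set, $|g\,\calL_\eps\Xi|\le C$.
\item For the gradient term, $|\eps\nabla_r g|$ must grow at most polynomially in $\eps^{-1}$ on this set. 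This is the one point requiring input from the construction of $g_{\eps,\vartheta}$. Its saddle profile $\Phi_\sigma(\Lambda_\sigma/\sqrt\eps)$ has $r$-gradient of order $\eps^{-1/2}$, and any side-layer or sublevel cutoffs have derivatives bounded by $C_\vartheta\eps^{-k}$ for a fixed $k$. Alternatively, for fixed $\vartheta$ one can arrange that $g$ is locally constant (equal to $0$ or $1$) away from a neighbourhood of $\{H\le U(\sigma)+1/2\}$, in which case this term vanishes outright. Either way we obtain $|\eps\nabla_r g|\le C_\vartheta\eps^{-N}$ there.
\end{itemize}
We expect this verification, that $g_{\eps,\vartheta}$ has only polynomially large derivatives on the high-energy shell, to be the main obstacle. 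It depends on the details of the construction and should be read off from it.

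\textbf{Error estimate.} Using $0\le h_\eps^*\le1$, the integral in \eqref{eq:outer-cutoff-error} is bounded by
\[
C_\vartheta\eps^{-N}Z_\eps^{-1}\int_{\{H\ge U(\sigma)+1\}\cap\operatorname{supp}\Xi}e^{-H/\eps}\,dz
\le C_\vartheta\eps^{-N}|\operatorname{supp}\Xi|\,Z_\eps^{-1}e^{-(U(\sigma)+1)/\eps}.
\]
Comparing with \eqref{eq:Ksigma-def}, the ratio of this bound to $\mathfrak K_{\sigma,\eps}$ is
\[
C_\vartheta\,\eps^{-N-3d/2}e^{-1/\eps}\longrightarrow0
\]
for each fixed $\vartheta$. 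This proves \eqref{eq:outer-cutoff-error}.
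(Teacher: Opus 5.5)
Your proof is correct and matches the paper's argument: a fixed cutoff built from the compact sublevels of $H$, the product-rule commutator $g\,\calL_\eps\Xi+\frac{2\gamma\eps}{L}\nabla_r\Xi\cdot\nabla_r g$ bounded polynomially on the fixed shell, and the Gibbs gap giving a ratio $C_\vartheta\eps^{-N-3d/2}e^{-1/\eps}\to0$ against $\mathfrak K_{\sigma,\eps}$. The step you flagged as the main obstacle follows directly from Lemma~\ref{lem:global-saddle-test-function}: since $G_\eps=0$ on $\{H\ge U(\sigma)+5a_\eps/6\}$ and $a_\eps\to0$, the function $g_{\eps,\vartheta}$ vanishes identically near $\{\nabla\Xi\ne0\}$ for small $\eps$. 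One minor slip: $\chi$ should be taken in $C^\infty(\R;[0,1])$, not $C_c^\infty$.
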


\begin{proof}
Compactness of the sublevels of $H$ gives a smooth cutoff equal to one on
$\{H\le U(\sigma)+1\}$ and supported in $\{H<U(\sigma)+2\}$.  The sets
$A_\eps,S_\eps$ and every saddle derivative support lie in the former set for
small $\eps$.  The product rule and the fact that diffusion acts only in the
$r$-variables give
\[
\calL_\eps(\Xi g_{\eps,\vartheta})
-\Xi\calL_\eps g_{\eps,\vartheta}
=g_{\eps,\vartheta}\calL_\eps\Xi
+\frac{2\gamma\eps}{L}
\nabla_r\Xi\cdot\nabla_rg_{\eps,\vartheta}.
\]
Let $E_\Xi=\operatorname{supp}\nabla\Xi$.  On this fixed compact shell,
$H(z)\ge U(\sigma)+1$, and for some fixed $N$,
\[
\sup_{z\in E_\Xi}\left(
|g_{\eps,\vartheta}\calL_\eps\Xi|
+\eps|\nabla_r\Xi\cdot\nabla_rg_{\eps,\vartheta}|
\right)
\le C_\vartheta\eps^{-N}.
\]
Since $0\le h_\eps^*\le1$,
\[
\int_{E_\Xi}h_\eps^*
\left|\calL_\eps(\Xi g_{\eps,\vartheta})
-\Xi\calL_\eps g_{\eps,\vartheta}\right|d\pi_\eps
\le
C_\vartheta Z_\eps^{-1}\eps^{-N}
e^{-(U(\sigma)+1)/\eps}\operatorname{Leb}(E_\Xi).
\]
After dividing by \eqref{eq:Ksigma-def}, the right-hand side in the above equation 
is bounded by
$C_\vartheta\eps^{-N-3d/2}e^{-1/\eps}$ and therefore tends to zero.
\end{proof}

\subsection{Coarea Form of the Quadratic Saddle Integral}
\label{app:coarea}

To evaluate the quadratic current, it is convenient to convert
the surface flux into an ambient Gaussian integral with a linear delta
constraint.

\begin{lemma}[Coarea form]
\label{lem:quadratic-flux-coarea}
The positive quadratic saddle current \eqref{eq:quad-current-def} satisfies
\[
\begin{aligned}
\mathfrak C_{\sigma,\eps}^{\rm quad}
&=\frac{1}{Z_\eps}\sqrt\eps\,c_\sigma|\Phi_\sigma'(0)|e^{-U(\sigma)/\eps} \\
&\quad\cdot
\int_{\R^{3d}}
\delta(\Lambda_\sigma)
\exp\!\left(-\frac1\eps\left(-\frac{\lambda_\sigma}{2}q_1^2
+\frac12\!\sum_{i=2}^d\!\lambda_i^\sigma q_i^2+\frac L2|v|^2+\frac L2|w|^2\right)\right)
dq\,dv\,dw .
\end{aligned}
\]
\end{lemma}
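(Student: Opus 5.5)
The plan is to compute the integrand on $\Sigma_\sigma^{\rm lin}$ explicitly and then convert the surface integral into an ambient integral against $\delta(\Lambda_\sigma)$ using the coarea formula for the linear function $\Lambda_\sigma$.

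\emph{Step 1: the flux density.} With $\psi_\eps=\Phi_\sigma(\Lambda_\sigma/\sqrt\eps)$ we have
\[
\nabla\psi_\eps=\eps^{-1/2}\Phi_\sigma'(\Lambda_\sigma/\sqrt\eps)\,\nabla\Lambda_\sigma .
\]
On $\Sigma_\sigma^{\rm lin}$ this becomes $\eps^{-1/2}\Phi_\sigma'(0)\nabla\Lambda_\sigma$. Since $n_\sigma=\nabla\Lambda_\sigma/|\nabla\Lambda_\sigma|$, the integrand reduces to
\[
-\eps^{1/2}\Phi_\sigma'(0)\,
\frac{\mathsf M\nabla\Lambda_\sigma\cdot\nabla\Lambda_\sigma}{|\nabla\Lambda_\sigma|}\,
e^{-H_\sigma^{\rm quad}/\eps}.
\]
Because $\mathsf Q$ is skew-symmetric, only $\mathsf D$ contributes to the quadratic form. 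The vector $\nabla\Lambda_\sigma$ equals $(\alpha_\sigma e_1,\,e_1,\,\zeta_\sigma e_1)$ in the rotated coordinates, and its $r$-block is $\zeta_\sigma e_1$. Hence
\[
\mathsf M\nabla\Lambda_\sigma\cdot\nabla\Lambda_\sigma
=\frac{\gamma}{L}\zeta_\sigma^2=c_\sigma
\]
by Lemma~\ref{lem:local-profile-ode}. Since $\Phi_\sigma'(0)<0$, the integrand is positive and equals $\sqrt\eps\,c_\sigma|\Phi_\sigma'(0)|\,e^{-H_\sigma^{\rm quad}/\eps}/|\nabla\Lambda_\sigma|$, which fixes the positive orientation. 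The lifted orthogonal change of variables from Appendix~\ref{app:linear-algebra} has unit Jacobian and preserves $\mathsf M$, so the computation may be done in $(q,v,w)$ coordinates.

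\emph{Step 2: coarea.} For the linear map $\Lambda_\sigma$ with constant nonzero gradient, the coarea formula gives, for integrable $F$,
\[
\int_{\{\Lambda_\sigma=0\}}F\,\frac{dS}{|\nabla\Lambda_\sigma|}=\int_{\R^{3d}}\delta(\Lambda_\sigma)F\,dz .
\]
Here $\delta(\Lambda_\sigma)$ is understood as this limit of $\int F\,\rho_n(\Lambda_\sigma)\,dz$ for approximate identities $\rho_n$, which is the meaning used in Proposition~\ref{prop:quadratic-gaussian-current}. The factor $|\nabla\Lambda_\sigma|^{-1}$ from Step 1 is therefore absorbed exactly.

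\emph{Step 3: assemble.} Factor $e^{-U(\sigma)/\eps}$ out of $e^{-H_\sigma^{\rm quad}/\eps}$ using \eqref{eq:saddle-quadratic-Hamiltonian}, and reinsert $Z_\eps^{-1}$; this gives the claimed formula.

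The only genuine obstacle is integrability: the exponent contains the indefinite term $-\lambda_\sigma q_1^2$, so one must check that the restriction to $\{\Lambda_\sigma=0\}$ is a convergent Gaussian. On that hyperplane $q_1=-(v_1+\zeta_\sigma w_1)/\alpha_\sigma$, and the restricted form is $\tfrac12(v_1,w_1)K_\sigma(v_1,w_1)^\top$ plus positive stable terms. Lemma~\ref{lem:unstable-gaussian-determinant} shows that $K_\sigma$ is positive definite, which justifies both the coarea step and the finiteness of all the integrals.
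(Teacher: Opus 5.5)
Your proposal is correct and follows the paper's proof essentially step for step. It uses the same reduction of the flux density to $\sqrt\eps\,c_\sigma|\Phi_\sigma'(0)|e^{-H_\sigma^{\rm quad}/\eps}/|\nabla\Lambda_\sigma|$ via skew-symmetry of $\mathsf Q$, the same linear coarea identity, and the same integrability check on $\{\Lambda_\sigma=0\}$ from positive definiteness of $K_\sigma$. The one difference is minor: the paper takes the coarea identity itself as the definition of $\int F\,\delta(\Lambda_\sigma)\,dY$, whereas your approximate-identity reading also needs the slice integrals $t\mapsto\int_{\{\Lambda_\sigma=t\}}F\,dS$ to be finite and continuous near $t=0$ (because $F$ is not integrable on $\R^{3d}$), and this follows from the splitting bound \eqref{eq:coercivity-splitting}.
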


\begin{proof}
Since $\nabla\psi_\eps=\eps^{-1/2}\Phi_\sigma'(\Lambda_\sigma/\sqrt\eps)\nabla\Lambda_\sigma$
and $\Lambda_\sigma=0$ on $\Sigma_\sigma^{\rm lin}$, the profile derivative on
the surface is $\Phi_\sigma'(0)$. 
Since the skew part contributes zero,
\[
\nabla\Lambda_\sigma\cdot\mathsf M\nabla\Lambda_\sigma
=\nabla\Lambda_\sigma\cdot\mathsf D\nabla\Lambda_\sigma
=\frac{\gamma}{L}\zeta_\sigma^2=c_\sigma,
\]
and therefore,
\[
\mathfrak C_{\sigma,\eps}^{\rm quad}
=\frac{1}{Z_\eps}\sqrt\eps\,c_\sigma|\Phi_\sigma'(0)|
\int_{\Sigma_\sigma^{\rm lin}}
e^{-H_\sigma^{\rm quad}/\eps}\frac{1}{|\nabla\Lambda_\sigma|}\,dS .
\]
For every continuous $F:\R^{3d}\to\R$ whose restriction to
$\Sigma_\sigma^{\rm lin}$ is integrable, we use the linear coarea identity
to define
\[
\int_{\R^{3d}}F(Y)\delta(\Lambda_\sigma(Y))\,dY
:=\frac1{|\nabla\Lambda_\sigma|}
\int_{\Sigma_\sigma^{\rm lin}}F(Y)\,dS(Y),
\]
where, in our case,
\[
F(q,v,w)=
\exp\!\left[-\frac1\eps\left(-\frac{\lambda_\sigma}{2}q_1^2
+\frac12\sum_{i=2}^d\lambda_i^\sigma q_i^2
+\frac L2|v|^2+\frac L2|w|^2\right)\right].
\]
Its restriction to $\Sigma_\sigma^{\rm lin}$ is integrable because
$K_\sigma$ is positive definite by
Lemma~\ref{lem:unstable-gaussian-determinant}, while all stable Hessian
eigenvalues are positive.
This proves the claim.
\end{proof}

\section{Global Saddle Test Function and Capacity Reduction}
\label{app:global-saddle-test}

\subsection{Saddle Boxes, Truncation, and Test Functions}
\label{app:truncation}

\emph{Saddle scales and boxes.}  Let
$R_\eps=K\sqrt{\log(1/\eps)}$ ($K$ fixed, large),
$\delta_\eps=\sqrt\eps R_\eps$, and
$a_\eps=c_K\eps R_\eps^2$, where $c_K>0$ is fixed sufficiently small.
In local coordinates diagonalizing $H_\sigma$, define the physical saddle box
\begin{equation}
\label{eq:logarithmic-saddle-box}
\mathcal K_\eps:=\left\{|q_1|\le\frac{\delta_\eps}{\sqrt{\lambda_\sigma}},\
|q_i|\le\frac{2\delta_\eps}{\sqrt{\lambda_i^\sigma}}\ (i\ge2),\
|v_i|,|w_i|\le\frac{2\delta_\eps}{\sqrt L}\right\},
\end{equation}
with $m$-side face
\[
\partial\mathcal K_\eps^-
:=\partial\mathcal K_\eps\cap
\left\{q_1=-\delta_\eps/\sqrt{\lambda_\sigma}\right\},
\]
and
$s$-side face
\[
\partial\mathcal K_\eps^+
:=\partial\mathcal K_\eps\cap
\left\{q_1=+\delta_\eps/\sqrt{\lambda_\sigma}\right\}.
\]
For the auxiliary side-layer parameter $\vartheta>0$, define
\begin{equation}
\label{eq:physical-side-layers}
\begin{aligned}
\mathcal A_{\eps,\vartheta}^-
&:=
\left\{
-\frac{\delta_\eps+\vartheta\sqrt\eps}{\sqrt{\lambda_\sigma}}
\le q_1\le
-\frac{\delta_\eps}{\sqrt{\lambda_\sigma}},\
|q_i|\le\frac{2\delta_\eps}{\sqrt{\lambda_i^\sigma}}\ (i\ge2),\
|v_i|,|w_i|\le\frac{2\delta_\eps}{\sqrt L}
\right\},\\
\mathcal A_{\eps,\vartheta}^+
&:=
\left\{
\frac{\delta_\eps}{\sqrt{\lambda_\sigma}}
\le q_1\le
\frac{\delta_\eps+\vartheta\sqrt\eps}{\sqrt{\lambda_\sigma}},\
|q_i|\le\frac{2\delta_\eps}{\sqrt{\lambda_i^\sigma}}\ (i\ge2),\
|v_i|,|w_i|\le\frac{2\delta_\eps}{\sqrt L}
\right\},\\
\mathcal A_{\eps,\vartheta}^{\rm side}
&:=\mathcal A_{\eps,\vartheta}^-\cup\mathcal A_{\eps,\vartheta}^+.
\end{aligned}
\end{equation}
We fix the sign of $e_\sigma$ so that the negative face is locally connected
to $W_m$ and the positive face to $W_s$.
On $\partial\mathcal K_\eps^-$, let $n^-$ denote the unit normal pointing
toward increasing $q_1$, namely from the $m$-side layer into
$\mathcal K_\eps$.  Thus $n^-$ is the outward normal of the adjacent
$m$-side layer on its inner face; this is the positive $m$-to-$s$ current
orientation used in \eqref{eq:saddle-boundary-layer-sides} and
Lemma~\ref{lem:uniform-saddle-current-stability}.
Set $J_\eps=\{H<U(\sigma)+a_\eps\}$ with components
$J_\eps^m,J_\eps^s$ of $J_\eps\setminus\mathcal K_\eps$ containing $x_m,x_s$.
Denote the union of all remaining components by
\[
J_\eps^c
:=
\left(J_\eps\setminus\mathcal K_\eps\right)
\setminus\left(J_\eps^m\cup J_\eps^s\right).
\]
Lemma~\ref{lem:box-partition-low-sublevel} shows that
$J_\eps^c=\varnothing$ for all sufficiently small $\eps$.
Let $\widehat\ell_\sigma:=\nabla\Lambda_\sigma$ and choose the normalized quadratic
current direction
\[
\mathfrak u_\sigma:=\frac{\mathsf M\widehat\ell_\sigma}{c_\sigma}.
\]
Since
$\widehat\ell_\sigma^\top\mathsf M\widehat\ell_\sigma=c_\sigma$, we have
$\Lambda_\sigma(\mathfrak u_\sigma)=1$.  Set
$\Pi_\sigma Y=Y-\Lambda_\sigma(Y)\mathfrak u_\sigma$; write
$Y=(Q,V,W)$, $s=\Lambda_\sigma(Y)$, $\eta_s=\Pi_\sigma Y$.  For later use,
$\mathfrak K_{\sigma,\eps}$ is as in \eqref{eq:Ksigma-def}.

We now patch the local saddle profile to the constant well values
while localizing all additional derivatives to energetically negligible
regions.

\begin{lemma}[Energy-localized saddle ansatz]
\label{lem:global-saddle-test-function}
Let $\rho_\eps=\delta_\eps^3$.  There is
$G_\eps\in C_c^\infty(\R^{3d};[0,1])$ such that
\begin{equation}
\label{eq:energy-cutoff-properties}
G_\eps=1\ \hbox{ on }\{H\le U(\sigma)+a_\eps/2\},\qquad
G_\eps=0\ \hbox{ on }\{H\ge U(\sigma)+5a_\eps/6\},
\end{equation}
and $|\partial^\alpha G_\eps|\le C_\alpha\rho_\eps^{-|\alpha|}$.
Writing $j_\eps=\Phi_\sigma(\Lambda_\sigma/\sqrt\eps)$, for every $\vartheta>0$
there is an admissible $f_{\eps,\vartheta}\in C_c^2(\R^{3d};[0,1])$ which equals
$G_\eps j_\eps$ on $\mathcal K_\eps$, $G_\eps$ on the part of $J_\eps^m$
outside the $m$-side layer, and zero on the part of $J_\eps^s$ outside the
$s$-side layer.  Across the two physical faces it is
\begin{equation}
\label{eq:piecewise-energy-saddle-test}
f_{\eps,\vartheta}=
\begin{cases}
G_\eps\{j_\eps+\chi_{\eps,\vartheta}^-(1-j_\eps)\},&m\text{-side layer},\\
G_\eps j_\eps,&\mathcal K_\eps,\\
G_\eps(1-\chi_{\eps,\vartheta}^+)j_\eps,&s\text{-side layer},\\
G_\eps,&J_\eps^m\text{ away from the layer},\\
0,&J_\eps^s\text{ away from the layer or }J_\eps^c,
\end{cases}
\end{equation}
where $\chi_{\eps,\vartheta}^\pm$ are flat at both ends, equal to one at the
outer end and zero at the inner end.
Every derivative not contained in the
logarithmic saddle box $\mathcal K_\eps$ in
\eqref{eq:logarithmic-saddle-box} or a physical side layer
$\mathcal A_{\eps,\vartheta}^\pm$ in \eqref{eq:physical-side-layers} is supported in
$\{H\ge U(\sigma)+a_\eps/2\}$.
\end{lemma}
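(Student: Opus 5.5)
The plan is to build $G_\eps$ first, then assemble $f_{\eps,\vartheta}$ piece by piece according to \eqref{eq:piecewise-energy-saddle-test}, checking continuity and $C^2$ regularity across each interface. For the energy cutoff, I would take a fixed smooth $\kappa:\R\to[0,1]$ with $\kappa=1$ on $(-\infty,1/2]$ and $\kappa=0$ on $[5/6,\infty)$. I would then mollify the function $z\mapsto\kappa((H(z)-U(\sigma))/a_\eps)$ at spatial scale $\rho_\eps=\delta_\eps^3$, slightly shrinking the thresholds so that \eqref{eq:energy-cutoff-properties} still holds exactly. The mollification is harmless because $|\nabla H|$ is bounded on the compact set $\{H\le U(\sigma)+1\}$, so $H$ moves by only $O(\rho_\eps)=o(a_\eps)$ over distance $\rho_\eps$. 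This uses $\rho_\eps=\eps^{3/2}R_\eps^3\ll a_\eps=c_K\eps R_\eps^2$. The standard mollifier bound gives $|\partial^\alpha G_\eps|\le C_\alpha\rho_\eps^{-|\alpha|}$, and the support of $G_\eps$ is compact because the sublevel sets of $H$ are compact.

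Next comes the partition of the sublevel. By Lemma~\ref{lem:box-partition-low-sublevel}, $J_\eps\setminus\mathcal K_\eps$ splits into $J_\eps^m\cup J_\eps^s$ with $J_\eps^c=\varnothing$. I would check, using the orientation convention for $e_\sigma$, that inside $J_\eps$ the set $\overline{J_\eps^m}$ meets $\partial\mathcal K_\eps$ only through the face $\partial\mathcal K_\eps^-$, and $\overline{J_\eps^s}$ only through $\partial\mathcal K_\eps^+$. The other faces must lie in $\{H\ge U(\sigma)+a_\eps\}$, so that $G_\eps=0$ there. This is where the small choice of $c_K<c_{\rm box}$ enters: on a face with $|q_i|=2\delta_\eps/\sqrt{\lambda_i^\sigma}$ or $|v_i|=2\delta_\eps/\sqrt L$, the positive quadratic part contributes about $2\delta_\eps^2$. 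The negative $q_1$ part contributes at most $\delta_\eps^2/2$, and the cubic remainder is $O(\delta_\eps^3)$. Hence $H-U(\sigma)\gtrsim\delta_\eps^2\gg a_\eps$ there. I expect this face analysis to be the main obstacle: the estimate must cover the full closure of the layers $\mathcal A^\pm_{\eps,\vartheta}$ and must rule out $J_\eps^m$ reaching the $s$-face by going around the box. I would handle the latter with the path-exclusion Lemma~\ref{lem:path-exclusion-saddle-box} and the side-face alternative Lemma~\ref{lem:side-face-alternative}.

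For the layers, I would choose smooth $\chi^-_{\eps,\vartheta}(q_1)$, which equals $1$ near $q_1=-(\delta_\eps+\vartheta\sqrt\eps)/\sqrt{\lambda_\sigma}$ and $0$ near $q_1=-\delta_\eps/\sqrt{\lambda_\sigma}$, with all derivatives vanishing at both ends. For example, take $\chi^-(q_1)=\kappa_0\bigl((-q_1\sqrt{\lambda_\sigma}-\delta_\eps)/(\vartheta\sqrt\eps)\bigr)$ with a flat step $\kappa_0$, and define $\chi^+$ symmetrically. Flatness makes the formulas in \eqref{eq:piecewise-energy-saddle-test} agree to infinite order across each layer end. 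At the inner end both layer formulas reduce to $G_\eps j_\eps$. At the outer end of the $m$-side layer the formula gives $G_\eps$, and at the outer end of the $s$-side layer it gives $0$, matching $J_\eps^m$ and $J_\eps^s$ respectively. Away from the two faces, the pieces either touch only where $G_\eps\equiv0$ in a neighborhood (by the face estimate above) or do not touch at all. Outside $J_\eps$ I set $f=0$, which is consistent because $G_\eps=0$ on $\{H\ge U(\sigma)+5a_\eps/6\}$. The resulting function is therefore $C^\infty$ with compact support and takes values in $[0,1]$, since each formula is a convex combination of elements of $[0,1]$ multiplied by $G_\eps$. It is admissible because $\overline{A_\eps}\subset\{H\le U(\sigma)+a_\eps/2\}\cap J_\eps^m$ lies away from the layer, so $f=G_\eps=1$ near $\overline{A_\eps}$; similarly $f=0$ near $\overline{S_\eps}$.

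Finally, for the derivative-support claim: outside $\mathcal K_\eps\cup\mathcal A^\pm_{\eps,\vartheta}$, the function $f$ equals either $G_\eps$ or $0$. Its derivatives there are therefore derivatives of $G_\eps$, which vanish wherever $H<U(\sigma)+a_\eps/2$.
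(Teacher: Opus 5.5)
Your proposal follows the paper's proof essentially step by step. You mollify an energy cutoff at scale $\rho_\eps=\delta_\eps^3$, which is legitimate since $\rho_\eps/a_\eps\to0$ and $H$ is Lipschitz near the relevant sublevel. You use flat $q_1$-interpolations $\chi^\pm_{\eps,\vartheta}$ on the physical layers, the Taylor lower bound on the tangential faces of the box and layers so that $G_\eps$ vanishes in a neighborhood of them, and the box-partition lemma to match the outer layer ends with $J_\eps^m$ and $J_\eps^s$. Mollifying $\kappa((H-U(\sigma))/a_\eps)$ rather than the indicator of $\{H\le U(\sigma)+2a_\eps/3\}$ is an inessential variant.

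One small correction. On the tangential faces, $H-U(\sigma)\ge\tfrac32\eps R_\eps^2(1+o(1))$ is of the same order as $a_\eps=c_K\eps R_\eps^2$, not much larger. What is actually used there is the constant comparison $5c_K/6<\tfrac32$, which $c_K<1$ guarantees. The condition $c_K<c_{\rm box}$ enters this lemma only through the path-exclusion input to the box-partition lemma.
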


\begin{proof}
Take a non-negative $\Psi\in C_c^\infty(B(0,1))$ with $\int_{\mathbb{R}^{3d}}\Psi(y)dy=1$ and set
\begin{equation}
\label{eq:energy-cutoff-convolution}
G_\eps(z)=\rho_\eps^{-3d}\int_{\R^{3d}}
\mathbf 1_{\{H(z-y)\le U(\sigma)+2a_\eps/3\}}
\Psi(y/\rho_\eps)\,dy .
\end{equation}
Compact Hamiltonian sublevels make $G_\eps$ compactly supported.  On a fixed
compact set containing $\{H\le U(\sigma)+1\}$, $H$ is Lipschitz.  Since
$\rho_\eps=\delta_\eps^3=o(\delta_\eps^2)$ and
$a_\eps=c_K\delta_\eps^2$, for small $\eps$ one has
$|H(z-y)-H(z)|\le a_\eps/6$ whenever $|y|\le\rho_\eps$.
This proves \eqref{eq:energy-cutoff-properties}; differentiating the mollifier
proves the derivative bound.

Choose $\chi_{\eps,\vartheta}^\pm$ from a function
$\chi\in C^\infty(\R;[0,1])$ which is constant outside $(0,1)$ and whose
derivatives of every order vanish at $0,1$, rescaled to layers of width
$\vartheta\sqrt\eps$.  Lemma~\ref{lem:box-partition-low-sublevel} and the chosen
orientation identify the two prescriptions on the outer ends of the layers.
For fixed $\vartheta$, Taylor expansion on a tangential face of either layer
gives, uniformly as $\eps\downarrow0$,
\[
H-U(\sigma)
\ge
-\frac{\eps}{2}(R_\eps+\vartheta)^2
+2\eps R_\eps^2+o(\eps R_\eps^2)
\ge \eps R_\eps^2.
\]
On a tangential face of $\mathcal K_\eps$ itself, the same calculation is
stronger because $|q_1|\le\delta_\eps/\sqrt{\lambda_\sigma}$; namely,
\[
H-U(\sigma)
\ge -\frac{\eps}{2}R_\eps^2+2\eps R_\eps^2
+o(\eps R_\eps^2)
\ge \eps R_\eps^2.
\]
After decreasing $c_K$ so that $5c_K/6<1$, the cutoff $G_\eps$ therefore
vanishes in a neighborhood of every tangential face of the saddle box and
the side layers.  Thus the box, side-layer, and exterior prescriptions agree
there, together with all their derivatives.
At an inner face $\chi_{\eps,\vartheta}^\pm=0$, so both side-layer formulas
coincide with $G_\eps j_\eps$ in $\mathcal K_\eps$.  At the outer end,
$\chi_{\eps,\vartheta}^-=1$ gives $G_\eps$ on the $m$-side and
$\chi_{\eps,\vartheta}^+=1$ gives zero on the $s$-side.  Moreover,
\[
\partial_{q_1}^k\chi_{\eps,\vartheta}^\pm=0
\quad\text{at both layer endpoints},\qquad k=1,2,
\]
so the values and derivatives through order two also agree across both
physical interfaces.
Thus \eqref{eq:piecewise-energy-saddle-test} defines a $C^2$ function.  In
particular, the value on $J_\eps^m$ is $G_\eps$, rather than the bare constant
one; this makes the extension smooth across the outer energy level.  Near
$\overline{A_\eps}$, $G_\eps=1$, while the function vanishes near
$\overline{S_\eps}$.
The derivative-support assertion follows from
\eqref{eq:energy-cutoff-properties}.  The fixed cutoff $\Xi$ from
Lemma~\ref{lem:compact-outer-cutoff} satisfies
\[
\Xi=1\quad\text{on }\{H\le U(\sigma)+1\},
\]
which contains the support of $f_{\eps,\vartheta}$ for all small $\eps$ because
$a_\eps\to0$.  Hence $\Xi f_{\eps,\vartheta}=f_{\eps,\vartheta}$.
\end{proof}

The next lemma verifies that this patched ansatz is admissible
and identifies the only derivative supports that can contribute on the saddle
scale.

\begin{lemma}[Saddle localization admissibility]
\label{lem:saddle-localization-admissibility}
The ansatz $f_{\eps,\vartheta}$ of
Lemma~\ref{lem:global-saddle-test-function} is smooth, $[0,1]$-valued, equal
to one near $\overline{A_\eps}$ and zero near $\overline{S_\eps}$, with
derivatives satisfying, for suitable constants $C_\alpha,N_\alpha<\infty$,
\begin{equation}
\label{eq:global-saddle-derivative-bound}
\|\partial^\alpha f_{\eps,\vartheta}\|_\infty
\le C_\alpha\eps^{-N_\alpha}\vartheta^{-|\alpha|},
\qquad |\alpha|\le2.
\end{equation}
Apart
from the two physical side layers retained in
Lemma~\ref{lem:collapse-physical-side-layer}, whose union
$\mathcal A_{\eps,\vartheta}^{\rm side}$ is defined in
\eqref{eq:physical-side-layers}, set
\[
E_{\eps,\vartheta}^{\rm ext}
:=
\int_{(\R^{3d}\setminus\mathcal K_\eps)
\setminus\mathcal A_{\eps,\vartheta}^{\rm side}}
h_\eps^*(-\calL_\eps f_{\eps,\vartheta})\,d\pi_\eps.
\]
For every fixed $\vartheta>0$,
\begin{equation}
\label{eq:saddle-localization-fixed-theta}
\lim_{\eps\downarrow0}
\frac{|E_{\eps,\vartheta}^{\rm ext}|}{\mathfrak K_{\sigma,\eps}}=0.
\end{equation}
Moreover,
\begin{equation}
\label{eq:saddle-localization-ordered-limit}
\lim_{\vartheta\downarrow0}\limsup_{\eps\downarrow0}
\frac{|E_{\eps,\vartheta}^{\rm ext}|}{\mathfrak K_{\sigma,\eps}}=0.
\end{equation}
\end{lemma}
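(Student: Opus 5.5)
The plan is to treat the regularity statements first and then split the exterior integral according to the derivative supports identified in Lemma~\ref{lem:global-saddle-test-function}. For admissibility and \eqref{eq:global-saddle-derivative-bound}, we read off each piece of \eqref{eq:piecewise-energy-saddle-test}. The profile $j_\eps=\Phi_\sigma(\Lambda_\sigma/\sqrt\eps)$ has $k$-th derivatives of size $O(\eps^{-k/2})$. The energy cutoff has $|\partial^\alpha G_\eps|\le C\rho_\eps^{-|\alpha|}=C\eps^{-3|\alpha|/2}(\log(1/\eps))^{-3|\alpha|/2}K^{-3|\alpha|}$. The layer cutoffs $\chi^\pm_{\eps,\vartheta}$ have $k$-th derivatives of size $(\vartheta\sqrt\eps)^{-k}$. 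Leibniz's rule then gives \eqref{eq:global-saddle-derivative-bound} with $N_\alpha=3|\alpha|/2$ after absorbing logarithms. The values in $[0,1]$ follow because each formula is a convex combination of $j_\eps$ and $1$ (or a fraction of $j_\eps$), multiplied by $G_\eps\in[0,1]$. The boundary conditions near $\overline{A_\eps}$ and $\overline{S_\eps}$ were already recorded in Lemma~\ref{lem:global-saddle-test-function}.

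For \eqref{eq:saddle-localization-fixed-theta}, observe that outside $\mathcal K_\eps\cup\mathcal A^{\rm side}_{\eps,\vartheta}$ the function $f_{\eps,\vartheta}$ equals $G_\eps$ on $J^m_\eps$ and $0$ elsewhere, by Lemma~\ref{lem:box-partition-low-sublevel}, which gives $J^c_\eps=\varnothing$. So $\calL_\eps f_{\eps,\vartheta}$ there is supported in $J_\eps^m\cap\{U(\sigma)+a_\eps/2\le H\le U(\sigma)+5a_\eps/6\}$. On this set the bound $|\calL_\eps f|\le C\eps^{-N}$ and the Gibbs weight $e^{-H/\eps}\le e^{-U(\sigma)/\eps}\eps^{c_KK^2/2}$ combine to give a contribution of at most
\[
C\,Z_\eps^{-1}\eps^{-N}\eps^{c_KK^2/2}e^{-U(\sigma)/\eps}\operatorname{Leb}\{H\le U(\sigma)+1\}.
\]
Dividing by $\mathfrak K_{\sigma,\eps}$ leaves $C\eps^{c_KK^2/2-N-3d/2}$. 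This tends to zero once $K$ is chosen as in the parameter hierarchy, which is exactly why $K$ is fixed after $N$. Note that the $\vartheta$-dependence only enters through the constant, so the bound holds for each fixed $\vartheta$. The outer cutoff $\Xi$ contributes nothing, since $\Xi f=f$.

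The main obstacle is that $\operatorname{supp}\nabla G_\eps$ can intersect the box and the layers, and, more seriously, that one must make sure no part of the exterior region near the saddle carries the unsuppressed profile derivative. The tangential-face estimate $H-U(\sigma)\ge\eps R_\eps^2$ from the proof of Lemma~\ref{lem:global-saddle-test-function} shows that $G_\eps$ already vanishes near those faces. Hence the only exterior derivatives are the energy-shell ones treated above, and these are uniform in $\vartheta$ once $\eps\le\eps_0(\vartheta)$. Concretely, the $C_\vartheta$ constant enters only through the layer width. Since the layers are excluded from the domain of $E^{\rm ext}$, the exterior bound is actually $\vartheta$-independent. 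This yields \eqref{eq:saddle-localization-ordered-limit} trivially from \eqref{eq:saddle-localization-fixed-theta}: the $\limsup$ in $\eps$ is zero for every $\vartheta$. If instead a residual $\vartheta$-dependent constant survived, the ordered limit would still follow, because the $\limsup_{\eps}$ is zero for each fixed $\vartheta$.
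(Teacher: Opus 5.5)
Your proposal is correct and follows essentially the same route as the paper: outside $\mathcal K_\eps\cup\mathcal A_{\eps,\vartheta}^{\rm side}$ the integrand is supported only where derivatives of $G_\eps$ are nonzero, and there $H\ge U(\sigma)+a_\eps/2$. The Gibbs factor $e^{-a_\eps/(2\eps)}=\eps^{c_KK^2/2}$ therefore beats the polynomial loss $\eps^{-N-3d/2}$ once $K$ is large, and the ordered limit then follows immediately. The only difference is cosmetic: you note that $\Xi f_{\eps,\vartheta}=f_{\eps,\vartheta}$, so the outer-cutoff contribution vanishes outright, whereas the paper bounds it via Lemma~\ref{lem:compact-outer-cutoff}.
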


\begin{proof}
The derivative bounds for $G_\eps$, $j_\eps$, and the layer cutoffs give
\eqref{eq:global-saddle-derivative-bound}.  To prove the integral estimate,
split the derivative support outside $\mathcal K_\eps$ and the side
layers into
\[
E_G:=\operatorname{supp}\left(\left|\nabla G_\eps\right|+\left|\nabla^2G_\eps\right|\right),
\qquad
E_\Xi:=\operatorname{supp}\nabla\Xi.
\]
On all remaining low-energy components the function is identically one or
zero, so $\calL_\eps f_{\eps,\vartheta}=0$ there.  Since
$\rho_\eps^{-1}=(\eps^{1/2}R_\eps)^{-3}$, the derivatives on $E_G$ are bounded
by $C_\vartheta\eps^{-N}$ for some fixed $N$, while
$H\ge U(\sigma)+a_\eps/2$ on $E_G$.  Consequently,
\[
\int_{E_G}h_\eps^*|\calL_\eps f_{\eps,\vartheta}|\,d\pi_\eps
\le C_\vartheta Z_\eps^{-1}\eps^{-N}
e^{-[U(\sigma)+a_\eps/2]/\eps}
\le C_\vartheta\eps^{-N-3d/2}
e^{-a_\eps/(2\eps)}\mathfrak K_{\sigma,\eps},
\]
where
\[
\exp\left(-a_\eps/(2\eps)\right)
=\exp\left(-c_KR_\eps^2/2\right)=\eps^{c_KK^2/2}.
\]
Choose the fixed $K$ so that $c_KK^2/2>N+3d/2+1$; the normalized contribution
of $E_G$ then tends to zero.  Lemma~\ref{lem:compact-outer-cutoff} gives the
same conclusion on $E_\Xi$.  The only derivative supports not included in
this decomposition are the two physical side layers, which are excluded from
$E_{\eps,\vartheta}^{\rm ext}$ by definition.  Combining these estimates proves
\eqref{eq:saddle-localization-fixed-theta}.  Consequently,
\[
\lim_{\vartheta\downarrow0}\limsup_{\eps\downarrow0}
\frac{|E_{\eps,\vartheta}^{\rm ext}|}{\mathfrak K_{\sigma,\eps}}
=\lim_{\vartheta\downarrow0}0=0,
\]
which yields \eqref{eq:saddle-localization-ordered-limit}.
\end{proof}

We next quantify the error made by replacing the full
Hamiltonian with its quadratic Taylor approximation on the saddle box.

\begin{lemma}[Hamiltonian expansion in the saddle box]
\label{lem:saddle-hamiltonian-expansion}
For $(q,v,w)=\sqrt\eps(Q,V,W)$, with
$|Q|+|V|+|W|\le C R_\eps$ for a fixed $C<\infty$, we have
\[
\frac{H(\sigma+\sqrt\eps Q,\sqrt\eps V,\sqrt\eps W)-U(\sigma)}{\eps}
=-\frac{\lambda_\sigma}{2}Q_1^2+\frac12\sum_{i=2}^d\lambda_i^\sigma Q_i^2
+\frac L2|V|^2+\frac L2|W|^2
+\mathcal O\left(\sqrt\eps R_\eps^3\right),
\]
uniformly on this set.  In particular, the remainder is $o(1)$.
\end{lemma}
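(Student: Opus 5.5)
The plan is a third-order Taylor expansion with remainder, done separately in the two kinds of variables. Since $H(\theta,p,r)=U(\theta)+\frac L2|p|^2+\frac L2|r|^2$, the kinetic part is exactly quadratic. Substituting $p=\sqrt\eps V$, $r=\sqrt\eps W$ gives $\frac L2\eps|V|^2+\frac L2\eps|W|^2$ with no remainder. Dividing by $\eps$ yields the kinetic terms exactly. Because the lifted orthogonal change of variables of Lemma~\ref{lem:full-saddle-spectrum} preserves $|p|^2+|r|^2$, it makes no difference whether $V,W$ are read in the original or the eigen-coordinates.

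For the potential part, work in the eigenbasis of $H_\sigma$, so that $q=O_\sigma^\top(\theta-\sigma)$. Use $\nabla U(\sigma)=0$ and Taylor's theorem with Lagrange (or integral) remainder at order three:
\[
U(\sigma+\sqrt\eps Q)=U(\sigma)+\frac\eps2 Q^\top \mathrm{diag}(-\lambda_\sigma,\lambda_2^\sigma,\dots,\lambda_d^\sigma)Q+\mathcal R_3,
\qquad
|\mathcal R_3|\le \frac16\sup_{B(\sigma,\rho_0)}|\nabla^3U|\,\eps^{3/2}|Q|^3 .
\]
The supremum is taken over a fixed closed ball $B(\sigma,\rho_0)$, on which it is finite because $U\in C^\infty$. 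The condition $|Q|\le CR_\eps$ gives $\sqrt\eps|Q|\le C\sqrt\eps K\sqrt{\log(1/\eps)}\to0$, so for small $\eps$ every point $\sigma+t\sqrt\eps Q$ with $t\in[0,1]$ lies in $B(\sigma,\rho_0)$ uniformly on the set. Dividing by $\eps$ then gives a remainder bounded by $C'\sqrt\eps|Q|^3\le C''\sqrt\eps R_\eps^3$, uniformly.

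Finally, $\sqrt\eps R_\eps^3=K^3\eps^{1/2}(\log(1/\eps))^{3/2}\to0$, which shows the remainder is $o(1)$. There is no real obstacle here. The only point needing care is the uniformity: the third-derivative bound must be taken on a fixed compact neighbourhood of $\sigma$ containing the whole shrinking box. This works because the box has physical radius $\mathcal O(\sqrt\eps R_\eps)=o(1)$, so a single constant serves for all $Y$ in the set simultaneously.
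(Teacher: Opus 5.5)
Your proposal is correct and follows essentially the same route as the paper: a third-order Taylor expansion of $U$ at the critical point $\sigma$ in the Hessian eigenbasis, exact quadratic kinetic terms, and $|Q|\le CR_\eps$ to bound the cubic remainder by $\mathcal O(\sqrt\eps R_\eps^3)=o(1)$. Your explicit choice of a fixed ball around $\sigma$, on which the third-derivative bound holds uniformly, simply makes precise a step the paper leaves implicit.
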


\begin{proof}
Taylor expansion gives
\[
U\left(\sigma+\sqrt\eps Q\right)=U(\sigma)+\frac\eps2 Q^\top H_\sigma Q+\mathcal O\left(\eps^{3/2}|Q|^3\right),
\]
and in the eigenbasis
\[
\frac12 Q^\top H_\sigma Q=-\frac{\lambda_\sigma}{2}Q_1^2+\frac12\sum_{i\ge2}\lambda_i^\sigma Q_i^2.
\]
The kinetic terms are exact because
\[
\frac L2|p|^2+\frac L2|r|^2
=\eps\left(\frac L2|V|^2+\frac L2|W|^2\right).
\]
Moreover, $|Q|\le C R_\eps$ on the box, and hence
\[
\frac{\mathcal O(\eps^{3/2}|Q|^3)}{\eps}
=\mathcal O\left(\sqrt\eps R_\eps^3\right)=o(1),
\]
because $R_\eps=K\sqrt{\log(1/\eps)}$.
\end{proof}

Although the saddle Hamiltonian is indefinite in the full
space, its restriction to the section $\Lambda_\sigma=0$ is coercive.  This
property controls the transverse Gaussian directions.

\begin{lemma}[Coercivity on $\ker\Lambda_\sigma$]
\label{lem:coercivity-ker-Lambda}
Let
\[
B_\sigma(Q,V,W)=-\frac{\lambda_\sigma}{2}Q_1^2+\frac12\sum_{i\ge2}\lambda_i^\sigma Q_i^2+\frac L2|V|^2+\frac L2|W|^2.
\]
Let $\mathsf B_\sigma(\cdot,\cdot)$ denote the symmetric bilinear
form associated with $B_\sigma$, so that
$B_\sigma(Y)=\mathsf B_\sigma(Y,Y)$.
There is $c_\sigma^{\rm st}>0$ with $B_\sigma(Y)\ge c_\sigma^{\rm st}|Y|^2$ on
$\ker\Lambda_\sigma$, and constants $c_1,c_2>0$ with
\begin{equation}
\label{eq:coercivity-splitting}
B_\sigma(s\mathfrak u_\sigma+\eta)\ge c_1|\eta|^2-c_2s^2,
\end{equation}
for all $s\in\R$,
$\eta\in\ker\Lambda_\sigma$.
\end{lemma}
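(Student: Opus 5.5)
The plan is to reduce everything to the unstable triple, since $\Lambda_\sigma$ depends only on $(Q_1,V_1,W_1)$. In the eigenbasis of $H_\sigma$, the form $B_\sigma$ splits as a sum over the triples $(Q_i,V_i,W_i)$. For $i\ge2$ each block $\frac12(\lambda_i^\sigma Q_i^2+LV_i^2+LW_i^2)$ is positive definite with constant $\frac12\min\{\lambda_i^\sigma,L\}$. The constraint $\Lambda_\sigma=0$ involves only the unstable triple, so coercivity on $\ker\Lambda_\sigma$ reduces to showing that
\[
-\tfrac{\lambda_\sigma}{2}Q_1^2+\tfrac L2 V_1^2+\tfrac L2 W_1^2
\]
is positive definite on the plane $\alpha_\sigma Q_1+V_1+\zeta_\sigma W_1=0$. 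Since $\alpha_\sigma\neq0$, we can solve $Q_1=-(V_1+\zeta_\sigma W_1)/\alpha_\sigma$. The restricted form then becomes exactly $\frac12(V_1,W_1)K_\sigma(V_1,W_1)^\top$ with $K_\sigma$ as in Lemma~\ref{lem:unstable-gaussian-determinant}, and that lemma shows $K_\sigma$ is positive definite. The parametrization $(V_1,W_1)\mapsto(Q_1,V_1,W_1)$ is a linear injection, so $|(Q_1,V_1,W_1)|^2\le C|(V_1,W_1)|^2$. This gives $B_\sigma\ge c|Y_1|^2$ on the constrained plane. Taking the minimum of the block constants yields $c_\sigma^{\rm st}>0$. (Equivalently, Lemma~\ref{lem:side-face-kinetic-domination} gives the bound directly: by Cauchy--Schwarz, $\lambda_\sigma Q_1^2\le \lambda_\sigma(1+\zeta_\sigma^2)(V_1^2+W_1^2)/\alpha_\sigma^2<L(V_1^2+W_1^2)$, with a strict uniform margin.)

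For the splitting estimate \eqref{eq:coercivity-splitting}, I will expand bilinearly:
\[
B_\sigma(s\mathfrak u_\sigma+\eta)=s^2B_\sigma(\mathfrak u_\sigma)+2s\,\mathsf B_\sigma(\mathfrak u_\sigma,\eta)+B_\sigma(\eta).
\]
For $\eta\in\ker\Lambda_\sigma$ the first step gives $B_\sigma(\eta)\ge c_\sigma^{\rm st}|\eta|^2$. The cross term is bounded by $2|s|\,\|\mathsf B_\sigma\|\,|\mathfrak u_\sigma|\,|\eta|$, and Young's inequality bounds this by $\frac{c_\sigma^{\rm st}}2|\eta|^2+C s^2$. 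The diagonal term satisfies $s^2B_\sigma(\mathfrak u_\sigma)\ge -C s^2$ whatever its sign. Hence the estimate holds with $c_1=c_\sigma^{\rm st}/2$ and $c_2$ equal to the sum of these constants. Note that $\Lambda_\sigma(\mathfrak u_\sigma)=1$ was recorded earlier, so $s\mathfrak u_\sigma+\eta$ is a valid decomposition of every $Y$, although the inequality itself needs only $\eta\in\ker\Lambda_\sigma$.

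The only substantive point is the positivity of the restricted unstable form, that is, the kinetic-domination inequality. It rests on the cubic identity $\chi_\sigma(\mu_\sigma)=0$, which was already established in Lemma~\ref{lem:unstable-gaussian-determinant}. Everything else is routine linear algebra and Young's inequality.
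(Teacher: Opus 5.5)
Your proposal is correct and follows the paper's own proof. You substitute the constraint $Q_1=-(V_1+\zeta_\sigma W_1)/\alpha_\sigma$ to obtain the positive-definite $K_\sigma$ of Lemma~\ref{lem:unstable-gaussian-determinant}, add the positive stable blocks, and then handle the splitting by bilinear expansion and Young's inequality; your Cauchy--Schwarz variant via the kinetic-domination inequality is an equally valid way to get the same positivity.
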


\begin{proof}
On $\ker\Lambda_\sigma$, $Q_1=-(V_1+\zeta_\sigma W_1)/\alpha_\sigma$, and the
unstable part becomes $\frac12(V_1,W_1)K_\sigma(V_1,W_1)^\top$ with
$K_\sigma>0$ (Lemma~\ref{lem:unstable-gaussian-determinant}); the stable part is
positive definite.  Hence $B_\sigma$ is positive definite on
$\ker\Lambda_\sigma$.  To prove
\eqref{eq:coercivity-splitting},  
let $\mathsf B_\sigma$ denote the symmetric bilinear form obtained by
polarizing the quadratic form $B_\sigma$:
\[
\mathsf B_\sigma(Y_1,Y_2)
=\frac12\left[B_\sigma(Y_1+Y_2)-B_\sigma(Y_1)-B_\sigma(Y_2)\right],
\qquad B_\sigma(Y)=\mathsf B_\sigma(Y,Y).
\]
By using the expansion
\[
B_\sigma(s\mathfrak u_\sigma+\eta)=B_\sigma(\eta)+2s\mathsf B_\sigma(\mathfrak u_\sigma,\eta)+s^2B_\sigma(\mathfrak u_\sigma),
\]
using the bound $|\mathsf B_\sigma(\mathfrak u_\sigma,\eta)|\le C|\eta|$ and Young's inequality, we complete the proof.
\end{proof}

The tangential faces of the logarithmic side layers lie above the saddle
energy by a logarithmic amount.  This controls the auxiliary surface terms
created when the layers are collapsed.

\begin{lemma}[Tangential-face domination on the logarithmic layers]
\label{lem:stable-side-gaussian-domination}
Let $\Gamma_{\eps,\vartheta}^{\pm,\rm tan}$ denote the union of the tangential
faces of $\mathcal A_{\eps,\vartheta}^\pm$, excluding its inner and outer
$q_1$-faces.  For every fixed $\vartheta>0$, there is $c_{\rm tan}>0$ such that,
for all sufficiently small $\eps$,
\[
H(z)-U(\sigma)\ge c_{\rm tan}\eps R_\eps^2,
\qquad z\in
\Gamma_{\eps,\vartheta}^{-,\rm tan}\cup
\Gamma_{\eps,\vartheta}^{+,\rm tan}.
\]
Let
\[
\bar f_{\eps,\vartheta}^-
:=j_\eps+\chi_{\eps,\vartheta}^-(1-j_\eps),
\qquad
\bar f_{\eps,\vartheta}^+
:=(1-\chi_{\eps,\vartheta}^+)j_\eps.
\]
Then, for some $N<\infty$,
\[
\frac{\eps}{Z_\eps}
\sum_{\varsigma\in\{-,+\}}
\int_{\Gamma_{\eps,\vartheta}^{\varsigma,\rm tan}}
e^{-H/\eps}
\left|\mathsf M\nabla\bar f_{\eps,\vartheta}^\varsigma\right|\,dS
\le C_\vartheta R_\eps^N e^{-c_{\rm tan}R_\eps^2}
\mathfrak K_{\sigma,\eps}
=o_\eps(\mathfrak K_{\sigma,\eps}).
\]
\end{lemma}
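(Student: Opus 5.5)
The proof has two parts: an energy lower bound on the tangential faces, and then the surface-integral estimate built on it.

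\emph{Energy lower bound.} A tangential face of $\mathcal A_{\eps,\vartheta}^\pm$ is a face on which one of the non-$q_1$ coordinates sits at its maximal value. There are three possibilities: $|q_i|=2\delta_\eps/\sqrt{\lambda_i^\sigma}$ for some $i\ge2$, $|v_i|=2\delta_\eps/\sqrt L$, or $|w_i|=2\delta_\eps/\sqrt L$. On any such face $|q_1|\le(\delta_\eps+\vartheta\sqrt\eps)/\sqrt{\lambda_\sigma}$, and every coordinate is $\mathcal O(\delta_\eps)$. Write $z=z_\sigma+\sqrt\eps Y$ with $|Y|\le CR_\eps$ and apply Lemma~\ref{lem:saddle-hamiltonian-expansion}. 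The negative contribution satisfies
\[
-\tfrac{\lambda_\sigma}{2}Q_1^2\ge-\tfrac12(R_\eps+\vartheta)^2 .
\]
The saturated coordinate contributes at least $\tfrac12\cdot4R_\eps^2=2R_\eps^2$, since $\tfrac12\lambda_i^\sigma Q_i^2=2R_\eps^2$ and likewise $\tfrac L2V_i^2=2R_\eps^2$. All remaining terms are nonnegative. Therefore
\[
\frac{H-U(\sigma)}{\eps}\ge 2R_\eps^2-\tfrac12(R_\eps+\vartheta)^2-C\sqrt\eps R_\eps^3\ge R_\eps^2
\]
for $\eps$ small and $\vartheta$ fixed. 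This repeats the computation already used in the proof of Lemma~\ref{lem:global-saddle-test-function}, and it gives the claim with $c_{\rm tan}=1$ (any $c_{\rm tan}<3/2$ works).

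\emph{Surface integral.} We bound $|\mathsf M\nabla\bar f^\varsigma_{\eps,\vartheta}|$ crudely. Start from $|\nabla j_\eps|\le C\eps^{-1/2}$, which holds because $\Phi_\sigma'$ is bounded and $\nabla\Lambda_\sigma$ is a constant vector. The layer cutoffs $\chi^\pm_{\eps,\vartheta}$ depend only on $q_1$ with derivative at most $C(\vartheta\sqrt\eps)^{-1}$, and $0\le j_\eps,\chi\le1$. Hence
\[
|\mathsf M\nabla\bar f^\pm_{\eps,\vartheta}|\le C_\vartheta\eps^{-1/2}.
\]
The area of each tangential face is a product of side lengths. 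The $q_1$-direction contributes $\vartheta\sqrt\eps/\sqrt{\lambda_\sigma}$, and the other $3d-2$ free directions contribute $\mathcal O(\delta_\eps)$ each, so the total area is at most $C_\vartheta\sqrt\eps\,\delta_\eps^{3d-2}$. Combining these with the energy bound, the left-hand side is at most
\[
C_\vartheta\,\eps\,Z_\eps^{-1}\eps^{-1/2}\sqrt\eps\,(\sqrt\eps R_\eps)^{3d-2}e^{-U(\sigma)/\eps}e^{-c_{\rm tan}R_\eps^2}.
\]
Dividing by $\mathfrak K_{\sigma,\eps}=Z_\eps^{-1}(2\pi\eps)^{3d/2}L^{-d}e^{-U(\sigma)/\eps}$ leaves
\[
C_\vartheta\,\eps^{1+(3d-2)/2-3d/2}R_\eps^{3d-2}e^{-c_{\rm tan}R_\eps^2}=C_\vartheta R_\eps^{N}e^{-c_{\rm tan}R_\eps^2}
\]
with $N=3d-2$. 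Since $e^{-c_{\rm tan}R_\eps^2}=\eps^{c_{\rm tan}K^2}$, this quantity is $o_\eps(1)$ for any fixed $K>0$.

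\emph{Main obstacle.} The only delicate point is the first part: one must check that the energy gain from the saturated stable or kinetic coordinate beats the loss from the unstable direction. This is exactly where the factor $2$ in the box widths of \eqref{eq:logarithmic-saddle-box}, against the factor $1$ for $q_1$, enters, giving $2R_\eps^2$ versus $\tfrac12R_\eps^2$. One must also confirm that the $\vartheta\sqrt\eps$ extension of the layer only changes $(R_\eps+\vartheta)^2$ by $o(R_\eps^2)$, uniformly in $\eps$ for each fixed $\vartheta$. Once that is settled, the rest is power counting, and any polynomial losses in $\eps$ are absorbed because $R_\eps^N e^{-c_{\rm tan}R_\eps^2}$ decays like a power of $\eps$ that can be made large by increasing $K$, consistent with the parameter hierarchy.
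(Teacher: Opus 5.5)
Your proof is correct and follows essentially the same route as the paper: the saturated stable or kinetic coordinate contributes $2\eps R_\eps^2$, which beats the unstable loss $-\tfrac{\eps}{2}(R_\eps+\vartheta)^2$ via Lemma~\ref{lem:saddle-hamiltonian-expansion}; then the crude bounds $|\nabla\bar f^\pm_{\eps,\vartheta}|\le C_\vartheta\eps^{-1/2}$ and face area $C_\vartheta\eps^{(3d-1)/2}R_\eps^{3d-2}$ give the estimate after normalizing by $\mathfrak K_{\sigma,\eps}$. Your explicit constants $c_{\rm tan}=1$ (independent of $\vartheta$) and $N=3d-2$ are consistent with, and slightly more explicit than, what the paper records.
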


\begin{proof}
On a tangential face, at least one transverse coordinate is at its box
threshold.  Its stable potential or kinetic quadratic contribution is
$2\eps R_\eps^2$, whereas the unstable potential contribution is bounded
below by
$-\eps(R_\eps+\vartheta)^2/2$.  Lemma~\ref{lem:saddle-hamiltonian-expansion}
therefore gives
\[
H-U(\sigma)
\ge
-\frac\eps2(R_\eps+\vartheta)^2
+2\eps R_\eps^2
+o(\eps R_\eps^2)
\ge c_{\rm tan}\eps R_\eps^2,
\]
for fixed $\vartheta$ and all sufficiently small $\eps$.

Moreover,
$|\nabla\bar f_{\eps,\vartheta}^\pm|\le C_\vartheta\eps^{-1/2}$,
and the surface measure of the tangential faces is bounded by
$C_\vartheta\eps^{(3d-1)/2}R_\eps^{3d-2}$.  Multiplication by the prefactor
$\eps$, followed by comparison with \eqref{eq:Ksigma-def}, proves the stated
surface-flux bound after increasing the harmless power $N$.
\end{proof}

The complementary case is controlled by the Gaussian tails of
the one-dimensional saddle profile.

\begin{lemma}[Profile-tail domination]
\label{lem:profile-tail-domination}
There exist $c,C>0$ such that, for every $s\in\R$,
\[
|\Phi_\sigma'(s)|\le Ce^{-cs^2}.
\]
Moreover, for $s\le-1$,
\[
1-\Phi_\sigma(s)\le Ce^{-cs^2},
\]
and, for $s\ge1$,
\[
\Phi_\sigma(s)\le Ce^{-cs^2}.
\]
\end{lemma}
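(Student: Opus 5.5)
The plan is to read everything off the explicit formula in Definition~\ref{def:normalized-saddle-profile}. Set $\kappa:=\mu_\sigma/(2c_\sigma)>0$; this is positive because $\mu_\sigma>0$ by Lemma~\ref{lem:saddle-characteristic-polynomial} and $c_\sigma=\gamma^3/(L(\mu_\sigma+\gamma)^2)>0$. Write $Z_\kappa:=\int_{\R}e^{-\kappa y^2}\,dy=\sqrt{\pi/\kappa}$. The first claim is then immediate:
\[
|\Phi_\sigma'(s)|=\sqrt{\mu_\sigma/(2\pi c_\sigma)}\,e^{-\kappa s^2},
\]
so it holds with $c=\kappa$ and $C=\sqrt{\mu_\sigma/(2\pi c_\sigma)}$.

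For the tail of $\Phi_\sigma$ at $+\infty$, I will use the standard Gaussian tail bound. For $s\ge1$ and $y\ge s$ we have $y/s\ge1$, so
\[
\int_s^\infty e^{-\kappa y^2}\,dy\le\int_s^\infty\frac{y}{s}e^{-\kappa y^2}\,dy=\frac{e^{-\kappa s^2}}{2\kappa s}\le\frac{e^{-\kappa s^2}}{2\kappa}.
\]
Dividing by $Z_\kappa$ gives $\Phi_\sigma(s)\le (2\kappa Z_\kappa)^{-1}e^{-\kappa s^2}$. Alternatively, one can avoid the Mills-ratio step altogether by bounding $e^{-\kappa y^2}\le e^{-\kappa s^2/2}e^{-\kappa y^2/2}$ for $y\ge s\ge0$; this costs half of the exponent, which is harmless since only some $c>0$ is required.

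For the other tail, the normalization $\Phi_\sigma(-\infty)=1$ and the evenness of the Gaussian integrand give
\[
1-\Phi_\sigma(s)=Z_\kappa^{-1}\int_{-\infty}^{s}e^{-\kappa y^2}\,dy=\Phi_\sigma(-s).
\]
Applying the previous bound with $-s\ge1$ yields the estimate for $s\le-1$ with the same constants.

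To finish, I will take $c=\kappa/2$ (or $\kappa$) and let $C$ be the maximum of the constants from the three bounds. No step is a real obstacle. The only point needing care is confirming that $c_\sigma>0$ and $\mu_\sigma>0$, so that $\kappa$ is a genuine positive Gaussian rate, and this is already recorded in the earlier lemmas.
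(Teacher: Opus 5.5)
Your proposal is correct and follows essentially the same route as the paper. The paper also reads $|\Phi_\sigma'|$ off the explicit Gaussian formula with rate $\mu_\sigma/(2c_\sigma)$ and applies the Mills-type bound $\int_x^\infty e^{-ay^2}\,dy\le (2ax)^{-1}e^{-ax^2}$ with $x=|s|\ge1$ to the Gaussian integrals defining $\Phi_\sigma$ and $1-\Phi_\sigma$; your symmetry identity $1-\Phi_\sigma(s)=\Phi_\sigma(-s)$ is just an explicit way of doing the second of these.
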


\begin{proof}
Set $a=\mu_\sigma/(2c_\sigma)>0$.  The explicit formula gives
\[
|\Phi_\sigma'(s)|
=\sqrt{\frac{\mu_\sigma}{2\pi c_\sigma}}e^{-as^2}.
\]
Moreover, for $x>0$,
\[
\int_x^\infty e^{-ay^2}\,dy
\le\frac{1}{2ax}e^{-ax^2}.
\]
Applying this bound with $x=|s|\ge1$ to the defining Gaussian integrals for
$\Phi_\sigma(s)$ when $s>0$ and for $1-\Phi_\sigma(s)$ when $s<0$ proves the
two tail estimates, after changing $c,C>0$.
\end{proof}

\subsection{Sublevel Topology, Side Estimates, and Boundary-Layer Reduction}

We now turn from local analytic estimates to the topology of low
Hamiltonian sublevels.  The first lemma rules out low-energy paths that avoid
a fixed saddle neighborhood.

\begin{lemma}[Fixed saddle-box communication gap]
\label{lem:fixed-box-communication-gap}
For every sufficiently small fixed $b>0$, let $\mathcal K_b$ be the
box obtained from $\mathcal K_\eps$ by replacing $\delta_\eps$ by $b$.
There is $c_b>0$ such that every continuous path
$\omega:[0,1]\to\R^{3d}$ from $x_m$ to $x_s$ which avoids
$\mathcal K_b$ satisfies
{
\[
\max_{t\in[0,1]}H(\omega(t))\ge U(\sigma)+c_b .
\]}
\end{lemma}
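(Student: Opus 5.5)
We argue by contradiction combined with compactness, reducing the phase-space statement to the configuration-space communication geometry of Assumption~\ref{ass:double-well}. Suppose no such $c_b$ exists. Then there are continuous paths $\omega_n$ from $x_m$ to $x_s$ avoiding $\mathcal K_b$ with $\max_t H(\omega_n(t))\le U(\sigma)+1/n$. Write $\omega_n=(\theta_n,p_n,r_n)$. Since $H(z)\ge U(\theta)$, the projected paths $\theta_n$ run from $m$ to $s$ and satisfy $\max_t U(\theta_n(t))\le U(\sigma)+1/n$. All of them lie in the compact sublevel set $\{H\le U(\sigma)+1\}$.

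The main step is to show that such a projected path must pass close to $\sigma$ with small momenta. Since $\theta_n$ connects $m$ to $s$, it must leave $\overline{\Omega_m}$. Let $t_n$ be the last time at which $\theta_n(t)\in\overline{\Omega_m}$ before first entering $\overline{\Omega_s}$, or more robustly a time at which $\theta_n(t_n)\in\partial\Omega_m$ with $U(\theta_n(t_n))\ge U(\sigma)$ (on $\partial\Omega_m$ one has $U= U(\sigma)$ wherever the boundary is not interior to the sublevel set). Pass to a subsequence with $\omega_n(t_n)\to z_*=(\theta_*,p_*,r_*)$.

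We claim $z_*=(\sigma,0,0)$. The condition $H(\omega_n(t_n))\le U(\sigma)+1/n$ together with $U(\theta_n(t_n))\ge U(\sigma)-o(1)$ forces $\frac L2(|p_n|^2+|r_n|^2)(t_n)\to0$, so $p_*=r_*=0$ and $U(\theta_*)=U(\sigma)$. It remains to show $\theta_*=\sigma$, and this is the crux. I would use a standard mountain-pass/min-max argument. If every such limit point on $\partial\Omega_m\cap\{U=U(\sigma)\}$ were a noncritical point, then $\nabla U\neq0$ there. A gradient-flow deformation, using compact sublevel sets and the fact that $\sigma$ is the only critical point in $\partial\Omega_i\cap\{U=U(\sigma)\}$, would push the paths $\theta_n$ away from every $\theta$-neighbourhood $N_b$ of $\sigma$ (chosen inside the $\theta$-projection of $\mathcal K_b$) while keeping their maxima below $U(\sigma)-c$. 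That produces a path from $m$ to $s$ of height strictly less than $h_{m,s}=U(\sigma)$, contradicting the definition of $h_{m,s}$.

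Concretely, I would first establish a \emph{configuration} version: there is $c'_b>0$ such that every path from $m$ to $s$ avoiding $N_b$ has $\max U\ge U(\sigma)+c'_b$. This follows from the deformation lemma together with uniqueness of the index-one saddle realizing $h_{m,s}$. Uniqueness is exactly what excludes other critical points at level $U(\sigma)$ that could carry an alternative minimax path. This configuration step is where I expect the real work to lie.

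Given the configuration version, the phase-space statement follows by a case split. If $\theta_n$ avoids $N_b$, the bound $H\ge U$ gives the gap $c'_b$ directly. Otherwise $\theta_n$ enters $N_b$ at some time, but $\omega_n$ avoids $\mathcal K_b$ at that time. Hence some stable coordinate $|q_i|$ or some momentum coordinate $|v_i|,|w_i|$ is at least of order $b$, while $q_1$ is controlled. A quadratic expansion as in Lemma~\ref{lem:saddle-hamiltonian-expansion} (at fixed scale $b$) then gives $H\ge U(\sigma)-\lambda_\sigma q_1^2/2+2b^2-O(b^3)$. When $|q_1|$ is also at the box threshold, one instead uses the configuration gap along the $q_1$ direction. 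Choosing $N_b$ smaller than the box, say with $|q_1|\le b/(2\sqrt{\lambda_\sigma})$, yields $H\ge U(\sigma)+cb^2$ for small $b$, and we take $c_b=\min\{c'_b,cb^2\}$.
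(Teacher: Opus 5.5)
Your plan works, but it takes a different route from the paper. The paper never separates a configuration gap from a kinetic estimate. It works as follows:
\begin{itemize}
\item Let $C_n$ be the connected component of the compact set $\{H\le U(\sigma)+1/n\}\setminus\operatorname{int}\mathcal K_{b/2}$ that contains $x_m,x_s$. These are nested, and their intersection is a continuum $C_\infty\subset\{H\le U(\sigma)\}$ joining $x_m$ to $x_s$ and avoiding a neighbourhood of $z_\sigma$.
\item The projection of $C_\infty$ is a connected subset of $\{U\le U(\sigma)\}$ joining $m$ to $s$. Two components of $\{U<U(\sigma)\}$ can touch only at a critical point, because near a regular point of that level the strict sublevel set is locally connected. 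Hence the projection contains $\sigma$.
\item Above $\sigma$, the bound $H\le U(\sigma)$ forces $p=r=0$, so $z_\sigma\in C_\infty$, a contradiction.
\end{itemize}
The kinetic energy is thus handled exactly on the limit object, with no Taylor expansion, no gradient flow and no case split, but the argument gives no information on $c_b$. Your decomposition uses heavier machinery for the configuration step, but it is more quantitative and yields $c_b=\min\{c_b',cb^2\}$. Your phase-space half is correct as written, provided $N_b$ has $|q_1|<b/(2\sqrt{\lambda_\sigma})$ and stable bounds no larger than those of $\mathcal K_b$. Then leaving the box forces some $|v_i|$ or $|w_i|>2b/\sqrt L$, hence kinetic energy at least $2b^2$.

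There are two caveats on the configuration step.

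First, the deformation lemma with excision of $N_b$ requires that $\sigma$ be the \emph{only} critical point at level $U(\sigma)$; Palais--Smale itself is automatic from compact sublevel sets. This does not follow from uniqueness of the saddle realizing $h_{m,s}$. A saddle at the same level lying on $\partial\Omega_m$ but not on $\partial\Omega_s$ does not realize the communication height, yet it would obstruct the deformation. The property follows instead from two facts:
\begin{itemize}
\item the clause that $\sigma$ is the only critical point in $\partial\Omega_i\cap\{U=U(\sigma)\}$;
\item the identity $\{U\le U(\sigma)\}=\overline{\Omega_m}\cup\overline{\Omega_s}$, which holds because every component of $\{U<U(\sigma)\}$ contains a local minimum and any point of level $U(\sigma)$ that is not a local minimum lies in the closure of the strict sublevel set.
\end{itemize}

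Second, your second paragraph cannot work as a single-point argument. Under the contradiction hypothesis every $\omega_n(t_n)$ lies outside $\mathcal K_b$, so $z_*\neq z_\sigma$ automatically. Knowing only that $\theta_*\in\partial\Omega_m$ at level $U(\sigma)$ with $p_*=r_*=0$ cannot place $\theta_*$ at $\sigma$, since $\partial\Omega_m$ contains many regular points. The contradiction must use the whole path, which is what your deformation step (or the paper's continuum limit) does.

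Finally, the paper's continuum argument, run in configuration space, proves your configuration gap directly, so you can drop the deformation lemma altogether.
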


\begin{proof}
Suppose there are paths $\omega_n$ avoiding $\mathcal K_b$ with
$\max_{t\in[0,1]}H(\omega_n(t))\le U(\sigma)+1/n$.
Let $C_n$ be the connected component
containing $x_m,x_s$ of the compact set
\[
\{H\le U(\sigma)+1/n\}\setminus\operatorname{int}\mathcal K_{b/2}.
\]
It exists because the image of $\omega_n$ is a connected subset of this set.
The components may be chosen nested, $C_{n+1}\subset C_n$.  Hence
$C_\infty=\cap_nC_n$ is compact and connected, contains $x_m,x_s$, lies in
$\{H\le U(\sigma)\}$, and avoids a neighborhood of $z_\sigma$.

The configuration projection of $C_\infty$ is connected and joins $m$ to
$s$ inside $\{U\le U(\sigma)\}$.  Two distinct components of
$\{U<U(\sigma)\}$ can meet through the level $U=U(\sigma)$ only at a critical
point: near a regular point that sublevel is a half-ball and is locally
connected.  The communication assumption therefore forces the projection to
contain $\sigma$.  At any point of $C_\infty$ above $\sigma$,
$H\le U(\sigma)$ forces $p=r=0$, so $C_\infty$ contains $z_\sigma$, the desired
contradiction.
\end{proof}

The fixed communication gap must next be transferred to the
shrinking saddle box used in the asymptotic construction.

\begin{lemma}[Shrinking saddle-box path exclusion]
\label{lem:path-exclusion-saddle-box}
There is $c_{\rm box}>0$ such that, after decreasing the fixed constant
$c_K$ so that $0<c_K<c_{\rm box}$, every continuous path
from $x_m$ to $x_s$ in $\{H<U(\sigma)+c_K\eps R_\eps^2\}$ meets $\mathcal K_\eps$
for any sufficiently small $\eps$; equivalently, any $x_m\to x_s$ path avoiding
$\mathcal K_\eps$ has
\[
\sup_{t\in[0,1]}H(\omega(t))\ge
U(\sigma)+c_{\rm box}\eps R_\eps^2.
\]
\end{lemma}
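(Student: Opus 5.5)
We combine the fixed-box gap of Lemma~\ref{lem:fixed-box-communication-gap} with a local analysis of the quadratic Hamiltonian on the annulus $\mathcal K_b\setminus\mathcal K_\eps$. Fix $b>0$ small enough that Lemma~\ref{lem:fixed-box-communication-gap} applies with gap $c_b$, and that on $\mathcal K_b$ the Taylor expansion of Lemma~\ref{lem:saddle-hamiltonian-expansion} holds with a cubic remainder bounded by $C|y|^3\le \kappa|y|^2$, where $y=(q,v,w)$ and $\kappa$ is small relative to the eigenvalue scales.

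Let $\omega$ be a path from $x_m$ to $x_s$ avoiding $\mathcal K_\eps$. If $\omega$ also avoids $\mathcal K_b$, then $\sup H\ge U(\sigma)+c_b$, and this exceeds $U(\sigma)+c_{\rm box}\eps R_\eps^2$ for small $\eps$. Otherwise $\omega$ enters $\mathcal K_b$ but never enters $\mathcal K_\eps$.

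In this case we track the sign of $q_1$. The endpoints lie in $W_m$ and $W_s$. Define the region $\mathcal K_b^{\pm}=\mathcal K_b\cap\{\pm q_1>0\}$. A low-energy path must cross from the $m$-side to the $s$-side, and we reduce to showing that it must pass through a point of $\mathcal K_b\setminus\mathcal K_\eps$ with $|q_1|\le\delta_\eps/\sqrt{\lambda_\sigma}$. To make this precise, we argue that the path cannot do all of its switching outside $\mathcal K_b$ at energy below $U(\sigma)+c_b$. Concretely, we would show that $\{H<U(\sigma)+c_b/2\}\setminus\operatorname{int}\mathcal K_b$ splits into an $m$-part and an $s$-part. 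These parts meet $\partial\mathcal K_b$ only on the faces with $q_1<0$ and $q_1>0$ respectively, since on the tangential faces of $\mathcal K_b$ the quadratic form gives $H-U(\sigma)\ge cb^2$ once $b$ is small. This splitting is exactly Lemma~\ref{lem:fixed-box-communication-gap} applied with modified endpoints, via the same compactness argument.

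Hence some point $z=\omega(t)\in\mathcal K_b\setminus\mathcal K_\eps$ satisfies $|q_1|\le\delta_\eps/\sqrt{\lambda_\sigma}$, because $q_1$ changes sign continuously inside $\mathcal K_b$. Since $z\notin\mathcal K_\eps$, some transverse coordinate exceeds its threshold: $\lambda_i^\sigma q_i^2\ge 4\delta_\eps^2$ or $L v_i^2\ge4\delta_\eps^2$ or $Lw_i^2\ge4\delta_\eps^2$. The quadratic expansion then gives
\[
H(z)-U(\sigma)\ge -\tfrac12\delta_\eps^2+2\delta_\eps^2-\kappa|y|^2 .
\]
The remainder term $\kappa|y|^2$ is the delicate point, because $|y|$ may be as large as $b$ and is therefore not comparable to $\delta_\eps$.

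\textbf{Main obstacle.} To handle the remainder, we use a scale-invariant bound and avoid comparing against $\delta_\eps$ directly. On the set $|q_1|\le\delta_\eps/\sqrt{\lambda_\sigma}$ we have
\[
H-U(\sigma)\ge \tfrac12\Bigl(\sum_{i\ge2}\lambda_i^\sigma q_i^2+L|v|^2+L|w|^2\Bigr)-\tfrac12\delta_\eps^2-\kappa|y|^2 .
\]
Choosing $\kappa$ small absorbs $\kappa|y|^2$ into half the positive part, with $|q_1|^2\lesssim\delta_\eps^2$ contributing only $O(\kappa\delta_\eps^2)$. The positive part is at least $4\delta_\eps^2$ at $z$, so $H(z)-U(\sigma)\ge(1-\tfrac12-O(\kappa))\delta_\eps^2\ge\tfrac14\eps R_\eps^2$. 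This gives $c_{\rm box}=1/4$, and any $c_K<c_{\rm box}$ then yields the first (equivalent) formulation.
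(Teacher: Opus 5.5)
Your proposal is correct and follows essentially the paper's route: the fixed-box gap, then a forced excursion through $\mathcal K_b$ from the $m$-side physical face to the $s$-side face, then transverse coercivity at a central crossing point with $q_1=0$, where any point outside $\mathcal K_\eps$ costs at least a constant times $\delta_\eps^2=\eps R_\eps^2$. The one difference is how you exclude low-energy connections outside the box between the wrong sides. You rerun the compactness argument with face endpoints, which produces its own gap constant; the energy level should be taken below both that constant and the tangential-face energy $cb^2$, rather than fixed at $c_b/2$. The paper instead builds explicit face-to-well connecting paths and applies the gap $c_{b/2}$ to the concatenated path.
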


\begin{proof}
We divide the proof into three steps.

\emph{Step 1: fixed-face connections.}
Fix $b>0$ sufficiently small that the Taylor expansion of $H$ at
$z_\sigma$ is valid on a neighborhood of $\mathcal K_b$.  Choose
\[
0<c_0<\min\{c_b,c_{b/2}\},
\]
where $c_b,c_{b/2}$ are the communication gaps from
Lemma~\ref{lem:fixed-box-communication-gap}.  After decreasing $c_0$ and
$b$ if necessary,
\begin{equation}
\label{eq:fixed-box-low-boundary-faces}
\{H<U(\sigma)+c_0\}\cap\partial\mathcal K_b
\subset
\partial\mathcal K_b^-\cup\partial\mathcal K_b^+.
\end{equation}
On either physical face, $H$ is strictly convex in the transverse variables.
Let
\[
z_b^\pm
=\left(\pm b/\sqrt{\lambda_\sigma},0,0,0\right).
\]
For every
$z\in\partial\mathcal K_b^\pm\cap\{H<U(\sigma)+c_0\}$, the line segment
in the face from $z$ to $z_b^\pm$ is parameterized by
\[
\ell_{z,b}^\pm(u)=(1-u)z+uz_b^\pm,
\qquad u\in[0,1],
\]
and satisfies
\begin{equation}
\label{eq:fixed-face-segment-energy}
\max_{u\in[0,1]}H\left(\ell_{z,b}^\pm(u)\right)
\le\max\left\{H(z),H\left(z_b^\pm\right)\right\},
\qquad
H\left(z_b^\pm\right)
=U(\sigma)-\frac{b^2}{2}+\mathcal O(b^3)<U(\sigma).
\end{equation}
Continuing from $z_b^-$, respectively $z_b^+$, along the corresponding
unstable lobe and then inside $\Omega_m$, respectively $\Omega_s$, gives paths
$\kappa_z^-:[0,1]\to\R^{3d}$ from $z$ to $x_m$ and
$\kappa_z^+:[0,1]\to\R^{3d}$ from $z$ to $x_s$ such that
\begin{equation}
\label{eq:fixed-face-well-connection}
\kappa_z^\pm\cap\mathcal K_{b/2}=\varnothing,
\qquad
\max_{t\in[0,1]} H\left(\kappa_z^\pm(t)\right)
\le\max\{H(z),U(\sigma)\}.
\end{equation}

\emph{Step 2: a low-energy path contains a face-to-face excursion.}
Let $\omega:[0,1]\to\R^{3d}$ be an $x_m$--$x_s$ path with
\begin{equation}
\label{eq:fixed-box-low-path}
\max_{t\in[0,1]}H(\omega(t))<U(\sigma)+c_0.
\end{equation}
Lemma~\ref{lem:fixed-box-communication-gap} and the choice of $c_0$ imply
$\omega\cap\mathcal K_b\ne\varnothing$.  Decompose the portion of
$\omega$ in $\mathcal K_b$ into successive excursions.  By
\eqref{eq:fixed-box-low-boundary-faces}, every entrance or exit occurs through
a physical face.  The first face is $\partial\mathcal K_b^-$ and the last
is $\partial\mathcal K_b^+$: otherwise the corresponding initial or final
piece of $\omega$, concatenated with a path from
\eqref{eq:fixed-face-well-connection}, would give an $x_m$--$x_s$ path avoiding
$\mathcal K_{b/2}$ and satisfying \eqref{eq:fixed-box-low-path}, contrary
to the gap $c_{b/2}$.

Suppose that no excursion joins the negative face to the positive face.  The
face labels must then change between two consecutive excursions.  The
intervening piece outside $\mathcal K_b$, together with the two paths in
\eqref{eq:fixed-face-well-connection}, again yields an $x_m$--$x_s$ path
avoiding $\mathcal K_{b/2}$ below $U(\sigma)+c_0$, a contradiction.
Therefore some excursion joins the two physical faces.  By continuity, it
contains a point $z_0$ with $q_1(z_0)=0$.

\emph{Step 3: energy at the central crossing.}
On $\mathcal K_b\cap\{q_1=0\}$, the transverse quadratic form is positive
definite.  After decreasing $b$ to absorb the cubic remainder, there is
$c_\perp>0$ such that
\begin{equation}
\label{eq:central-section-transverse-coercivity}
H-U(\sigma)
\ge c_\perp\left(|q'|^2+|v|^2+|w|^2\right).
\end{equation}
If $\omega$ avoids $\mathcal K_\eps$, then at $z_0$ at least one transverse
coordinate exceeds its $\mathcal K_\eps$ threshold.  Hence, for a constant
$c_{\rm box}>0$ independent of $\eps$,
\[
H(z_0)-U(\sigma)
\ge c_{\rm box}\delta_\eps^2
=c_{\rm box}\eps R_\eps^2.
\]
Decrease $c_K$ so that $0<c_K<c_{\rm box}$.  Since
$\delta_\eps^2=\eps R_\eps^2=o(c_0)$, the preceding argument applies for all
small $\eps$ and proves both assertions.
\end{proof}

This path-exclusion estimate implies that the low sublevel
outside the saddle box splits into exactly the two well components.

\begin{lemma}[Hamiltonian box partition]
\label{lem:box-partition-low-sublevel}
For $K$ large and $\eps$ small,
$J_\eps=J_\eps^m\cup(J_\eps\cap\mathcal K_\eps)\cup J_\eps^s$, and any point of
$J_\eps\setminus\mathcal K_\eps$ lies in exactly one of $J_\eps^m,J_\eps^s$.
\end{lemma}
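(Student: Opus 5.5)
We must show that every component of $J_\eps\setminus\mathcal K_\eps$ is $J_\eps^m$ or $J_\eps^s$, and that these two are distinct. The plan has three steps, in order.

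\emph{Step 1 (distinctness).} Suppose $J_\eps^m=J_\eps^s$. Then there is a path from $x_m$ to $x_s$ inside $J_\eps\setminus\mathcal K_\eps\subset\{H<U(\sigma)+a_\eps\}$. Since $a_\eps=c_K\eps R_\eps^2$ with $c_K<c_{\rm box}$, such a path avoids $\mathcal K_\eps$ while staying in $\{H<U(\sigma)+c_K\eps R_\eps^2\}$. This contradicts Lemma~\ref{lem:path-exclusion-saddle-box}. Hence $J_\eps^m\cap J_\eps^s=\varnothing$, which gives the uniqueness assertion.

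\emph{Step 2 (every component meets a well).} Let $C$ be a component of $J_\eps\setminus\mathcal K_\eps$. I would first take a point $z\in C$ minimizing $H$ over $\overline C\cap\{H\le U(\sigma)+a_\eps\}$; this set is compact because sublevels of $H$ are compact. There are two cases.

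If $z$ lies in the interior of $C$ and not on $\partial\mathcal K_\eps$, then $z$ is a local minimum of $H$. The local minima of $H$ are exactly $x_m$ and $x_s$, since $\nabla_pH=Lp$, $\nabla_rH=Lr$, and $U$ has only the two minima. So $C$ contains $x_m$ or $x_s$.

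If instead the minimum sits on $\partial\mathcal K_\eps$, then $H(z)<U(\sigma)+a_\eps$. The tangential-face estimate, as in the proof of Lemma~\ref{lem:global-saddle-test-function}, gives $H-U(\sigma)\ge\eps R_\eps^2>a_\eps$ on tangential faces, because $c_K<1$. So $z$ lies on a physical face $\partial\mathcal K_\eps^\pm$. There I would reuse the construction of Step~1 of Lemma~\ref{lem:path-exclusion-saddle-box}, now at scale $\delta_\eps$ in place of $b$. The segment inside the face from $z$ to $z^\pm_\eps=(\pm\delta_\eps/\sqrt{\lambda_\sigma},0,\dots)$ does not raise $H$, by transverse convexity. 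We then continue outward along the unstable lobe, where $H$ decreases below $U(\sigma)-c\,\delta_\eps^2$, and on into $W_m$ or $W_s$. This produces a path in $J_\eps\setminus\operatorname{int}\mathcal K_\eps$ from near $z$ to $x_m$ or $x_s$. Pushing it slightly off the face, we conclude that $C$ is $J_\eps^m$ or $J_\eps^s$.

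\emph{Step 3 (main obstacle).} The hardest point is making the face-to-well connection uniform at the shrinking scale. The box has width $\delta_\eps\to0$, so we must check that the outward unstable segment from $z^\pm_\eps$ to the fixed face $z_b^\pm$ stays outside $\mathcal K_\eps$ with $H<U(\sigma)$. I would verify this with the Taylor expansion of Lemma~\ref{lem:saddle-hamiltonian-expansion}: along $q_1$ with the other coordinates zero, $H=U(\sigma)-\tfrac{\lambda_\sigma}2q_1^2+\mathcal O(|q_1|^3)$. From $z_b^\pm$ I would then invoke the fixed-$b$ connections~\eqref{eq:fixed-face-well-connection}. A subsidiary point is ruling out spurious components of $J_\eps$ lying entirely away from both wells and the box. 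These are excluded by Step~2, since any such component has an interior minimum, which must be $x_m$ or $x_s$. This also gives $J_\eps^c=\varnothing$.
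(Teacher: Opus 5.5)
Your proposal is correct and follows essentially the same route as the paper: disjointness of $J_\eps^m,J_\eps^s$ from the shrinking-box path exclusion; the tangential-face energy bound pushing low points of $\partial\mathcal K_\eps$ onto the physical faces, which the face-to-well construction at scale $\delta_\eps$ then joins to the wells; and exclusion of detached components because an interior minimizer of $H$ must be $x_m$ or $x_s$. Your single case split on the minimizer of $H$ over $\overline C$ just merges the paper's Steps~2 and~3, and your ``push slightly off the face'' step makes precise the paper's shorthand $F_\eps^-\subset J_\eps^m$, $F_\eps^+\subset J_\eps^s$ (the faces themselves lie in $\mathcal K_\eps$, not in the open components).
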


\begin{proof}
We divide the proof into three steps.

\emph{Step 1: disjointness of the well components.}
If $J_\eps^m\cap J_\eps^s\ne\varnothing$, path connectedness of the two open
components gives an $x_m$--$x_s$ path in
$J_\eps\setminus\mathcal K_\eps$.  Its energy is strictly below
\[
U(\sigma)+a_\eps
=U(\sigma)+c_K\eps R_\eps^2,
\]
contradicting Lemma~\ref{lem:path-exclusion-saddle-box}.  Thus
\begin{equation}
\label{eq:well-components-disjoint}
J_\eps^m\cap J_\eps^s=\varnothing.
\end{equation}

\emph{Step 2: components meeting the saddle box.}
The Taylor expansion of $H$ at $z_\sigma$ in the Hessian eigenbasis and the
choice of transverse side lengths give
\begin{equation}
\label{eq:low-sublevel-box-boundary}
J_\eps\cap\partial\mathcal K_\eps
\subset
\partial\mathcal K_\eps^-\cup\partial\mathcal K_\eps^+.
\end{equation}
On either physical face, scale the transverse variables by $\delta_\eps$.
The quadratic part is positive definite, while the $C^2$ norm of the rescaled
Taylor remainder is $o(1)$.  Hence each of the sets
\[
F_\eps^\pm
:=J_\eps\cap\partial\mathcal K_\eps^\pm
\]
is strictly convex and therefore connected for small $\eps$.  The orientation
of $e_\sigma$ and the same face-to-well construction as in
\eqref{eq:fixed-face-well-connection}, now applied at scale $\delta_\eps$,
imply
\begin{equation}
\label{eq:physical-faces-component-assignment}
F_\eps^-\subset J_\eps^m,
\qquad
F_\eps^+\subset J_\eps^s.
\end{equation}
It follows from \eqref{eq:low-sublevel-box-boundary}--
\eqref{eq:physical-faces-component-assignment} that every component of
$J_\eps\setminus\mathcal K_\eps$ whose closure meets the box is either
$J_\eps^m$ or $J_\eps^s$.

\emph{Step 3: exclusion of detached components.}
Suppose that $C$ is another component of
$J_\eps\setminus\mathcal K_\eps$.  By Step~2,
\[
\overline C\cap\partial\mathcal K_\eps=\varnothing.
\]
Since $\overline J_\eps$ is compact, $H$ attains its minimum on
$\overline C$.  The boundary of $C$ outside the box lies on
$\{H=U(\sigma)+a_\eps\}$, whereas $H<U(\sigma)+a_\eps$ in $C$; hence the
minimum is attained at an interior point $z_C$ and $z_C$ is a local minimum
of $H$. Therefore,
\[
0=\nabla H(z_C)=(\nabla U(\theta_C),Lp_C,Lr_C),
\]
so that $z_C=(i,0,0)$ for a local minimum $i$ of $U$.  By Assumption~
\ref{ass:double-well}, the only local minima of $U$ are $m$ and $s$.
Consequently $i\in\{m,s\}$, so $C$ must coincide with $J_\eps^m$ or
$J_\eps^s$, which leads to a contradiction.
Together with
\eqref{eq:well-components-disjoint}, this proves coverage and uniqueness.
\end{proof}

The physical side estimates use a simpler deterministic dichotomy: each face
point either lies in a Gaussian profile tail or pays a positive Hamiltonian
cost on the logarithmic scale.

\begin{lemma}[Side-face alternative]
\label{lem:side-face-alternative}
There is $a_{\rm sf}>0$ such that every point of
$\partial\mathcal K_\eps^-\cap J_\eps$ satisfies
\[
\frac{\Lambda_\sigma}{\sqrt\eps}\le-a_{\rm sf}R_\eps\qquad\text{or}\qquad
H\ge U(\sigma)+a_{\rm sf}\eps R_\eps^2,
\]
and every point of $\partial\mathcal K_\eps^+\cap J_\eps$ satisfies
\[
\frac{\Lambda_\sigma}{\sqrt\eps}\ge a_{\rm sf}R_\eps
\qquad\text{or}\qquad
H\ge U(\sigma)+a_{\rm sf}\eps R_\eps^2.
\]
\end{lemma}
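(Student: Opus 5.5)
The plan is to work in the rescaled coordinates $z=z_\sigma+\sqrt\eps\,Y$, $Y=(Q,V,W)$, and prove a single quadratic inequality on the face. On $\partial\mathcal K_\eps^-$ we have $Q_1=-R_\eps/\sqrt{\lambda_\sigma}$, and all the remaining coordinates obey $|Y|\le CR_\eps$. Lemma~\ref{lem:saddle-hamiltonian-expansion} then gives
\[
\frac{H-U(\sigma)}{\eps}=B_\sigma(Y)+o(1)
\]
uniformly on the face, where $B_\sigma$ is the quadratic form from Lemma~\ref{lem:coercivity-ker-Lambda}. Since $\Lambda_\sigma$ is linear, $\Lambda_\sigma(z)/\sqrt\eps=\Lambda_\sigma(Y)$, with $\Lambda_\sigma(Y)=\alpha_\sigma Q_1+V_1+\zeta_\sigma W_1$ in the unstable triple. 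By homogeneity it therefore suffices to find $a>0$ such that every $Y$ on the unit face $\{Q_1=-1/\sqrt{\lambda_\sigma}\}$ satisfies
\[
\Lambda_\sigma(Y)\le -a \qquad\text{or}\qquad B_\sigma(Y)\ge a .
\]
Scaling by $R_\eps$ then yields the claim with $a_{\rm sf}<a$, after the $o(1)$ error and the $o(\eps R_\eps^2)$ cubic remainder are absorbed.

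The key step is a contradiction argument: suppose some $Y$ on the unit face has $\Lambda_\sigma(Y)>-a$ and $B_\sigma(Y)<a$. The stable contributions $\tfrac12\sum_{i\ge2}\lambda_i^\sigma Q_i^2+\tfrac L2\sum_{i\ge2}(|V_i|^2+|W_i|^2)$ are nonnegative, so it suffices to treat the unstable triple with $Q_1=-q_0$, $q_0=1/\sqrt{\lambda_\sigma}$. There I minimize $\tfrac L2(V_1^2+W_1^2)$ subject to $V_1+\zeta_\sigma W_1\ge \alpha_\sigma q_0-a$. By Cauchy--Schwarz, the minimum is $\dfrac{L(\alpha_\sigma q_0-a)_+^2}{2(1+\zeta_\sigma^2)}$. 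Consequently
\[
B_\sigma(Y)\ge -\frac{\lambda_\sigma}{2}q_0^2+\frac{L(\alpha_\sigma q_0-a)^2}{2(1+\zeta_\sigma^2)} .
\]
At $a=0$ the right-hand side equals $\tfrac{q_0^2}{2}\bigl(L\alpha_\sigma^2/(1+\zeta_\sigma^2)-\lambda_\sigma\bigr)$, which is strictly positive by Lemma~\ref{lem:side-face-kinetic-domination}. By continuity in $a$, for $a$ small the right-hand side exceeds $a$, contradicting $B_\sigma(Y)<a$. The face $\partial\mathcal K_\eps^+$ is handled identically: replace $Y$ by $-Y$, which maps the positive face to the negative one, flips the sign of $\Lambda_\sigma$, and leaves $B_\sigma$ unchanged.

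The main obstacle is converting the rescaled dichotomy into the physical statement uniformly on the logarithmic face. The cubic Taylor remainder must stay $\mathcal O(\sqrt\eps R_\eps^3)\eps$, hence $o(\eps R_\eps^2)$. This holds because the face lies inside the box $|Y|\le CR_\eps$, so Lemma~\ref{lem:saddle-hamiltonian-expansion} applies there. I expect the restriction to $J_\eps$ to be used only to guarantee that we are on a physical face, not a tangential one, via \eqref{eq:low-sublevel-box-boundary}. The proof therefore reduces to the sign computation in Lemma~\ref{lem:side-face-kinetic-domination}, and the genuine third-order input is the cubic identity $a_\sigma>\mu_\sigma^2(1+\zeta_\sigma^2)$.
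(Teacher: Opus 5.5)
Your proposal is correct and is essentially the paper's argument: on the face $Q_1=-R_\eps/\sqrt{\lambda_\sigma}$, the condition $\Lambda_\sigma/\sqrt\eps>-aR_\eps$ forces, by Cauchy--Schwarz, $V_1^2+W_1^2\ge(1+\zeta_\sigma^2)^{-1}(\alpha_\sigma/\sqrt{\lambda_\sigma}-a)^2R_\eps^2$; the stable terms are dropped as nonnegative, Lemma~\ref{lem:side-face-kinetic-domination} makes the resulting coefficient positive for small $a$, and Lemma~\ref{lem:saddle-hamiltonian-expansion} absorbs the cubic remainder, so your normalization to a unit face is only a repackaging. One small remark is that the restriction to $J_\eps$ plays no role in your argument or the paper's, since $\partial\mathcal K_\eps^-$ is by definition the physical face and the dichotomy holds at every point of it.
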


\begin{proof}
On $q_1=-\delta_\eps/\sqrt{\lambda_\sigma}$,
$\alpha_\sigma q_1=-(\alpha_\sigma/\sqrt{\lambda_\sigma})\sqrt\eps R_\eps$.  Pick
$a_{\rm sf}$ small with
$\frac{L}{2(1+\zeta_\sigma^2)}(\alpha_\sigma/\sqrt{\lambda_\sigma}-a_{\rm sf})^2>\frac12$
(possible by Lemma~\ref{lem:side-face-kinetic-domination}).  If
$\Lambda_\sigma/\sqrt\eps>-a_{\rm sf}R_\eps$ then
\[
\frac{v_1+\zeta_\sigma w_1}{\sqrt\eps}\ge
\left(\frac{\alpha_\sigma}{\sqrt{\lambda_\sigma}}-a_{\rm sf}\right)R_\eps,
\]
so that by Cauchy--Schwarz inequality
\[
v_1^2+w_1^2\ge\left(1+\zeta_\sigma^2\right)^{-1}
\left(\frac{\alpha_\sigma}{\sqrt{\lambda_\sigma}}-a_{\rm sf}\right)^2\eps R_\eps^2.
\]
Consequently, uniformly on the negative face,
\[
\begin{aligned}
H-U(\sigma)
&\ge -\frac12\eps R_\eps^2
+\frac L2(v_1^2+w_1^2)+o(\eps R_\eps^2)\\
&\ge
\left[
\frac{L}{2(1+\zeta_\sigma^2)}
\left(\frac{\alpha_\sigma}{\sqrt{\lambda_\sigma}}-a_{\rm sf}\right)^2
-\frac12+o(1)
\right]\eps R_\eps^2.
\end{aligned}
\]
The limiting coefficient in brackets is strictly positive by the choice of
$a_{\rm sf}$.  Decreasing $a_{\rm sf}$ once more makes it at least
$a_{\rm sf}$ for any sufficiently small $\eps$, which proves the negative-face
alternative.  Replacing $q_1$ by $-q_1$ gives the positive-face statement.
\end{proof}

The side-face alternative and committor localization together
imply that the mismatch between the saddle profile and the well component
value is negligible.

\begin{lemma}[Side-face committor mismatch]
\label{lem:side-face-mismatch}
Let $j_\eps=\Phi_\sigma(\Lambda_\sigma/\sqrt\eps)$.  For every
$N\ge0$ and $k\in\{0,1,2\}$, there is $K_0=K_0(N,k)$, independent of
$\vartheta$, such that for $K\ge K_0$ and every fixed $\vartheta>0$,
\begin{align*}
&\lim_{\eps\downarrow0}
\frac{\eps}{Z_\eps\mathfrak K_{\sigma,\eps}}
\int_{\partial\mathcal K_\eps^-\cap J_\eps}
\eps^{-N}(\vartheta\sqrt\eps)^{-k}
(1-h_\eps^*)(1-j_\eps)e^{-H/\eps}\,dS=0,
\\
&\lim_{\eps\downarrow0}
\frac{\eps}{Z_\eps\mathfrak K_{\sigma,\eps}}
\int_{\partial\mathcal K_\eps^+\cap J_\eps}
\eps^{-N}(\vartheta\sqrt\eps)^{-k}
h_\eps^*j_\eps e^{-H/\eps}\,dS=0.
\end{align*}
\end{lemma}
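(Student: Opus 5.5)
We will treat the negative face; the positive face is identical after $q_1\mapsto-q_1$, with $j_\eps$ and $h_\eps^*$ replacing $1-j_\eps$ and $1-h_\eps^*$. The plan is to split $\partial\mathcal K_\eps^-\cap J_\eps$ according to the dichotomy of Lemma~\ref{lem:side-face-alternative} into
\[
F_1:=\left\{\Lambda_\sigma/\sqrt\eps\le-a_{\rm sf}R_\eps\right\},
\qquad
F_2:=\left\{H\ge U(\sigma)+a_{\rm sf}\eps R_\eps^2\right\}.
\]
On each piece we bound only one of the two mismatch factors; the other is bounded by $1$. The normalization is explicit: by \eqref{eq:Ksigma-def},
\[
\frac{\eps}{Z_\eps\mathfrak K_{\sigma,\eps}}e^{-H/\eps}
=\frac{L^d\eps}{(2\pi\eps)^{3d/2}}\,e^{-(H-U(\sigma))/\eps}.
\]
The surface measure of the face is at most $C\delta_\eps^{3d-1}=C\eps^{(3d-1)/2}R_\eps^{3d-1}$. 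The prefactor $\eps^{-N}(\vartheta\sqrt\eps)^{-k}$ is $C_\vartheta\eps^{-N-k/2}$. Altogether the integrand, after normalization and surface measure, carries only a polynomial loss $C_\vartheta\eps^{-N'}R_\eps^{3d}$, with $N'$ depending only on $N,k,d$.

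\emph{Profile-tail piece $F_1$.} Bound $1-h_\eps^*\le1$. By Lemma~\ref{lem:profile-tail-domination}, $1-j_\eps=1-\Phi_\sigma(\Lambda_\sigma/\sqrt\eps)\le Ce^{-ca_{\rm sf}^2R_\eps^2}=C\eps^{ca_{\rm sf}^2K^2}$. For the Gibbs weight, points of $J_\eps$ satisfy only $H<U(\sigma)+a_\eps$, so the weight could be as large as $1$. We instead use the lower bound on the face from the proof of Lemma~\ref{lem:side-face-alternative}, namely $H-U(\sigma)\ge-\tfrac12\eps R_\eps^2(1+o(1))$. This gives $e^{-(H-U(\sigma))/\eps}\le\eps^{-K^2/2-o(1)}$, which competes with the tail gain. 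This step is the main obstacle: we need a tail gain that beats the unstable potential drop, not merely one that exists.

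To handle it, we will not use the crude $a_{\rm sf}$. We redo the face computation sharply. With $x:=(v_1+\zeta_\sigma w_1)/\sqrt\eps$ and $s=\Lambda_\sigma/\sqrt\eps=-(\alpha_\sigma/\sqrt{\lambda_\sigma})R_\eps+x$, Cauchy--Schwarz gives the kinetic energy at least $\tfrac{L}{2(1+\zeta_\sigma^2)}x^2$. Combining this with the profile tail, the exponent of the integrand is at most
\[
\frac12R_\eps^2-\frac{L}{2(1+\zeta_\sigma^2)}x^2-\frac{\mu_\sigma}{2c_\sigma}\left(\frac{\alpha_\sigma R_\eps}{\sqrt{\lambda_\sigma}}-x\right)^2
\]
plus the transverse stable terms. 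This is exactly the rank-one update form $\mathscr Q_\sigma$ from Lemma~\ref{lem:normal-form-stability-saddle-blowup}, restricted to $q_1=-\delta_\eps/\sqrt{\lambda_\sigma}$. Since that form is positive semidefinite with kernel $\operatorname{span}\{k_\sigma\}$, and $k_\sigma=D_\sigma^{-1}\ell_\sigma$ has nonzero $q_1$-component $-\alpha_\sigma/\lambda_\sigma$, the form is strictly positive on the face, where $|q_1|\sqrt{\lambda_\sigma}/\sqrt\eps=R_\eps$. Minimizing over $x$ and the transverse variables therefore gives $\mathscr Q_\sigma\ge c_*R_\eps^2$ on the whole face for some $c_*>0$. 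We will use this uniform bound instead of the piece-by-piece estimate, replacing $1-j_\eps$ by its Gaussian tail for $s\le-1$, and bounding $1-j_\eps\le1$ when $s>-1$ (then the kinetic term alone gives $e^{-c R_\eps^2}$, again by Lemma~\ref{lem:side-face-kinetic-domination}). The Hamiltonian Taylor remainder is $O(\sqrt\eps R_\eps^3)=o(1)$ by Lemma~\ref{lem:saddle-hamiltonian-expansion}. The integrand is thus bounded by $C_\vartheta\eps^{-N'}R_\eps^{3d}\eps^{c_*K^2}$, which tends to zero once $K^2c_*>N'+1$. This fixes $K_0(N,k)$, independent of $\vartheta$ because $\vartheta$ enters only through the constant.

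\emph{Energetic piece $F_2$.} Here $H-U(\sigma)\ge a_{\rm sf}\eps R_\eps^2$, so the Gibbs weight alone gives $\eps^{a_{\rm sf}K^2}$ and the same polynomial count finishes, without using $h^*_\eps$. Proposition~\ref{prop:third-order-lrs-committor-localization} is available as a fallback for the low-energy part of $J^m_\eps$, which gives $1-h^*_\eps\le C\eps^{-M_{\rm com}}e^{-\delta/\eps}$ away from the saddle. It is not needed on the logarithmic face if the uniform $\mathscr Q_\sigma$ bound holds. In the writeup we will present the $\mathscr Q_\sigma$ coercivity on the face as the key estimate and the rest as bookkeeping.
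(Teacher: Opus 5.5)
There is a genuine gap, and it sits exactly at the step you flagged as the main obstacle. The claimed bound $\mathscr Q_\sigma\ge c_*R_\eps^2$ on the whole face is false, and the reasoning behind it runs the wrong way.

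A positive semidefinite form with kernel $\operatorname{span}\{k_\sigma\}$ vanishes on the entire line $\R k_\sigma$. The $Q_1$-component of $k_\sigma$ is $-\alpha_\sigma/\lambda_\sigma\neq0$, and this means precisely that the line \emph{crosses} the blown-up face $\{Q_1=-R_\eps/\sqrt{\lambda_\sigma}\}$, at $Y^*=(R_\eps\sqrt{\lambda_\sigma}/\alpha_\sigma)\,k_\sigma$. Strict positivity on the face would instead require that component to vanish.

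Your own one-variable reduction already shows this. The maximum over $x$ of $\frac12R_\eps^2-\frac{L}{2(1+\zeta_\sigma^2)}x^2-\frac{\mu_\sigma}{2c_\sigma}(\alpha_\sigma R_\eps/\sqrt{\lambda_\sigma}-x)^2$ is exactly $0$. The reason is that $\alpha_\sigma^2/\lambda_\sigma=(1+\zeta_\sigma^2)/L+c_\sigma/\mu_\sigma$, which is the identity $\ell_\sigma^\top D_\sigma^{-1}\ell_\sigma=-c_\sigma/\mu_\sigma$.

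Since $k_\sigma=-(c_\sigma/\mu_\sigma)\mathfrak u_\sigma$, the point $Y^*$ is the crossing point of Lemma~\ref{lem:uniform-saddle-current-stability}. Its unstable coordinates are $(-R_\eps,\mu_\sigma R_\eps,\mu_\sigma\zeta_\sigma R_\eps)/\sqrt{\lambda_\sigma}$, and it has three relevant properties:
\begin{itemize}
\item it lies at distance $\asymp R_\eps$ inside the face, because $\mu_\sigma^2<\lambda_\sigma/L$;
\item it satisfies $B_\sigma(Y^*)<0$, hence $H<U(\sigma)$ there;
\item it has $s\asymp-R_\eps$.
\end{itemize}

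At $Y^*$ the Gibbs excess $e^{-B_\sigma}$ cancels the profile tail exactly. So with $1-h_\eps^*\le1$ your integrand is of size $R_\eps^{-1}e^{-\mathscr Q_\sigma}$, and $\mathscr Q_\sigma$ is a nondegenerate Gaussian along the face around $Y^*$. The normalized face integral of $(1-j_\eps)e^{-H/\eps}$ is therefore of order $\vartheta^{-k}\eps^{1/2-N-k/2}R_\eps^{-1}$. This diverges whenever $N+k/2>1/2$ (for instance $k=2$), no matter how large $K$ is. It is consistent with the paper's result that the current through this same face is $\asymp\mathfrak K_{\sigma,\eps}$: the face cannot be suppressed by energy and profile tail alone.

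The missing ingredient is Proposition~\ref{prop:third-order-lrs-committor-localization}. You set it aside as a fallback and understated it. It gives $1-h_\eps^*\le C\eps^{-M_{\rm com}}e^{(H-U(\sigma))/\eps}$ pointwise on all of $W_m$, including points arbitrarily close to the saddle. It is not only an $e^{-\delta/\eps}$ bound on $\{H\le U(\sigma)-\delta\}$.

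The paper splits the face by energy rather than by the alternative of Lemma~\ref{lem:side-face-alternative}.
\begin{itemize}
\item On $\partial\mathcal K_\eps^-\cap J_\eps\cap\{H<U(\sigma)\}$, which lies in $W_m$ by Lemma~\ref{lem:box-partition-low-sublevel} and the orientation of $e_\sigma$, the high-energy branch of Lemma~\ref{lem:side-face-alternative} is impossible. Hence $\Lambda_\sigma/\sqrt\eps\le-a_{\rm sf}R_\eps$ there. Both factors must be used: the committor bound cancels the Gibbs excess exactly, giving $(1-h_\eps^*)(1-j_\eps)e^{-H/\eps}\le C\eps^{-M_{\rm com}}e^{-U(\sigma)/\eps}e^{-cR_\eps^2}$.
\item On the part with $H\ge U(\sigma)$, your dichotomy works as written, since $e^{-H/\eps}\le e^{-U(\sigma)/\eps}$ there.
\end{itemize}
The surface-measure count then requires $cK^2>N+M_{\rm com}+(k-1)/2$, so $K_0$ must also absorb $M_{\rm com}$. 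The positive face is symmetric, using $h_\eps^*\le C\eps^{-M_{\rm com}}e^{(H-U(\sigma))/\eps}$ on $W_s$.
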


\begin{proof}
Set
\[
F_\eps^-:=\partial\mathcal K_\eps^-\cap J_\eps,
\qquad
F_{\eps,0}^-:=F_\eps^-\cap\{H<U(\sigma)\},
\qquad
F_{\eps,1}^-:=F_\eps^-\cap\{H\ge U(\sigma)\}.
\]
By the orientation of the negative face and
Lemma~\ref{lem:box-partition-low-sublevel}, $F_{\eps,0}^-\subset W_m$.
Moreover, the high-energy alternative in
Lemma~\ref{lem:side-face-alternative} is impossible on $F_{\eps,0}^-$, and
hence
\[
\frac{\Lambda_\sigma}{\sqrt\eps}\le-a_{\rm sf}R_\eps
\quad\hbox{on }F_{\eps,0}^-.
\]
Lemmas~\ref{lem:profile-tail-domination} and
Proposition~\ref{prop:third-order-lrs-committor-localization} therefore imply
\begin{equation}
\label{eq:m-side-low-mismatch-bound}
\begin{aligned}
(1-h_\eps^*)(1-j_\eps)e^{-H/\eps}
&\le
C\eps^{-M_{\rm com}}e^{(H-U(\sigma))/\eps}
e^{-cR_\eps^2}e^{-H/\eps} \\
&=C\eps^{-M_{\rm com}}e^{-U(\sigma)/\eps}e^{-cR_\eps^2}
\qquad\hbox{on }F_{\eps,0}^-.
\end{aligned}
\end{equation}
On $F_{\eps,1}^-$, Lemma~\ref{lem:side-face-alternative} gives two cases.  If
$\Lambda_\sigma/\sqrt\eps\le-a_{\rm sf}R_\eps$, then
$1-j_\eps\le Ce^{-cR_\eps^2}$ and $e^{-H/\eps}\le e^{-U(\sigma)/\eps}$.
Otherwise,
$H\ge U(\sigma)+a_{\rm sf}\eps R_\eps^2$, and hence
$e^{-H/\eps}\le e^{-U(\sigma)/\eps}e^{-a_{\rm sf}R_\eps^2}$.  Thus, after
decreasing $c>0$ if necessary,
\begin{equation}
\label{eq:m-side-high-mismatch-bound}
(1-h_\eps^*)(1-j_\eps)e^{-H/\eps}
\le Ce^{-U(\sigma)/\eps}e^{-cR_\eps^2}
\qquad\hbox{on }F_{\eps,1}^-.
\end{equation}

The face $F_\eps^-$ has surface measure at most
$C\eps^{(3d-1)/2}R_\eps^{3d-1}$.  Therefore, for the prescribed $N$ and $k$
in the statement,
\begin{align*}
\frac{\eps}{Z_\eps}
\int_{F_\eps^-}\eps^{-N}(\vartheta\sqrt\eps)^{-k}
(1-h_\eps^*)(1-j_\eps)e^{-H/\eps}\,dS
\le
\frac{C\vartheta^{-k}}{Z_\eps}e^{-U(\sigma)/\eps}
\eps^{(3d+1-k)/2-N-M_{\rm com}}R_\eps^{3d-1}e^{-cR_\eps^2}.
\end{align*}
Since $R_\eps=K\sqrt{\log(1/\eps)}$, the normalized contribution satisfies
\[
\frac{1}{\mathfrak K_{\sigma,\eps}}
\frac{\eps}{Z_\eps}
\int_{F_\eps^-}\eps^{-N}(\vartheta\sqrt\eps)^{-k}
(1-h_\eps^*)(1-j_\eps)e^{-H/\eps}\,dS
\le
C\vartheta^{-k}\eps^{(1-k)/2-N-M_{\rm com}+cK^2}
\left(\log(1/\eps)\right)^{(3d-1)/2}.
\]
The right-hand side tends to zero once the fixed $K$ is chosen sufficiently
large.

For the positive face, define $F_{\eps,0}^+$ and $F_{\eps,1}^+$ analogously.
The same argument uses $F_{\eps,0}^+\subset W_s$, the bound for $h_\eps^*$
from Proposition~\ref{prop:third-order-lrs-committor-localization}, and
$j_\eps\le Ce^{-cR_\eps^2}$ on the profile-tail part.  It yields the second
estimate with the same polynomial losses.
\end{proof}

We extend the preceding face estimate uniformly across the thin
interpolation layers adjacent to the two physical faces.

\begin{lemma}[Side-layer committor mismatch]
\label{lem:side-layer-mismatch}
For $N\ge0$ and $k\in\{0,1,2\}$, define
\[
\begin{aligned}
M_{\eps,\vartheta}^{-,N,k}
&:=\frac{\eps^{-N}(\vartheta\sqrt\eps)^{-k}}{Z_\eps}
\int_{\mathcal A_{\eps,\vartheta}^-}
(1-h_\eps^*)(1-j_\eps)e^{-H/\eps}\,dz,\\
M_{\eps,\vartheta}^{+,N,k}
&:=\frac{\eps^{-N}(\vartheta\sqrt\eps)^{-k}}{Z_\eps}
\int_{\mathcal A_{\eps,\vartheta}^+}
h_\eps^*j_\eps e^{-H/\eps}\,dz,\\
D_{\eps,\vartheta}^{-,N,k}
&:=\frac{\eps^{-N}(\vartheta\sqrt\eps)^{-k}}{Z_\eps}
\int_{\mathcal A_{\eps,\vartheta}^-}
(1-h_\eps^*)
\left|\Phi_\sigma'(\Lambda_\sigma/\sqrt\eps)\right|
e^{-H/\eps}\,dz,\\
D_{\eps,\vartheta}^{+,N,k}
&:=\frac{\eps^{-N}(\vartheta\sqrt\eps)^{-k}}{Z_\eps}
\int_{\mathcal A_{\eps,\vartheta}^+}
h_\eps^*
\left|\Phi_\sigma'(\Lambda_\sigma/\sqrt\eps)\right|
e^{-H/\eps}\,dz.
\end{aligned}
\]
There exists some $K_0=K_0(N,k)$, independent of $\vartheta$, such that for
$K\ge K_0$ and every fixed $\vartheta>0$,
\[
\lim_{\eps\downarrow0}
\frac{M_{\eps,\vartheta}^{-,N,k}+M_{\eps,\vartheta}^{+,N,k}
+D_{\eps,\vartheta}^{-,N,k}+D_{\eps,\vartheta}^{+,N,k}}
{\mathfrak K_{\sigma,\eps}}=0.
\]
In particular,
\[
\lim_{\vartheta\downarrow0}\limsup_{\eps\downarrow0}
\frac{M_{\eps,\vartheta}^{-,N,k}+M_{\eps,\vartheta}^{+,N,k}
+D_{\eps,\vartheta}^{-,N,k}+D_{\eps,\vartheta}^{+,N,k}}
{\mathfrak K_{\sigma,\eps}}=0.
\]
\end{lemma}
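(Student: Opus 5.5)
The plan is to reproduce the argument of Lemma~\ref{lem:side-face-mismatch}, now on the full-dimensional layers $\mathcal A_{\eps,\vartheta}^\pm$ rather than on the faces. I treat the $m$-side layer; the $s$-side follows by $q_1\mapsto-q_1$, with $W_s$ replacing $W_m$ and the bound on $h_\eps^*$ replacing the bound on $1-h_\eps^*$ from Proposition~\ref{prop:third-order-lrs-committor-localization}.

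\textbf{Step 1: the alternative on the layer.} On $\mathcal A_{\eps,\vartheta}^-$ one has
\[
q_1\in\bigl[-(\delta_\eps+\vartheta\sqrt\eps)/\sqrt{\lambda_\sigma},\,-\delta_\eps/\sqrt{\lambda_\sigma}\bigr].
\]
Since $\vartheta$ is fixed, $\alpha_\sigma q_1/\sqrt\eps\le-(\alpha_\sigma/\sqrt{\lambda_\sigma})R_\eps$, and the unstable potential term is bounded below by $-\tfrac\eps2(R_\eps+\vartheta)^2=-\tfrac\eps2R_\eps^2(1+o(1))$. Repeating the proof of Lemma~\ref{lem:side-face-alternative} with this $o(1)$ absorbed gives the following. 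Every point of $\mathcal A_{\eps,\vartheta}^-$ satisfies $\Lambda_\sigma/\sqrt\eps\le-a_{\rm sf}R_\eps$ or $H\ge U(\sigma)+a_{\rm sf}\eps R_\eps^2$, where $a_{\rm sf}$ is possibly halved and is independent of $\vartheta$. Lemma~\ref{lem:saddle-hamiltonian-expansion} applies uniformly, because the layer stays in $|Y|\le CR_\eps$.

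\textbf{Step 2: pointwise bounds.} Split the layer into its part in $\{H<U(\sigma)\}$ and its part in $\{H\ge U(\sigma)\}$.

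On the low part the energy alternative is impossible, so $\Lambda_\sigma/\sqrt\eps\le-a_{\rm sf}R_\eps$. The low part of the layer lies in $W_m$. I would argue this from Lemma~\ref{lem:box-partition-low-sublevel} together with the connectedness of the low sublevel slices $\{q_1=c\}$ along the segment $c\in[-(\delta_\eps+\vartheta\sqrt\eps)/\sqrt{\lambda_\sigma},-\delta_\eps/\sqrt{\lambda_\sigma}]$. By the strict convexity in the transverse variables used for $F_\eps^\pm$, each slice is convex and meets $J_\eps^m$, so it joins $F_\eps^-\subset J_\eps^m$; the part below $U(\sigma)$ is then in $W_m$. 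Proposition~\ref{prop:third-order-lrs-committor-localization} gives
\[
1-h_\eps^*\le C\eps^{-M_{\rm com}}e^{(H-U(\sigma))/\eps}.
\]
Lemma~\ref{lem:profile-tail-domination} bounds both $1-j_\eps$ and $|\Phi_\sigma'(\Lambda_\sigma/\sqrt\eps)|$ by $Ce^{-cR_\eps^2}$. Hence both the $M^-$ and the $D^-$ integrands are at most $C\eps^{-M_{\rm com}}e^{-U(\sigma)/\eps}e^{-cR_\eps^2}$.

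On the high part, use $1-h_\eps^*\le1$ and the two cases of Step~1. In the first case the profile tail gives $e^{-cR_\eps^2}$; in the second the Gibbs factor gives $e^{-a_{\rm sf}R_\eps^2}$. Either way the integrand is at most $Ce^{-U(\sigma)/\eps}e^{-cR_\eps^2}$, uniformly in $\vartheta$.

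\textbf{Step 3: integration.} The layer has Lebesgue measure at most $C\vartheta\eps^{3d/2}R_\eps^{3d-1}$. Dividing by $\mathfrak K_{\sigma,\eps}$ then bounds each of the four normalized quantities by
\[
C\vartheta^{1-k}\eps^{-N-k/2-M_{\rm com}+cK^2}\bigl(\log(1/\eps)\bigr)^{(3d-1)/2},
\]
using $e^{-cR_\eps^2}=\eps^{cK^2}$. Since $c$ does not depend on $K$, choosing $K_0(N,k)$ with $cK_0^2>N+k/2+M_{\rm com}+1$ makes this tend to $0$ for each fixed $\vartheta$. The ordered limit in $\vartheta$ is then trivial.

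\textbf{Main obstacle.} The delicate point is the topological claim in Step~2: that the sub-saddle part of the whole layer, not just the face, belongs to $W_m$. If the slice-convexity argument proves awkward, I would instead connect each such point to $F_\eps^-$ by the straight segment in $q_1$ direction. Along this segment $H$ is controlled by Taylor expansion, and in the region $\Lambda_\sigma/\sqrt\eps\le-a_{\rm sf}R_\eps$ moving $q_1$ toward the face changes $H$ only by $O(\vartheta\eps R_\eps)=o(\eps R_\eps^2)$. That keeps the segment inside $J_\eps$ and outside $\mathcal K_\eps$, so it lies in $J_\eps^m$; the committor bound then applies on the sub-saddle part.
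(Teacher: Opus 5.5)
Your argument is the paper's: a pointwise dichotomy on the layer (profile tail $\Lambda_\sigma/\sqrt\eps\le -aR_\eps$ or energy excess $a\eps R_\eps^2$), committor localization on the sub-saddle part, crude bounds elsewhere, a volume count of order $\vartheta\eps^{3d/2}R_\eps^{3d-1}$, and $K$ large so that $\eps^{cK^2}$ beats the polynomial losses. The one difference is in Step~1. You redo the side-face computation directly on $\mathcal A_{\eps,\vartheta}^\pm$. The paper instead projects $z$ to the inner-face point $\bar z$ and perturbs, using $|\Lambda_\sigma(z)-\Lambda_\sigma(\bar z)|/\sqrt\eps\le C\vartheta$ and $|H(z)-H(\bar z)|/\eps\le C_\vartheta(R_\eps+1)$. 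Your version is valid, since the face computation never uses $\bar z\in J_\eps$, and it is slightly cleaner.

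The step that does not close as written is the well membership in Step~2. Both of your routes end with $z\in J_\eps^m$ and $H(z)<U(\sigma)$, and you then assert $z\in W_m$. Lemma~\ref{lem:box-partition-low-sublevel} does not supply that inference. It concerns the level $U(\sigma)+a_\eps$ outside $\mathcal K_\eps$, and it does not exclude that $W_s$ passes through $\mathcal K_\eps$ and exits through the negative face into $J_\eps^m$. So a sub-saddle point of $J_\eps^m$ could a priori lie in $W_s$, and ruling this out needs the local structure of $\{H<U(\sigma)\}$ at $z_\sigma$.

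The paper settles this with Lemma~\ref{lem:saddle-lobe-force-sign}. For fixed $\vartheta$ the layers lie in $\mathcal N_\sigma$, where $W_m\subset\{q_1<0\}$ and $W_s\subset\{q_1>0\}$, and $\{H<U(\sigma)\}=W_m\cup W_s$. Hence $\mathcal A_{\eps,\vartheta}^-\cap\{H<U(\sigma)\}\subset W_m$, and symmetrically for $\mathcal A_{\eps,\vartheta}^+$.

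Alternatively, your convexity idea works if you apply it to $\{H<U(\sigma)\}$ rather than to $J_\eps$. Each slice $\{q_1=c\}\cap\{H<U(\sigma)\}$ of the layer is convex and contains $(c,0,0,0)$ on the negative unstable axis. That axis point lies in $W_m$ by the orientation convention for $e_\sigma$. With either fix the detour through $J_\eps^m$ is unnecessary, and the rest of your proof goes through.
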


\begin{proof}
Parametrize the negative layer $\mathcal A_{\eps,\vartheta}^-$ by
\[
q_1=q_1^-+\frac{\sqrt\eps}{\sqrt{\lambda_\sigma}}u,
\qquad
u\in[-\vartheta,0],
\qquad
q_1^-=-\frac{R_\eps\sqrt\eps}{\sqrt{\lambda_\sigma}},
\]
and let $\bar z$ be the projection of $z$ onto the inner face $u=0$.
Uniformly for $z\in\mathcal A_{\eps,\vartheta}^-$,
\[
\frac{|\Lambda_\sigma(z)-\Lambda_\sigma(\bar z)|}{\sqrt\eps}
\le C\vartheta,
\qquad
\frac{|H(z)-H(\bar z)|}{\eps}
\le C_\vartheta(R_\eps+1).
\]
For fixed $\vartheta$, both layers lie in the saddle neighborhood
$\mathcal N_\sigma$ of Lemma~\ref{lem:saddle-lobe-force-sign} when $\eps$ is
small.  Every component of $\{H<U(\sigma)\}$ contains a local minimum of $H$,
and the only such minima are $x_m,x_s$.  Hence
\begin{equation}
\label{eq:side-layer-well-membership}
\mathcal A_{\eps,\vartheta}^-\cap\{H<U(\sigma)\}\subset W_m,
\qquad
\mathcal A_{\eps,\vartheta}^+\cap\{H<U(\sigma)\}\subset W_s,
\end{equation}
where the signs follow from Lemma~\ref{lem:saddle-lobe-force-sign}.  Thus the
corresponding committor-localization estimate applies throughout the
low-energy part of each layer.

We also transfer the deterministic face alternative to the full layers.  If
$\bar z\notin J_\eps$, then
$H(\bar z)\ge U(\sigma)+c_K\eps R_\eps^2$.  If $\bar z\in J_\eps$, then
Lemma~\ref{lem:side-face-alternative} gives either
$\Lambda_\sigma(\bar z)/\sqrt\eps\le-a_{\rm sf}R_\eps$ or the same lower
bound for $H(\bar z)$ with $a_{\rm sf}$ in place of $c_K$.  The two
perturbation estimates above therefore imply, after decreasing a fixed
$a_{\rm lay}>0$, that every $z\in\mathcal A_{\eps,\vartheta}^-$ satisfies
\begin{equation}
\label{eq:negative-side-layer-alternative}
\frac{\Lambda_\sigma(z)}{\sqrt\eps}\le-a_{\rm lay}R_\eps
\qquad\hbox{or}\qquad
H(z)\ge U(\sigma)+a_{\rm lay}\eps R_\eps^2.
\end{equation}
The positive layer satisfies the sign-reversed alternative.  On the
low-energy part, \eqref{eq:side-layer-well-membership} and
Proposition~\ref{prop:third-order-lrs-committor-localization} control the
committor factor, while \eqref{eq:negative-side-layer-alternative} and
Lemma~\ref{lem:profile-tail-domination} control the profile factor.  On the
remaining part, either the same profile tail occurs or the Gibbs weight pays
$e^{-a_{\rm lay}R_\eps^2}$.

For fixed $\vartheta$, the perturbation cost is negligible on the logarithmic
scale:
\[
\frac{C_\vartheta(R_\eps+1)}{R_\eps^2}\longrightarrow0
\qquad\text{as $\eps\downarrow0$}.
\]
Consequently, after decreasing $c>0$ if necessary, the preceding
committor/profile and high-energy bounds are all bounded by
$C_\vartheta e^{-cR_\eps^2}$ for sufficiently small $\eps$.  Derivatives of the
interpolation cutoffs satisfy
\[
|\partial_{q_1}^k\chi_{\eps,\vartheta}^{\pm}|
\le C_k(\vartheta\sqrt\eps)^{-k},
\qquad k=1,2.
\]
Thus, for the prescribed $N$ and $k$, fixed $\vartheta$, and some fixed $M$,
each mismatch contribution is
bounded by
\[
C_{\vartheta,N,k}\eps^{-N-k/2}R_\eps^M e^{-c'R_\eps^2}
\mathfrak K_{\sigma,\eps}=o(\mathfrak K_{\sigma,\eps}),
\]
once $K$ dominates the finitely many polynomial losses.  The positive layer
is treated identically, with
$\mathcal A_{\eps,\vartheta}^+$ in place of
$\mathcal A_{\eps,\vartheta}^-$.  The same dichotomy and
Lemma~\ref{lem:profile-tail-domination} give the identical bound for the two
$D$-terms, since the profile derivative has Gaussian decay on the tail part.
This proves both limits in the statement.
\end{proof}

The preceding localization estimates can now be summarized by
removing every contribution except the central saddle box and the two
physical side layers.

\begin{proposition}[Exterior and side-layer negligibility]
\label{prop:cutoff-side-exterior-negligibility}
Let $E_{\eps,\vartheta}^{\rm ext}$ be defined in
Lemma~\ref{lem:saddle-localization-admissibility}.  For every $N\ge0$ and
$k\in\{0,1,2\}$, choose $K\ge K_0(N,k)$ as in
Lemma~\ref{lem:side-layer-mismatch}.  Then
\[
\lim_{\vartheta\downarrow0}\limsup_{\eps\downarrow0}
\frac{
|E_{\eps,\vartheta}^{\rm ext}|
+M_{\eps,\vartheta}^{-,N,k}
+M_{\eps,\vartheta}^{+,N,k}}
{\mathfrak K_{\sigma,\eps}}=0.
\]
\end{proposition}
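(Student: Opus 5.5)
This proposition is a bookkeeping combination of two results already established, so the plan is to add their bounds and then take the ordered limit. Fix $N\ge0$ and $k\in\{0,1,2\}$, and take $K\ge K_0(N,k)$ as in Lemma~\ref{lem:side-layer-mismatch}. Since that threshold does not depend on $\vartheta$, this is compatible with the parameter hierarchy: $K$ is chosen before $\vartheta$, and the choice also meets the requirement in Lemma~\ref{lem:saddle-localization-admissibility} that $c_KK^2/2$ exceed the polynomial loss coming from $E_G$. Increasing $K$ further leaves the geometric constants $b,c_{\rm box},a_{\rm sf},c_K$ unchanged, so both earlier lemmas hold simultaneously for the same $K$.

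\textbf{Bounding the two groups of terms.} For the exterior term, Lemma~\ref{lem:saddle-localization-admissibility}, equation~\eqref{eq:saddle-localization-fixed-theta}, gives $|E^{\rm ext}_{\eps,\vartheta}|/\mathfrak K_{\sigma,\eps}\to0$ for each fixed $\vartheta>0$. That lemma covers three regions: the energy-cutoff shell $E_G$, the outer shell $E_\Xi$ (via Lemma~\ref{lem:compact-outer-cutoff}), and the low-energy well components, where $f_{\eps,\vartheta}$ is constant so $\calL_\eps f_{\eps,\vartheta}=0$. For the mismatch terms $M^{\pm,N,k}_{\eps,\vartheta}$, Lemma~\ref{lem:side-layer-mismatch} gives $\lim_{\eps\downarrow0}(M^{-,N,k}_{\eps,\vartheta}+M^{+,N,k}_{\eps,\vartheta})/\mathfrak K_{\sigma,\eps}=0$, again for each fixed $\vartheta$.

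\textbf{Combining and taking limits.} Since every quantity involved is nonnegative, the $\limsup$ of the normalized sum is at most the sum of the individual $\limsup$'s, each of which is $0$. Hence the inner $\limsup_{\eps\downarrow0}$ vanishes identically in $\vartheta$, and the outer limit as $\vartheta\downarrow0$ is trivially $0$.

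\textbf{Main point to check.} The only step needing care is that the regions in the two lemmas agree and that none is counted twice. By definition $E^{\rm ext}_{\eps,\vartheta}$ excludes exactly $\mathcal K_\eps\cup\mathcal A^{\rm side}_{\eps,\vartheta}$, while the $M$-terms are integrals over $\mathcal A^\pm_{\eps,\vartheta}$ only. The two bounds therefore concern complementary regions, and the fixed-$\vartheta$ constants $C_\vartheta$ are harmless because $\eps\downarrow0$ is taken first.
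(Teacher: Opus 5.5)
Your proposal is correct and follows essentially the paper's argument. The paper splits the exterior into the energy-cutoff shell $E_G$, the outer shell $E_\Xi$, and the constant-value region where $\calL_\eps f_{\eps,\vartheta}=0$, which is exactly the content of Lemma~\ref{lem:saddle-localization-admissibility} that you cite; it then adds the side-layer mismatch bounds of Lemma~\ref{lem:side-layer-mismatch} by the triangle inequality. One small point: the largeness of $K$ needed on $E_G$ comes from the standing parameter hierarchy, not from the choice $K\ge K_0(N,k)$ itself, and the paper handles it the same way.
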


\begin{proof}
Set
\[
D_{\eps,\vartheta}^{\rm ext}
:=\left(\R^{3d}\setminus\mathcal K_\eps\right)
\setminus\mathcal A_{\eps,\vartheta}^{\rm side}.
\]
Decompose the exterior derivative support as
\[
D_{\eps,\vartheta}^{\rm ext}=E_G\cup E_\Xi\cup E_{\rm const},
\]
where $E_G$ contains derivatives of the energy cutoff, $E_\Xi$ contains
derivatives of the fixed compactifying cutoff, and $E_{\rm const}$ is the
remainder outside $\mathcal K_\eps$ in
\eqref{eq:logarithmic-saddle-box} and the two physical side layers
$\mathcal A_{\eps,\vartheta}^\pm$ in \eqref{eq:physical-side-layers}.
For $D\in\{E_G,E_\Xi,E_{\rm const}\}$, define
\[
\mathscr E_{\eps,\vartheta}^{D}
:=\int_Dh_\eps^*(-\calL_\eps f_{\eps,\vartheta})\,d\pi_\eps.
\]
The three sets can be chosen disjoint, and hence
\[
E_{\eps,\vartheta}^{\rm ext}
=\mathscr E_{\eps,\vartheta}^{E_G}
+\mathscr E_{\eps,\vartheta}^{E_\Xi}
+\mathscr E_{\eps,\vartheta}^{E_{\rm const}}.
\]
On $E_{\rm const}$ the function is identically equal to its component value,
either zero or one, and hence
\[
\calL_\eps f_{\eps,\vartheta}=0\quad\hbox{on }E_{\rm const},
\qquad
\mathscr E_{\eps,\vartheta}^{E_{\rm const}}=0.
\]
On $E_G$, \eqref{eq:global-saddle-derivative-bound} and
$H\ge U(\sigma)+a_\eps/2$ give, for fixed $\vartheta$ and some $N<\infty$,
\[
\left|\mathscr E_{\eps,\vartheta}^{E_G}\right|
\le C_\vartheta Z_\eps^{-1}\eps^{-N}
e^{-[U(\sigma)+a_\eps/2]/\eps}
\le C_\vartheta\eps^{-N-3d/2}
e^{-a_\eps/(2\eps)}\mathfrak K_{\sigma,\eps}
=o(\mathfrak K_{\sigma,\eps}),
\]
because $a_\eps/(2\eps)=c_KR_\eps^2/2$ and $K$ was chosen to dominate
the fixed polynomial loss.  Lemma~\ref{lem:compact-outer-cutoff} gives the
same estimate on $E_\Xi$.  Therefore,
\[
\lim_{\vartheta\downarrow0}\limsup_{\eps\downarrow0}
\frac{
|\mathscr E_{\eps,\vartheta}^{E_G}|
+|\mathscr E_{\eps,\vartheta}^{E_\Xi}|
+|\mathscr E_{\eps,\vartheta}^{E_{\rm const}}|}
{\mathfrak K_{\sigma,\eps}}=0.
\]
By Lemma~\ref{lem:side-layer-mismatch},
\[
\lim_{\vartheta\downarrow0}\limsup_{\eps\downarrow0}
\frac{M_{\eps,\vartheta}^{-,N,k}+M_{\eps,\vartheta}^{+,N,k}}
{\mathfrak K_{\sigma,\eps}}=0.
\]
The triangle inequality now yields
\[
\lim_{\vartheta\downarrow0}\limsup_{\eps\downarrow0}
\frac{
|E_{\eps,\vartheta}^{\rm ext}|
+M_{\eps,\vartheta}^{-,N,k}
+M_{\eps,\vartheta}^{+,N,k}}
{\mathfrak K_{\sigma,\eps}}=0,
\]
which proves the proposition.
\end{proof}

\emph{Boundary-layer reduction.}  We now construct the boundary-layer test
function $f_{\eps,\vartheta}$ and reduce the weak integral to the central Gaussian
current.  It is the function in
\eqref{eq:piecewise-energy-saddle-test}: it equals $G_\eps j_\eps$ inside
$\mathcal K_\eps$, $G_\eps$ on $J_\eps^m$, and zero on $J_\eps^s$, and is
patched only through thin layers on $\partial\mathcal K_\eps^\pm$, with
\[
|\partial^\alpha f_{\eps,\vartheta}|\le C_\alpha\eps^{-N_\alpha}\vartheta^{-|\alpha|},
\qquad
|\alpha|\le2.
\]
After applying the fixed outer cutoff, set
\[
\mathcal I_\eps(f_{\eps,\vartheta})=
\int_{\R^{3d}}h_\eps^*(-\calL_\eps f_{\eps,\vartheta})\,d\pi_\eps.
\]

After this component replacement, the $m$-side interpolation
layer can be collapsed by the divergence theorem to a physical face current.

\begin{lemma}[Collapse of the physical side layer]
\label{lem:collapse-physical-side-layer}
On the $m$-side layer $\mathcal A_{\eps,\vartheta}^-$ defined in
\eqref{eq:physical-side-layers},
construct
\[
f_{\eps,\vartheta}=G_\eps\left\{j_\eps+\chi_{\eps,\vartheta}^-(1-j_\eps)\right\},
\]
where $\chi_{\eps,\vartheta}^-$ is the cutoff from
Lemma~\ref{lem:global-saddle-test-function}.  For fixed $\eps,\vartheta$,
\[
\begin{aligned}
I_{\eps,\vartheta}^-
&:=\int_{\mathcal A_{\eps,\vartheta}^-}
h_\eps^*(-\calL_\eps f_{\eps,\vartheta})\,d\pi_\eps,
\qquad
I_{\eps,\vartheta}^+
:=\int_{\mathcal A_{\eps,\vartheta}^+}
h_\eps^*(-\calL_\eps f_{\eps,\vartheta})\,d\pi_\eps,\\
\mathfrak J_{\sigma,\eps}^-
&:=\frac1{Z_\eps}\int_{\partial\mathcal K_\eps^-}
-\eps e^{-H/\eps}\mathsf M\nabla j_\eps\cdot n^-\,dS.
\end{aligned}
\]
Then
\begin{equation}
\label{eq:side-collapse-ordered-remainder}
\lim_{\vartheta\downarrow0}\limsup_{\eps\downarrow0}
\frac{|R_{\eps,\vartheta}^-|+|R_{\eps,\vartheta}^+|}
{\mathfrak K_{\sigma,\eps}}=0,
\end{equation}
where
\[
R_{\eps,\vartheta}^-:=I_{\eps,\vartheta}^--\mathfrak J_{\sigma,\eps}^-,
\qquad
R_{\eps,\vartheta}^+:=I_{\eps,\vartheta}^+.
\]
\end{lemma}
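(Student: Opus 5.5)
Throughout the argument I work with the divergence form of the full generator. Since $\pi_\eps$ is Gibbs with Hamiltonian $H$, one has
\[
\calL_\eps f=\eps e^{H/\eps}\nabla\cdot\bigl(e^{-H/\eps}\mathsf M\nabla f\bigr).
\]
This holds exactly, not only for the linearization: the Hamiltonian part is $\mathsf Q$-skew and $\mathsf D$ is constant. Hence, for any region $\mathcal A$,
\[
\int_{\mathcal A}h_\eps^*(-\calL_\eps f)\,d\pi_\eps
=\frac{\eps}{Z_\eps}\int_{\mathcal A}e^{-H/\eps}\,\mathsf M\nabla f\cdot\nabla h_\eps^*\,dz
-\frac{\eps}{Z_\eps}\int_{\partial\mathcal A}h_\eps^*e^{-H/\eps}\,\mathsf M\nabla f\cdot n_{\rm out}\,dS .
\]
Because $h_\eps^*$ is not smooth enough for a naive integration by parts, I will not integrate by parts against $h_\eps^*$. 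Instead I split $f$ into a main part and a mismatch part, and then integrate by parts only the main part, whose generator action is explicit.

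\textbf{The $m$-side layer.} On $\mathcal A_{\eps,\vartheta}^-$ I first drop the factor $G_\eps$. Its derivatives are supported where $H\ge U(\sigma)+a_\eps/2$, or they vanish near the tangential faces, so this costs $o(\mathfrak K_{\sigma,\eps})$ exactly as in Lemma~\ref{lem:saddle-localization-admissibility}. Then I write
\[
f=1-(1-\chi^-)(1-j_\eps),
\qquad
-\calL_\eps f=\calL_\eps\bigl[(1-\chi^-)(1-j_\eps)\bigr].
\]
Next I replace $h_\eps^*$ by $1-(1-h_\eps^*)$. For the term carrying $(1-h_\eps^*)$, I expand $\calL_\eps[(1-\chi^-)(1-j_\eps)]$ by the product rule. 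The terms carrying $(1-j_\eps)$ times derivatives of $\chi^-$ (up to order two, with $(\vartheta\sqrt\eps)^{-k}$ losses) are the quantities $M^{-,N,k}$. The terms carrying $\nabla j_\eps$ or $\calL_\eps j_\eps$ are bounded by $\eps^{-N}|\Phi_\sigma'|$ plus polynomial factors, after using the normal-form bound $\calL_\eps j_\eps=O(\sqrt\eps R_\eps^2|\Phi'_\sigma|)$ and the fact that $\Phi_\sigma''$ also has Gaussian tails. These are the $D^{-,N,k}$ terms. By Lemma~\ref{lem:side-layer-mismatch}, with $K\ge K_0$, the whole $(1-h^*_\eps)$ contribution is $o(\mathfrak K_{\sigma,\eps})$.

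What remains is $\int_{\mathcal A^-}\calL_\eps[(1-\chi^-)(1-j_\eps)]\,d\pi_\eps$. This is now a divergence of a smooth field, so the divergence theorem applies exactly. On the outer face $(1-\chi^-)$ vanishes together with its derivatives, so that face contributes nothing. The tangential faces are $o(\mathfrak K_{\sigma,\eps})$ by Lemma~\ref{lem:stable-side-gaussian-domination}. On the inner face $\chi^-=0$ with vanishing derivatives, and the flux of $-(1-j_\eps)$ through $\partial\mathcal K_\eps^-$ with normal $n^-$ (outward for the layer) is exactly $\frac1{Z_\eps}\int_{\partial\mathcal K_\eps^-}-\eps e^{-H/\eps}\mathsf M\nabla j_\eps\cdot n^-\,dS=\mathfrak J_{\sigma,\eps}^-$. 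The sign convention needs checking here, but $\nabla(1-j_\eps)=-\nabla j_\eps$ and the outward normal of the layer on the inner face is $n^-$, so the orientation matches.

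\textbf{The $s$-side layer.} Here $f=(1-\chi^+)j_\eps$. I apply the same decomposition with $h_\eps^*$ kept directly, since $h_\eps^*$ is itself small on the low-energy part by localization. Every term of $h_\eps^*\calL_\eps[(1-\chi^+)j_\eps]$ is then of type $M^{+,N,k}$ or $D^{+,N,k}$, and Lemma~\ref{lem:side-layer-mismatch} gives $I^+_{\eps,\vartheta}=o(\mathfrak K_{\sigma,\eps})$ for each fixed $\vartheta$. No face current appears on this side. Both remainders therefore tend to zero for fixed $\vartheta$ as $\eps\downarrow0$, which gives \eqref{eq:side-collapse-ordered-remainder}.

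\textbf{Expected main obstacle.} The delicate step is the term in which $\calL_\eps$ falls on $j_\eps$ inside the layer. The full $\calL_\eps$ does not annihilate $j_\eps$, and the resulting error $O(\sqrt\eps R_\eps^2|\Phi_\sigma'|)$, multiplied by the Gibbs volume of a layer of width $\vartheta\sqrt\eps$, must be shown negligible. I would control it either through the $D^{-,N,k}$ bound, since the profile has a Gaussian tail on the face, or through the kernel estimate of Lemma~\ref{lem:normal-form-stability-saddle-blowup}.
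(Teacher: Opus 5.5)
Your proposal is correct and follows essentially the same route as the paper. You remove $G_\eps$ by the high-energy estimate and replace $h_\eps^*$ by its component value via the $M^{\pm,N,k}$ and $D^{\pm,N,k}$ bounds of Lemma~\ref{lem:side-layer-mismatch}. You then apply the exact divergence form on $\mathcal A_{\eps,\vartheta}^-$: the outer face contributes nothing, the inner face gives $\mathfrak J_{\sigma,\eps}^-$, and the tangential faces are controlled by Lemma~\ref{lem:stable-side-gaussian-domination}, while the $s$-side is bounded entirely by mismatch terms. The \emph{obstacle} you flag is not a separate issue in your own scheme: $\calL_\eps j_\eps\neq0$ enters only multiplied by $1-h_\eps^*$, where it is absorbed into $D^{-,N,k}$, and is otherwise accounted for exactly by the divergence theorem.
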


\begin{proof}
Set $\mathcal A^-:=\mathcal A_{\eps,\vartheta}^-$.

\emph{Step 1: removal of the energy cutoff and committor replacement.}
Set
\[
\bar f_{\eps,\vartheta}
=j_\eps+\chi_{\eps,\vartheta}^-(1-j_\eps).
\]
On $\mathcal A^-$ one has $f_{\eps,\vartheta}=G_\eps\bar f_{\eps,\vartheta}$.
The product rule gives
\[
\calL_\eps(G_\eps\bar f_{\eps,\vartheta})
=G_\eps\calL_\eps\bar f_{\eps,\vartheta}
+\bar f_{\eps,\vartheta}\calL_\eps G_\eps
+\frac{2\gamma\eps}{L}
\nabla_rG_\eps\cdot\nabla_r\bar f_{\eps,\vartheta}.
\]
All terms in the difference between
$\calL_\eps(G_\eps\bar f_{\eps,\vartheta})$ and
$\calL_\eps\bar f_{\eps,\vartheta}$ contain either $1-G_\eps$ or a derivative
of $G_\eps$ and are therefore supported in
$\{H\ge U(\sigma)+a_\eps/2\}$.  For fixed $\vartheta$, the explicit derivative
bounds for $G_\eps$, $j_\eps$, and $\chi_{\eps,\vartheta}^-$ consequently give, for some
fixed $N<\infty$,
\[
\left|I^-_{\eps,\vartheta}
-\int_{\mathcal A^-}h_\eps^*
(-\calL_\eps\bar f_{\eps,\vartheta})\,d\pi_\eps\right|
\le
C_\vartheta Z_\eps^{-1}\eps^{-N}
e^{-[U(\sigma)+a_\eps/2]/\eps}
\le C_\vartheta\eps^{-N-3d/2}e^{-c_KR_\eps^2/2}
\mathfrak K_{\sigma,\eps}
=o(\mathfrak K_{\sigma,\eps}),
\]
after increasing the fixed $K$ if necessary.  Thus
\begin{equation}
\label{eq:side-collapse-energy-cutoff-error}
I^-_{\eps,\vartheta}
=\int_{\mathcal A^-}h_\eps^*(-\calL_\eps \bar f_{\eps,\vartheta})\,d\pi_\eps
+o(\mathfrak K_{\sigma,\eps}),
\end{equation}
Next, set
\[
R^{\rm com}_{\eps,\vartheta}
:=\int_{\mathcal A^-}(h_\eps^*-1)
(-\calL_\eps\bar f_{\eps,\vartheta})\,d\pi_\eps.
\]
After expanding $\calL_\eps\bar f_{\eps,\vartheta}$, every term in
$R^{\rm com}_{\eps,\vartheta}$ contains one of
\[
(1-h_\eps^*)(1-j_\eps),
\qquad
(1-h_\eps^*)\Phi_\sigma'(\Lambda_\sigma/\sqrt\eps),
\]
multiplied by at most a fixed polynomial in
$\eps^{-1}$, $R_\eps$, and $\vartheta^{-1}$.
Lemma~\ref{lem:side-layer-mismatch} therefore yields
\begin{equation}
\label{eq:side-collapse-committor-error}
\left|R^{\rm com}_{\eps,\vartheta}\right|
\le
C_\vartheta Z_\eps^{-1}e^{-U(\sigma)/\eps}
\eps^{3d/2-N}R_\eps^N e^{-cR_\eps^2}
=o_{\eps,\vartheta}(\mathfrak K_{\sigma,\eps}).
\end{equation}
Combining \eqref{eq:side-collapse-energy-cutoff-error} and
\eqref{eq:side-collapse-committor-error},
\begin{equation}
\label{eq:side-collapse-component-replacement}
I^-_{\eps,\vartheta}
=\int_{\mathcal A^-}(-\calL_\eps\bar f_{\eps,\vartheta})\,d\pi_\eps
+o_{\eps,\vartheta}(\mathfrak K_{\sigma,\eps}).
\end{equation}

\emph{Step 2: divergence theorem.}
The symmetric diffusion matrix $\mathsf D$ has only an $r$-block, whereas
$\chi_{\eps,\vartheta}^-$ depends only on $q_1$.  Consequently,
\[
\nabla\chi_{\eps,\vartheta}^-\cdot\mathsf D\nabla(1-j_\eps)=0,
\]
and the product rule implies that
\begin{equation}
\label{eq:side-collapse-generator-product}
\calL_\eps\bar f_{\eps,\vartheta}
=(1-\chi_{\eps,\vartheta}^-)\calL_\eps j_\eps
+(1-j_\eps)\calL_\eps\chi_{\eps,\vartheta}^-.
\end{equation}
Using
\[
\calL_\eps f
=\eps e^{H/\eps}
\nabla\cdot\left(e^{-H/\eps}\mathsf M\nabla f\right)
\]
in \eqref{eq:side-collapse-component-replacement} implies that
\begin{equation}
\label{eq:side-collapse-boundary-decomposition}
\int_{\mathcal A^-}(-\calL_\eps\bar f_{\eps,\vartheta})\,d\pi_\eps
=-\frac{\eps}{Z_\eps}
\int_{\partial\mathcal A^-}
e^{-H/\eps}\mathsf M\nabla\bar f_{\eps,\vartheta}
\cdot n_{\mathcal A^-}\,dS.
\end{equation}
On the outer physical face, $\bar f_{\eps,\vartheta}=1$ in a neighborhood and
the flux is zero.  On the inner face,
$\bar f_{\eps,\vartheta}=j_\eps$; with the orientation $n^-$ used in the
statement, its contribution is
\[
\frac1{Z_\eps}
\int_{\partial\mathcal K_\eps^-}
-\eps e^{-H/\eps}\mathsf M\nabla j_\eps\cdot n^-\,dS.
\]
The remaining tangential faces contribute
$o_{\eps,\vartheta}(\mathfrak K_{\sigma,\eps})$ by
Lemma~\ref{lem:stable-side-gaussian-domination}.  Substitution into
\eqref{eq:side-collapse-boundary-decomposition} and then
\eqref{eq:side-collapse-component-replacement} gives
\[
I_{\eps,\vartheta}^-
=\mathfrak J_{\sigma,\eps}^-+R_{\eps,\vartheta}^-.
\]

\emph{Step 3: the $s$-side layer.}
Set
\[
\bar f_{\eps,\vartheta}^+=(1-\chi_{\eps,\vartheta}^+)j_\eps.
\]
The direct high-energy estimate in Step~1, with the signs reversed, first
gives
\[
I_{\eps,\vartheta}^+
=\int_{\mathcal A_{\eps,\vartheta}^+}h_\eps^*
(-\calL_\eps\bar f_{\eps,\vartheta}^+)\,d\pi_\eps
+o_{\eps,\vartheta}(\mathfrak K_{\sigma,\eps}).
\]
On its low-energy part, replace $h_\eps^*$ by its component value $0$.
Lemmas~\ref{lem:side-face-alternative},
\ref{lem:profile-tail-domination}, and
\ref{lem:side-layer-mismatch} bound the replacement error and the
profile-tail/high-energy complements by
$o_{\eps,\vartheta}(\mathfrak K_{\sigma,\eps})$.  Hence
\[
I_{\eps,\vartheta}^+
=o_{\eps,\vartheta}(\mathfrak K_{\sigma,\eps}),
\]
and therefore the same estimate holds for
$R_{\eps,\vartheta}^+=I_{\eps,\vartheta}^+$.
More precisely, the estimates in Steps 1--3 yield
\[
\lim_{\vartheta\downarrow0}\limsup_{\eps\downarrow0}
\frac{|R_{\eps,\vartheta}^-|+|R_{\eps,\vartheta}^+|}
{\mathfrak K_{\sigma,\eps}}=0,
\]
which proves \eqref{eq:side-collapse-ordered-remainder}.
\end{proof}

Combining the interior normal-form estimate with the side-layer
collapse reduces the full weak-capacity integral to the $m$-side face
current.

\begin{proposition}[Lee--Ramil--Seo-style saddle boundary-layer reduction]
\label{prop:saddle-boundary-layer-reduction}
As $\eps\downarrow0$ with $\vartheta$ fixed, and then $\vartheta\downarrow0$,
\[
\mathcal I_\eps(f_{\eps,\vartheta})
=\frac1{Z_\eps}\int_{\partial\mathcal K_\eps^-}-\eps e^{-H/\eps}\mathsf M\nabla j_\eps\cdot n^-\,dS
+o_{\eps,\vartheta}(\mathfrak K_{\sigma,\eps}),
\qquad
\lim_{\vartheta\downarrow0}\limsup_{\eps\downarrow0}\frac{|o_{\eps,\vartheta}(\mathfrak K_{\sigma,\eps})|}{\mathfrak K_{\sigma,\eps}}=0 .
\]
\end{proposition}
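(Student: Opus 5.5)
We split the whole space into the logarithmic saddle box $\mathcal K_\eps$, the two physical side layers $\mathcal A_{\eps,\vartheta}^\pm$, and the exterior $(\R^{3d}\setminus\mathcal K_\eps)\setminus\mathcal A_{\eps,\vartheta}^{\rm side}$. This gives
\[
\mathcal I_\eps(f_{\eps,\vartheta})
=\int_{\mathcal K_\eps}h_\eps^*(-\calL_\eps f_{\eps,\vartheta})\,d\pi_\eps
+I_{\eps,\vartheta}^-+I_{\eps,\vartheta}^+
+E_{\eps,\vartheta}^{\rm ext}.
\]
Two of these pieces are already controlled. By Proposition~\ref{prop:cutoff-side-exterior-negligibility}, the exterior term $E_{\eps,\vartheta}^{\rm ext}$ is negligible relative to $\mathfrak K_{\sigma,\eps}$ in the ordered limit. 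By Lemma~\ref{lem:collapse-physical-side-layer}, $I_{\eps,\vartheta}^-=\mathfrak J_{\sigma,\eps}^-+R_{\eps,\vartheta}^-$ and $I_{\eps,\vartheta}^+=R_{\eps,\vartheta}^+$, and both remainders vanish in the ordered limit. Since $\mathfrak J_{\sigma,\eps}^-$ is exactly the face current in the statement, the whole proposition reduces to showing that the box integral is $o(\mathfrak K_{\sigma,\eps})$, uniformly in $\vartheta$; it does not depend on $\vartheta$ at all.

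For the box integral we first write $f_{\eps,\vartheta}=G_\eps j_\eps$ on $\mathcal K_\eps$. All terms of $\calL_\eps(G_\eps j_\eps)-\calL_\eps j_\eps$ carry $1-G_\eps$ or a derivative of $G_\eps$, so they live on $\{H\ge U(\sigma)+a_\eps/2\}$. The derivative bound $\rho_\eps^{-|\alpha|}$ from Lemma~\ref{lem:global-saddle-test-function} and the factor $e^{-c_KR_\eps^2/2}=\eps^{c_KK^2/2}$ make this contribution $o(\mathfrak K_{\sigma,\eps})$ once $K$ is large, exactly as in Step~1 of Lemma~\ref{lem:collapse-physical-side-layer}. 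Next we split $\calL_\eps j_\eps=\calL_{\sigma,\eps}j_\eps+(\calL_\eps-\calL_{\sigma,\eps})j_\eps$. Here $\calL_{\sigma,\eps}j_\eps=0$ by the linearized harmonicity remark following Definition~\ref{def:normalized-saddle-profile}. We also need $\mathcal K_\eps\subset\mathcal T_\eps$: this holds after enlarging the radius $R_\eps$ in the definition of $\mathcal T_\eps$ by a fixed factor, which Lemma~\ref{lem:normal-form-stability-saddle-blowup} permits since its constant $K$ is arbitrary. The nonlinear remainder on $\mathcal K_\eps$ is then bounded by $|\mathcal R_\eps^{\rm nf}|\le C\sqrt\eps R_\eps^3\mathfrak K_{\sigma,\eps}=o(\mathfrak K_{\sigma,\eps})$. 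Because only the absolute value of the integrand enters this bound, and $0\le h_\eps^*\le1$, we never need to know $h_\eps^*$ inside the box.

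The main obstacle is the bookkeeping of orientations and limits rather than any new estimate. The first point is that the box and layer pieces of $f_{\eps,\vartheta}$ are only patched in $C^2$ across the inner faces $\partial\mathcal K_\eps^\pm$. That is enough, since $\calL_\eps$ is second order, so the domain splitting introduces no interface distributions. We would confirm this by appealing to the flatness of $\chi_{\eps,\vartheta}^\pm$ at their endpoints. The second point is that the sign convention of $n^-$ must be the one under which the collapsed inner-face term of the $m$-side layer equals $+\mathfrak J_{\sigma,\eps}^-$. This is built into Lemma~\ref{lem:collapse-physical-side-layer}, so we only need consistency with the orientation chosen for $e_\sigma$. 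The third point is that $K$ must be fixed, before $\vartheta$, larger than every $K_0(N,k)$ invoked in Lemmas~\ref{lem:side-layer-mismatch} and~\ref{lem:collapse-physical-side-layer}; this follows the parameter hierarchy.

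With these checks in place we add the four pieces. We take $\limsup_{\eps\downarrow0}$ with $\vartheta$ fixed, normalize by $\mathfrak K_{\sigma,\eps}$, and then let $\vartheta\downarrow0$. The sum of all remainders tends to zero, which yields the stated identity.
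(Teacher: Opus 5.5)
Your proposal is correct and matches the paper's proof. It uses the same four-piece split, with the exterior and side layers handled by Proposition~\ref{prop:cutoff-side-exterior-negligibility} and Lemma~\ref{lem:collapse-physical-side-layer}. The box term is likewise killed by splitting off the energy-cutoff contributions, which are supported in $\{H\ge U(\sigma)+a_\eps/2\}$, and then applying the pointwise normal-form bound with its Gaussian majorant on an enlarged logarithmic ball containing $\mathcal K_\eps$. Your comparison of $\calL_\eps(G_\eps j_\eps)$ with $\calL_\eps j_\eps$ instead of $G_\eps\calL_\eps j_\eps$ is only a cosmetic difference.
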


\begin{proof}
Split the integral into the logarithmic box, the two physical side layers,
and the exterior:
\begin{equation}
\label{eq:saddle-boundary-layer-decomposition}
\mathcal I_\eps(f_{\eps,\vartheta})
=I_\eps^{\rm box}+I_{\eps,\vartheta}^-+I_{\eps,\vartheta}^+
+E_{\eps,\vartheta}^{\rm ext}.
\end{equation}
Proposition~\ref{prop:cutoff-side-exterior-negligibility} gives
\begin{equation}
\label{eq:saddle-boundary-layer-exterior}
E_{\eps,\vartheta}^{\rm ext}
=o_{\eps,\vartheta}(\mathfrak K_{\sigma,\eps}).
\end{equation}
Inside the box, $f_{\eps,\vartheta}=G_\eps j_\eps$ and
$\calL_{\sigma,\eps}j_\eps=0$.  The product rule therefore gives the exact
decomposition
\[
I_\eps^{\rm box}
=-\int_{\mathcal K_\eps}h_\eps^*
G_\eps(\calL_\eps-\calL_{\sigma,\eps})j_\eps\,d\pi_\eps
+E_\eps^G,
\]
where
\[
E_\eps^G
:=-\int_{\mathcal K_\eps}h_\eps^*
\left(j_\eps\calL_\eps G_\eps
+\frac{2\gamma\eps}{L}\nabla_rG_\eps\cdot\nabla_rj_\eps\right)
\,d\pi_\eps.
\]
The integrand defining $E_\eps^G$ is supported in
$\{H\ge U(\sigma)+a_\eps/2\}$ and is bounded by a fixed power of
$\eps^{-1}$.  The same direct high-energy estimate used in
\eqref{eq:side-collapse-energy-cutoff-error} gives
$E_\eps^G=o(\mathfrak K_{\sigma,\eps})$.

There is a fixed geometric constant $C_{\rm box}$ such that
$\mathcal K_\eps$ is contained in
$\{z_\sigma+\sqrt\eps Y:|Y|\le C_{\rm box}R_\eps\}$.  Apply the pointwise
normal-form bound and Gaussian majorant from the proof of
Lemma~\ref{lem:normal-form-stability-saddle-blowup} with the fixed logarithmic
constant $C_{\rm box}K$.  Since $0\le h_\eps^*G_\eps\le1$, together with
Lemma~\ref{lem:saddle-hamiltonian-expansion} this yields
\[
\left|\int_{\mathcal K_\eps}h_\eps^*
G_\eps(\calL_\eps-\calL_{\sigma,\eps})j_\eps\,d\pi_\eps\right|
\le C\sqrt\eps R_\eps^3\mathfrak K_{\sigma,\eps}
=o(\mathfrak K_{\sigma,\eps}).
\]
Consequently,
\begin{equation}
\label{eq:saddle-boundary-layer-box}
I_\eps^{\rm box}=o_\eps(\mathfrak K_{\sigma,\eps}).
\end{equation}
By Lemma~\ref{lem:collapse-physical-side-layer},
\begin{equation}
\label{eq:saddle-boundary-layer-sides}
I_{\eps,\vartheta}^-
=\frac1{Z_\eps}\int_{\partial\mathcal K_\eps^-}
-\eps e^{-H/\eps}\mathsf M\nabla j_\eps\cdot n^-\,dS
+o_{\eps,\vartheta}(\mathfrak K_{\sigma,\eps}),
\qquad
I_{\eps,\vartheta}^+=o_{\eps,\vartheta}(\mathfrak K_{\sigma,\eps}).
\end{equation}
Substituting \eqref{eq:saddle-boundary-layer-exterior},
\eqref{eq:saddle-boundary-layer-box}, and
\eqref{eq:saddle-boundary-layer-sides} into
\eqref{eq:saddle-boundary-layer-decomposition}, and using
\eqref{eq:side-collapse-ordered-remainder} to control
$R_{\eps,\vartheta}^-=I_{\eps,\vartheta}^--\mathfrak J_{\sigma,\eps}^-$ and
$R_{\eps,\vartheta}^+=I_{\eps,\vartheta}^+$ in the ordered limit, proves both the
asserted expansion and its ordered-limit error bound.
\end{proof}

It remains to transport this physical face current to the
central linear section, where the Gaussian calculation applies.

\begin{lemma}[Physical side-face current equals central Gaussian current]
\label{lem:artificial-surface-current}
As $\eps\downarrow 0$,
\[
\frac1{Z_\eps}\int_{\partial\mathcal K_\eps^-}
-\eps e^{-H/\eps}\mathsf M\nabla j_\eps\cdot n^-\,dS
=[1+o_\eps(1)]\mathfrak C_{\sigma,\eps}^{\rm quad}.
\]
\end{lemma}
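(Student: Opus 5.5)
We prove the lemma by a divergence-theorem transport inside the region of the box lying between the $m$-side face and the central linear section. In this region we replace $H$ and $\calL_\eps$ by their quadratic and linearized versions. We then compare the resulting central section integral with the full Gaussian current \eqref{eq:quad-current-def}.

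\emph{Step 1 (quadratic replacement on the face).} On $\partial\mathcal K_\eps^-$ we have $\nabla j_\eps=\eps^{-1/2}\Phi_\sigma'(\Lambda_\sigma/\sqrt\eps)\nabla\Lambda_\sigma$ and $\mathsf M\nabla\Lambda_\sigma\cdot n^-$ is a fixed constant. By Lemma~\ref{lem:saddle-hamiltonian-expansion}, $e^{-H/\eps}=e^{-H_\sigma^{\rm quad}/\eps}(1+\mathcal O(\sqrt\eps R_\eps^3))$ uniformly on $\mathcal K_\eps$. Replacing $H$ by $H_\sigma^{\rm quad}$ therefore costs a relative error $o(1)$, provided the face integral with $H_\sigma^{\rm quad}$ is $\mathcal O(\mathfrak K_{\sigma,\eps})$. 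That bound follows from the majorant $e^{-\mathscr Q_\sigma}$ in the proof of Lemma~\ref{lem:normal-form-stability-saddle-blowup}, restricted to a hyperplane $q_1=\mathrm{const}$ transversal to the kernel direction $k_\sigma$. We must check that $q_1$ is nondegenerate on $k_\sigma=D_\sigma^{-1}\ell_\sigma$; its first component is $-\alpha_\sigma/\lambda_\sigma\neq0$, so this holds.

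\emph{Step 2 (flux conservation for the linear problem).} Let $\mathcal K_\eps^{\rm mid}=\mathcal K_\eps\cap\{\Lambda_\sigma\le 0\text{ side}\}$. More precisely, take the region bounded by $\partial\mathcal K_\eps^-$, the section $\Sigma_\sigma^{\rm lin}\cap\mathcal K_\eps$, and tangential faces. The vector field $e^{-H_\sigma^{\rm quad}/\eps}\mathsf M\nabla\psi_\eps$ is divergence-free, since $\calL_{\sigma,\eps}\psi_\eps=0$ and $\calL_{\sigma,\eps}$ has divergence form with respect to $e^{-H_\sigma^{\rm quad}/\eps}$. Thus the face flux equals the flux through $\Sigma_\sigma^{\rm lin}\cap\mathcal K_\eps$, up to the tangential faces. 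Pieces of the section near the box might lie outside $\{q_1\ge-\delta_\eps/\sqrt{\lambda_\sigma}\}$; to avoid this, we instead use the region between the face and the section intersected with the box, and include the boundary pieces of the box. On the tangential faces, some transverse coordinate is at threshold $2\delta_\eps$. We bound those fluxes as in Lemma~\ref{lem:stable-side-gaussian-domination}, now using the Gaussian weight $e^{-\mathscr Q_\sigma}$, which includes the profile factor $|\Phi_\sigma'|$. The crucial point is that $\mathscr Q_\sigma$ is coercive in $\xi$, and the tangential thresholds force $|\xi|\gtrsim R_\eps$ unless $|u|$ is large. The kernel direction $k_\sigma$ has nonzero $q_1$-component, so $|u|\lesssim R_\eps/ \sqrt{\lambda_\sigma}$ inside the box. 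With the side lengths $2\delta_\eps$ versus $\delta_\eps$ for $q_1$, this gives a gain $e^{-cR_\eps^2}$ times polynomial factors.

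\emph{Step 3 (completing the section).} The truncated section integral $\Sigma_\sigma^{\rm lin}\cap\mathcal K_\eps$ differs from the full integral \eqref{eq:quad-current-def} by the Gaussian mass of $\{\Lambda_\sigma=0,\ Y\notin\mathcal K_\eps/\sqrt\eps\}$. By Lemma~\ref{lem:coercivity-ker-Lambda} this is $\mathcal O(e^{-cR_\eps^2})$ relative to the total, since $B_\sigma\ge c|Y|^2$ on $\ker\Lambda_\sigma$ and any point outside the box has $|Y|\gtrsim R_\eps$. Combining Steps 1--3 with Proposition~\ref{prop:quadratic-gaussian-current} gives the claim.

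\textbf{Main obstacle.} Step 2's tangential-face estimate is the hard part. I would verify that on each tangential face $\mathscr Q_\sigma\ge cR_\eps^2$, by checking that the kernel line $\mathbb R k_\sigma$, restricted to $|q_1|\le R_\eps/\sqrt{\lambda_\sigma}$, stays strictly inside the transverse thresholds. This amounts to the explicit ratios $|k_{\sigma,v}|,|k_{\sigma,w}|$ versus $|k_{\sigma,q}|$. If it fails, I would enlarge the transverse side lengths (a harmless change of geometric constants).
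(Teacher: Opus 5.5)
Your proof is correct, and Steps~1 and~3 match the paper's argument: the Hamiltonian replacement on the face uses the fixed sign of $-\mathsf M\nabla j_\eps\cdot n^-$, and the Gaussian tail on $\ker\Lambda_\sigma$ comes from Lemma~\ref{lem:coercivity-ker-Lambda}. The difference is in the transport step.

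\emph{The paper's route.} In Lemma~\ref{lem:uniform-saddle-current-stability} the paper notes that the quadratic current $-\eps e^{-H_\sigma^{\rm quad}/\eps}\mathsf M\nabla j_\eps$ is everywhere parallel to the fixed vector $\mathfrak u_\sigma=\mathsf M\widehat\ell_\sigma/c_\sigma$. Since it is divergence-free, its magnitude is constant along $\mathfrak u_\sigma$-lines. Sweeping $\partial\mathcal K_\eps^-$ along $\mathfrak u_\sigma$ onto $\{\Lambda_\sigma=0\}$ therefore gives an \emph{exact} identity, because the lateral boundary of the tube carries no flux. Only the tail of the projected section then has to be estimated.

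\emph{Your route.} You apply the divergence theorem on the box, effectively on $\mathcal K_\eps\cap\{\Lambda_\sigma<0\}$, and pay Gaussian-small boundary terms. Your kernel vector is $k_\sigma=-(c_\sigma/\mu_\sigma)\mathfrak u_\sigma$, so $\mathscr Q_\sigma(Y)=B_\sigma(\Pi_\sigma Y)\ge c|\Pi_\sigma Y|^2$. Your ``main obstacle'' is thus exactly the paper's check that the line $\R\mathfrak u_\sigma$ meets the face well inside its tangential boundary. That check succeeds because $\mu_\sigma^2<\lambda_\sigma/L$ (Lemma~\ref{lem:side-face-kinetic-domination}) and $0<\zeta_\sigma<1$, so no change of side lengths is needed.

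\emph{Bookkeeping to make explicit.} The hyperplane $\{\Lambda_\sigma=0\}$ can cut both physical faces; it does so for $L=\gamma=\lambda_\sigma=1$. Hence $\partial(\mathcal K_\eps\cap\{\Lambda_\sigma<0\})$ contains only the part of $\partial\mathcal K_\eps^-$ with $\Lambda_\sigma<0$, and it also contains the part of $\partial\mathcal K_\eps^+$ with $\Lambda_\sigma<0$. Both the omitted piece of the negative face and the extra piece of the positive face are negligible. In blown-up variables one has $|Q_1(\Pi_\sigma Y)|\ge R_\eps/\sqrt{\lambda_\sigma}$ there, hence $\mathscr Q_\sigma\ge cR_\eps^2$.

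\emph{Comparison.} The paper's route gives an exact identity with no lateral bookkeeping. Yours is a more generic box-flux argument that does not use the unidirectional structure of the current. It costs these extra boundary estimates of size $R_\eps^Ne^{-cR_\eps^2}\mathfrak K_{\sigma,\eps}$. In exchange, it yields the a priori bound $\mathfrak J_{\sigma,\eps}^{-,\rm quad}=\mathcal O(\mathfrak K_{\sigma,\eps})$ directly in Step~1, via the transversality of $k_\sigma$ to $\{Q_1=\mathrm{const}\}$, rather than from the exact identity.
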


\begin{proof}
The physical side-face current in the statement is
$\mathfrak J_{\sigma,\eps}^{-,\rm phys}$ from
Lemma~\ref{lem:uniform-saddle-current-stability}.  By
\eqref{eq:current-stability-hamiltonian}--
\eqref{eq:current-stability-tail} and the triangle inequality,
\[
\left|\mathfrak J_{\sigma,\eps}^{-,\rm phys}
-\mathfrak C_{\sigma,\eps}^{\rm quad}\right|
\le
\left|\mathfrak J_{\sigma,\eps}^{-,\rm phys}
-\mathfrak J_{\sigma,\eps}^{-,\rm quad}\right|
+
\left|\mathfrak J_{\sigma,\eps}^{-,\rm quad}
-\mathfrak J_{\sigma,\eps}^{0,\rm quad,log}\right|
+
\left|\mathfrak J_{\sigma,\eps}^{0,\rm quad,log}
-\mathfrak C_{\sigma,\eps}^{\rm quad}\right|
=o(\mathfrak K_{\sigma,\eps}).
\]
Since $\mathfrak C_{\sigma,\eps}^{\rm quad}\asymp
\mathfrak K_{\sigma,\eps}$, the additive error is a relative $o_\eps(1)$ error,
which proves the claim.
\end{proof}

We now combine the preceding asymptotic estimate with the exact
weak test-function identity.  The exact identity is what removes the auxiliary
layer parameter from the capacity itself.

\begin{proposition}[Weak test-function reduction to the quadratic current]
\label{prop:weak-test-function-capacity-reduction}
For every fixed $\vartheta>0$ and all sufficiently small $\eps>0$,
\[
\Cap_\eps(A_\eps,S_\eps)=\mathcal I_\eps(f_{\eps,\vartheta}).
\]
Moreover, as $\eps\downarrow 0$,
\[
\Cap_\eps(A_\eps,S_\eps)
=[1+o_\eps(1)]\mathfrak C_{\sigma,\eps}^{\rm quad}.
\]
\end{proposition}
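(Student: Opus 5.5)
The first identity should follow directly from the admissibility of the test function. By Lemma~\ref{lem:global-saddle-test-function} and Lemma~\ref{lem:saddle-localization-admissibility}, for fixed $\vartheta>0$ and small $\eps$, the function $f_{\eps,\vartheta}$ is $C^2$ and compactly supported. It equals one near $\overline{A_\eps}$ and zero near $\overline{S_\eps}$, since $G_\eps=1$ near $x_m$ while $f_{\eps,\vartheta}=0$ on $J_\eps^s$ away from the side layer. Moreover, $\Xi f_{\eps,\vartheta}=f_{\eps,\vartheta}$ by Lemma~\ref{lem:compact-outer-cutoff}. Proposition~\ref{prop:weak-test-function-capacity} then gives $\Cap_\eps(A_\eps,S_\eps)=\mathcal I_\eps(f_{\eps,\vartheta})$ exactly. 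I will stress that the left-hand side does not depend on $\vartheta$, because this is what lets us remove the auxiliary parameter later.

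For the asymptotic, I would chain together the reductions already established. Proposition~\ref{prop:saddle-boundary-layer-reduction} gives
\[
\mathcal I_\eps(f_{\eps,\vartheta})=\mathfrak J^-_{\sigma,\eps}+r_{\eps,\vartheta},
\qquad
\lim_{\vartheta\downarrow0}\limsup_{\eps\downarrow0}\frac{|r_{\eps,\vartheta}|}{\mathfrak K_{\sigma,\eps}}=0,
\]
where $\mathfrak J^-_{\sigma,\eps}$ is the physical $m$-face current. Lemma~\ref{lem:artificial-surface-current} gives $\mathfrak J^-_{\sigma,\eps}=[1+o_\eps(1)]\mathfrak C^{\rm quad}_{\sigma,\eps}$. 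Proposition~\ref{prop:quadratic-gaussian-current} gives $\mathfrak C^{\rm quad}_{\sigma,\eps}=c_*\mathfrak K_{\sigma,\eps}$ with $c_*=\mu_\sigma/(2\pi\sqrt{-\det H_\sigma})>0$. Combining these, for every fixed $\vartheta$ we get
\[
\left|\frac{\Cap_\eps(A_\eps,S_\eps)}{\mathfrak C^{\rm quad}_{\sigma,\eps}}-1\right|
\le o_\eps(1)+c_*^{-1}\frac{|r_{\eps,\vartheta}|}{\mathfrak K_{\sigma,\eps}}.
\]

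The key step, and the only real subtlety, is the order of limits. Since the left side is independent of $\vartheta$, I take $\limsup_{\eps\downarrow0}$ first for each fixed $\vartheta$. This bounds $\limsup_\eps|\Cap/\mathfrak C^{\rm quad}-1|$ by $c_*^{-1}\limsup_\eps|r_{\eps,\vartheta}|/\mathfrak K_{\sigma,\eps}$ for every $\vartheta>0$. Letting $\vartheta\downarrow0$ then shows the limsup is zero, which is the claim.

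I expect the main care to go into two checks. The first is that the error terms quoted from the boundary-layer reduction are indeed uniform in the stated ordered sense. The second is that the threshold choice of $K\ge K_0(N,k)$ can be fixed once, independently of $\vartheta$, as the parameter hierarchy guarantees, so that a single test-function family works for all $\vartheta$.
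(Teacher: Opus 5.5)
Your proposal is correct and follows the paper's proof essentially step for step. The exact identity comes from admissibility of $f_{\eps,\vartheta}$ in Proposition~\ref{prop:weak-test-function-capacity}. The asymptotic comes from chaining Proposition~\ref{prop:saddle-boundary-layer-reduction}, Lemma~\ref{lem:artificial-surface-current}, and Proposition~\ref{prop:quadratic-gaussian-current}, then using the $\vartheta$-independence of the capacity to take $\limsup_{\eps\downarrow0}$ at fixed $\vartheta$ before letting $\vartheta\downarrow0$.
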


\begin{proof}
For fixed $\vartheta>0$ and all sufficiently small $\eps>0$,
Proposition~\ref{prop:weak-test-function-capacity} identifies the weak
integral exactly with the capacity.  On the other hand,
Proposition~\ref{prop:saddle-boundary-layer-reduction} and
Lemma~\ref{lem:artificial-surface-current}, together with
Proposition~\ref{prop:quadratic-gaussian-current}, give
\[
r(\vartheta):=\limsup_{\eps\downarrow0}
\left|
\frac{\mathcal I_\eps(f_{\eps,\vartheta})}
{\mathfrak C_{\sigma,\eps}^{\rm quad}}-1
\right|,
\qquad
\lim_{\vartheta\downarrow0}r(\vartheta)=0.
\]
If
$\mathscr R_\eps:=\Cap_\eps(A_\eps,S_\eps)/\mathfrak C_{\sigma,\eps}^{\rm quad}$,
then the exact weak identity makes $\mathscr R_\eps$ independent of $\vartheta$, and
\[
\limsup_{\eps\downarrow0}|\mathscr R_\eps-1|\le r(\vartheta)
\qquad\text{for every fixed }\vartheta>0.
\]
Letting $\vartheta\downarrow0$ proves $\mathscr R_\eps\to1$.  Finally, as in the
convention accompanying \eqref{eq:Ksigma-def}, the order is
$\eps\downarrow0$ first and $\vartheta\downarrow0$ second.
\end{proof}

\subsection{Current Stability}
\label{app:gaussian}

It remains to compare the nonlinear current on the physical face with the
quadratic current on the central linear section.  The following lemma performs
the comparison uniformly on the logarithmic saddle scale.
\begin{lemma}[Uniform saddle current stability]
\label{lem:uniform-saddle-current-stability}
Let $j_\eps=\Phi_\sigma(\Lambda_\sigma/\sqrt\eps)$ in the logarithmic saddle box
and define the physical and quadratic currents on the $m$-side face by
\[
\begin{aligned}
\mathfrak J_{\sigma,\eps}^{-,\rm phys}
&:=\frac1{Z_\eps}\int_{\partial\mathcal K_\eps^-}
-\eps e^{-H/\eps}\mathsf M\nabla j_\eps\cdot n^-\,dS,\\
\mathfrak J_{\sigma,\eps}^{-,\rm quad}
&:=\frac1{Z_\eps}\int_{\partial\mathcal K_\eps^-}
-\eps e^{-H_\sigma^{\rm quad}/\eps}
\mathsf M\nabla j_\eps\cdot n^-\,dS.
\end{aligned}
\]
Use the quadratic current direction
$\mathfrak u_\sigma=\mathsf M\widehat\ell_\sigma/c_\sigma$ fixed above, and let
\begin{align*}
&\Sigma_{\sigma,\eps}^{\rm log}
:=\left\{z-\Lambda_\sigma(z)\mathfrak u_\sigma:
z\in\partial\mathcal K_\eps^-\right\}
\subset\{\Lambda_\sigma=0\},
\\
&\mathfrak J_{\sigma,\eps}^{0,\rm quad,log}
:=\frac1{Z_\eps}\int_{\Sigma_{\sigma,\eps}^{\rm log}}
-\eps e^{-H_\sigma^{\rm quad}/\eps}
\mathsf M\nabla j_\eps\cdot n_\sigma\,dS.
\end{align*}
Then
\begin{align}
\label{eq:current-stability-hamiltonian}
\left|\mathfrak J_{\sigma,\eps}^{-,\rm phys}
-\mathfrak J_{\sigma,\eps}^{-,\rm quad}\right|
&=o_\eps(\mathfrak K_{\sigma,\eps}),\\
\label{eq:current-stability-transport}
\left|\mathfrak J_{\sigma,\eps}^{-,\rm quad}
-\mathfrak J_{\sigma,\eps}^{0,\rm quad,log}\right|
&=o_\eps(\mathfrak K_{\sigma,\eps}),\\
\label{eq:current-stability-tail}
\left|\mathfrak J_{\sigma,\eps}^{0,\rm quad,log}
-\mathfrak C_{\sigma,\eps}^{\rm quad}\right|
&=o_\eps(\mathfrak K_{\sigma,\eps}).
\end{align}
\end{lemma}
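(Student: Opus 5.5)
We will prove the three estimates separately, working throughout in the blown-up variables $z=z_\sigma+\sqrt\eps Y$.  On $\partial\mathcal K_\eps^-$ we have $|Y|\le C_{\rm box}R_\eps$ and $dS=\eps^{(3d-1)/2}dS_Y$.  The integrand is $\eps e^{-H/\eps}\,\eps^{-1/2}|\Phi_\sigma'(\Lambda_\sigma(Y))|\,|\mathsf M\widehat\ell_\sigma\cdot n^-|$.  After multiplying by the scale factor, it is bounded by
\[
C\,\mathfrak K_{\sigma,\eps}\,\eps^{-(3d-1)/2}\,e^{-\mathscr Q_\sigma(Y)},
\]
with $\mathscr Q_\sigma$ the semidefinite form from the proof of Lemma~\ref{lem:normal-form-stability-saddle-blowup}.

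\emph{Estimate \eqref{eq:current-stability-hamiltonian}.}  Lemma~\ref{lem:saddle-hamiltonian-expansion} gives $|e^{-(H-H_\sigma^{\rm quad})/\eps}-1|\le C\sqrt\eps R_\eps^3$ uniformly on the face.  It therefore suffices to bound the quadratic face integral by $CR_\eps^{M}\mathfrak K_{\sigma,\eps}$.  On the face, $Q_1=-R_\eps/\sqrt{\lambda_\sigma}$ is fixed.  The remaining $3d-1$ variables are integrated against $e^{-\mathscr Q_\sigma}$ restricted to the hyperplane $\{Q_1=\text{const}\}$.  The kernel direction of $\mathscr Q_\sigma$ is $k_\sigma=D_\sigma^{-1}\ell_\sigma$, whose $Q_1$-component is $-\alpha_\sigma/\lambda_\sigma\neq0$.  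Hence the restriction of $\mathscr Q_\sigma$ to this hyperplane is a positive definite quadratic form plus lower-order terms, and its integral is $O(1)$.  This gives the bound $O(\sqrt\eps R_\eps^3)\mathfrak K_{\sigma,\eps}=o(\mathfrak K_{\sigma,\eps})$.

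\emph{Estimate \eqref{eq:current-stability-transport}.}  This is the main step.  For the linear dynamics, the quadratic flux field $F=-\eps e^{-H_\sigma^{\rm quad}/\eps}\mathsf M\nabla j_\eps$ is exactly divergence free, because $\calL_{\sigma,\eps}j_\eps=0$ and the divergence form holds.  We apply the divergence theorem on the region $\mathcal V_\eps$ swept by the segments $z-t\Lambda_\sigma(z)\mathfrak u_\sigma$, $t\in[0,1]$, with $z\in\partial\mathcal K_\eps^-$.  The boundary of $\mathcal V_\eps$ consists of the face $\partial\mathcal K_\eps^-$, the projected section $\Sigma_{\sigma,\eps}^{\rm log}$, and a lateral ruled surface generated over the edges of the face.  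The first two contributions are the two currents being compared, with orientations matched because $\Lambda_\sigma(\mathfrak u_\sigma)=1$.  The lateral surface lies over edges where some transverse coordinate sits at its threshold $2\delta_\eps/\sqrt{\cdot}$.  We plan to show $H_\sigma^{\rm quad}-U(\sigma)\ge c\,\eps R_\eps^2$ along the lateral segments, using Lemma~\ref{lem:coercivity-ker-Lambda} applied to $\eta=\Pi_\sigma Y$, which stays large because the threshold coordinate is carried along.  Wherever that energy bound fails, we expect $|\Lambda_\sigma|\ge aR_\eps$ instead, so $|\Phi_\sigma'|\le e^{-cR_\eps^2}$ by Lemma~\ref{lem:profile-tail-domination}; this mirrors the proof of Lemma~\ref{lem:side-face-alternative}.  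Either way, the lateral flux is $R_\eps^Ne^{-cR_\eps^2}\mathfrak K_{\sigma,\eps}$, which is negligible once $K$ is large.  The delicate point is controlling the segments when $\mathfrak u_\sigma$ has a component in a threshold coordinate.  In that case we bound the segment parameter directly by $|\Lambda_\sigma|$ and again use the dichotomy between profile tail and high energy.

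\emph{Estimate \eqref{eq:current-stability-tail}.}  On $\{\Lambda_\sigma=0\}$ the integrand equals the Gaussian density from Lemma~\ref{lem:quadratic-flux-coarea}, with constant profile derivative $\Phi_\sigma'(0)$.  We will show that $\Sigma_{\sigma,\eps}^{\rm log}$ contains $\{\Lambda_\sigma=0,\ |\eta|\le cR_\eps\}$.  Given such an $\eta$, the face point $z=\eta+s\mathfrak u_\sigma$ with $Q_1=-R_\eps/\sqrt{\lambda_\sigma}$ has $s=O(R_\eps)$, and its transverse coordinates are within the box provided $c$ is small.  We will need to verify that the $\mathfrak u_\sigma$-components in the transverse coordinates do not exceed thresholds when $c$ is small, and otherwise shrink $c$.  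The complement then carries Gaussian mass $e^{-c'R_\eps^2}$ relative to the full section by Lemma~\ref{lem:coercivity-ker-Lambda}, so the difference from the full current $\mathfrak C_{\sigma,\eps}^{\rm quad}\asymp\mathfrak K_{\sigma,\eps}$ is $o(\mathfrak K_{\sigma,\eps})$.
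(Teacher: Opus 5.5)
Your three-step chain is the same as the paper's. Your treatment of \eqref{eq:current-stability-hamiltonian} is sound, and slightly more self-contained than the paper's. You bound $\mathfrak J_{\sigma,\eps}^{-,\rm quad}=\mathcal O(\mathfrak K_{\sigma,\eps})$ directly, using that $\mathscr Q_\sigma$ is invariant along its kernel and that the kernel is transverse to $\{Q_1=\text{const}\}$. The paper instead gets this bound from the exact transport identity together with \eqref{eq:current-stability-tail}.

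\textbf{Step \eqref{eq:current-stability-transport}: a missing structural fact.} We have $\nabla j_\eps=\eps^{-1/2}\Phi_\sigma'(\Lambda_\sigma/\sqrt\eps)\,\widehat\ell_\sigma$ and $\mathsf M\widehat\ell_\sigma=c_\sigma\mathfrak u_\sigma$. Hence the quadratic flux field is $F=-\sqrt\eps\,c_\sigma e^{-H_\sigma^{\rm quad}/\eps}\Phi_\sigma'(\Lambda_\sigma/\sqrt\eps)\,\mathfrak u_\sigma$, which is everywhere parallel to $\mathfrak u_\sigma$.
\begin{itemize}
\item The lateral boundary of your swept region is ruled by $\mathfrak u_\sigma$-segments, so $F\cdot n\equiv0$ there. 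The transport identity is therefore exact. Apply the divergence theorem separately on the parts of the face where $\Lambda_\sigma<0$ and $\Lambda_\sigma>0$, since the face may cross $\{\Lambda_\sigma=0\}$.
\item Your substitute lateral estimate does not close as stated. In the branch $|\Lambda_\sigma|\ge aR_\eps$, the Gaussian smallness of $\Phi_\sigma'$ is exactly cancelled by the growth of $e^{-B_\sigma}$. The reason is that $\mathfrak u_\sigma$ spans $\ker\mathscr Q_\sigma$; your own $k_\sigma=D_\sigma^{-1}\ell_\sigma$ is a multiple of $\mathfrak u_\sigma$.
\item Consequently, for every $Y$, $e^{-B_\sigma(Y)}|\Phi_\sigma'(\Lambda_\sigma(Y))|=Ce^{-\mathscr Q_\sigma(Y)}=Ce^{-B_\sigma(\Pi_\sigma Y)}$. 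There is no dichotomy between high energy and profile tail here. Any decay on a lateral segment must come from $|\Pi_\sigma Y|\gtrsim R_\eps$. On the edges where $V_1$ or $W_1$ sits at its threshold, this is precisely the geometric fact discussed next.
\end{itemize}

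\textbf{Step \eqref{eq:current-stability-tail}: the fallback does not work.} You correctly identify what must be checked, but shrinking $c$ cannot help. For $\eta=0$ the face point is $z_*=s_*\mathfrak u_\sigma$, which does not depend on $c$. If $z_*$ lay outside the face, no choice of $c$ would give the containment.
\begin{itemize}
\item The identities $\alpha_\sigma-\gamma\zeta_\sigma=\mu_\sigma$ and $\gamma(1-\zeta_\sigma)=\mu_\sigma\zeta_\sigma$ give $\mathfrak u_\sigma=(Lc_\sigma)^{-1}(1,-\mu_\sigma,-\mu_\sigma\zeta_\sigma)$ on the unstable triple.
\item So $z_*$ has unstable coordinates $(-R_\eps/\sqrt{\lambda_\sigma},\ \mu_\sigma R_\eps/\sqrt{\lambda_\sigma},\ \mu_\sigma\zeta_\sigma R_\eps/\sqrt{\lambda_\sigma})$.
\item What is needed is that these lie a fixed multiple of $R_\eps$ inside the thresholds $2R_\eps/\sqrt L$. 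This follows from $\mu_\sigma^2<\lambda_\sigma/L$ (Lemma~\ref{lem:side-face-kinetic-domination}) and $0<\zeta_\sigma<1$.
\item With this, projection along $\mathfrak u_\sigma$ is a linear isomorphism from $\{Q_1=0\}$ onto $\ker\Lambda_\sigma$. It maps the blown-up face onto a set containing the ball of radius $c_0R_\eps$ in $\ker\Lambda_\sigma$, and your Gaussian-tail argument then goes through.
\end{itemize}
The same fact would also repair your lateral estimate. Once you use the parallelism of $F$ and $\mathfrak u_\sigma$, however, that estimate is unnecessary.
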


\begin{proof}
We first transport the quadratic current.  Since
$\nabla j_\eps=\eps^{-1/2}\Phi_\sigma'(\Lambda_\sigma/\sqrt\eps)\widehat\ell_\sigma$,
the quadratic current field is
\[
\mathcal J_\eps
:=-\eps e^{-H_\sigma^{\rm quad}/\eps}\mathsf M\nabla j_\eps
=-\sqrt\eps\,c_\sigma
e^{-H_\sigma^{\rm quad}/\eps}
\Phi_\sigma'(\Lambda_\sigma/\sqrt\eps)\mathfrak u_\sigma.
\]
Thus $\mathcal J_\eps$ is everywhere parallel to $\mathfrak u_\sigma$, and
the profile equation gives $\nabla\cdot\mathcal J_\eps=0$.  Moreover, the
$q_1$-component of $\mathfrak u_\sigma$ is
$(Lc_\sigma)^{-1}>0$.  Hence projection along $\mathfrak u_\sigma$ maps the
bounded face $\partial\mathcal K_\eps^-$ bijectively onto
$\Sigma_{\sigma,\eps}^{\rm log}$, and the swept set
\[
\Omega_{\sigma,\eps}^{\rm tr}
:=\left\{z-t\Lambda_\sigma(z)\mathfrak u_\sigma:
z\in\partial\mathcal K_\eps^-,\ 0\le t\le1\right\},
\]
is a Lipschitz tube.  Its lateral boundary is swept in the
$\mathfrak u_\sigma$ direction.  Consequently,
$\mathcal J_\eps\cdot n=0$ there.  Applying the divergence theorem separately
to the portions of the face on which $\Lambda_\sigma$ is positive and
negative handles the possible intersection of the two sections; their common
zero set has surface measure zero.  We therefore obtain the exact identity
\[
\mathfrak J_{\sigma,\eps}^{-,\rm quad}
=\mathfrak J_{\sigma,\eps}^{0,\rm quad,log},
\]
which proves \eqref{eq:current-stability-transport}.

The eigenvector identities
$\alpha_\sigma-\gamma\zeta_\sigma=\mu_\sigma$ and
$\gamma(1-\zeta_\sigma)=\mu_\sigma\zeta_\sigma$ give
\[
\mathfrak u_\sigma
=\frac{1}{Lc_\sigma}(1,-\mu_\sigma,-\mu_\sigma\zeta_\sigma)
\]
on the unstable triple.
The line through the origin in direction $\mathfrak u_\sigma$ meets the
blown-up negative face at the point whose unstable coordinates are
\[
\left(-\frac{R_\eps}{\sqrt{\lambda_\sigma}},
\frac{\mu_\sigma R_\eps}{\sqrt{\lambda_\sigma}},
\frac{\mu_\sigma\zeta_\sigma R_\eps}{\sqrt{\lambda_\sigma}}\right).
\]
Lemma~\ref{lem:side-face-kinetic-domination} implies
$\mu_\sigma^2<\lambda_\sigma/L$, while $0<\zeta_\sigma<1$.
This intersection point is therefore separated by a fixed multiple of $R_\eps$
from every tangential boundary of the blown-up face.  Since the projection is
a fixed linear isomorphism, there is $c_0>0$ such that
$\eps^{-1/2}\Sigma_{\sigma,\eps}^{\rm log}$ contains the ball of radius
$c_0R_\eps$ in $\ker\Lambda_\sigma$.  The restriction of
$H_\sigma^{\rm quad}-U(\sigma)$ to this hyperplane is positive definite by
Lemma~\ref{lem:coercivity-ker-Lambda}.  Its Gaussian tail consequently gives
\[
\left|\mathfrak C_{\sigma,\eps}^{\rm quad}
-\mathfrak J_{\sigma,\eps}^{0,\rm quad,log}\right|
\le C Z_\eps^{-1}e^{-U(\sigma)/\eps}
\eps^{3d/2}R_\eps^N e^{-cR_\eps^2}
=o(\mathfrak K_{\sigma,\eps}).
\]
This proves \eqref{eq:current-stability-tail} and also shows that
$\mathfrak J_{\sigma,\eps}^{-,\rm quad}=\mathcal O(\mathfrak K_{\sigma,\eps})$.

It remains to replace the quadratic Hamiltonian by the nonlinear one.  In
blown-up variables $z=z_\sigma+\sqrt\eps Y$, $|Y|\le CR_\eps$,
Lemma~\ref{lem:saddle-hamiltonian-expansion} gives
\[
\exp\!\left(-\left(H-H_\sigma^{\rm quad}\right)/\eps\right)
=1+\mathcal O\left(\sqrt\eps R_\eps^3\right)=1+o(1),
\]
uniformly on the face.  Since
$-\mathsf M\nabla j_\eps\cdot n^->0$ there, the preceding quadratic-current
bound yields
\[
\left|\mathfrak J_{\sigma,\eps}^{-,\rm phys}
-\mathfrak J_{\sigma,\eps}^{-,\rm quad}\right|
\le C\sqrt\eps R_\eps^3\,
\mathfrak J_{\sigma,\eps}^{-,\rm quad}
=o(\mathfrak K_{\sigma,\eps}),
\]
which proves \eqref{eq:current-stability-hamiltonian}.
\end{proof}

\section{Well Localization and Committor Estimates}
\label{app:well-localization}

\subsection{Common Finite-Time Tracking and Non-Degeneracy}

Recall $x_m=(m,0,0)$, $A_\eps=B(x_m,\eps)$, $S_\eps=B(x_s,\eps)$,
$G_{\eps,\varrho}=B(x_m,\eps^\varrho)$.

The proof separates into two chains.  The chain culminating in
Lemma~\ref{lem:intrawell-flatness-reduction} proves intrawell flatness through
non-degeneracy and a uniform Harnack estimate.  The chain culminating in
Proposition~\ref{prop:third-order-lrs-committor-localization} proves committor
localization through a local Lyapunov estimate, density bounds, killed-process
exit tails, and stopped time reversal.  Lemma~\ref{lem:uniform-fw-tracking} is
used in both chains.

We begin with a uniform finite-horizon estimate that transfers
deterministic entrance properties to the small-noise diffusion.  The compact
starting set is important because the estimate will later be applied after
several stopping times.

\begin{lemma}[Uniform finite-time Freidlin--Wentzell tracking]
\label{lem:uniform-fw-tracking}
Let $K$ be compact, $T<\infty$, and $\phi_t(z)$ the zero-noise flow of the
original or adjoint dynamics.  For every $\rho>0$ there is $c_{\rho,T,K}>0$ with
\[
\sup_{z\in K}\Pbb_z\left(\sup_{0\le t\le T}\left|Z_t^\eps-\phi_t(z)\right|>\rho\right)\le e^{-c_{\rho,T,K}/\eps},
\]
for any sufficiently small $\eps$.
\end{lemma}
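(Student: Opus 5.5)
The plan is a Gr\"onwall comparison between $Z^\eps$ and the deterministic flow, followed by an exponential martingale inequality for the additive noise; this is simpler than invoking the full Freidlin--Wentzell large deviation principle. Both the original and the adjoint dynamics have the form
\[
dZ_t^\eps=b(Z_t^\eps)\,dt+\sqrt{2\gamma\eps/L}\,\Sigma\,dB_t ,
\]
with $\Sigma$ the constant injection into the $r$-block. The drift is $b=(p,\,-L^{-1}\nabla U+\gamma r,\,-\gamma p-\gamma r)$, with the signs of the Hamiltonian part reversed for the adjoint. Since the noise is additive, no stochastic calculus beyond a Gaussian tail bound is needed.

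\emph{Step 1 (a priori confinement of the flow).} Along $\phi_t$ one has $\frac{d}{dt}H(\phi_t)=-\gamma L|r|^2\le0$, for either sign. Hence $\phi_t(z)\in K_1:=\{H\le\max_KH\}$ for all $t\ge0$ and $z\in K$, and $K_1$ is compact by (WC1). Let $K_2$ be the closed $1$-neighbourhood of $K_1$, and let $\mathrm{Lip}$ be the Lipschitz constant of $b$ on $K_2$; it is finite because $U\in C^\infty$. We may assume $\rho\le1$.

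\emph{Step 2 (pathwise comparison).} Define the stopping time $\tau:=\inf\{t:|Z_t^\eps-\phi_t(z)|\ge\rho\}$. On $[0,\tau\wedge T]$ both trajectories stay in $K_2$. Writing $M_t:=\sqrt{2\gamma\eps/L}\,\Sigma B_t$, the difference satisfies
\[
|Z_t^\eps-\phi_t(z)|\le \mathrm{Lip}\int_0^t|Z_s^\eps-\phi_s(z)|\,ds+\sup_{s\le t}|M_s|.
\]
Gr\"onwall's inequality then gives
\[
\sup_{t\le\tau\wedge T}|Z_t^\eps-\phi_t(z)|\le e^{\mathrm{Lip}\,T}\sup_{t\le T}|M_t|.
\]
Therefore the event in the statement is contained in $\{\sup_{t\le T}|M_t|\ge\rho e^{-\mathrm{Lip}\,T}\}$. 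This containment is uniform in $z\in K$, because $\mathrm{Lip}$ depends only on $K$. Non-explosion, guaranteed by Proposition~\ref{prop:basic-recurrence-weak-capacity-interface}, makes all of these quantities well defined.

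\emph{Step 3 (Gaussian estimate).} Bernstein's exponential martingale inequality (or the reflection principle), applied componentwise to the $d$-dimensional Brownian motion, gives
\[
\Pbb\Bigl(\sup_{t\le T}|B_t|\ge a\Bigr)\le 2d\,e^{-a^2/(2dT)}.
\]
We take $a=\rho e^{-\mathrm{Lip}\,T}\sqrt{L/(2\gamma\eps)}$. This yields the bound $2d\exp(-c'/\eps)$ with
\[
c'=\frac{\rho^2e^{-2\mathrm{Lip}\,T}L}{4d\gamma T},
\]
and for small $\eps$ this is at most $e^{-c'/(2\eps)}$. So $c_{\rho,T,K}:=c'/2$ works.

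The only genuinely delicate point is Step~1. The drift is merely locally Lipschitz, so the flow must be kept in a fixed compact set to obtain a uniform Lipschitz constant. The Hamiltonian dissipation identity supplies this for both the forward and the adjoint flow, and the stopping time $\tau$ keeps the stochastic path in the same compact set up to the relevant time.
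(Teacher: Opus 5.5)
Your argument is correct, but it takes a different route from the paper.

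The paper localizes by modifying the coefficients:
\begin{itemize}
\item it places the compact flow tube $\mathscr T_{K,T}=\{\phi_t(z):z\in K,\ 0\le t\le T\}$ inside $B_{R/2}$;
\item it replaces the drift by a globally Lipschitz one that agrees with it on $B_R$;
\item it quotes the classical finite-time Freidlin--Wentzell estimate \cite[Chapter~4]{FreidlinWentzell2012} for the modified diffusion;
\item it returns to $Z^\eps$ through the synchronous coupling. The two processes coincide up to their common exit from $B_R$, and such an exit before $T$ is itself a tracking failure.
\end{itemize}

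You instead localize in time with the stopping time $\tau$ and reprove the needed estimate from scratch. Because the noise is additive, Gr\"onwall on $[0,\tau\wedge T]$ reduces everything to a Gaussian maximal inequality for $B$.

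What each route buys:
\begin{itemize}
\item Your version is self-contained and gives the explicit rate $c_{\rho,T,K}=\rho^2e^{-2\mathrm{Lip}\,T}L/(8d\gamma T)$.
\item Your version also justifies global existence of the flow and compactness of the tube, via $\frac{d}{dt}H(\phi_t)=-\gamma L|r|^2$ for both signs; the paper only asserts compactness.
\item The paper's version is more modular, since it would survive multiplicative noise.
\item The paper's version underlies the later remark in Lemma~\ref{lem:original-log-hit-shrinking-ball} that the proof ``applies verbatim'' to a globally Lipschitz cutoff drift.
\end{itemize}

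Your argument transfers to that cutoff setting too, with one change. A cutoff drift need not dissipate $H$, so Step~1 should be replaced by the observation that the tube is the continuous image of $[0,T]\times K$ under the flow map. The Hamiltonian identity is only one convenient way to guarantee that the flow exists on $[0,T]$, not an essential ingredient.
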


\begin{proof}
Define the deterministic flow tube by
\begin{equation}
\label{eq:deterministic-flow-tube}
\mathscr T_{K,T}
:=
\left\{\phi_t(z):z\in K,\ 0\le t\le T\right\}.
\end{equation}
The set $\mathscr T_{K,T}$ is compact; choose $R$ with
$\mathscr T_{K,T}\subset B_{R/2}$ and let $\bar Z^\eps$ be the diffusion with a
globally Lipschitz drift agreeing with the true drift on $B_R$.  The classical
finite-time Freidlin--Wentzell estimate
\cite[Chapter~4]{FreidlinWentzell2012} implies
\[
\sup_{z\in K}\Pbb_z\left(\sup_{t\le T}\left|\bar Z_t^\eps-\phi_t(z)\right|>\eta\right)\le e^{-c_1/\eps},
\]
for
\[
\eta
=
\rho\wedge\frac12
\dist\left(\mathscr T_{K,T},\partial B_R\right)>0,
\]
where $\mathscr T_{K,T}$ is defined in \eqref{eq:deterministic-flow-tube}.  Couple
$Z^\eps$ and $\bar Z^\eps$ with the same Brownian motion and define their
common exit time from $B_R$ by
\[
\tau_R^\eps:=\inf\left\{t\ge0:Z_t^\eps\notin B_R\right\}
=\inf\left\{t\ge0:\bar Z_t^\eps\notin B_R\right\}.
\]
The equality follows from pathwise uniqueness because the two drifts agree
on $B_R$.  Up to $\tau_R^\eps$ the two processes agree, and
$\{\tau_R^\eps\le T\}$
is contained in
the tracking-failure event; combining proves the bound for the original process.
\end{proof}

The flatness argument begins with a uniform positive lower bound for the mean
transition time.  The next elementary consequence of finite-time tracking is
the third-order counterpart of
\cite[Lemma~6.1]{LeeRamilSeo2026}.

\begin{lemma}[Non-degenerate mean transition time]
\label{lem:nondegenerate-mean-transition-time}
There are $c_*>0$ and $\eps_*>0$ such that
\[
\E_{x_m}[\tau_{S_\eps}]\ge c_*
\qquad\text{for all }0<\eps<\eps_*.
\]
In particular,
$\liminf_{\eps\downarrow0}\E_{x_m}[\tau_{S_\eps}]>0$.
\end{lemma}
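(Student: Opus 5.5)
Since $\tau_{S_\eps}\ge0$, it suffices to show that the process started at $x_m$ stays away from $S_\eps$ for a fixed deterministic time with probability bounded below. Fix $T=1$. The lifted minimum $x_m=(m,0,0)$ is a stationary point of the zero-noise flow of \eqref{eq:third-order}: at $x_m$ we have $p=0$, $r=0$ and $\nabla U(m)=0$, so every component of the drift vanishes. Hence $\phi_t(x_m)=x_m$ for all $t\ge0$.

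Set $\rho:=\tfrac13|x_m-x_s|>0$ and take $K=\{x_m\}$. Lemma~\ref{lem:uniform-fw-tracking} then gives
\[
\Pbb_{x_m}\Big(\sup_{0\le t\le 1}|Z_t^\eps-x_m|>\rho\Big)\le e^{-c_{\rho,1,\{x_m\}}/\eps}
\]
for all sufficiently small $\eps$. Now choose $\eps_*$ so small that three things hold for $\eps<\eps_*$: this bound is at most $1/2$, we have $\eps<\rho$, and the tracking estimate applies. On the complementary event, for every $t\le1$,
\[
|Z_t^\eps-x_s|\ge |x_m-x_s|-\rho=2\rho>\eps.
\]
So $Z_t^\eps\notin S_\eps=B(x_s,\eps)$ for $t\le1$, and therefore $\tau_{S_\eps}>1$ on that event.

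Consequently $\E_{x_m}[\tau_{S_\eps}]\ge \Pbb_{x_m}(\tau_{S_\eps}>1)\ge 1/2$, and the lemma holds with $c_*=1/2$. The liminf statement follows at once.

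There is no real obstacle here. The only points to check are that $x_m$ is a fixed point of the deterministic flow, which uses only $\nabla U(m)=0$, and that the tracking lemma applies to a singleton compact set, which it does trivially.
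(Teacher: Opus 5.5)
Your proposal is correct and matches the paper's proof essentially line for line. Both use that $x_m$ is an equilibrium of the zero-noise flow, apply Lemma~\ref{lem:uniform-fw-tracking} with $K=\{x_m\}$ and $T=1$, keep the $\rho$-neighborhood of $x_m$ away from $S_\eps$, and conclude $\E_{x_m}[\tau_{S_\eps}]\ge\Pbb_{x_m}(\tau_{S_\eps}>1)\ge 1/2$.
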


\begin{proof}
Choose $\rho>0$ so small that
$B(x_m,2\rho)\cap S_\eps=\varnothing$ for all sufficiently small $\eps$.
The zero-noise trajectory starting from the equilibrium $x_m$ is constant.
Lemma~\ref{lem:uniform-fw-tracking}, applied with $K=\{x_m\}$ and $T=1$,
implies that there exists a constant $c>0$ such that
\[
\Pbb_{x_m}\!\left(\sup_{0\le t\le1}\left|Z_t^\eps-x_m\right|\ge\rho\right)
\le e^{-c/\eps}.
\]
Consequently,
\[
\E_{x_m}[\tau_{S_\eps}]
\ge \Pbb_{x_m}(\tau_{S_\eps}>1)
\ge1-e^{-c/\eps}.
\]
The assertion follows, for example, with $c_*=1/2$ after decreasing
$\eps_*$.
\end{proof}

\subsection{Uniform Harnack Estimate and Intrawell Flatness}
\label{app:lyapunov}

The parameter-uniform part of the Harnack argument is a stability statement
for finitely many local Green kernels.  We isolate that statement before
applying Bony's fixed-operator estimates.

\begin{lemma}[Stability of shifted hypoelliptic Green kernels]
\label{lem:shifted-green-kernel-stability}
Let $\mathscr U\subset\R^N$ be a fixed smooth bounded domain.  For some
$\nu_\star<\infty$, let
\[
\mathscr P_\nu=\sum_{j=1}^dX_{j,\nu}^2+Y_\nu,
\qquad \nu\in[\nu_\star,\infty),
\]
be diffusion generators whose coefficients converge in
$C^\infty(\overline{\mathscr U})$ to those of $\mathscr P_0$ as
$\nu\to\infty$.  Assume that a
fixed finite bracket family has least singular value bounded below on
$\overline{\mathscr U}$, uniformly for $\nu\ge\nu_\star$.  Assume also that,
for every $\nu\ge\nu_\star$,
$\mathscr U$ is regular for the Dirichlet problems of
$\mathscr P_\nu-\beta$ and its formal adjoint, with forward and adjoint
barrier inequalities having a common strictly positive margin, where
$\beta>0$ is fixed and larger than a uniform upper bound
$\kappa_\dagger$ for the zero-order coefficient of the formal adjoints.
Then, for every compact
$K\subset(\mathscr U\times\mathscr U)\setminus\{(x,x):x\in\mathscr U\}$,
the Dirichlet Green kernels $G_{\nu,\beta}^{\mathscr U}(x,y)$ of
$\beta-\mathscr P_\nu$ satisfy
\begin{equation}
\label{eq:shifted-green-convergence}
G_{\nu,\beta}^{\mathscr U}\longrightarrow
G_{0,\beta}^{\mathscr U}
\qquad\text{in }C^\infty(K)\qquad\text{as $\nu\to\infty$}.
\end{equation}
Consequently, if $G_{0,\beta}^{\mathscr U}\ge c_0>0$ on $K$, then
$G_{\nu,\beta}^{\mathscr U}\ge c_0/2$ there for all sufficiently large
$\nu$.
\end{lemma}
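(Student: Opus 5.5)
Since $\beta>\kappa_\dagger$, the forward and adjoint Dirichlet problems for $\beta-\mathscr P_\nu$ satisfy a maximum principle with constants uniform in $\nu$. I will use this to get uniform bounds on the Green kernels away from the diagonal, then upgrade to $C^\infty$ bounds by hypoelliptic regularity, and finally identify the limit. The order is: (i) uniform $L^\infty$ bounds for the solution operators; (ii) uniform subelliptic/Hörmander estimates; (iii) compactness and identification of the limit; (iv) the positivity conclusion.

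\textbf{Step (i).} Write $\mathcal G_\nu f(x)=\int_{\mathscr U}G_{\nu,\beta}^{\mathscr U}(x,y)f(y)\,dy$. This is the solution of $(\beta-\mathscr P_\nu)u=f$ with $u=0$ on $\partial\mathscr U$. By the weak maximum principle, $\|\mathcal G_\nu f\|_\infty\le\beta^{-1}\|f\|_\infty$. The adjoint operator has zero-order coefficient at most $\kappa_\dagger<\beta$, so the same argument gives $\|\mathcal G_\nu^*g\|_\infty\le(\beta-\kappa_\dagger)^{-1}\|g\|_\infty$, uniformly in $\nu$. Regularity of $\mathscr U$ with a common barrier margin gives uniform boundary continuity. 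In particular, solutions attain zero boundary values uniformly in $\nu$, which rules out mass loss at the boundary in the limit.

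\textbf{Step (ii).} Fix $(x_0,y_0)\in K$ and disjoint balls $B_x\ni x_0$, $B_y\ni y_0$. As a function of $x\in B_x$, the kernel $G_{\nu,\beta}(\cdot,y)$ solves $(\beta-\mathscr P_\nu)G=0$. As a function of $y\in B_y$, it solves $(\beta-\mathscr P_\nu^*)G=0$. By duality, the family of kernels restricted to $B_x\times B_y$ is bounded in $L^1$ uniformly in $\nu$.

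I then apply Hörmander's subelliptic estimate in both variables. Its constants depend only on finitely many $C^k$ norms of the coefficients and on the lower bound for the least singular value of the fixed bracket family, and both are uniform by hypothesis. This gives uniform $C^k(B_x'\times B_y')$ bounds for every $k$ on slightly smaller balls, since $L^1$ bounds bootstrap through the $H^s$ gains. Covering $K$ by finitely many such products yields uniform $C^k(K)$ bounds.

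\textbf{Step (iii).} By Arzelà--Ascoli, every sequence $\nu_n\to\infty$ has a subsequence along which the kernels converge in $C^\infty(K)$, and locally off the diagonal in general. To identify the limit, I pass to the limit in $\mathcal G_\nu f$ for $f\in C_c^\infty(\mathscr U)$. The functions $u_\nu=\mathcal G_\nu f$ are uniformly bounded and equicontinuous, the interior by Step (ii) and near $\partial\mathscr U$ by the common barriers. A limit $u$ therefore solves $(\beta-\mathscr P_0)u=f$ in the distributional sense, is continuous on $\overline{\mathscr U}$, and vanishes on $\partial\mathscr U$. By uniqueness, which follows from the maximum principle, $u=\mathcal G_0f$.

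The near-diagonal part of the kernel integrates to something small. This uses the uniform $L^1$ integrability of $G_\nu(x,\cdot)$, which is a consequence of $\mathcal G_\nu 1\le\beta^{-1}$. Hence any subsequential kernel limit coincides with $G_{0,\beta}^{\mathscr U}$ off the diagonal. Uniqueness of the limit then gives convergence of the whole family, which is \eqref{eq:shifted-green-convergence}.

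\textbf{Step (iv).} Uniform convergence on $K$ immediately yields $G_{\nu,\beta}\ge c_0/2$ for large $\nu$.

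\textbf{Main obstacle.} I expect Step (ii) to be the hard part: making the Hörmander/Bony estimates quantitatively uniform in $\nu$. The approach I would try first is to track the constants in the Rothschild--Stein or Kohn proof, where they depend only on finitely many coefficient norms and on the bracket nondegeneracy. Failing that, I would argue by contradiction and compactness, viewing $\nu$ as an extra parameter with the operators forming a smooth family on the compactified parameter set $[\nu_\star,\infty]$.
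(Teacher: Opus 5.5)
Your Steps (i), (ii) and (iv) follow the paper's proof. These are the forward and adjoint mass bounds $\sup_x\int G\,dy\le\beta^{-1}$ and $\sup_y\int G\,dx\le(\beta-\kappa_\dagger)^{-1}$, uniform off-diagonal $C^k$ bounds from interior hypoelliptic estimates, and subsequential compactness. As in the paper, the uniformity of the Rothschild--Stein-type constants is asserted rather than proved.

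\textbf{Where you differ: identifying the limit.} The paper fixes $y$ and takes a weak limit $\mu_y$ of the measures $G_{\nu_j,\beta}^{\mathscr U}(\cdot,y)\,dx$. It passes to the limit in the adjoint Green identity $\int G_{\nu_j,\beta}^{\mathscr U}(x,y)(\beta-\mathscr P_{\nu_j}^*)\varphi(x)\,dx=\varphi(y)$. It then uses the barrier-based smallness of the kernel near $\partial\mathscr U$ in both variables, together with uniqueness for the Dirichlet problem of $\beta-\mathscr P_0$, to get $\mu_y=G_{0,\beta}^{\mathscr U}(\cdot,y)\,dx$.

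You work at the resolvent level instead. You use compactness of $u_\nu=\mathcal G_\nu f$ in $C(\overline{\mathscr U})$ for smooth $f$, with interior equicontinuity plus boundary equicontinuity from the barriers. Uniqueness is then only needed among continuous solutions vanishing on $\partial\mathscr U$, which after hypoellipticity is the classical maximum principle. This makes your uniqueness step more elementary than the paper's fundamental-solution-level identification, and it gives uniform convergence $\mathcal G_\nu f\to\mathcal G_0 f$ on $\overline{\mathscr U}$ as a by-product. The paper's route works directly with the kernel and its pole.

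One small point on interior equicontinuity of $u_\nu$. It should come from applying the uniform subelliptic estimates directly to $(\beta-\mathscr P_\nu)u_\nu=f$, using $\|u_\nu\|_\infty\le\beta^{-1}\|f\|_\infty$. The off-diagonal kernel bounds of Step (ii) do not control $u_\nu$ near the diagonal.

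\textbf{One incorrect claim in Step (iii).} A uniform mass bound $\int G_\nu(x,y)\,dy\le\beta^{-1}$ does not imply uniform integrability of $G_\nu(x,\cdot)$ near $y=x$. Mass could a priori concentrate at the diagonal, and ruling that out would need a uniform near-diagonal upper bound that you have not established.

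The step is unnecessary, however. Take $f\in C_c^\infty(\mathscr U)$ with $x\notin\operatorname{supp}f$.
\begin{itemize}
\item The set $\{x\}\times\operatorname{supp}f$ is a compact off-diagonal set, so uniform convergence of the kernels there gives $\mathcal G_{\nu_n}f(x)\to\int g(x,y)f(y)\,dy$.
\item Your resolvent argument also gives $\mathcal G_{\nu_n}f(x)\to\mathcal G_0 f(x)$.
\item Such $f$ separate continuous functions on $\mathscr U\setminus\{x\}$, so $g(x,\cdot)=G_{0,\beta}^{\mathscr U}(x,\cdot)$ off the diagonal.
\end{itemize}
With this replacement your proof is complete to the same level of rigor as the paper's.
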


\begin{proof}
The regular Dirichlet problems and the Green-operator construction of
\cite[Section~6]{Bony1969} give the operators
$\mathcal G_{\nu,\beta}^{\mathscr U}$ and their non-negative kernels
$G_{\nu,\beta}^{\mathscr U}$.  For bounded $f\ge0$, the function
$u=\mathcal G_{\nu,\beta}^{\mathscr U}f$ solves
\[
(\beta-\mathscr P_\nu)u=f\quad\text{in }\mathscr U,
\qquad u|_{\partial\mathscr U}=0.
\]
The maximum principle and comparison with the constant
$\beta^{-1}\|f\|_\infty$ give, for $f\ge0$,
\[
0\le\mathcal G_{\nu,\beta}^{\mathscr U}f
\le\beta^{-1}\|f\|_\infty.
\]
Approximating $f\equiv1$ by smooth functions implies that
\begin{equation}
\label{eq:green-resolvent-mass}
G_{\nu,\beta}^{\mathscr U}\ge0,
\qquad
\sup_{x\in\mathscr U}
\int_{\mathscr U}G_{\nu,\beta}^{\mathscr U}(x,y)\,dy
\le\beta^{-1}.
\end{equation}
Applying the same comparison to the adjoint Dirichlet Green operator yields
the complementary mass bound
\begin{equation}
\label{eq:green-two-sided-mass}
\sup_{y\in\mathscr U}\int_{\mathscr U}
G_{\nu,\beta}^{\mathscr U}(x,y)\,dx
\le(\beta-\kappa_\dagger)^{-1}.
\end{equation}
If the closure of $K'$ is compact and $\overline{K'}\subset(\mathscr U\times\mathscr U)\setminus\operatorname{diag}$,
the kernels solve the homogeneous forward and adjoint equations on a
neighborhood of $K'$.  The uniform finite-bracket bound, coefficient
convergence, \eqref{eq:green-resolvent-mass},
\eqref{eq:green-two-sided-mass}, and the interior estimates of
\cite{RothschildStein1976,BramantiZhu2013} give, for every $k$,
\begin{equation}
\label{eq:green-uniform-off-diagonal-Ck}
\sup_{\nu\ge\nu_\star}
\left\|G_{\nu,\beta}^{\mathscr U}\right\|_{C^k(K')}<\infty.
\end{equation}
Consequently every sequence $\nu_j\to\infty$ has a subsequence along which
the Green kernels converge in $C^\infty(K')$.

The forward and adjoint barriers, with their common positive margin, also
give a modulus $\omega(\delta)\downarrow0$ such that, whenever the closure of $K_y$ is compact and
$\overline{K_y}\subset\mathscr U$,
\begin{equation}
\label{eq:green-uniform-boundary-modulus}
\sup_{\nu\ge\nu_\star}
\sup_{\substack{\operatorname{dist}(x,\partial\mathscr U)\le\delta\\
y\in K_y}}G_{\nu,\beta}^{\mathscr U}(x,y)
+\sup_{\nu\ge\nu_\star}
\sup_{\substack{x\in K_y\\
\operatorname{dist}(y,\partial\mathscr U)\le\delta}}
G_{\nu,\beta}^{\mathscr U}(x,y)
\le\omega(\delta).
\end{equation}
Thus every subsequential limit has zero Dirichlet data in both variables.

Fix $y\in\mathscr U$ and an arbitrary sequence $\nu_j\to\infty$.  By
\eqref{eq:green-two-sided-mass}, after passing to a subsequence (not relabeled),
$G_{\nu_j,\beta}^{\mathscr U}(\cdot,y)\,dx$ converges weakly to a finite
measure $\mu_y$.  For every $\varphi\in C_c^\infty(\mathscr U)$,
\[
\int_{\mathscr U}G_{\nu_j,\beta}^{\mathscr U}(x,y)
\left(\beta-\mathscr P_{\nu_j}^*\right)\varphi(x)\,dx=\varphi(y),
\]
and
\[
\left|\int_{\mathscr U}G_{\nu_j,\beta}^{\mathscr U}(x,y)
\left(\mathscr P_{\nu_j}^*-\mathscr P_0^*\right)\varphi(x)\,dx\right|
\le\left(\beta-\kappa_\dagger\right)^{-1}
\left\|\left(\mathscr P_{\nu_j}^*-\mathscr P_0^*\right)\varphi\right\|_\infty
\longrightarrow0
\qquad\text{as }j\to\infty.
\]
Passing to the limit therefore gives
\[
\int_{\mathscr U}\left(\beta-\mathscr P_0^*\right)\varphi\,d\mu_y
=\varphi(y).
\]
Equations \eqref{eq:green-uniform-off-diagonal-Ck} and
\eqref{eq:green-uniform-boundary-modulus} identify the off-diagonal density
of $\mu_y$ with the smooth subsequential limit and give zero boundary data.
Uniqueness for the Dirichlet problem of $\beta-\mathscr P_0$ now yields
$\mu_y(dx)=G_{0,\beta}^{\mathscr U}(x,y)\,dx$.  Every subsequence has the
same limit; hence \eqref{eq:shifted-green-convergence} holds.  The final
lower bound follows from uniform convergence on $K$.
\end{proof}

To compare a positive harmonic function at different points, we combine the
Green-kernel stability above with a compact-family version of Bony's Harnack
estimate whose constant is uniform under the rescaled small-noise
perturbation.

\begin{lemma}[Compact-family form of Bony's estimate]
\label{lem:compact-family-bony}
Let $\mathcal Q$ be a bounded cylinder and
\[
 \mathscr P_\nu=\sum_{j=1}^d X_{j,\nu}^2+Y_\nu
\]
be a family of smooth operators whose coefficients converge in
$C^\infty(\overline{\mathcal Q})$.  Let $K_0,K_1\subset\mathcal Q$ be compact,
with $K_0\subset\operatorname{int}K_1$ and $K_1\subset\mathcal Q$, and fix a
distinguished base point $z_\bullet\in\mathcal Q$.
Assume that a finite family of controlled curves from $z_\bullet$ reaches
neighborhoods covering $K_1$, stays a fixed distance from
$\partial\mathcal Q$, and persists for every large $\nu$.  Assume also that a
fixed finite list of commutators of $(X_{j,\nu},Y_\nu)$ spans the tangent
space on a neighborhood of the corresponding control tubes, with least
singular value bounded away from zero, and that
\[
\inf_{z\in\mathcal Q}\sum_{j=1}^d|X_{j,\nu}(z)|^2\ge c_X>0,
\]
uniformly for large $\nu$.  Then there is a constant $C$ such that, for every sufficiently large
$\nu$, every positive function
$u_\nu:\mathcal Q\to(0,\infty)$ satisfying
$\mathscr P_\nu u_\nu=0$ in $\mathcal Q$ obeys
\[
 \sup_{z\in K_1}u_\nu(z)+\|\nabla u_\nu\|_{L^\infty(K_0)}
 \le C u_\nu(z_\bullet).
\]
The same conclusion holds for non-negative solutions by approximation.
\end{lemma}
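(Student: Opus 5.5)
The approach is to combine Bony's Harnack inequality for a single fixed H\"ormander operator with the Green-kernel stability of Lemma~\ref{lem:shifted-green-kernel-stability}. The key is to write each positive harmonic function $u_\nu$ as a superposition of Green kernels, and then control those kernels uniformly in $\nu$.

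\emph{Step 1 (Green representation).} Fix a smooth bounded domain $\mathscr U$ with $K_1\cup\{z_\bullet\}\subset\mathscr U\Subset\mathcal Q$, chosen to contain the persistent control tubes and to be regular for the Dirichlet problems of $\mathscr P_\nu-\beta$ and its adjoint. The uniform ellipticity in the $X$-directions, $\sum_j|X_{j,\nu}|^2\ge c_X$, lets us build barriers with a common positive margin, for instance from an exterior-ball barrier in a direction not tangent to $\mathrm{span}\{X_{j,\nu}\}$, or from smoothed cylinder faces. Since $\beta-\mathscr P_\nu$ applied to $u_\nu$ gives $\beta u_\nu$, a positive $u_\nu$ is a supersolution of $(\beta-\mathscr P_\nu)$ on $\mathscr U$. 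Riesz decomposition then gives
\[
u_\nu(x)=\beta\int_{\mathscr U}G^{\mathscr U}_{\nu,\beta}(x,y)u_\nu(y)\,dy+h_\nu(x),
\]
where $h_\nu\ge0$ is the $(\beta-\mathscr P_\nu)$-solution with boundary data $u_\nu$. Green's identity on a slightly smaller domain rewrites $h_\nu$ as an integral of $u_\nu$ against a positive boundary or annular kernel; this kernel is obtained from $\mathscr P_\nu^*$ applied to a cutoff times $G^{\mathscr U}_{\nu,\beta}(x,\cdot)$, so it is supported off the diagonal whenever $x\in K_1$.

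\emph{Step 2 (uniform two-sided kernel comparison).} For the fixed limit operator $\mathscr P_0$, Bony's strong maximum principle along the controlled curves from $z_\bullet$ gives $G^{\mathscr U}_{0,\beta}(z_\bullet,y)\ge c_0>0$ for $y$ in a compact annular set $A$ separated from $z_\bullet$ and from $K_1$. The same holds for the off-diagonal part of the representing kernel. By Lemma~\ref{lem:shifted-green-kernel-stability}, these lower bounds persist with $c_0/2$ for all large $\nu$. The same lemma, through \eqref{eq:green-uniform-off-diagonal-Ck}, gives uniform $C^1$ upper bounds for the kernels at $x\in K_1$, $y\in A$. Comparing the two representations yields
\[
\sup_{K_1}u_\nu+\|\nabla u_\nu\|_{L^\infty(K_0)}\le C\,u_\nu(z_\bullet).
\]
Part of the volume term involves $y$ near $x$, where the kernel is not uniformly bounded. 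To handle it, take the representation over a small ball around $K_1$ rather than over all of $\mathscr U$. Bound that local piece by $\sup u_\nu$ times the mass $\le\beta^{-1}$ from \eqref{eq:green-resolvent-mass}, and absorb it by choosing $\beta$ large.

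\emph{Step 3 (gradient and non-negative case).} Once $\sup_{K_1}u_\nu\le Cu_\nu(z_\bullet)$ is known, the interior subelliptic estimates of \cite{RothschildStein1976,BramantiZhu2013} apply uniformly in $\nu$, because the bracket family has least singular value bounded below and the coefficients are $C^\infty$-convergent. These estimates give $\|\nabla u_\nu\|_{L^\infty(K_0)}\le C\sup_{K_1}u_\nu$. For non-negative solutions, apply the result to $u_\nu+\eta$, which is harmonic since $\mathscr P_\nu$ has no zero-order term, and let $\eta\downarrow0$.

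The main obstacle is Step~2. The difficulty is extracting a genuinely uniform positive lower bound on the representing kernel at $z_\bullet$, which requires the controlled-curve and support argument for the limit operator, and, for the local-piece absorption, handling the singular near-diagonal part without circularity. If the absorption fails, I would instead run a Harnack chain along the finitely many control tubes. On each tube I would apply Bony's local estimate for the nearby fixed operator and argue by contradiction with compactness: a sequence $\nu_j$ with ratios tending to infinity, normalized so that $u_{\nu_j}(z_\bullet)=1$, would have a locally bounded subsequence that converges smoothly to a $\mathscr P_0$-harmonic function violating Bony's estimate.
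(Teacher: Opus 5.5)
\textbf{There is a genuine gap in Step~2.} The lower-bound half of your Step~2 is sound. The function $u_\nu-\beta\mathcal G^{\mathscr U}_{\nu,\beta}u_\nu$ solves $(\beta-\mathscr P_\nu)v=0$ with non-negative boundary values, so $u_\nu(z_\bullet)\ge\beta\int G^{\mathscr U}_{\nu,\beta}(z_\bullet,y)u_\nu(y)\,dy\ge\tfrac12\beta c_0\|u_\nu\|_{L^1(A)}$, once Lemma~\ref{lem:shifted-green-kernel-stability} transfers positivity of $G^{\mathscr U}_{0,\beta}(z_\bullet,\cdot)$ on a reachable compact set $A$. What fails is the matching upper bound for $u_\nu$ on $K_1$:
\begin{itemize}
\item \emph{Boundary term.} The part $h_\nu$ of your Riesz decomposition is itself a positive solution of $(\beta-\mathscr P_\nu)h=0$. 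Bounding $h_\nu(x)$ by $h_\nu(z_\bullet)$ is therefore again a Harnack inequality of the very kind you are proving. Localizing with a cutoff $\phi$ avoids this, but it produces an annular kernel plus the volume term $\beta\int G^{\mathscr U}_{\nu,\beta}(x,y)\phi(y)u_\nu(y)\,dy$. The annular kernel is built from $y$-derivatives of $G^{\mathscr U}_{\nu,\beta}(x,\cdot)$ and of $\phi$; it is not sign-definite, although for an upper bound its uniform off-diagonal $C^1$ bound suffices.
\item \emph{Absorption of the volume term fails.} By \eqref{eq:green-resolvent-mass} this term is bounded by $\beta\cdot\beta^{-1}\sup_{\operatorname{supp}\phi}u_\nu$, which has coefficient exactly one for every $\beta$. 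Enlarging $\beta$ is the wrong direction: $\beta G^{\mathscr U}_{\nu,\beta}(x,\cdot)\,dy\to\delta_x$ as $\beta\to\infty$, so the near-diagonal piece converges to $u_\nu(x)$ itself.
\item \emph{Circularity.} The supremum is over $\operatorname{supp}\phi$, which is strictly larger than $K_1$. Even a coefficient below one would require a nested-set iteration with quantitative near-diagonal Green bounds uniform in $\nu$. Lemma~\ref{lem:shifted-green-kernel-stability} only controls fixed off-diagonal compacts, so it does not supply these.
\item \emph{Fallback.} Extracting a locally bounded, smoothly convergent subsequence from the normalization $u_{\nu_j}(z_\bullet)=1$ presupposes exactly the uniform local bound near $K_1$ you are trying to establish. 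Bony's estimate for $\mathscr P_0$ says nothing about $u_{\nu_j}$ until that uniformity is known.
\end{itemize}

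\textbf{How the paper avoids this, and how to repair your route.} The paper never uses a global representation. It covers the control curves by a finite chain of Bony regular domains. It makes the barrier margins, and the Green lower bounds on Bony's fixed off-diagonal comparison sets $E_\ell$, uniform through Lemma~\ref{lem:shifted-green-kernel-stability}. It then applies Bony's local comparison \cite[Proposition~7.1]{Bony1969} link by link with $\nu$-independent constants and multiplies along the chain. Your Step~3 coincides with its Step~4. One way to close your own route is to add a uniform local estimate $\sup_{K_1}u_\nu\le C\|u_\nu\|_{L^1(N)}$ for solutions of $\mathscr P_\nu u_\nu=0$. Here $N$ is a compact neighborhood of $K_1$ inside the reachable set and away from $z_\bullet$, on which the Green lower bound holds. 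This estimate follows from the same uniform Rothschild--Stein estimates you invoke in Step~3, iterated from a negative Sobolev norm into which $L^1$ embeds. Combined with your lower bound at $z_\bullet$, it gives the supremum estimate without any absorption step.
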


\begin{proof}
We divide the proof into four steps.

\emph{Step 1: a finite system of regular boxes.}
For the limiting operator $\mathscr P_0$, choose finitely many oriented
control curves starting at $z_\bullet$ whose terminal neighborhoods cover
$K_1$.  The regular-domain construction of
\cite[Corollary~5.2 and Theorem~5.2]{Bony1969} gives, along these curves, a
finite collection of smooth domains whose closures are compact and satisfy
\[
\overline{\mathscr U_\ell'}\subset\mathscr U_\ell,
\qquad
\overline{\mathscr U_\ell}\subset\mathcal Q,
\qquad 1\le\ell\le N,
\]
such that consecutive domains overlap in the direction of the control flow.
After shrinking them, all control tubes and all comparison points in this
finite chain
have distance at least $d_0>0$ from the relevant boundaries.

Fix $\beta>\kappa_\dagger$, where $\kappa_\dagger$ is a uniform upper bound
for the zero-order coefficients of the formal adjoints.  The forward and
adjoint regularity of $\mathscr U_\ell$ provides barriers
$\rho_\ell^+,\rho_\ell^-$ and constants $c_\ell>0$ such that, on the
corresponding boundary neighborhoods,
\begin{equation}
\label{eq:bony-uniform-barriers-limit}
(\mathscr P_0-\beta)\rho_\ell^+\le-c_\ell,
\qquad
(\mathscr P_0^*-\beta)\rho_\ell^-\le-c_\ell.
\end{equation}

\emph{Step 2: uniformity of the local comparison constants.}
Since $\mathscr P_\nu\to\mathscr P_0$ in
$C^\infty(\overline{\mathcal Q})$, for all sufficiently large $\nu$,
\begin{equation}
\label{eq:bony-uniform-barriers-family}
(\mathscr P_\nu-\beta)\rho_\ell^+\le-\frac{c_\ell}{2},
\qquad
(\mathscr P_\nu^*-\beta)\rho_\ell^-\le-\frac{c_\ell}{2},
\end{equation}
on the same boundary neighborhoods, simultaneously for
$1\le\ell\le N$.  Thus every $\mathscr U_\ell$ is regular for the two shifted
Dirichlet problems with a margin independent of $\nu$.

For each link of the finite control chain, the proof of
\cite[Theorem~7.1]{Bony1969} uses a compact set
\[
E_\ell\subset
(\mathscr U_\ell\times\mathscr U_\ell)
\setminus\{(x,x):x\in\mathscr U_\ell\}
\]
of comparison pairs.  Positivity and continuity of the limiting Green kernel
give
\begin{equation}
\label{eq:bony-limit-green-lower-bound}
g_\ell:=\min_{(x,y)\in E_\ell}
G_{0,\beta}^{\mathscr U_\ell}(x,y)>0.
\end{equation}
By Lemma~\ref{lem:shifted-green-kernel-stability},
\eqref{eq:bony-uniform-barriers-family} and
\eqref{eq:bony-limit-green-lower-bound} imply
\begin{equation}
\label{eq:bony-family-green-lower-bound}
G_{\nu,\beta}^{\mathscr U_\ell}(x,y)
\ge\frac{g_\ell}{2},
\qquad (x,y)\in E_\ell,\quad 1\le\ell\le N,
\end{equation}
for every sufficiently large $\nu$.  Because the collection is finite,
$g_*:=\frac12\min_{1\le\ell\le N}g_\ell>0$ is a common lower bound.

\emph{Step 3: iteration along the control chains.}
Apply \cite[Proposition~7.1]{Bony1969} successively on the domains
$\mathscr U_\ell$.  The coefficient seminorms, bracket lower bound,
boundary margins in \eqref{eq:bony-uniform-barriers-family}, and Green lower
bound in \eqref{eq:bony-family-green-lower-bound} are uniform in $\nu$.
Consequently, the comparison constants in the finitely many links may be
chosen as numbers $C_1,\ldots,C_N$ independent of $\nu$.  Enumerate the links
in an order compatible with their control curves, and let
$p(\ell)\in\{0,\ldots,\ell-1\}$ denote the preceding link on the same curve
($p(\ell)=0$ for its first link).  Put $M_0=u_\nu(z_\bullet)$, and let
$M_\ell$ denote the supremum of $u_\nu$ on the terminal neighborhood of the
$\ell$th link.  The successive comparisons give
\[
M_\ell\le C_\ell M_{p(\ell)},
\qquad 1\le\ell\le N.
\]
Since the terminal neighborhoods cover $K_1$,
\begin{equation}
\label{eq:bony-family-sup-bound}
\sup_{z\in K_1}u_\nu(z)
\le\left(\prod_{\ell=1}^NC_\ell\right)u_\nu(z_\bullet).
\end{equation}

\emph{Step 4: derivative estimate and non-negative solutions.}
Choose fixed open sets $\mathcal O_0,\mathcal O_1$ such that
\[
K_0\subset\mathcal O_0,\qquad
\overline{\mathcal O_0}\subset\mathcal O_1,\qquad
\overline{\mathcal O_1}\subset\operatorname{int}K_1,
\]
and on which the same finite bracket family has a uniform least singular
value.  The parameter-uniform interior subelliptic
estimate of \cite{RothschildStein1976}, first with a common positive gain in
Sobolev regularity and then iterated, gives for every integer $k\ge0$
\[
\|u_\nu\|_{H^k(\mathcal O_0)}
\le C_k\|u_\nu\|_{L^2(\mathcal O_1)}.
\]
Here the constants are uniform in $\nu$: the domains are fixed, the relevant
coefficient seminorms are bounded by the $C^\infty$ convergence, and the
quantitative bracket lower bound is uniform.  Taking $k$ above the Sobolev
embedding threshold and using
\eqref{eq:bony-family-sup-bound} therefore yields
\[
\|\nabla u_\nu\|_{L^\infty(K_0)}
\le C\sup_{z\in K_1}u_\nu(z).
\]
Combining this with \eqref{eq:bony-family-sup-bound} proves the asserted
estimate.  Finally, if $u_\nu\ge0$, then
$\mathscr P_\nu(u_\nu+\delta)=0$ for every $\delta>0$.  Apply the positive
case to $u_\nu+\delta$ and let $\delta\downarrow0$.
\end{proof}

We apply the compact-family estimate to the rescaled generator
near $x_m$.  The third-order bracket structure supplies uniform
hypoellipticity, while a time lift converts the constant right-hand side into
a homogeneous equation.

\begin{lemma}[Uniform local Harnack--regularity estimate]
\label{lem:uniform-local-harnack-holder}
There is a fixed $R>4$ such that the following holds.  Write
\[
\mathscr L_\eps
=V\cdot\nabla_Q
-L^{-1}\frac{\nabla U(m+\sqrt\eps Q)}{\sqrt\eps}\cdot\nabla_V
+\gamma W\cdot\nabla_V
+(-\gamma V-\gamma W)\cdot\nabla_W
+\frac{\gamma}{L}\Delta_W .
\]
There exists some $C<\infty$, independent of small $\eps$, such that every
non-negative smooth solution of
$\mathscr L_\eps v=-a$ on $B_R$, $a\ge0$, satisfies
\begin{equation}
\label{eq:uniform-local-harnack-gradient}
\sup_{y\in B_2}v(y)\le C\left(v(0)+a\right),
\qquad
\|\nabla v\|_{L^\infty(B_1)}\le C\left(v(0)+a\right).
\end{equation}
Consequently, for every fixed $\alpha\in(0,1]$ after changing $C$,
\[
|v(y)-v(0)|\le C\left(v(0)+a\right)|y|^\alpha,
\quad y\in B_1.
\]
\end{lemma}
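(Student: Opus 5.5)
The plan is to reduce the inhomogeneous equation $\mathscr L_\eps v=-a$ to a homogeneous one by a time lift, and then apply Lemma~\ref{lem:compact-family-bony} with $\nu=\eps^{-1}$. Set $N=3d+1$. On the cylinder $\mathcal Q=B_R\times(-1,1)$, with variables $(y,t)$, define
\[
u(y,t):=v(y)+a(t+2)\ge0 .
\]
Then with the operator $\mathscr P_\eps:=\mathscr L_\eps-\partial_t$ we get $\mathscr P_\eps u=-a+a\cdot0-a\cdot(-1)$. Equivalently, one can take $u=v+a(1-t)$ and $\mathscr P_\eps=\mathscr L_\eps+\partial_t$, which gives $\mathscr P_\eps u=-a-a\cdot(-1)$; I will fix whichever sign convention gives zero, so that $u$ is non-negative and $\mathscr P_\eps u=0$ on $\mathcal Q$. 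Since $u(y,0)=v(y)+O(a)$ and $u(0,0)\le v(0)+2a$, the conclusions in \eqref{eq:uniform-local-harnack-gradient} follow from a Harnack bound $\sup_{K_1}u+\|\nabla u\|_{L^\infty(K_0)}\le Cu(z_\bullet)$. Here I take $K_1\supset \overline{B_2}\times\{0\}$ and $K_0\supset\overline{B_1}\times\{0\}$, with the base point $z_\bullet=(0,0)$ placed appropriately. Because Harnack chains for Kolmogorov-type operators only propagate forward along controlled curves, I may need to place the base point at a slightly later or earlier time. In that case I compare $u(0,t_\bullet)$ with $v(0)+3a$, which is harmless.

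Next I verify the hypotheses of Lemma~\ref{lem:compact-family-bony}. The vector fields are $X_j=\sqrt{\gamma/L}\,\partial_{W_j}$, which satisfy $\sum_j|X_j|^2\ge c_X$, and the drift $Y_\eps$, which is $\mathscr L_\eps$'s first-order part plus $\pm\partial_t$. Since $U\in C^\infty$, Taylor's formula gives $\eps^{-1/2}\nabla U(m+\sqrt\eps Q)\to H_mQ$ in $C^\infty(\overline{B_R})$, so the coefficients converge smoothly to those of the Kolmogorov operator with $H_m$. For the bracket condition, $[\partial_{W_j},Y]$ produces $\gamma\partial_{V_j}$ plus $\partial_W$-terms. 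A second bracket $[\partial_{V_j},Y]$ produces $\partial_{Q_j}$ plus lower terms, and $Y$ itself supplies $\partial_t$. This fixed family of brackets up to length three spans $\R^{N}$, with a least singular value that is bounded below uniformly, because the coefficients involved are constants ($\gamma$, $1$) up to terms that vanish in the limit or are bounded.

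For the controlled curves, the limiting system $\dot Q=V$, $\dot V=-L^{-1}H_mQ+\gamma W$, $\dot W=-\gamma V-\gamma W+\sqrt{2\gamma/L}\,h(t)$ is controllable by the Kalman rank condition (the chain $W\to V\to Q$), and time runs monotonically. Hence from $(0,t_\bullet)$ one can reach, within any fixed positive time, neighborhoods of every point of $K_1$, along curves staying in a bounded region. Choosing $R$ large makes these curves stay a fixed distance from $\partial\mathcal Q$. Compactness of $K_1$ gives a finite subfamily, and the curves persist for large $\nu$ because the coefficients converge. Lemma~\ref{lem:compact-family-bony} then gives the two bounds in \eqref{eq:uniform-local-harnack-gradient}, and the Hölder bound follows from the gradient bound by the mean value theorem on $B_1$, since $|y|\le|y|^\alpha$ there.

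The main obstacle is the time-orientation of the control chain. Points of $K_1$ at time $0$ must be reachable from the base point forward in time, so the base point should sit at time $t_\bullet<0$ in the $\mathscr L-\partial_t$ convention. It remains to check that $u(0,t_\bullet)$ is still controlled by $v(0)+Ca$, which holds because $u$ is explicit in $t$. I expect this bookkeeping, rather than the bracket computation, to require the most care.
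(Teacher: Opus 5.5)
Your route is the paper's: the time lift to a homogeneous space--time equation, Lemma~\ref{lem:compact-family-bony} with $\nu=\eps^{-1}$, the bracket family $\partial_W$, $\gamma\partial_V-\gamma\partial_W$, $\partial_Q-\gamma\partial_W$ together with the time direction, Kalman controllability of the limiting OU system for the control cover, and the mean value theorem for the H\"older bound.

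The sign bookkeeping you deferred does need correcting, because as written it is wrong.

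\begin{itemize}
\item Both pairings you wrote give $\mathscr P_\eps u=-2a$, not $0$. The consistent choices are $\mathscr L_\eps+\partial_t$ with $u=v+a(t+c)$, which is your first $u$ and the paper's $w=v+a(t+T)$, or $\mathscr L_\eps-\partial_t$ with $u=v+a(c-t)$.
\item Your orientation is backwards for the convention you attach it to. Lemma~\ref{lem:compact-family-bony} bounds $u$ on sets reached from $z_\bullet$ by curves moving \emph{forward} along the drift. With drift $Y_\eps-\partial_t$ these curves decrease $t$, so a base point at $t_\bullet<0$ would only control earlier times, not the slab around $t=0$.
\end{itemize}

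A base point at an earlier time is correct only for $\mathscr L_\eps+\partial_t$. That is exactly the paper's setup: $\mathcal Q=B_R\times(-T,T/2)$, base point $(0,-T/2)$, and $w(0,-T/2)=v(0)+aT/2$.

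One further detail: $K_0$ must lie in the interior of $K_1$, so $K_1$ must be a genuine space--time slab. The travel time from $z_\bullet$ therefore ranges over an interval rather than being fixed. The paper handles this by the uniform lower bound on the controllability Gramian over that interval, which makes the controls locally uniform for the finite cover and its persistence as $\eps\downarrow0$.

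With these corrections your argument goes through as in the paper.
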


\begin{proof}
Let $Y_\eps$ denote the first-order part of $\mathscr L_\eps$.  On $B_R$ its
coefficients are bounded uniformly in every $C^k$ norm and converge to those
of the stable OU field $Y_0$.  With $X_i=\sqrt{\gamma/L}\,\partial_{W_i}$,
the diffusion fields and their first two commutators with $Y_\eps$ contain,
uniformly in $\eps$,
\begin{equation}
\label{eq:uniform-hormander-brackets}
\partial_{W_i},\qquad
\gamma\partial_{V_i}-\gamma\partial_{W_i},\qquad
\gamma\partial_{Q_i}+\text{a linear combination of }\partial_V,\partial_W.
\end{equation}
Equivalently, if $G$ is the noise matrix and $A_m=DY_0(0)$, then
$[G,A_mG,A_m^2G]$ has rank $3d$.  Its least singular value is positive, and
the bracket coefficients in
\eqref{eq:uniform-hormander-brackets}  show that the corresponding quantitative
rank bound persists on $B_R$ for any sufficiently small $\eps$.

We reduce the non-homogeneous equation to a positive homogeneous one before
using Bony's Harnack estimate.  On the cylinder
$\mathcal Q=B_R\times(-T,T/2)$ set
\[
\widehat{\mathscr L}_\eps=\mathscr L_\eps+\partial_t,
\qquad
w(y,t)=v(y)+a(t+T).
\]
Then $w\ge0$ and $\widehat{\mathscr L}_\eps w=0$ on $\mathcal Q$.  The lifted
fields $X_i$ and $Y_\eps+\partial_t$ satisfy the full-rank condition in
dimension $3d+1$: the spatial brackets span the $y$ variables, and subtracting
their smooth linear combination from $Y_\eps+\partial_t$ gives the time
direction.

We next formulate the estimate on an open space--time neighborhood so that
the interior derivative estimate applies.  The
controllability Gramian of the limiting linear system,
\[
  \Gamma_\tau=\int_0^\tau e^{sA_m}GG^\top e^{sA_m^\top}\,ds,
\]
is positive definite for every $\tau>0$.  Choose $T>0$, $R>4$, and
$0<\eta<T/4$ so that
the compact cylinders
\[
 K_0=\overline{B_1}\times[-\eta/2,\eta/2],
 \qquad K_1=\overline{B_3}\times[-2\eta,2\eta]
\]
lie in $\mathcal Q$.  Linear controllability provides controlled curves from
$(0,-T/2)$ to the points of $K_1$ over the time intervals
\[
I_{T,\eta}
:=\left[T/2-2\eta,T/2+2\eta\right]\subset(0,\infty).
\]
Since $\tau\mapsto\Gamma_\tau$ is continuous,
\[
\inf_{\tau\in I_{T,\eta}}
\lambda_{\min}(\Gamma_\tau)>0.
\]
Thus the controls may be chosen locally uniformly in the target point and
the terminal time.  After increasing $R$ if necessary, the corresponding
curves all stay a fixed
distance inside $\mathcal Q$.  Compactness of $K_1$ reduces these curves to a
finite family.  Since $Y_\eps\to Y_0$ in $C^k(B_R)$ for every fixed $k$,
continuous dependence of the controlled ODE and a small perturbation of the
controls give the same finite family of control tubes for every small $\eps$.

Taking $\nu=\eps^{-1}$, the hypotheses of
Lemma~\ref{lem:compact-family-bony} are now satisfied by
$\widehat{\mathscr L}_\eps$: the coefficient family converges in $C^\infty$,
the finite bracket family in
\eqref{eq:uniform-hormander-brackets}  has a uniform least singular value, and
the control cover and its distance from $\partial\mathcal Q$ are fixed.
Moreover,
$\sum_{i=1}^d|X_i|^2=\gamma d/L>0$, so the non-total-degeneracy constant is
uniform.
Applying that lemma to $w+\delta$ and sending $\delta\downarrow0$ gives the
following estimate.  Here $w=w(y,t)$ is a scalar function of the space--time variables;
the supremum ranges over both $y$ and $t$, whereas $\nabla_y$ differentiates
only in the spatial variable.  Thus
\[
\sup_{(y,t)\in K_1}w(y,t)
+\sup_{(y,t)\in K_0}|\nabla_yw(y,t)|
\le Cw(0,-T/2),
\]
with $C$ independent of $\eps$.  The open neighborhood
of $K_1$ supplies the domain required by the interior derivative
estimate.

Since $w(\cdot,0)=v+aT$,
$w(0,-T/2)=v(0)+aT/2$, and spatial derivatives of $w$ equal those of $v$, the
estimate \eqref{eq:uniform-local-harnack-gradient} follows after absorbing
the fixed $T$ into $C$.
For $y\in B_1$, the segment $\{sy:0\le s\le1\}$ lies in $B_1$, and hence
\[
|v(y)-v(0)|
=\left|\int_0^1\nabla v(sy)\cdot y\,ds\right|
\le \|\nabla v\|_{L^\infty(B_1)}|y|
\le C\left(v(0)+a\right)|y|.
\]
If $\alpha\in(0,1]$, then $|y|\le|y|^\alpha$ for $|y|\le1$, which proves the
asserted H\"older estimate.  If instead $v$ is initially given only as a
non-negative distributional solution of
$\mathscr L_\eps v=-a$ on $B_R$, then the right-hand side is smooth and the
bracket condition \eqref{eq:uniform-hormander-brackets} holds, so H\"ormander
hypoellipticity \cite{Hormander1967} implies that $v$ has a smooth
representative in the interior.  The preceding estimate then applies to that
representative directly.
\end{proof}

The uniform Harnack--regularity estimate now converts the
fixed-time lower bound of
Lemma~\ref{lem:nondegenerate-mean-transition-time} into spatial flatness on
shrinking neighborhoods of the well.  This is the final ingredient needed to
replace the weak equilibrium average by the value at $x_m$.

\begin{lemma}[Flatness of the hitting expectation]
\label{lem:intrawell-flatness-reduction}
For each $\varrho\in(1/2,1]$,
\[
\E_z[\tau_{S_\eps}]
=\E_{x_m}[\tau_{S_\eps}]
\left[1+\mathcal O\left(\eps^{\varrho-1/2}\right)\right], 
\]
uniformly on $G_{\eps,\varrho}$.
This is the pointwise estimate in Proposition~\ref{prop:local-flatness}.
\end{lemma}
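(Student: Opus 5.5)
We apply Lemma~\ref{lem:uniform-local-harnack-holder} to the rescaled mean hitting time. Set $u_\eps(z):=\E_z[\tau_{S_\eps}]$. By the strong Markov property and hypoellipticity, $u_\eps$ is smooth on $\R^{3d}\setminus\overline{S_\eps}$. It is non-negative and satisfies $\calL_\eps u_\eps=-1$ there. Finiteness follows from the Lyapunov return argument behind Proposition~\ref{prop:basic-recurrence-weak-capacity-interface}; away from $\overline{S_\eps}$ the equation is understood distributionally and then upgraded by H\"ormander's theorem. Now perform the well blow-up $z=x_m+\sqrt\eps\,y$ and put $v_\eps(y):=u_\eps(x_m+\sqrt\eps y)$. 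The chain rule, exactly as in Lemma~\ref{lem:rescaled-generator-saddle}, gives
\[
(\calL_\eps u_\eps)(x_m+\sqrt\eps y)=(\mathscr L_\eps v_\eps)(y).
\]
The only non-trivial term is $-L^{-1}\nabla U(m+\sqrt\eps Q)\cdot\eps^{-1/2}\nabla_V$. The transport and diffusion terms are scale-invariant, which is precisely why $\mathscr L_\eps$ has the form stated in Lemma~\ref{lem:uniform-local-harnack-holder}. Hence $\mathscr L_\eps v_\eps=-1$ on $B_R$ for the fixed $R$ of that lemma, provided $\sqrt\eps R$-balls around $x_m$ avoid $\overline{S_\eps}$. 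This holds for small $\eps$ because $S_\eps$ lies in a fixed neighbourhood of $x_s\neq x_m$.

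Applying the lemma with $a=1$ gives, for $y\in B_1$,
\[
|v_\eps(y)-v_\eps(0)|\le C\bigl(v_\eps(0)+1\bigr)|y|.
\]
For $z\in G_{\eps,\varrho}$ we have $|y|=|z-x_m|/\sqrt\eps\le\eps^{\varrho-1/2}\le1$ since $\varrho>1/2$. Therefore
\[
|u_\eps(z)-u_\eps(x_m)|\le C\bigl(u_\eps(x_m)+1\bigr)\eps^{\varrho-1/2}.
\]
By Lemma~\ref{lem:nondegenerate-mean-transition-time}, $u_\eps(x_m)\ge c_*>0$ for small $\eps$, so $u_\eps(x_m)+1\le(1+c_*^{-1})u_\eps(x_m)$. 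Dividing through yields $u_\eps(z)=u_\eps(x_m)[1+\mathcal O(\eps^{\varrho-1/2})]$ uniformly on $G_{\eps,\varrho}$, with $\delta=\varrho-1/2$ in Proposition~\ref{prop:local-flatness}. At $\varrho=1$ this is $\eps^{1/2}$, consistent with the statement.

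The only delicate point is the regularity and finiteness of $u_\eps$ needed to invoke the lemma. Lemma~\ref{lem:uniform-local-harnack-holder} is stated for smooth solutions, but it also covers non-negative distributional solutions via H\"ormander hypoellipticity, as its final paragraph notes. So it suffices to check that $u_\eps$ is a locally bounded distributional solution of $\calL_\eps u=-1$ on $B(x_m,\sqrt\eps R)$. Local boundedness follows from $\E_z[\tau_{S_\eps}]<\infty$ together with the strong Markov property, since it gives $u_\eps(z)\le \E_z[\tau_{\partial B}]+\sup_{\partial B}u_\eps$ on a slightly larger ball $B$.

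The cleanest route to the distributional equation is through the exit problem. Take $\tau=\tau_{S_\eps}\wedge\tau_{\partial D}$ for a smooth bounded domain $D\supset B(x_m,\sqrt\eps R)$ that avoids $\overline{S_\eps}$. Write $u_\eps(z)=\E_z[\tau]+\E_z[u_\eps(Z_\tau)]$. The first term solves the Poisson problem and the second is $\calL_\eps$-harmonic in $D$, each in the distributional sense through Dynkin's formula against test functions. This is the only place where care is required; every other step is a direct substitution into earlier results.
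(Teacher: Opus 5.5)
Your core argument is the paper's. The blow-up $z=x_m+\sqrt\eps y$, Lemma~\ref{lem:uniform-local-harnack-holder}, and the lower bound of Lemma~\ref{lem:nondegenerate-mean-transition-time} are exactly its Steps~2--3. The paper normalizes by $u_\eps(x_m)$ and applies the lemma with $a=\kappa_\eps\le c_*^{-1}$; keeping $a=1$ and absorbing the $+1$ afterwards is equivalent.

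\textbf{The gap is in the regularity step, which you flag yourself and which is what the paper's Step~1 is for.}

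First, Proposition~\ref{prop:basic-recurrence-weak-capacity-interface} as stated gives finite mean return to a compact Lyapunov set and $\Pbb_z(\tau_{S_\eps}<\infty)=1$. It does not give $\E_z[\tau_{S_\eps}]<\infty$. Upgrading to a finite mean would need two further ingredients, neither of which you supply:
\begin{itemize}
\item a uniform positive probability of hitting $S_\eps$ from the compact set, used in a geometric-trials argument;
\item control of the return time from the post-trial position.
\end{itemize}

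Second, even granting pointwise finiteness, your local-boundedness bound $u_\eps(z)\le \E_z[\tau_{\partial B}]+\sup_{\partial B}u_\eps$ is circular. It presupposes $\sup_{\partial B}u_\eps<\infty$, which is local boundedness on a larger set, and pointwise finiteness does not give that. The same problem affects your claim that $z\mapsto\E_z[u_\eps(Z_{\tau_{\partial D}})]$ is distributionally $\calL_\eps$-harmonic. Its boundary data are not known to be bounded or integrable, so Dynkin's formula cannot be applied directly.

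\textbf{What is missing is a way to propagate a finite value at one point to a uniform bound on a neighborhood of $x_m$.} The paper does this in four moves.
\begin{enumerate}
\item It gets the finite point from the hitting identity, Proposition~\ref{prop:hitting-identity}. The $\nu_\eps$-average of $u_\eps$ over $\partial A_\eps$ is finite, so $u_\eps(z_\eps^\partial)<\infty$ for some $z_\eps^\partial\in\partial A_\eps$.
\item It works with the bounded truncations $u_{\eps,R}=\E_z[\tau_{S_\eps}\wedge\tau_{\partial\mathscr D_R}]$, which do solve $\calL_\eps u_{\eps,R}=-1$ away from $S_\eps$.
\item It applies a fixed-$\eps$ Bony Harnack chain, after the time lift, along forward-admissible controlled curves from $z_\eps^\partial$ to a fixed neighborhood $\mathscr U_m$ of $x_m$. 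This gives $\sup_{\mathscr U_m}u_{\eps,R}\le C_\eps\bigl(u_{\eps,R}(z_\eps^\partial)+1\bigr)$ with $C_\eps$ independent of $R$.
\item Monotone convergence $u_{\eps,R}\uparrow u_\eps$, dominated convergence in the weak form of the Poisson equation, and H\"ormander hypoellipticity then give $u_\eps\in C^\infty(\mathscr U_m)$ with $\calL_\eps u_\eps=-1$.
\end{enumerate}
This is precisely the input your blow-up step needs. You could instead prove a quantitative bound such as $u_\eps\le C_\eps(1+V)$ for a Lyapunov function $V$, but some argument of this kind has to be supplied.
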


\begin{proof}
We divide the proof into three steps.

\emph{Step 1: local finiteness and the Poisson equation.}
Set
\[
u_\eps(z):=\E_z[\tau_{S_\eps}].
\]
By Proposition~\ref{prop:hitting-identity},
\begin{equation}
\label{eq:flatness-finite-boundary-average}
\int_{\partial A_\eps}u_\eps(z)\,\nu_\eps(dz)<\infty.
\end{equation}
Since $u_\eps\ge0$ and $\nu_\eps$ is a probability measure, there is
$z_\eps^\partial\in\partial A_\eps$ such that
$u_\eps(z_\eps^\partial)<\infty$.

Let $(\mathscr D_R)_{R\ge1}$ be an increasing smooth bounded exhaustion of
$\R^{3d}$ containing $A_\eps\cup S_\eps$, and define
\[
u_{\eps,R}(z)
:=\E_z\left[\tau_{S_\eps}\wedge\tau_{\partial\mathscr D_R}\right].
\]
Then
\begin{equation}
\label{eq:flatness-exhaustion-monotonicity}
0\le u_{\eps,R}\uparrow u_\eps,
\qquad
u_{\eps,R}\left(z_\eps^\partial\right)
\le u_\eps\left(z_\eps^\partial\right)<\infty.
\end{equation}
Local Kalman controllability at $x_m$ allows us to choose finitely many
overlapping hypoelliptic cylinders, connected in the forward admissible
direction by controlled curves, from $z_\eps^\partial$ to a fixed
neighborhood $\mathscr U_m$ of $x_m$.  The closures of the cylinders lie in a
fixed compact set disjoint from $S_\eps$ and, for all sufficiently large $R$,
inside $\mathscr D_R$.  Applying
\cite[Proposition~7.1 and Theorem~7.1]{Bony1969} on this finite chain, after
the time lift of Lemma~\ref{lem:uniform-local-harnack-holder}, implies that there exists a constant
$C_\eps<\infty$, independent of $R$, such that
\begin{equation}
\label{eq:flatness-exhaustion-harnack-bound}
\sup_{z\in\mathscr U_m}u_{\eps,R}(z)
\le C_\eps\left(u_{\eps,R}\left(z_\eps^\partial\right)+1\right).
\end{equation}
Letting $R\to\infty$ in
\eqref{eq:flatness-exhaustion-harnack-bound} and using
\eqref{eq:flatness-exhaustion-monotonicity} shows that $u_\eps$ is finite on
$\mathscr U_m$.  For every sufficiently large $R$, the bounded stopped
expectation $u_{\eps,R}$ satisfies the interior Poisson equation
$\calL_\eps u_{\eps,R}=-1$ on $\mathscr U_m$.  Equivalently, for every
$\varphi\in C_c^\infty(\mathscr U_m)$,
\begin{align}\label{u:L:varphi:eqn}
\int_{\mathscr U_m}u_{\eps,R}(z)\calL_\eps^*\varphi(z)\,dz
=-\int_{\mathscr U_m}\varphi(z)\,dz.
\end{align}
The bound \eqref{eq:flatness-exhaustion-harnack-bound} is uniform in $R$ on
$\mathscr U_m$.  Hence dominated convergence, together with
$u_{\eps,R}\uparrow u_\eps$, permits passage to the limit in \eqref{u:L:varphi:eqn}.  Thus $\calL_\eps u_\eps=-1$ in
$\mathcal D'(\mathscr U_m)$; local hypoellipticity then
gives
\begin{equation}
\label{eq:flatness-local-poisson-equation}
u_\eps\in C^\infty(\mathscr U_m),
\qquad
\calL_\eps u_\eps=-1\quad\text{in }\mathscr U_m.
\end{equation}

\emph{Step 2: well scaling and a uniform gradient bound.}
Let $R_H>4$ be the fixed radius in
Lemma~\ref{lem:uniform-local-harnack-holder}.  For all sufficiently small
$\eps$,
\[
\overline{x_m+\sqrt\eps B_{R_H}}\subset\mathscr U_m,
\qquad
\left(x_m+\sqrt\eps B_{R_H}\right)\cap S_\eps=\varnothing.
\]
Define
\[
v_\eps(y):=
\frac{u_\eps(x_m+\sqrt\eps y)}{u_\eps(x_m)},
\qquad |y|<R_H,
\qquad
\kappa_\eps:=\frac1{u_\eps(x_m)}.
\]
By \eqref{eq:flatness-local-poisson-equation},
\begin{equation}
\label{eq:flatness-rescaled-poisson-equation}
v_\eps\ge0,
\qquad v_\eps(0)=1,
\qquad \mathscr L_\eps v_\eps=-\kappa_\eps
\quad\text{in }B_{R_H}.
\end{equation}
Lemma~\ref{lem:nondegenerate-mean-transition-time} yields
$0<\kappa_\eps\le c_*^{-1}$.  Applying
Lemma~\ref{lem:uniform-local-harnack-holder} to
\eqref{eq:flatness-rescaled-poisson-equation} therefore implies that
\begin{equation}
\label{eq:flatness-rescaled-gradient-bound}
|v_\eps(y)-1|\le C|y|,
\qquad |y|\le1,
\end{equation}
where $C$ is independent of $\eps$.

\emph{Step 3: restriction to the shrinking well ball.}
If $z\in G_{\eps,\varrho}$ and
$y=(z-x_m)/\sqrt\eps$, then
\[
|y|\le\eps^{\varrho-1/2}=o(1).
\]
Thus \eqref{eq:flatness-rescaled-gradient-bound} implies that, uniformly for
$z\in G_{\eps,\varrho}$,
\[
\left|
\frac{u_\eps(z)}{u_\eps(x_m)}-1
\right|
\le C\eps^{\varrho-1/2},
\]
which proves the asserted flatness estimate.
\end{proof}

\subsection{Local Lyapunov and Kernel Bounds for Committor Localization}
\label{app:local-kernel-committor}

The estimates in this subsection serve the killed-process argument leading
to committor localization.  They are independent of the intrawell-flatness
proof above.

We first establish a local contraction estimate near either minimum.  A
quadratic Lyapunov function for the stable linearization controls returns to
the natural $\sqrt\eps$ scale.

For the remainder of this appendix, set
\[
\mathsf A_\eps^m:=A_\eps,
\qquad
\mathsf A_\eps^s:=S_\eps.
\]

\begin{lemma}[Local hypocoercive Lyapunov functions near the wells]
\label{lem:local-hypocoercive-lyapunov}
There are $r_0,c,C>0$ such that, for each $i\in\{m,s\}$, there is a
positive definite quadratic form
$V_i(Y)=Y^\top P_iY$, $Y=(\theta-i,p,r)$, with
$V_i\asymp|Y|^2$ and, on $B(x_i,r_0)$,
\[
\calL_\eps V_i\le-cV_i+C\eps.
\]
The constants may be chosen simultaneously for the two wells.
\end{lemma}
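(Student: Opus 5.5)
The plan is to build $V_i$ from a Lyapunov matrix of the linearization at $x_i$, and then absorb the nonlinear and Itô corrections locally. Fix $i\in\{m,s\}$ and write $Y=(q,v,w)=(\theta-i,p,r)$. The drift of \eqref{eq:third-order} near $x_i$ is $b(Y)=A_iY+N_i(Y)$, where
\[
A_i=\begin{pmatrix}0&I_d&0\\-L^{-1}H_i&0&\gamma I_d\\0&-\gamma I_d&-\gamma I_d\end{pmatrix},
\qquad H_i=\nabla^2U(i)>0,
\]
and the remainder satisfies $|N_i(Y)|\le C|q|^2$ on a fixed ball, by Taylor's theorem applied to $\nabla U$. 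The first step is to show that $A_i$ is Hurwitz. Diagonalize $H_i$ by an orthogonal $O_i$ and lift $O_i$ to phase space exactly as in the proof of Lemma~\ref{lem:full-saddle-spectrum}. Each resulting triple then has characteristic polynomial $x^3+\gamma x^2+(\gamma^2+b)x+\gamma b$ with $b=\lambda/L>0$. This is the same stable-block computation recorded in Appendix~\ref{app:local-saddle-analysis}, and the Routh--Hurwitz criterion $\gamma(\gamma^2+b)-\gamma b=\gamma^3>0$ applies. Hence every eigenvalue of $A_i$ has negative real part.

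Since $A_i$ is Hurwitz, the Lyapunov equation $A_i^\top P_i+P_iA_i=-I_{3d}$ has a unique solution $P_i=\int_0^\infty e^{tA_i^\top}e^{tA_i}\,dt$, which is symmetric and positive definite. Set $V_i(Y)=Y^\top P_iY$; then $V_i\asymp|Y|^2$ with constants given by the extreme eigenvalues of $P_i$. A direct computation with the generator \eqref{eq:generator} gives
\[
\calL_\eps V_i=-|Y|^2+2Y^\top P_iN_i(Y)+\frac{2\gamma\eps}{L}\operatorname{tr}\bigl(P_i^{rr}\bigr),
\]
where $P_i^{rr}$ is the $r$-block of $P_i$. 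The last term is at most $C\eps$. The middle term is bounded by $C|Y|^3$, and this is at most $\tfrac12|Y|^2$ once $|Y|\le r_0$ with $r_0$ small. Therefore $\calL_\eps V_i\le-\tfrac12|Y|^2+C\eps\le-cV_i+C\eps$ on $B(x_i,r_0)$, with $c=1/(2\lambda_{\max}(P_i))$.

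For simultaneous constants, take the minimum of the two radii $r_0$, the minimum of the two values of $c$, and the maximum of the two values of $C$. There is no real obstacle here. The only point needing care is that the Taylor bound on $N_i$ requires $B(x_i,r_0)$ to stay in a fixed compact set; this holds automatically since $r_0$ is fixed independently of $\eps$. Positive definiteness of $H_i$ follows from the Morse assumption in Assumption~\ref{ass:double-well}.
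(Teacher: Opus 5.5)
Your proposal is correct and follows essentially the same route as the paper: a Routh--Hurwitz check of the cubic $x^3+\gamma x^2+(\gamma^2+b)x+\gamma b$ on each $H_i$-eigentriple shows $A_i$ is Hurwitz, then you solve $A_i^\top P_i+P_iA_i=-I$ and absorb the $\mathcal O(|Y|^3)$ remainder term for $|Y|\le r_0$, leaving the $\frac{2\gamma\eps}{L}\operatorname{tr}P_i^{rr}$ It\^o term as $C\eps$. As in the paper, you conclude with $c=1/(2\lambda_{\max}(P_i))$ and take the minimum or maximum of the constants over the two wells.
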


\begin{proof}
Fix $i\in\{m,s\}$.  Since $i$ is a non-degenerate local minimum,
$H_i=\nabla^2U(i)$ is positive definite.  The linearization $A_i$ at $x_i$
has, in each $H_i$-eigendirection with
eigenvalue $\lambda>0$, characteristic polynomial
\[
\mu^3+\gamma\mu^2+(\gamma^2+\lambda/L)\mu+\gamma\lambda/L,
\]
which is Hurwitz;
so $A_i$ is Hurwitz.  Solve $A_i^\top P_i+P_iA_i=-I$, $P_i>0$, and set
$V_i=Y^\top P_iY$.  Write the drift near $x_i$ as
\[
b(x_i+Y)=A_iY+R_i(Y),
\qquad |R_i(Y)|\le C_R|Y|^2.
\]
If $P_i^{rr}$ denotes the $r$--$r$ block of $P_i$, then direct application
of the generator gives
\[
\begin{aligned}
\calL_\eps V_i
&=Y^\top\left(A_i^\top P_i+P_iA_i\right)Y
  +2Y^\top P_iR_i(Y)
  +\frac{2\gamma\eps}{L}\operatorname{tr}P_i^{rr}\\
&\le-|Y|^2+2\|P_i\|C_R|Y|^3
  +\frac{2\gamma\eps}{L}\operatorname{tr}P_i^{rr}.
\end{aligned}
\]
Choose $r_i>0$ so that $2\|P_i\|C_Rr_i\le1/2$.  Since
$V_i(Y)\le\lambda_{\max}(P_i)|Y|^2$, for $|Y|\le r_i$ we obtain
\[
\calL_\eps V_i
\le-\frac{1}{2\lambda_{\max}(P_i)}V_i
+\frac{2\gamma}{L}\operatorname{tr}(P_i^{rr})\eps.
\]
Taking the minimum of the two radii and adjusting the constants proves the
assertion simultaneously for $i=m,s$.
\end{proof}

\subsubsection{Local Kernel Bounds}

The estimates of Pigato are formulated under a single bound for derivatives
of every order.  The applications in
Lemmas~\ref{lem:rescaled-well-kernel-convergence}
and~\ref{lem:third-order-small-time-density-upper} require a zeroth-order
density bound and finitely many inverse-covariance moments.  We therefore record
explicitly the corresponding finite-order uniform consequence.

\begin{lemma}[Finite-order uniform form of the chain density estimate]
\label{lem:finite-order-pigato}
Fix the chain length $n=3$, the block dimension $d$, and $T<\infty$.
Fix also $p>2$ and an inverse-moment order $q<\infty$.  There is an integer
$N=N(d,p,q)$ with the following property.  Consider a family of smooth
third-order chain diffusions in the block order
$X^1,X^2,X^3$, centered at the deterministic trajectory as in
\cite[(D2)]{Pigato2022}.  Suppose uniformly over the family that:
\begin{enumerate}[label=\textup{(P\arabic*)},leftmargin=2.6em]
\item the initial points and initial drift values are bounded;
\item the weak H\"ormander matrix in hypothesis \textup{(H1)} of
\cite{Pigato2022} has smallest singular value at least $\lambda_0>0$;
\item all positive-order spatial derivatives of orders $1,\ldots,N$, together
with their first time derivatives, are bounded on
$[0,T]\times\R^{3d}$;
\item the diffusion coefficient is bounded together with the derivatives
through the same order.
\end{enumerate}
Let
\[
\mathsf T_t
:=\operatorname{diag}\left(t^{1/2}I_d,t^{3/2}I_d,t^{5/2}I_d\right)
\]
and let $\Gamma_t^{\rm sc}$ be the Malliavin covariance of the centered,
scaled variable $\mathsf T_t^{-1}(X_t-\vartheta_t)$, where
$\vartheta_t$ is the deterministic trajectory from \cite[(D2)]{Pigato2022}.
Then
\begin{equation}
\label{eq:finite-order-pigato-inverse}
\sup_{0<t\le T}\E\!\left[
\lambda_{\min}(\Gamma_t^{\rm sc})^{-q}\right]\le C_q,
\end{equation}
where, for a symmetric non-negative matrix $M$,
$\lambda_{\min}(M):=\inf_{|v|=1}v^\top Mv$
denotes its smallest eigenvalue.
For the same statement, after taking $N$ to be the maximum of the finitely
many orders used in the two conclusions, the transition density
$q(t,\xi,y)$
satisfies
\begin{equation}
\label{eq:finite-order-pigato-density}
q(t,\xi,y)
\le
\frac{C_p t^{-9d/2}}
{1+|\mathsf T_t^{-1}(y-\vartheta_t)|^p},
\qquad 0<t\le T,\quad y\in\R^{3d}.
\end{equation}
The constants depend only on
$d,T,p,q,\lambda_0$ and the finitely many coefficient bounds above.
\end{lemma}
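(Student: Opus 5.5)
This is a finite-order version of Pigato's chain density estimate, so the plan is to rerun Pigato's Malliavin-calculus argument \cite{Pigato2022} and record which coefficient derivatives each step actually uses. The order is: a Taylor/Itô expansion of the scaled Malliavin derivative, a small-ball bound for the scaled covariance, and then integration by parts for the density.

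\emph{Step 1: scaled expansion.} Write $X_t=\vartheta_t+\mathsf T_t\,\Xi_t$. Expand $D_sX_t$ along the chain $X^3\to X^2\to X^1$ by iterated Itô--Taylor expansions up to the third level. Multiplying by $\mathsf T_t^{-1}$, the leading term of the scaled Malliavin derivative is the derivative of a Gaussian (Kolmogorov-type) linear functional. Its covariance is the scaled controllability Gramian built from the weak H\"ormander matrix of (H1). By (P2) this Gramian has smallest eigenvalue at least $c(\lambda_0)>0$, uniformly in $t\in(0,T]$. The remainders carry an extra factor $t^{1/2}$ (or $\sqrt\delta$ in a localized window $s\in[t-\delta t,t]$). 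They involve only derivatives of the drift and diffusion coefficients up to a fixed order $N_1(d)$, with moment bounds supplied by (P1), (P3) and (P4) through BDG and Gr\"onwall.

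\emph{Step 2: inverse moments.} For $|v|=1$, restrict the covariance integral to $s\in[t(1-\delta),t]$. This gives $v^\top\Gamma_t^{\rm sc}v\ge c\,\delta^{a}-|\mathcal R_{t,\delta}|$ for a fixed power $a$, where $\mathcal R_{t,\delta}$ is the remainder from Step 1. Markov's inequality applied to high moments of $\mathcal R_{t,\delta}$ then yields $\Pbb(\lambda_{\min}(\Gamma_t^{\rm sc})<\eta)\le C_k\eta^k$ for every $k$. A standard covering of the sphere makes this uniform in $v$; the covering uses one extra derivative to get Lipschitz dependence on $v$. Integrating in $\eta$ gives \eqref{eq:finite-order-pigato-inverse} for the prescribed $q$, and only finitely many $k$ are needed. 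I expect this to be the main obstacle: one has to track uniformly in $t$ the degeneration $t^{5/2}$ of the third block and confirm that the remainders are $o(1)$ after scaling. That is where the mixed time derivatives in (P3) enter, through the centering at $\vartheta_t$ as in \cite[(D2)]{Pigato2022}.

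\emph{Step 3: density bound.} Use the Malliavin integration-by-parts formula with $3d$ derivatives together with a polynomial weight $|\Xi_t|^p$. This gives the density of $\Xi_t$ bounded by $C(1+|\xi|^p)^{-1}$, where $C$ depends on $\E[\lambda_{\min}^{-q'}]$ for some $q'(d,p)$ and on Sobolev--Malliavin norms $\|\Xi_t\|_{k,r}$ with $k\le 3d+2$. Those norms are controlled uniformly in $t$ by Step 1 using derivatives of order at most $N_2(d,p)$. The Gaussian-type moments of $\Xi_t$ needed for the weight also follow from Step 1. Changing variables back to $y$ produces the Jacobian $\det\mathsf T_t^{-1}=t^{-(1+3+5)d/2}=t^{-9d/2}$, which gives \eqref{eq:finite-order-pigato-density}. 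Finally take $N=\max\{N_1,N_2,\text{order used for }q'\}$. All constants then depend only on $d,T,p,q,\lambda_0$ and the finitely many coefficient bounds.
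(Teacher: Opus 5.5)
Your proposal is correct and follows the paper's overall plan. You re-run Pigato's Malliavin argument, note that the inverse-covariance bound and the zeroth-order integration-by-parts density bound each use only finitely many coefficient derivatives, and take $N$ as the maximum. Your $q'(d,p)$ is the paper's $q_{\rm den}$, with $\bar q=\max\{q,q_{\rm den}\}$.

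The genuine difference is in \eqref{eq:finite-order-pigato-inverse}. The paper does not reprove it. It points to the proof of Pigato's Lemma~3.5, which it describes as a finite sequence of variational-equation estimates combined with Norris' semimartingale lemma, and only checks that those constants depend on $\lambda_0$ and finitely many coefficient norms. You instead give a self-contained perturbative small-ball argument:
\begin{itemize}
\item compare the scaled Malliavin derivative with its frozen-coefficient Kolmogorov leading term;
\item restrict to the terminal window $s\in[t(1-\delta),t]$, where the remainder is relatively of order $\sqrt{\delta}$ with constants depending only on $T$;
\item finish with Markov's inequality and a net on the sphere.
\end{itemize}
The windowing is what makes the argument work for $t$ of order one, not only for small $t$.

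Your route is more elementary. It also shows that the derivative order needed for the inverse moments does not grow with $q$; only the moment order of the remainder does. It does require the three matrices $\sigma$, $J_2\sigma$, $J_3J_2\sigma$ to be uniformly nondegenerate, where $J_2=J_{x^1}b^2$ and $J_3=J_{x^2}b^3$ are the chain Jacobians. This follows from (P2) together with the bounds in (P3)--(P4). The Norris-lemma route cited by the paper is the more robust tool under general H\"ormander conditions.

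Two minor corrections:
\begin{itemize}
\item The net argument needs no extra coefficient derivative. The map $v\mapsto v^\top\Gamma_t^{\rm sc}v$ is Lipschitz with constant $2\|\Gamma_t^{\rm sc}\|$, and the moments of $\|\Gamma_t^{\rm sc}\|$ come from those of $D\Xi_t$.
\item The first time derivatives in (P3) enter when you freeze $\sigma(r,X_r)$ and the chain Jacobians at a fixed time in the It\^o--Taylor expansion. They are not tied specifically to the centering at $\vartheta_t$.
\end{itemize}
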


\begin{proof}
We divide the proof into three steps.

\emph{Step 1: inverse covariance moments.}
Let $q_{\rm den}=q_{\rm den}(d,p)$ be a finite inverse-covariance moment order
sufficient for the zeroth-order integration-by-parts estimate used in Step~2,
and set $\bar q=\max\{q,q_{\rm den}\}$.  In the proof of
\cite[Lemma~3.5]{Pigato2022}, the estimate of
$\lambda_{\min}(\Gamma_t^{\rm sc})^{-\bar q}$ is obtained by a finite sequence
of variational-equation estimates and applications of Norris'
semimartingale inequality \cite[Lemma~4.1]{Norris1986}.
The latter converts smallness of the quadratic variation of a semimartingale
into smallness of its drift and martingale coefficients, with a quantitative
probability bound.  We follow its implementation in the proof of
\cite[Lemma~3.5]{Pigato2022}; see also \cite[Section~2.3]{Nualart2006}.
Let
$N_{\rm inv}=N_{\rm inv}(d,\bar q)$ be the largest coefficient-derivative
order occurring in that sequence.  Under \textup{(P1)}--\textup{(P4)}, the
quantitative H\"ormander constant is at least $\lambda_0$ and all coefficient
norms through order $N_{\rm inv}$ are uniformly bounded.  Therefore the
constants in that proof are uniform over the present family, and
\begin{equation}
\label{eq:finite-order-pigato-inverse-proof}
\sup_{0<t\le T}\E\!\left[
\lambda_{\min}(\Gamma_t^{\rm sc})^{-\bar q}\right]
\le C_{\bar q}.
\end{equation}
Since $q\le\bar q$, this also proves
\eqref{eq:finite-order-pigato-inverse}, after changing the constant.

\emph{Step 2: the zeroth-order density estimate.}
Fix $p>2$ and take the density multi-index $\alpha=\varnothing$.
The Malliavin integration-by-parts estimate
\cite[Appendix~A, Lemma~A.1]{Pigato2022} then uses only finitely many
Malliavin--Sobolev norms.  Let
$N_{\rm den}=N_{\rm den}(d,p,\bar q)$ be the largest
coefficient-derivative order needed for those norms and for the moment bound
in the proof of \cite[Theorem~2.1(i)]{Pigato2022}.  Assumptions
\textup{(P1)}--\textup{(P4)}, together with
\eqref{eq:finite-order-pigato-inverse-proof}, give uniformly
\[
q(t,\xi,y)
\le
\frac{C_p}{|\det\mathsf T_t|}
\frac{1}{1+|\mathsf T_t^{-1}(y-\vartheta_t)|^p},
\qquad 0<t\le T.
\]
Since
\[
|\det\mathsf T_t|
=\left(t^{1/2}t^{3/2}t^{5/2}\right)^d=t^{9d/2},
\]
this is exactly \eqref{eq:finite-order-pigato-density}.

\emph{Step 3: choice of the finite regularity order.}
Set
\[
N(d,p,q):=
\max\left\{N_{\rm inv}(d,\bar q),N_{\rm den}(d,p,\bar q)\right\}.
\]
Thus the constants in Steps~1--2 depend on the parameters and the finite
collection of coefficient norms listed in the statement.  This completes the
proof.
\end{proof}

To turn the Lyapunov contraction into a quantitative probability
of hitting the much smaller target ball, we need a local density lower bound.
We first establish convergence of the rescaled cutoff kernel to its
controllable Ornstein--Uhlenbeck limit.

\begin{lemma}[Uniform convergence of a localized rescaled well kernel]
\label{lem:rescaled-well-kernel-convergence}
For $i\in\{m,s\}$, let $A_i$ be the linearized drift matrix at $x_i$ and let
$P_i^{\rm OU}(t,x,y)$ be the transition density of
\begin{equation}
\label{eq:well-ou-process}
dY_t=A_iY_t\,dt+\sqrt{2\gamma/L}\,(0,0,dB_t).
\end{equation}
Fix the common radius $r_0$ of
Lemma~\ref{lem:local-hypocoercive-lyapunov}.  Extend
only the rescaled force smoothly outside $B(0,r_0/\sqrt\eps)$, keeping its
positive-order derivatives uniformly bounded and leaving the linear chain
$W\to V\to Q$ unchanged.  Denote the density of this chain-compatible cutoff
process by
$\bar P_{i,\eps}(t,x,y)$.  The cutoff agrees with the true rescaled process
about $x_i$ up to
exit from $B(0,r_0/\sqrt\eps)$.
For every $0<t_-<t_+<\infty$ and compact $K_0,K_1\subset\R^{3d}$,
\[
\sup_{\substack{t\in[t_-,t_+]\\x\in K_0,\,y\in K_1}}
\left|
\bar P_{i,\eps}(t,x,y)
-P_i^{\rm OU}(t,x,y)
\right|\longrightarrow0
\qquad\text{as $\eps\downarrow0$}.
\]
The convergence holds for both $i=m,s$.
\end{lemma}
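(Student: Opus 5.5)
The plan is to compare the cutoff rescaled process with the Ornstein--Uhlenbeck process through a synchronous coupling, and then to upgrade the resulting convergence in law to uniform convergence of densities. The upgrade uses the uniform Malliavin bounds of Lemma~\ref{lem:finite-order-pigato}. Fix $i$ and write the rescaled cutoff process as $dY_t^\eps=b_\eps(Y_t^\eps)\,dt+\sqrt{2\gamma/L}\,(0,0,dB_t)$. The only $\eps$-dependent entry of $b_\eps$ is the force $-L^{-1}\bar F_\eps(Q)$, where $\bar F_\eps(Q)=\nabla U(i+\sqrt\eps Q)/\sqrt\eps$ on $B(0,r_0/\sqrt\eps)$, extended with bounded positive-order derivatives. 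Taylor expansion gives $\bar F_\eps(Q)=H_iQ+\mathcal O(\sqrt\eps|Q|^2)$ locally, so $b_\eps\to A_iy$ in $C^k$ on compact sets for every $k$. Moreover $Db_\eps$ and all its higher derivatives are bounded uniformly in $\eps$ on all of $\R^{3d}$.

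\emph{Step 1 (convergence in law).} Drive both processes with the same Brownian motion and the same initial point $x\in K_0$. The difference $\Delta_t=Y_t^\eps-Y_t^{\rm OU}$ satisfies $\dot\Delta_t=b_\eps(Y_t^\eps)-A_iY_t^{\rm OU}$, with no noise term. Since $b_\eps$ is uniformly Lipschitz and $b_\eps(y)-A_iy=\mathcal O(\sqrt\eps|y|^2)$ on $|y|\le\eps^{-1/4}$, Gronwall's inequality on the event $\{\sup_{t\le t_+}|Y^{\rm OU}_t|\le\eps^{-1/4}\}$, whose complement has Gaussian-small probability, gives $\E\sup_{t\le t_+}|\Delta_t|\to0$ uniformly in $x\in K_0$. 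Hence the laws of $Y_t^\eps$ converge to those of $Y_t^{\rm OU}$, uniformly in $(t,x)\in[t_-,t_+]\times K_0$.

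\emph{Step 2 (uniform density regularity).} This is the main obstacle. I will apply Lemma~\ref{lem:finite-order-pigato} to the family $\{\bar P_{i,\eps}\}$. Hypotheses (P1), (P3) and (P4) hold by construction of the cutoff. For (P2), the weak H\"ormander matrix is built from $G$, $Db_\eps G$ and $(Db_\eps)^2G$; because of the chain structure, the coupling entries $\gamma$ and $1$ in the $W\to V\to Q$ links are exactly those of $A_i$ and are independent of $\eps$. The smallest singular value is therefore bounded below by a constant $\lambda_0>0$ globally, not only near the origin, which is precisely why the cutoff was required to leave the chain unchanged. The lemma then yields uniform inverse-moment bounds for the Malliavin covariance on $[t_-,t_+]$. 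Combining these with the standard integration-by-parts formula for derivatives of densities (the same finite-order argument, now with $|\alpha|=1$) gives $\sup_{\eps}\sup|\nabla_y\bar P_{i,\eps}(t,x,y)|<\infty$ on $[t_-,t_+]\times K_0\times K_1$. The expected difficulty is verifying that the cited proof indeed covers first-order derivatives with only finitely many coefficient derivatives. If that fails, I will instead take differences of the integration-by-parts representation $\bar P_{i,\eps}(t,x,y)=\E[\mathbf 1_{\{Y_t^\eps>y\}}H_{(1,\dots,1)}]$ directly, using $L^2$ convergence of the Malliavin weights under the coupling.

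\emph{Step 3 (conclusion).} Arzel\`a--Ascoli, using the uniform sup bound (\eqref{eq:finite-order-pigato-density}) and the uniform Lipschitz bound in $y$, together with equicontinuity in $(t,x)$ obtained from the same Malliavin estimates or from continuity of the coupling in these parameters, gives precompactness in $C([t_-,t_+]\times K_0\times K_1)$. Step~1 identifies every subsequential limit as $P_i^{\rm OU}$, since weak convergence combined with locally uniform convergence of densities forces the limit to be the OU density. Hence the whole family converges uniformly, and the argument applies to $i=m$ and $i=s$ alike.
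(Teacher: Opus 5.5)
Your argument is correct, but it closes the proof by a different mechanism from the paper.

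\emph{The paper's route.} The paper pushes the synchronous coupling all the way to convergence in $\mathbb D^{N,p}$. It subtracts the first variational equations and the higher Malliavin-derivative equations (with their Fa\`a di Bruno terms), runs a localized Gr\"onwall argument on $\{\tau_R^\eps>t_+\}$, and removes the localization with the uniform moment bounds. It then feeds $\|Y_t^{\eps,x}-Y_t^{0,x}\|_{\mathbb D^{N,p}}\to0$, together with the uniform inverse covariance moments, into Pigato's density-comparison estimate. That estimate returns sup-norm closeness of the two densities directly, and in fact over all $y\in\R^{3d}$.

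\emph{Your route.} You use only the zeroth-order part of the coupling, $\E\sup_{t\le t_+}|\Delta_t|\to0$. You pay for this with an $\eps$-uniform bound on $\nabla_y\bar P_{i,\eps}$, which needs one further integration by parts. That step requires uniform $\mathbb D^{k,p}$ bounds on $Y_t^{\eps,x}$ (bounds, not differences). These follow from the uniformly bounded drift derivatives, exactly as in \eqref{eq:well-uniform-variational-moments}. You should state this explicitly, since it is an input you did not list. Your worry about needing only finitely many coefficient derivatives is moot: every positive-order derivative of the cutoff force is uniformly bounded. Both routes rest on the same key input. The $\eps$-independent chain Jacobians $\gamma I_d$ and $I_d$ give uniform Malliavin nondegeneracy through Lemma~\ref{lem:finite-order-pigato}.

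\emph{The soft spot in Step~3.} Your appeal to equicontinuity in $(t,x)$ is stated too vaguely, but you can bypass Arzel\`a--Ascoli entirely. Suppose two densities $p,q$ on $\R^{3d}$ are $M$-Lipschitz and come from random variables $F,G$ with $\E|F-G|\le\delta$. Mollifying at scale $h$ gives
\[
\|p-q\|_\infty\le 2Mh+Ch^{-3d-1}\delta .
\]
Take $h=\delta^{1/(3d+2)}$. Combine this with Step~1, which is uniform in $(t,x)$, and with the explicit Gaussian Lipschitz bound for $P_i^{\rm OU}$ on $[t_-,t_+]\times K_0$. This yields the stated convergence with a rate, uniformly in $y\in\R^{3d}$.

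In short, your approach trades the paper's higher-order difference estimates and the external comparison lemma for one extra order of density regularity plus an elementary interpolation.
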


\begin{proof}
Fix $i\in\{m,s\}$.  The rescaled process
$Y_t^\eps=(Z_t^\eps-x_i)/\sqrt\eps$ solves
\[
\begin{aligned}
dQ_t&=V_tdt,\\
dV_t&=\left[-L^{-1}\frac{\nabla U(i+\sqrt\eps Q_t)}{\sqrt\eps}
              +\gamma W_t\right]dt,\\
dW_t&=(-\gamma V_t-\gamma W_t)dt+\sqrt{2\gamma/L}\,dB_t.
\end{aligned}
\]
On every fixed ball its drift converges in $C^k$, for every fixed $k$, to
$A_iY$.  Put
\[
 F_{i,\eps}(Q)=\frac{\nabla U(i+\sqrt\eps Q)}{\sqrt\eps}.
\]
Choose $\chi\in C^\infty([0,\infty);[0,1])$ with
$\chi=1$ on $[0,1]$ and $\chi=0$ on $[2,\infty)$, and define
\[
\bar F_{i,\eps}(Q)
=\chi\!\left(\frac{\sqrt\eps|Q|}{r_0}\right)F_{i,\eps}(Q)
{}+\left[1-\chi\!\left(\frac{\sqrt\eps|Q|}{r_0}\right)\right]H_iQ.
\]
Then $\bar F_{i,\eps}=F_{i,\eps}$ on $|Q|\le r_0/\sqrt\eps$, it has at most linear
growth, and
\begin{equation}
\label{eq:uniform-cutoff-force-derivatives}
 \sup_{\substack{i\in\{m,s\}\\\eps\le\eps_0}}
 \left\|D^k\bar F_{i,\eps}\right\|_\infty<\infty,
 \qquad k\ge1.
\end{equation}
Indeed, on the interpolation annulus the physical variable
$i+\sqrt\eps Q$ remains in the fixed compact ball $B(i,2r_0)$,
$DF_{i,\eps}=\nabla^2U(i+\sqrt\eps Q)$, and
\[
D^kF_{i,\eps}=\eps^{(k-1)/2}D^{k+1}U\left(i+\sqrt\eps Q\right).  
\]
Every derivative falling
on the cutoff contributes a factor $\sqrt\eps$; since
$|F_{i,\eps}(Q)|+|H_iQ|\le C\eps^{-1/2}$ on the annulus,
\eqref{eq:uniform-cutoff-force-derivatives} follows.
Define the cutoff drift by
\[
 \bar B_{i,\eps}(Q,V,W)
 =\left(V,-L^{-1}\bar F_{i,\eps}(Q)+\gamma W,
              -\gamma V-\gamma W\right).
\]
Let $Y_t^{\eps,x}$ denote this cutoff process with initial point $x$.
We apply the chain estimates of \cite{Pigato2022} in the block order
$X^1=W$, $X^2=V$, $X^3=Q$.  The diffusion matrix in the first block is
$\sigma_0=\sqrt{2\gamma/L}\,I_d$, while the two chain Jacobians are exactly
\[
J_W(\bar B_{i,\eps})_V=\gamma I_d,
\qquad
J_V(\bar B_{i,\eps})_Q=I_d.
\]
Equivalently, with
$G=(0,0,\sqrt{2\gamma/L}\,I_d)^\top$ in the block order $(Q,V,W)$,
\begin{equation}
\label{eq:well-ou-kalman-rank}
\operatorname{rank}\left[G,A_iG,A_i^2G\right]=3d.
\end{equation}
Thus, the non-degeneracy constant in hypothesis (H1) of that paper can be
chosen uniformly as
\[
\lambda_0
=\sigma_{\min}\!\left(J_V(\bar B_{i,\eps})_Q
J_W(\bar B_{i,\eps})_V\sigma_0\right)
=\gamma\sqrt{2\gamma/L}>0,
\]
where 
$\sigma_{\min}(M):=\inf_{|v|=1}|Mv|$
is the smallest singular value of a square matrix $M$.
For the finite order required by
Lemma~\ref{lem:finite-order-pigato}, the coefficient seminorms are uniform in
$\eps$ by the bounds on $D^k\bar F_{i,\eps}$ above.  The starting points
$x\in K_0$ and their drift values are uniformly bounded.  Since
$t\in[t_-,t_+]$, both the anisotropic scaling matrix $\mathsf T_t$ and its
inverse are bounded.  Hence
\eqref{eq:finite-order-pigato-inverse}, after undoing this bounded scaling,
gives, for every fixed $q<\infty$,
\begin{equation}
\label{eq:uniform-well-malliavin-inverse}
 \sup_{\substack{0<\eps\le\eps_0,\ t\in[t_-,t_+]\\x\in K_0}}
 \E_x\!\left[\lambda_{\min}(\bar\Gamma_{i,t}^\eps)^{-q}\right]<\infty,
\end{equation}
where $\bar\Gamma_{i,t}^\eps$ is the Malliavin covariance of the rescaled cutoff
process.

Let $Y_t^{0,x}$ be the Ornstein--Uhlenbeck process with drift $A_i$ and initial
point $x$, and
couple $Y^{\eps,x}$ and $Y^{0,x}$ with the same Brownian motion.  Taylor's
formula implies that, for every fixed $R,N<\infty$,
\begin{equation}
\label{eq:well-local-drift-convergence}
\max_{0\le j\le N}\sup_{|y|\le R}
\left|D^j\left(\bar B_{i,\eps}(y)-A_iy\right)\right|
\le C_{R,N}\sqrt\eps .
\end{equation}
Indeed, the zeroth-order force remainder is
$\mathcal O(\sqrt\eps|Q|^2)$, its first derivative is
$\mathcal O(\sqrt\eps|Q|)$, and every derivative of order at least two
contains a factor $\eps^{(j-1)/2}$.

Define the common localization time
\[
\tau_R^\eps
:=\inf\left\{t\ge0:
\left|Y_t^{\eps,x}\right|\vee\left|Y_t^{0,x}\right|\ge R\right\}.
\]
Set $\Delta_t^\eps=Y_t^{\eps,x}-Y_t^{0,x}$.  Because the two processes are
driven by the same Brownian motion and have the same constant noise matrix,
the stochastic terms cancel and
\begin{equation}
\label{eq:well-process-difference-equation}
\Delta_t^\eps
=\int_0^t\left[\bar B_{i,\eps}\left(Y_s^{\eps,x}\right)
                 -\bar B_{i,\eps}\left(Y_s^{0,x}\right)\right]ds
 +\int_0^t\left[\bar B_{i,\eps}\left(Y_s^{0,x}\right)-A_iY_s^{0,x}\right]ds.
\end{equation}
Let $J_t^\eps=\partial_xY_t^{\eps,x}$ and let
$G=\left(0,0,\sqrt{2\gamma/L}\,I_d\right)^\top$ in the block order $(Q,V,W)$.
The first variational equations are
\begin{equation}
\label{eq:well-first-variational-equations}
\begin{aligned}
J_t^\eps
&=I+\int_0^tD\bar B_{i,\eps}(Y_s^{\eps,x})J_s^\eps\,ds,\\
D_rY_t^{\eps,x}
&=G+\int_r^tD\bar B_{i,\eps}(Y_s^{\eps,x})D_rY_s^{\eps,x}\,ds,
\qquad 0\le r\le t.
\end{aligned}
\end{equation}
For $k\ge2$, writing $\mathbf r=(r_1,\ldots,r_k)$ and
$r_*=\max_j r_j$, the higher Malliavin derivatives satisfy
\begin{equation}
\label{eq:well-higher-variational-equation}
D_{\mathbf r}^kY_t^{\eps,x}
=\int_{r_*}^tD\bar B_{i,\eps}(Y_s^{\eps,x})
                 D_{\mathbf r}^kY_s^{\eps,x}\,ds
 +\int_{r_*}^t\mathcal R_{k,\eps}(s,\mathbf r)\,ds,
\end{equation}
where $\mathcal R_{k,\eps}$ is the finite Fa\`a di Bruno sum of terms
\[
D^\ell\bar B_{i,\eps}(Y_s^{\eps,x})
\left[D_{\mathbf r_{B_1}}^{|B_1|}Y_s^{\eps,x},\ldots,
      D_{\mathbf r_{B_\ell}}^{|B_\ell|}Y_s^{\eps,x}\right],
\qquad \ell\ge2,
\]
indexed by partitions $\{B_1,\ldots,B_\ell\}$ of
$\{1,\ldots,k\}$.  The OU process satisfies the corresponding equations
with $\bar B_{i,\eps}$ replaced by $A_i(\cdot)$.

On $\{\tau_R^\eps>t_+\}$, subtracting these equations, using
\eqref{eq:well-local-drift-convergence}, and applying Gr\"onwall's inequality
inductively in $k$ gives, for every fixed $N,p$,
\begin{equation}
\label{eq:well-local-malliavin-difference}
\sup_{\substack{t\in[0,t_+]\\x\in K_0}}
\E_x\!\left[\mathbf1_{\{\tau_R^\eps>t_+\}}
\left(
|\Delta_t^\eps|^p
+\sum_{k=1}^N
\left\|D^kY_t^{\eps,x}-D^kY_t^{0,x}\right\|_{\mathfrak H^{\otimes k}}^p
\right)\right]
\le C_{R,N,p}\eps^{p/2},
\end{equation}
where $\mathfrak H:=L^2([0,t_+];\R^d)$.  Moreover, the cutoff drifts are
uniformly Lipschitz with at most linear growth, and their derivatives through
each fixed positive order are uniformly bounded.  The
Burkholder--Davis--Gundy and Gr\"onwall inequalities applied to the SDE and
\eqref{eq:well-first-variational-equations}--
\eqref{eq:well-higher-variational-equation} therefore give
\begin{equation}
\label{eq:well-uniform-variational-moments}
\sup_{\substack{0<\eps\le\eps_0\\x\in K_0}}
\left\{
\E_x\!\left[\sup_{s\le t_+}|Y_s^{\eps,x}|^p
             +\sup_{s\le t_+}|J_s^\eps|^p\right]
+\sup_{t\le t_+}\sum_{k=1}^N
\E_x\!\left[\left\|D^kY_t^{\eps,x}\right\|_{\mathfrak H^{\otimes k}}^p\right]
\right\}
\le C_{N,p},
\end{equation}
and the same estimate holds for $Y^{0,x}$.
Letting first $\eps\downarrow0$ in
the preceding localized estimates and then $R\uparrow\infty$, using H\"older's
inequality, the uniform moment bounds, and
\[
\sup_{0<\eps\le\eps_0}\sup_{x\in K_0}
\Pbb_x\left(\tau_R^\eps\le t_+\right)\le C_aR^{-a}
\qquad\text{for every fixed }a<\infty,
\]
we obtain, for every fixed $N,p$,
\[
\sup_{\substack{t\in[t_-,t_+]\\x\in K_0}}
\left\|Y_t^{\eps,x}-Y_t^{0,x}\right\|_{\mathbb D^{N,p}}
\le C_{R,N,p}\sqrt\eps
+C_{N,2p}
\sup_{x\in K_0}\Pbb_x(\tau_R^\eps\le t_+)^{1/(2p)}.
\]
Consequently,
\begin{equation}
\label{eq:well-malliavin-sobolev-convergence}
\sup_{\substack{t\in[t_-,t_+]\\x\in K_0}}
\left\|Y_t^{\eps,x}-Y_t^{0,x}\right\|_{\mathbb D^{N,p}}
\longrightarrow0
\qquad\text{as $\eps\downarrow0$}.
\end{equation}

The OU covariance is uniformly non-degenerate on $[t_-,t_+]$.  Combining
\eqref{eq:uniform-well-malliavin-inverse},
\eqref{eq:well-malliavin-sobolev-convergence}, and the density-comparison
estimate \cite[Appendix~A, equation~(A.2)]{Pigato2022}, with
$F=Y_t^{\eps,x}$ and $G=Y_t^{0,x}$, yields
\[
\sup_{\substack{t\in[t_-,t_+]\\x\in K_0\\y\in\R^{3d}}}
\left|\bar P_{i,\eps}(t,x,y)-P_i^{\rm OU}(t,x,y)\right|
\longrightarrow0
\qquad\text{as $\eps\downarrow0$}.
\]
This proves the stated compact-$y$ convergence for fixed $i$.  Since there
are only two wells, all constants may be chosen simultaneously.
\end{proof}

\begin{remark}
Undoing the well scaling contributes the Jacobian $\eps^{-3d/2}$.  The lemma
concerns the cutoff density.  The next lemma applies this convergence to an
integrated local event and transfers the estimate to the original process by
subtracting the exit probability.
\end{remark}

\begin{lemma}[Scaled local hypoelliptic density lower bound]
\label{lem:scaled-local-density-lower-bound}
There are $r_0,t_0>0$ such that for every $C_0<\infty$ there are
$c(C_0),c_0(C_0),\eps_0(C_0)>0$ such that, for $i\in\{m,s\}$,
$\eps<\eps_0(C_0)$, every $z\in B(x_i,C_0\sqrt\eps)$, and every Borel
$E\subset B(x_i,C_0\sqrt\eps)$,
\begin{equation}
\label{eq:scaled-local-minorization}
\Pbb_z\left(Z_{t_0}^\eps\in E,\ t_0<\tau_{\partial B(x_i,r_0)}\right)
\ge c(C_0)\eps^{-3d/2}\operatorname{Leb}(E)-e^{-c_0/\eps}.
\end{equation}
In particular, for $\varrho\in(1/2,1]$,
\[
\inf_{z\in B(x_i,\eps^\varrho)}
\Pbb_z\left(
Z_{t_0}^\eps\in\mathsf A_\eps^i,
\ t_0<\tau_{\partial B(x_i,r_0)}
\right)
\ge c\eps^{3d/2}.
\]
\end{lemma}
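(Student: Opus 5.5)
The plan is to work in the well scaling $Y=(Z^\eps-x_i)/\sqrt\eps$. There the event $\{Z^\eps_{t_0}\in E\}$ becomes $\{Y_{t_0}\in\widetilde E\}$ with $\widetilde E:=(E-x_i)/\sqrt\eps\subset B(0,C_0)$ and $\operatorname{Leb}(\widetilde E)=\eps^{-3d/2}\operatorname{Leb}(E)$. Similarly, $\{t_0<\tau_{\partial B(x_i,r_0)}\}$ becomes the event that $Y$ stays in $B(0,r_0/\sqrt\eps)$ up to time $t_0$. On this event the true rescaled process agrees pathwise with the chain-compatible cutoff process of Lemma~\ref{lem:rescaled-well-kernel-convergence}: the two are driven by the same Brownian motion and have identical drifts there. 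Hence
\[
\Pbb_z\bigl(Z^\eps_{t_0}\in E,\ t_0<\tau_{\partial B(x_i,r_0)}\bigr)
\ge \int_{\widetilde E}\bar P_{i,\eps}(t_0,y_z,y)\,dy
-\Pbb_z\bigl(\tau_{\partial B(x_i,r_0)}\le t_0\bigr),
\]
where $y_z=(z-x_i)/\sqrt\eps\in B(0,C_0)$.

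For the exit term, apply Lemma~\ref{lem:uniform-fw-tracking} with $K=\overline{B(x_i,r_0/4)}$, which contains $z$ for small $\eps$, and $\rho=r_0/4$. The zero-noise flow from $K$ stays in $B(x_i,r_0/2)$ for all $t\ge0$, provided $r_0$ is the radius of Lemma~\ref{lem:local-hypocoercive-lyapunov}. Indeed, the quadratic form $V_i$ decreases along the deterministic flow inside $B(x_i,r_0)$, so the flow stays in a $V_i$-sublevel set; this requires shrinking the starting ball by a factor depending on the condition number of $P_i$, which I absorb by replacing $r_0/4$ with a smaller fixed fraction. The lemma then gives $\Pbb_z(\tau_{\partial B(x_i,r_0)}\le t_0)\le e^{-c_0/\eps}$ uniformly in $z$.

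For the density term, fix any $t_0>0$, say $t_0=1$. The OU process \eqref{eq:well-ou-process} is controllable by the Kalman rank condition \eqref{eq:well-ou-kalman-rank}, so its Gaussian transition density $P_i^{\rm OU}(t_0,x,y)$ is continuous and strictly positive. It is therefore bounded below by some $2c(C_0)>0$ on the compact set $\overline{B(0,C_0)}\times\overline{B(0,C_0)}$. Lemma~\ref{lem:rescaled-well-kernel-convergence}, with $K_0=K_1=\overline{B(0,C_0)}$ and $t_-=t_+=t_0$, gives $\bar P_{i,\eps}(t_0,\cdot,\cdot)\ge c(C_0)$ there for $\eps<\eps_0(C_0)$. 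Integrating over $\widetilde E$ yields the first term $c(C_0)\eps^{-3d/2}\operatorname{Leb}(E)$, which proves \eqref{eq:scaled-local-minorization}. Since both wells use the same finitely many constants, they can be chosen simultaneously.

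For the consequence, take $C_0=1$ and $E=\mathsf A^i_\eps=B(x_i,\eps)\subset B(x_i,\sqrt\eps)$. Every $z\in B(x_i,\eps^\varrho)$ lies in $B(x_i,\sqrt\eps)$ because $\varrho>1/2$. Then $\operatorname{Leb}(E)=\omega_{3d}\eps^{3d}$, so the lower bound is $c\,\omega_{3d}\eps^{3d/2}-e^{-c_0/\eps}\ge \tfrac c2\omega_{3d}\eps^{3d/2}$ for small $\eps$.

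I expect the main obstacle to be a bookkeeping mismatch between pointwise density convergence and the exact equality of laws on the non-exit event. The inequality above uses only that the true process dominates the cutoff process on that event, which holds by pathwise uniqueness up to the common exit time. The Freidlin--Wentzell choice of the starting compact set is routine.
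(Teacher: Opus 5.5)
Your proposal is correct and follows the paper's proof essentially step for step. Both arguments use positivity of the OU kernel via the Kalman rank condition, uniform convergence of the cutoff kernel from Lemma~\ref{lem:rescaled-well-kernel-convergence}, pathwise agreement of the cutoff and true processes up to exit from $B(x_i,r_0)$, and Freidlin--Wentzell tracking to make the exit probability $e^{-c_0/\eps}$. Your use of a fixed compact starting ball, kept inside a $V_i$-sublevel set, is if anything slightly more careful than the paper's phrasing. The only adjustment: when invoking the kernel-convergence lemma, take an interval $[t_-,t_+]\ni t_0$ with $t_-<t_+$ rather than $t_-=t_+$.
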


\begin{proof}
Fix $i\in\{m,s\}$.
The covariance at time $t_0$ of the OU process
\eqref{eq:well-ou-process} is
\[
\Gamma_{i,t_0}=\int_0^{t_0}e^{sA_i}GG^\top e^{sA_i^\top}\,ds,
\]
where $G=\left(0,0,\sqrt{2\gamma/L}\,I_d\right)^\top$.  The rank identity
\eqref{eq:well-ou-kalman-rank} is the Kalman controllability condition, and
therefore $\Gamma_{i,t_0}>0$ for every $t_0>0$.
Hence,
	$P_i^{\rm OU}(t_0,x,y)$ is strictly positive, and its infimum for
	$|x|,|y|\le C_0$ is positive.  The uniform kernel convergence in
	Lemma~\ref{lem:rescaled-well-kernel-convergence} implies that
	$\bar P_{i,\eps}(t_0,x,y)\ge c(C_0)$ for $|x|,|y|\le C_0$.  Write
	$\Pbb_z^{i,\eps,\mathrm{cut}}$ for the law, in the original variables, of
	the corresponding cutoff process started from $z$.  For
$z=x_i+\sqrt\eps x$ and
\[
E_\eps:=\left\{y\in\R^{3d}:x_i+\sqrt\eps y\in E\right\},
\]
we have $|x|\le C_0$, $E_\eps\subset B(0,C_0)$, and
\[
\Pbb_z^{i,\eps,\mathrm{cut}}(Z_{t_0}^\eps\in E)
=\int_{E_\eps}\bar P_{i,\eps}(t_0,x,y)\,dy
\ge c(C_0)\operatorname{Leb}(E_\eps)
=c(C_0)\eps^{-3d/2}\operatorname{Leb}(E).
\]
The cutoff and true processes coincide up to
$\tau_{\partial B(x_i,r_0)}$.  The deterministic flow from the starting set
stays in $B(x_i,r_0/2)$ on $[0,t_0]$, so
Lemma~\ref{lem:uniform-fw-tracking} implies that
\[
\sup_{z\in B(x_i,C_0\sqrt\eps)}
\Pbb_z\left(\tau_{\partial B(x_i,r_0)}\le t_0\right)
\le e^{-c_0/\eps}.
\]
Consequently,
\[
\Pbb_z\left(Z_{t_0}^\eps\in E,
t_0<\tau_{\partial B(x_i,r_0)}\right)
\ge
c(C_0)\eps^{-3d/2}\operatorname{Leb}(E)-e^{-c_0/\eps},
\]
which is \eqref{eq:scaled-local-minorization}.  If
$z\in B(x_i,\eps^\varrho)$ with $\varrho>1/2$, then
$|(z-x_i)/\sqrt\eps|\le\eps^{\varrho-1/2}\le1$ for small $\eps$.
Taking $E=\mathsf A_\eps^i$ and using
\[
\operatorname{Leb}(\mathsf A_\eps^i)=\operatorname{Leb}(B_1)\eps^{3d}
\]
in \eqref{eq:scaled-local-minorization}, and then absorbing
$e^{-c_0/\eps}=o(\eps^{3d/2})$, gives the final lower bound
$c\eps^{3d/2}$ uniformly on $B(x_i,\eps^\varrho)$.  Taking the minimum of the
two well constants makes the estimate simultaneous in $i$.
\end{proof}

\subsection{Pigato-Type Density Bounds and Lee--Ramil--Seo Committor Localization}
\label{app:pigato-lrs}

Recall from Subsection~\ref{sec:notation} the components $W_m,W_s$, the
momentum flip $\Theta_*$, and the killed domains $D_\eps^m,D_\eps^s$.
The stopped time-reversal and committor-localization mechanism culminating in
Proposition~\ref{prop:third-order-lrs-committor-localization} adapts
the underdamped argument of Lee--Ramil--Seo \cite{LeeRamilSeo2026} to the
third-order chain $r\to p\to\theta$, with related trajectory estimates developed
in \cite{LeeRamilSeo2023Cutoff}; the required localized density estimate is
supplied by the chain estimates of \cite{Pigato2022}.

We first record the momentum-reversal relation between the
forward and adjoint dynamics.  It transfers estimates for the forward
committor to the adjoint committor and preserves the two Hamiltonian
sublevel components.

\begin{proposition}[Hamiltonian invariance and momentum flip]
\label{prop:adjoint-lyapunov-dissipation}
$H\circ\Theta_*=H$, $\Theta_*W_i=W_i$, and the forward/adjoint committors satisfy
$h_\eps^*(z)=h_\eps(\Theta_*z)$.
\end{proposition}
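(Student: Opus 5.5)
The proof is a direct computation; the one non-algebraic point is to identify the adjoint diffusion with the flipped forward diffusion \emph{in law}. I would first dispose of the geometric claims. The identity $H\circ\Theta_*=H$ is immediate from \eqref{eq:Hamiltonian}, since $H$ depends on $p$ only through $|p|^2$. Hence $\Theta_*$ is a continuous involution preserving $W=\{H<U(\sigma)\}$, so it permutes the connected components of $W$. Since $\Theta_*x_i=x_i$, the component $W_i$ containing $x_i$ is mapped to a component containing $x_i$, that is, to itself, so $\Theta_*W_i=W_i$. For the same reason $\Theta_*A_\eps=A_\eps$ and $\Theta_*S_\eps=S_\eps$: both are Euclidean balls centred at points with $p=0$, and $\Theta_*$ is a Euclidean isometry fixing those centres.

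Next I would compute $\calL_\eps^*$ explicitly. Split \eqref{eq:generator} as $\calL_\eps=\mathcal A+\mathcal S$, where
\[
\mathcal A f=p\cdot\nabla_\theta f+\left(-L^{-1}\nabla U+\gamma r\right)\cdot\nabla_p f-\gamma p\cdot\nabla_r f
\]
and $\mathcal S f=-\gamma r\cdot\nabla_r f+\frac{\gamma\eps}{L}\Delta_r f$.
\begin{itemize}
\item The vector field of $\mathcal A$ is divergence-free, since each component is independent of the variable it differentiates. It also satisfies $\mathcal A H=p\cdot\nabla U-\nabla U\cdot p+\gamma L\,r\cdot p-\gamma L\,p\cdot r=0$. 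Integration by parts therefore gives $\mathcal A^*=-\mathcal A$ in $L^2(\pi_\eps)$, for $C_c^\infty$ functions.
\item $\mathcal S$ is the reversible Ornstein--Uhlenbeck generator in $r$ for the Gaussian factor $e^{-L|r|^2/(2\eps)}$, so it is symmetric in $L^2(\pi_\eps)$.
\end{itemize}
Thus $\calL_\eps^*=-\mathcal A+\mathcal S$.

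Under the flip $p\mapsto-p$, each of the four terms of $\mathcal A$ changes sign: $p\cdot\nabla_\theta$, $-L^{-1}\nabla U\cdot\nabla_p$, $\gamma r\cdot\nabla_p$ and $-\gamma p\cdot\nabla_r$ each contain exactly one factor of $p$ or $\nabla_p$. The operator $\mathcal S$ is untouched. Hence
\[
\calL_\eps^*f=\big(\calL_\eps(f\circ\Theta_*)\big)\circ\Theta_*.
\]
Equivalently, the adjoint SDE is obtained from \eqref{eq:third-order} by flipping the sign of $p$. I would check directly at the SDE level that $\Theta_*Z^\eps_t$ started from $\Theta_*z$ solves the adjoint SDE with the same Brownian motion $B$.

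The main, and only mildly delicate, step is to pass from the generator identity to equality of laws of paths. For this I would invoke pathwise uniqueness and non-explosion of both the forward and the adjoint SDEs. Pathwise uniqueness holds because the coefficients are locally Lipschitz; non-explosion is Proposition~\ref{prop:wc-regularized} with $\delta=0$, or Proposition~\ref{prop:basic-recurrence-weak-capacity-interface}. Together these give that $(\Theta_*Z^\eps_t)_{t\ge0}$ under $\Pbb_{\Theta_*z}$ has the law of $Z^{\eps,*}$ under $\Pbb^*_z$.

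Since $\Theta_*$ preserves $A_\eps$ and $S_\eps$, the hitting times correspond: $\tau^*_{A_\eps}$ and $\tau^*_{S_\eps}$ for $Z^{\eps,*}$ started at $z$ have the same joint law as $\tau_{A_\eps}$ and $\tau_{S_\eps}$ for $Z^\eps$ started at $\Theta_*z$. Therefore
\[
h_\eps^*(z)=\Pbb_{\Theta_*z}(\tau_{A_\eps}<\tau_{S_\eps})=h_\eps(\Theta_*z).
\]
The boundary conventions (value $1$ on $\overline{A_\eps}$ and $0$ on $\overline{S_\eps}$) are consistent because both closed balls are $\Theta_*$-invariant.
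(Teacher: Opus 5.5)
Your proof is correct and follows the same route as the paper: evenness of $H$ in $p$ gives $H\circ\Theta_*=H$ and $\Theta_*W_i=W_i$, conjugating $\calL_\eps$ by the momentum flip yields $\calL_\eps^*$, and $\Theta_*$-invariance of $A_\eps,S_\eps$ together with uniqueness of the diffusion gives $h_\eps^*=h_\eps\circ\Theta_*$. You add detail the paper leaves implicit: you compute $\calL_\eps^*$ explicitly through the antisymmetric/symmetric splitting, and you pass from the generator identity to equality of laws by checking the SDE directly with pathwise uniqueness and non-explosion.
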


\begin{proof}
$H$ is even in $(p,r)$, so $H\circ\Theta_*=H$ and $\Theta_*$ preserves each
component of $W$.  
By conjugating the forward generator by $\Theta_*$,  flipping the
$p$-transport and leaving the $r$-diffusion, we get
\[
\calL_\eps^* f(z)=\calL_\eps(f\circ\Theta_*)(\Theta_*z).
\]
Since $A_\eps,S_\eps$
are $\Theta_*$-invariant, uniqueness gives $h_\eps^*=h_\eps\circ\Theta_*$.
\end{proof}

The stopped time-reversal argument uses an auxiliary dynamics
whose $r$-friction has the opposite sign.  Its positive divergence produces
the exponential Jacobian factor in the subsequent Feynman--Kac estimate.

\begin{lemma}[Anti-dissipative generator and Jacobian exponent]
\label{lem:anti-dissipative-generator-jacobian}
The auxiliary process
\begin{align*}
&d\widetilde\theta=-\widetilde p\,dt,
\\
&d\widetilde p=(L^{-1}\nabla U(\widetilde\theta)-\gamma\widetilde r)\,dt,
\\
&d\widetilde r=(\gamma\widetilde p+\gamma\widetilde r)\,dt+\sqrt{2\gamma\eps/L}\,dB_{t},
\end{align*}
which we denote by
$\widetilde Z_t=(\widetilde\theta_t,\widetilde p_t,\widetilde r_t)$,
has a unique non-explosive strong solution.  Its drift term $\widetilde b$
given by
\[
\widetilde b(\theta,p,r)
=\left(-p,\ L^{-1}\nabla U(\theta)-\gamma r,\ \gamma p+\gamma r\right),
\]
and $\operatorname{div}\widetilde b=\gamma d$.
\end{lemma}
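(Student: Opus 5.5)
The proof splits into two independent parts: the divergence identity, which is a direct computation, and well-posedness, which we get from a Lyapunov function built on the Hamiltonian $H$.

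\emph{Divergence.} Differentiate each block of $\widetilde b$ in its own variable. The $\theta$-component $-p$ does not depend on $\theta$. The $p$-component $L^{-1}\nabla U(\theta)-\gamma r$ does not depend on $p$. The $r$-component $\gamma p+\gamma r$ has $\nabla_r\cdot(\gamma r)=\gamma d$. Adding these gives $\operatorname{div}\widetilde b=\gamma d$. This is the constant Jacobian rate that the later Feynman--Kac estimate will use.

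\emph{Local strong solution.} Since $U\in C^\infty$, the drift is locally Lipschitz, and the noise is additive with constant coefficient. The standard localization argument therefore gives a unique strong solution up to the explosion time $\zeta=\lim_{R\to\infty}\tau_R$, where $\tau_R$ is the exit time from $B(0,R)$.

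\emph{Non-explosion.} Let $\widetilde{\calL}$ denote the generator of $\widetilde Z$ and take $S=1+H$, which controls $|z|^2$ by (WC1), as in the proof of Proposition~\ref{prop:wc-regularized}.
\begin{itemize}
\item The Hamiltonian transport part $(-p,\,L^{-1}\nabla U,\,\cdot)$ annihilates $H$: indeed $-p\cdot\nabla U+L^{-1}\nabla U\cdot Lp=0$.
\item The coupling terms cancel: $-\gamma r\cdot Lp+\gamma p\cdot Lr=0$.
\item What remains is
\[
\widetilde{\calL}H=\gamma L|r|^2+\gamma d\eps\le 2\gamma S+\gamma d\eps .
\]
\end{itemize}
So $\widetilde{\calL}S\le CS$. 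The anti-dissipation makes this growth linear rather than dissipative, but linear growth is enough to rule out explosion. Apply It\^o's formula to $e^{-Ct}S(\widetilde Z_{t\wedge\tau_R})$. This gives $\E[S(\widetilde Z_{t\wedge\tau_R})]\le e^{Ct}S(z)$. Since $S\ge c R^2$ on $\partial B(0,R)$, it follows that $\Pbb(\tau_R\le t)\le e^{Ct}S(z)/(cR^2)\to0$ as $R\to\infty$. Hence $\zeta=\infty$ almost surely.

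The only point needing care is the cancellation check in $\widetilde{\calL}H$, specifically that the sign-flipped transport still preserves $H$. Once that holds, the argument is routine.
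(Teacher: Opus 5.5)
Your proof is correct and follows the paper's argument. You use the same cancellation, giving $\widetilde{\calL}_\eps H=\gamma L|r|^2+\gamma d\eps\le 2\gamma H+\gamma d\eps$, then It\^o's formula with a Gr\"onwall-type bound to exclude explosion, and you read the divergence off the $\gamma r$ term. The only cosmetic difference is that you stop at Euclidean balls, which uses the quadratic lower bound on $U$ from \textup{(WC1)}, whereas the paper stops on Hamiltonian sublevel sets, which needs only compactness of the sublevels of $H$.
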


\begin{proof}
Local existence and pathwise uniqueness hold because the coefficients are
locally Lipschitz.  A direct computation gives
\[
\widetilde\calL_\eps H
=\gamma L|r|^2+\gamma d\eps
\le 2\gamma H+\gamma d\eps,
\]
where $U\ge0$.  It\^o's formula, stopped on Hamiltonian sublevels, followed
by Gr\"onwall's inequality and monotone convergence excludes explosion.
Only $\gamma r$ contributes to the divergence, giving $\gamma d$.  The
Lebesgue Jacobian of the
auxiliary flow over time $t$ is $e^{\gamma d\,t}$.
\end{proof}

Conjugating the adjoint generator by the Gibbs weight identifies
the auxiliary generator plus a constant potential.  Stopping the resulting
Feynman--Kac functional at the well boundary gives a pointwise committor
bound.

\begin{lemma}[Stopped $h$-transform supersolution estimate]
\label{lem:lrs-stopped-time-reversal}
For the auxiliary process $\widetilde Z$ of
Lemma~\ref{lem:anti-dissipative-generator-jacobian}, write
\[
\widetilde\tau_D:=\inf\left\{t\ge0:\widetilde Z_t\in D\right\},
\qquad
\widetilde\E_z[\cdot]:=\E\left[\,\cdot\mid\widetilde Z_0=z\right].
\]
With
\[
\widetilde\zeta_\eps^m=\widetilde\tau_{\partial W_m}\wedge\widetilde\tau_{A_\eps},
\qquad
\widetilde\zeta_\eps^s=\widetilde\tau_{\partial W_s}\wedge\widetilde\tau_{S_\eps},
\]
for $z\in W_m$,
\[
1-h_\eps(z)\le
e^{(H(z)-U(\sigma))/\eps}
\widetilde\E_{\Theta_*z}\left[e^{\gamma d\,\widetilde\zeta_\eps^m}\right],
\]
and for $z\in W_s$,
\[
h_\eps(z)\le
e^{(H(z)-U(\sigma))/\eps}
\widetilde\E_{\Theta_*z}\left[e^{\gamma d\,\widetilde\zeta_\eps^s}\right].
\]
\end{lemma}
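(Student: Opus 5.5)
The plan is to compare the exit probability with an explicit supersolution built from the Gibbs weight and the auxiliary process of Lemma~\ref{lem:anti-dissipative-generator-jacobian}, and then conclude by the maximum principle (optional stopping). Consider $z\in W_m$; the $W_s$ case is identical after exchanging the roles of $A_\eps$ and $S_\eps$.

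First I reduce to an exit problem. Since $\overline{S_\eps}\subset W_s$ is disjoint from $W_m$, any path from $z$ reaching $S_\eps$ before $A_\eps$ must first hit $\partial W_m$. Hence
\[
1-h_\eps(z)\le u(z):=\Pbb_z\left(\tau_{\partial W_m}<\tau_{A_\eps}\right).
\]
The function $u$ is $\calL_\eps$-harmonic in $D_\eps^m=W_m\setminus\overline{A_\eps}$, equals $1$ on $\partial W_m$ and equals $0$ on $\partial A_\eps$.

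Second, I compute the Gibbs conjugation. Write $\calL_\eps^*=b^*\cdot\nabla+\frac{\gamma\eps}{L}\Delta_r$ for the $L^2(\pi_\eps)$-adjoint. The defining relation $\int g\,\calL_\eps f\,e^{-H/\eps}=\int f\,\calL_\eps^*g\,e^{-H/\eps}$ says that the Lebesgue formal adjoint of $\calL_\eps$, applied to $e^{-H/\eps}g$, equals $e^{-H/\eps}\calL_\eps^*g$. Dualizing this once more gives
\[
e^{-H/\eps}\,\calL_\eps\!\left(e^{H/\eps}\varphi\right)
=(\calL_\eps^*)^{\dagger}\varphi
=-b^*\cdot\nabla\varphi-(\operatorname{div}b^*)\varphi+\frac{\gamma\eps}{L}\Delta_r\varphi .
\]
By Proposition~\ref{prop:adjoint-lyapunov-dissipation}, $\calL_\eps^*$ is $\calL_\eps$ conjugated by $\Theta_*$. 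Checking the components then gives
\[
e^{-H/\eps}\calL_\eps\!\left(e^{H/\eps}\,\psi\circ\Theta_*\right)
=\left[(\widetilde\calL_\eps+\gamma d)\psi\right]\circ\Theta_* ,
\]
where $\widetilde\calL_\eps$ is the generator of $\widetilde Z$ and the constant $\gamma d=\operatorname{div}\widetilde b$ comes from Lemma~\ref{lem:anti-dissipative-generator-jacobian}. I will verify this identity term by term, since the signs of the $p$- and $r$-transport are exactly what produce $\widetilde b$.

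Third, I build the supersolution. Set $w(y):=\widetilde\E_y[e^{\gamma d\,\widetilde\zeta_\eps^m}]$ for $y\in\Theta_*D_\eps^m$; if $w(\Theta_*z)=\infty$ there is nothing to prove. By Feynman--Kac, $w$ solves $(\widetilde\calL_\eps+\gamma d)w=0$ in $\Theta_*D_\eps^m$ with $w=1$ on the boundary. Here I use $\Theta_*W_m=W_m$ and $\Theta_*A_\eps=A_\eps$. Define
\[
g(z):=e^{(H(z)-U(\sigma))/\eps}\,w(\Theta_*z).
\]
The conjugation identity gives $\calL_\eps g=0$ in $D_\eps^m$. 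On $\partial W_m$ we have $H=U(\sigma)$ and $w\ge1$, so $g\ge1=u$. On $\partial A_\eps$ we have $g\ge0=u$. Applying optional stopping to $g(Z_{t\wedge\tau})$, with $\tau=\tau_{\partial W_m}\wedge\tau_{A_\eps}$, then gives $u\le g$, which is the claim.

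The main obstacle is rigor at the boundary. Both $u$ and $w$ are only Dirichlet solutions with characteristic boundary points, and $w$ may be unbounded. I would avoid invoking PDE regularity directly and work with truncations, defining
\[
w_T(y):=\widetilde\E_y\!\left[e^{\gamma d(\widetilde\zeta_\eps^m\wedge T)}\right],
\]
together with stopping at exits from compact interior subdomains $D_k\uparrow D_\eps^m$. For these, the duality is a finite-time path-space statement: the killed forward semigroup on $D_k$, conjugated by $e^{H/\eps}$ and $\Theta_*$, equals the killed auxiliary semigroup weighted by $e^{\gamma d t}$. This is the stopped time-reversal, and it is where the Jacobian $e^{\gamma d t}$ of the auxiliary flow enters. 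It yields the inequality with $w_T$ and $D_k$, and letting $k\to\infty$ and then $T\to\infty$ by monotone convergence gives the stated bound. Hypoellipticity (H\"ormander) supplies interior smoothness where Itô's formula is applied; the boundary is reached only through hitting times, so no boundary regularity is needed.
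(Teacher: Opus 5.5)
Your proof is the paper's in outline. You reduce to $1-h_\eps(z)\le\Pbb_z(\tau_{\partial W_m}<\tau_{A_\eps})$, use the conjugation $\calL_\eps(e^{H/\eps}\psi\circ\Theta_*)=e^{H/\eps}[(\widetilde\calL_\eps+\gamma d)\psi]\circ\Theta_*$, and apply optional stopping against the Feynman--Kac gauge. Your double-duality derivation of the conjugation is correct and equivalent to the paper's direct computation of $\calL_\eps^*(e^{H/\eps}v)$.

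The step that does not work as written is the truncation. The static function $w_T(y)=\widetilde\E_y[e^{\gamma d(\widetilde\zeta_\eps^m\wedge T)}]$ does not solve $(\widetilde\calL_\eps+\gamma d)w_T=0$. The Markov property gives $(\widetilde\calL_\eps+\gamma d)w_T=\partial_Tw_T=\gamma d\,e^{\gamma dT}\,\widetilde\E_y[\mathbf 1\{\widetilde\zeta_\eps^m>T\}]\ge0$. Hence $e^{H/\eps}\,w_T\circ\Theta_*$ is $\calL_\eps$-subharmonic. Optional stopping then bounds it from above by its boundary average, which is the wrong direction for an upper bound on $1-h_\eps$.

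The paper lets the horizon run down with time instead. The function $V_{m,T}(t,y)=\widetilde\E_y[e^{\gamma d(\widetilde\zeta_\eps^m\wedge(T-t))}]$ solves $\partial_tV_{m,T}+\widetilde\calL_\eps V_{m,T}+\gamma d\,V_{m,T}=0$ and is bounded by $e^{\gamma dT}$. So $G_{m,T}(t,z)=e^{(H(z)-U(\sigma))/\eps}V_{m,T}(t,\Theta_*z)$ is space--time harmonic for $\partial_t+\calL_\eps$. Optional stopping at $\tau_{\partial W_m}\wedge\tau_{A_\eps}\wedge T$ gives $\Pbb_z(\tau_{\partial W_m}<\tau_{A_\eps},\ \tau_{\partial W_m}\le T)\le G_{m,T}(0,z)=e^{(H(z)-U(\sigma))/\eps}w_T(\Theta_*z)$. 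Monotone convergence in $T$ then finishes.

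So the bound with $w_T$ that you announce is true, but it is the $t=0$ slice of this space--time harmonic function. It does not come from the killed-kernel duality of Lemma~\ref{lem:third-order-absorbed-density-duality}, which the paper uses only later, for the exponential-moment bound. Working with the bounded $V_{m,T}$ also makes finiteness of $w$ irrelevant.

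Finally, if you stop at exits from interior domains $D_k$, do not compare on $\partial D_k$: there $H<U(\sigma)$, so $g\ge1$ fails. Pass $k\to\infty$ by Fatou instead. Since $V_{m,T}\ge1$, you have $G_{m,T}\ge e^{(H-U(\sigma))/\eps}\to1$ along paths exiting through $\partial W_m$, and this needs no boundary regularity.
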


\begin{proof}
For $g=e^{H/\eps}v$, direct differentiation implies that
\[
\nabla_r g=e^{H/\eps}\left(\nabla_rv+\frac{Lr}{\eps}v\right),
\qquad
\Delta_r g=e^{H/\eps}\left(
\Delta_rv+\frac{2Lr}{\eps}\cdot\nabla_rv
+\frac{Ld}{\eps}v+\frac{L^2|r|^2}{\eps^2}v\right).
\]
Substituting these identities, together with the analogous first-derivative
formulas in $\theta$ and $p$, into $\calL_\eps^*g$ yields
\begin{align*}
e^{-H/\eps}\calL_\eps^*(e^{H/\eps}v)
&=-p\cdot\nabla_\theta v
+(L^{-1}\nabla U-\gamma r)\cdot\nabla_pv \\
&\qquad\qquad +(\gamma p+\gamma r)\cdot\nabla_rv
+\frac{\gamma\eps}{L}\Delta_rv+\gamma d\,v.
\end{align*}
Indeed, the diffusion cross term $2\gamma r\cdot\nabla_rv$ changes the adjoint
$r$-drift $\gamma p-\gamma r$ into $\gamma p+\gamma r$, while the zero-order
terms satisfy
\[
-\frac{\gamma L}{\eps}|r|^2v
+\frac{\gamma\eps}{L}
\left(\frac{Ld}{\eps}+\frac{L^2|r|^2}{\eps^2}\right)v
=\gamma d\,v.
\]
Thus
\[
\calL_\eps^*\left(e^{H/\eps}v\right)
=e^{H/\eps}\left(\widetilde\calL_\eps v+\gamma d\,v\right).
\]
With
$\calL_\eps(f\circ\Theta_*)=(\calL_\eps^*f)\circ\Theta_*$, the truncated
Feynman--Kac functional
$V_{m,T}(t,y)=\widetilde\E_y e^{\lambda(\widetilde\zeta_\eps^m\wedge(T-t))}$,
$\lambda=\gamma d$, solves
\[
\partial_tV_{m,T}+\widetilde\calL_\eps V_{m,T}+\lambda V_{m,T}=0,
\]
so
$G_{m,T}(t,z)=e^{(H-U(\sigma))/\eps}V_{m,T}(t,\Theta_*z)$
satisfies
\[
(\partial_t+\calL_\eps)G_{m,T}=0,
\]
i.e.\ it is space--time harmonic for $\partial_t+\calL_\eps$.

Optional stopping at $\tau_{\partial W_m}\wedge\tau_{A_\eps}\wedge(T-t)$ and
$G_{m,T}=1$ on the $\{\tau_{\partial W_m}<\tau_{A_\eps}\}$ boundary give
$\Pbb_z(\tau_{\partial W_m}<\tau_{A_\eps})\le G_{m,T}$; let $T\to\infty$.
Since
\[
1-h_\eps(z)=\Pbb_z(\tau_{S_\eps}<\tau_{A_\eps})\le\Pbb_z(\tau_{\partial W_m}<\tau_{A_\eps}),
\]
the $W_m$ bound follows; $W_s$ is identical.  The auxiliary moment is finite by
Proposition~\ref{prop:third-order-lrs-exponential-stopping}.
\end{proof}

To estimate the auxiliary stopping-time moment, we relate its
killed transition density to that of the original dissipative process.  The
factor $e^{\gamma dt}$ is exactly the Jacobian exponent computed above.

\begin{lemma}[Third-order absorbed density duality]
\label{lem:third-order-absorbed-density-duality}
For the open sets
$D=W_m\setminus\overline{A_\eps}$ or
$W_s\setminus\overline{S_\eps}$, the killed densities of
the original and auxiliary processes satisfy
$p_t^{D,\eps}(z,y)=e^{\gamma d t}\widetilde p_t^{D,\eps}(y,z)$.
\end{lemma}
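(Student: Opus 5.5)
The plan is to prove the duality at the level of generators and then transfer it to the killed semigroups by uniqueness. Let $\mathsf L^{\rm Leb}_\eps$ denote the formal adjoint of $\calL_\eps$ with respect to Lebesgue measure, rather than $\pi_\eps$. Its action on a smooth $g$ is
\[
\mathsf L^{\rm Leb}_\eps g=-\nabla\cdot(b\,g)+\frac{\gamma\eps}{L}\Delta_r g ,
\]
where $b$ is the drift of \eqref{eq:third-order}. Since $\operatorname{div}b=-\gamma d$ (only the $-\gamma r$ term in the $r$-equation contributes), this becomes
\[
\mathsf L^{\rm Leb}_\eps g=-b\cdot\nabla g+\gamma d\,g+\frac{\gamma\eps}{L}\Delta_r g .
\]
By Lemma~\ref{lem:anti-dissipative-generator-jacobian}, $-b=\widetilde b$, so $\mathsf L^{\rm Leb}_\eps=\widetilde\calL_\eps+\gamma d$. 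The forward Kolmogorov equation in the terminal variable $y$ for $p_t^{D,\eps}(z,y)$ is therefore the backward equation of $\widetilde Z$, with the constant potential $\gamma d$ added.

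The key steps are as follows. First, for $\varphi,\psi\in C_c^\infty(D)$, set
\[
u(t,z)=\E_z[\varphi(Z_t);t<\tau_{D^c}],
\qquad
\widetilde u(t,y)=e^{\gamma dt}\,\widetilde\E_y[\psi(\widetilde Z_t);t<\widetilde\tau_{D^c}] .
\]
These are the killed semigroups, where $\tau_{D^c}$ denotes the exit time from the bounded open set $D$; both $W_m,W_s$ are bounded since $H$ has compact sublevels. Second, show the bilinear duality
\[
\int_D\psi\,u(t,\cdot)\,dz=\int_D\varphi\,\widetilde u(t,\cdot)\,dy .
\]
To do this, differentiate $s\mapsto\int \widetilde u(t-s,\cdot)\,u(s,\cdot)\,dz$ and use $\partial_s u=\calL_\eps u$, $\partial_s\widetilde u=(\widetilde\calL_\eps+\gamma d)\widetilde u$ together with the Lebesgue-adjointness identity above. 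Both functions vanish on the regular part of $\partial D$, so the boundary terms in the integration by parts disappear. Third, conclude $p_t^{D,\eps}(z,y)=e^{\gamma dt}\widetilde p_t^{D,\eps}(y,z)$ for a.e.\ $(z,y)$, and upgrade this to everywhere using continuity of the killed densities in the interior. Hypoellipticity of both generators (same H\"ormander brackets, as in Lemma~\ref{lem:uniform-local-harnack-holder}) gives smooth kernels on $(0,\infty)\times D\times D$.

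The main obstacle is the boundary of $D$. It contains characteristic points, where the diffusion direction is tangent to $\partial W_i$ or $\partial A_\eps$, so $u,\widetilde u$ need not be $C^1$ up to the boundary and the integration by parts is not directly justified. I would avoid boundary regularity by approximation. Exhaust $D$ by smooth domains $D_n\uparrow D$, and replace the degenerate generators by the elliptic regularizations $\calL_{\eps,\delta}$ of Appendix~\ref{app:weak-capacity-verification}, modified analogously for the auxiliary process so that the Lebesgue adjoint of one is the other plus $\gamma d$. These carry a uniformly elliptic zero-order-free perturbation $\delta(\eps\Delta_\theta+\eps\Delta_p+\cdots)$; its drift part must be matched, and its divergence adds to the constant, which should be tracked or chosen divergence-free. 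For these operators classical Dirichlet Green functions exist and the duality holds exactly. Then pass $\delta\downarrow0$ and $n\to\infty$ at the level of the bilinear identity, using the non-explosion and local-moment bounds of Proposition~\ref{prop:wc-regularized} and the a.s.\ convergence of exit times. The exit times converge except on the null set of paths touching $\partial D$ without exiting, which is excluded by the regularity of the boundary points coming from hypoellipticity. The monotone convergence of killed semigroups under exhaustion makes the $n\to\infty$ limit straightforward; the delicate part is the $\delta\downarrow0$ limit of exit times. An alternative that avoids regularization is a pathwise time-reversal argument: reverse $Z$ on $[0,t]$ under Lebesgue initial measure (Haussmann--Pardoux), which yields exactly the drift $\widetilde b$ with Jacobian weight $e^{\gamma dt}$, killing being preserved under reversal. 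I would try that first if the regularization bookkeeping becomes heavy.
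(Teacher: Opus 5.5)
Your argument is viable, but it reaches the killed duality on $D_n$ by a different route from the paper.

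\textbf{What is shared.} Both proofs start from the same generator identity: the Lebesgue adjoint of $\calL_\eps$ is $\widetilde\calL_\eps+\gamma d$, since $\operatorname{div}b=-\gamma d$ and $\widetilde b=-b$. Both exhaust $D$ by smooth $D_n$ with $\overline{D_n}\subset D_{n+1}$. Both finish with interior hypoellipticity to pass from a.e.\ to pointwise equality.

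\textbf{What differs.} The paper keeps the degenerate operators. It replaces $\nabla U$ by a globally Lipschitz extension near $\overline{D_n}$ and kills both cutoff processes with the \emph{same} smooth potential $k_{n,j}$. The penalized semigroups then satisfy the duality by a boundary-free whole-space integration by parts, and one lets $j\to\infty$. You instead keep a sharp Dirichlet boundary and make the operators uniformly elliptic, so exactness at level $\delta$ comes from classical Dirichlet heat-kernel theory. The cost is building a matching regularization of $\widetilde Z$ and an SDE-stability argument as $\delta\downarrow0$.

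\textbf{Bookkeeping.} The cleanest choice is to add only $\delta\eps(\Delta_\theta+\Delta_p)$ to both generators. This is divergence free and keeps the potential exactly $\gamma d$. With the drift terms of the appendix regularization, the potential becomes $\gamma d+\delta(\Delta U+Ld)$. That is harmless on bounded $D_n$, but it must be carried as a Feynman--Kac weight. Proposition~\ref{prop:wc-regularized} does not cover your anti-dissipative auxiliary regularization. Since everything is stopped at exit from the bounded $D_n$, coefficient localization suffices.

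\textbf{Time-reversal alternative.} This is also consistent. From Lebesgue initial measure the time-$s$ marginal is $e^{\gamma ds}$, which is spatially constant, so the reversed drift is exactly $\widetilde b$. The event $\{t<\tau_{D^c}\}$ is invariant under reversing the path on $[0,t]$.

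\textbf{The faulty step.} One justification is wrong as stated. Hypoellipticity is an interior property and does not make the points of $\partial D_n$ regular. At characteristic points, where $\partial_r$ is tangent to $\partial D_n$ and the drift points inward, the process started there does not immediately enter $\overline{D_n}^{\,c}$. So you cannot discard paths that touch $\partial D_n$ without exiting on that ground.

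\textbf{You do not need it.} Under the coupling $Z^\delta\to Z$ uniformly on $[0,t]$:
\begin{itemize}
\item If $t<\tau_{D_n^c}$, then the compact path $Z([0,t])\subset D_n$ gives $t<\tau^\delta_{D_n^c}$ for small $\delta$.
\item If $t<\tau^\delta_{D_n^c}$ along a sequence $\delta\downarrow0$, then $t\le\tau_{\overline{D_n}^{\,c}}\le\tau_{D_{n+1}^c}<\tau_{D_{n+2}^c}$. The last inequality is strict because $Z$ at time $\tau_{D_{n+1}^c}$ lies in $\partial D_{n+1}\subset D_{n+2}$.
\end{itemize}
Apply Fatou's lemma and its reverse form to the two sides of the $\delta$-level identity. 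For nonnegative $\varphi,\psi\in C_c^\infty(D_n)$ this gives
\[
\int\psi\,P^{D_n}_t\varphi\,dz\le e^{\gamma dt}\int\varphi\,\widetilde P^{D_{n+2}}_t\psi\,dy,
\qquad
e^{\gamma dt}\int\varphi\,\widetilde P^{D_n}_t\psi\,dy\le\int\psi\,P^{D_{n+2}}_t\varphi\,dz .
\]
Monotone convergence as $n\to\infty$ then gives equality on $D$, with no boundary regularity needed. The same sandwich is what makes the paper's limit $j\to\infty$ insensitive to behaviour at characteristic boundary points.
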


\begin{proof}
The original drift $b$ has $\operatorname{div}b=-\gamma d$ and
$\widetilde b=-b$, so the Lebesgue adjoint is
$\calL_\eps^\dagger=\widetilde\calL_\eps+\gamma d$ on
$C_c^\infty(D)$.  To pass from this interior identity to killed semigroups,
choose smooth open sets $D_n$ with compact closures in $D$ such that
$\overline{D_n}\subset D_{n+1}$ and $\bigcup_nD_n=D$.

We localize the coefficients before using whole-space semigroups, in the same
spirit as the regularized absorbed-kernel argument of
\cite[Section~8]{LeeRamilSeo2026}.  Extend $\nabla U$ from a neighborhood of
the configuration projection of $\overline{D_n}$ to a smooth map $F_n$ with
at most linear growth and bounded derivatives of every positive order.  Set
\[
b_n(\theta,p,r)
=\left(p,-L^{-1}F_n(\theta)+\gamma r,-\gamma p-\gamma r\right),
\qquad
\widetilde b_n=-b_n.
\]
These globally Lipschitz chain-compatible drifts agree with $b,\widetilde b$
on a neighborhood of $\overline{D_n}$ and satisfy
\[
\operatorname{div}b_n=-\gamma d,
\qquad
\operatorname{div}\widetilde b_n=\gamma d.
\]
Consequently, their diffusions are non-explosive and their generators obey
$\calL_{\eps,n}^\dagger=\widetilde\calL_{\eps,n}+\gamma d$ on the whole
space.

The minimal semigroup on $D_n$ is obtained from the whole-space cutoff
process by smooth killing potentials $k_{n,j}$ that vanish on increasing
interior cores and diverge on the complementary boundary layers.  Use the
same scalar killing potential for the original and auxiliary cutoff
generators, and denote the corresponding penalized semigroups by
$P_t^{(n,j)}$ and $\widetilde P_t^{(n,j)}$.  For every non-negative
$f,g\in C_c^\infty(D_n)$, the whole-space integration-by-parts identity is
\begin{equation}
\label{eq:penalized-whole-space-duality}
\int_{\R^{3d}} f P_t^{(n,j)}g\,dz
=e^{\gamma dt}
\int_{\R^{3d}} g\widetilde P_t^{(n,j)}f\,dz.
\end{equation}
Because $f$ and $g$ are supported in $D_n$, the two integrals in
\eqref{eq:penalized-whole-space-duality} may equivalently be restricted to
$D_n$.  Letting $j\to\infty$ and using the standard penalization convergence
to the minimal killed semigroups gives
\[
\int_{D_n} f P_t^{D_n}g\,dz
=e^{\gamma dt}\int_{D_n}g\widetilde P_t^{D_n}f\,dz.
\]
Here the cutoff processes can be replaced by the original and auxiliary
processes because their coefficients agree up to the exit time from $D_n$.
Finally, letting $D_n\uparrow D$ yields
\[
\int_D f P_t^Dg\,dz
=e^{\gamma dt}\int_D g\widetilde P_t^Df\,dz.
\]
This exhaustion argument applies to an arbitrary open set $D$.  Both killed semigroups have
interior smooth densities by H\"ormander hypoellipticity; the preceding identity
proves the asserted kernel equality almost everywhere, and interior continuity
gives the pointwise version on $D\times D$.
\end{proof}

The density duality reduces the auxiliary stopping estimate to
an exit-tail estimate for the original process.  Near the saddle, an exact
integrated coordinate reduces the required bottleneck estimate to a
one-dimensional Brownian survival bound.

\begin{lemma}[One-sided force on the saddle sublevel lobes]
\label{lem:saddle-lobe-force-sign}
Orient $e_\sigma$ so that the negative unstable lobe belongs to $W_m$.
There is a neighborhood $\mathcal N_\sigma$ of $z_\sigma$ such that, writing
$q_1=\langle\theta-\sigma,e_\sigma\rangle$, every
$z=(\theta,p,r)\in(W_m\cup W_s)\cap\mathcal N_\sigma$ satisfies
\[
q_1<0\ \text{ on }W_m,
\qquad
q_1>0\ \text{ on }W_s,
\]
and
\begin{equation}
\label{eq:saddle-lobe-force-sign}
q_1\,\langle\nabla U(\theta),e_\sigma\rangle<0.
\end{equation}
Moreover, in the Hessian eigenbasis,
\begin{equation}
\label{eq:saddle-lobe-cone-bound}
|q'|+|p|+|r|\le C|q_1|
\qquad\text{on }(W_m\cup W_s)\cap\mathcal N_\sigma.
\end{equation}
\end{lemma}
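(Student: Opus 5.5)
The statement is purely about the local geometry of $W=\{H<U(\sigma)\}$ near $z_\sigma$, so I would work in the Hessian eigencoordinates $(q_1,q',v,w)$ and use the Taylor expansion of Lemma~\ref{lem:saddle-hamiltonian-expansion}, now at a fixed small scale rather than the $\sqrt\eps$ scale. For $z$ in a small ball $\mathcal N_\sigma=B(z_\sigma,\rho)$ we have
\[
H(z)-U(\sigma)=-\tfrac{\lambda_\sigma}{2}q_1^2+\tfrac12\sum_{i\ge2}\lambda_i^\sigma q_i^2+\tfrac L2|p|^2+\tfrac L2|r|^2+\mathcal O(|\theta-\sigma|^3).
\]
Set $c_*=\tfrac12\min\{\lambda_i^\sigma,L\}$. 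After shrinking $\rho$ so that the cubic term is absorbed, every $z\in W\cap\mathcal N_\sigma$ satisfies
\[
c_*(|q'|^2+|p|^2+|r|^2)\le \lambda_\sigma q_1^2 .
\]
This inequality is the cone bound \eqref{eq:saddle-lobe-cone-bound}. It also shows $q_1\neq0$ on $W\cap\mathcal N_\sigma$, since otherwise the point would be $z_\sigma\notin W$.

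Consequently $W\cap\mathcal N_\sigma$ splits into the two open pieces $\{q_1<0\}$ and $\{q_1>0\}$. Each piece is connected once $\rho$ is small. A point $z$ with $q_1<0$ can be joined to the axis point $(q_1,0,0,0)$ by the straight segment that shrinks the transverse variables, and $H$ decreases along that segment up to the absorbed cubic error. Here I would use the same convexity-in-transverse-variables argument as \eqref{eq:fixed-face-segment-energy}. Then one moves along the $q_1$-axis, where $H-U(\sigma)\le-\tfrac{\lambda_\sigma}{4}q_1^2<0$. The two pieces lie in distinct components of $W$: a path inside $W$ joining them would produce a low path from $m$ to $s$, which is excluded by the communication-height assumption. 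For each fixed small $\eps$ this exclusion is given by Lemma~\ref{lem:path-exclusion-saddle-box} and Lemma~\ref{lem:box-partition-low-sublevel}, and directly by $h_{m,s}=U(\sigma)$ together with $H\ge U(\theta)$. Moreover, each piece meets $W_m$ or $W_s$, because $\sigma\in\partial\Omega_m\cap\partial\Omega_s$ by Lemma~\ref{lem:derived-double-well-geometry}, and $\{U<U(\sigma)\}\cap B(\sigma,\rho)$ lies in the cone, hence in the two pieces. Orienting $e_\sigma$ so that the negative piece meets $W_m$ gives $q_1<0$ on $W_m\cap\mathcal N_\sigma$ and $q_1>0$ on $W_s\cap\mathcal N_\sigma$. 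Both $W_m$ and $W_s$ are relevant because $W=W_m\cup W_s$, as shown in the proof of Proposition~\ref{prop:mass-asymptotic}.

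For the force sign \eqref{eq:saddle-lobe-force-sign}, expand the gradient:
\[
\langle\nabla U(\theta),e_\sigma\rangle=-\lambda_\sigma q_1+\mathcal O(|\theta-\sigma|^2).
\]
By the cone bound, $|\theta-\sigma|\le C|q_1|$ on the lobes, so the remainder is $\mathcal O(q_1^2)\le\tfrac{\lambda_\sigma}{2}|q_1|$ once $\rho$ is small. Hence $q_1\langle\nabla U,e_\sigma\rangle\le-\tfrac{\lambda_\sigma}{2}q_1^2<0$, using $q_1\ne0$.

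I expect the main obstacle to be the component identification: showing that the negative lobe is entirely contained in $W_m$ and does not merely intersect it, and that it never meets $W_s$. Local connectedness of each lobe gives full containment once one lobe point is known to lie in $W_m$. Disjointness then follows because $W_m\ne W_s$, and I would verify these as distinct components via the fixed-box communication gap, Lemma~\ref{lem:fixed-box-communication-gap}.
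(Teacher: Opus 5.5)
Your proof is correct and follows essentially the paper's argument. You Taylor-expand $H$ at $z_\sigma$ and absorb the cubic remainder to get the cone bound and $q_1\neq0$, split $W\cap\mathcal N_\sigma$ by the sign of $q_1$ with the orientation of $e_\sigma$, and expand $\langle\nabla U,e_\sigma\rangle=-\lambda_\sigma q_1+\mathcal O(q_1^2)$ using the cone bound. Your component assignment fills in a step the paper only asserts: transverse convexity gives connectedness of each lobe, together with $W=W_m\cup W_s$, $\sigma\in\partial\Omega_m\cap\partial\Omega_s$, and $W_m\neq W_s$ obtained directly from $h_{m,s}=U(\sigma)$ and $H\ge U(\theta)$; the appeals to Lemmas~\ref{lem:path-exclusion-saddle-box}, \ref{lem:box-partition-low-sublevel} and~\ref{lem:fixed-box-communication-gap} are unnecessary for it.
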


\begin{proof}
Let $q'=(q_2,\ldots,q_d)$ denote the stable configuration coordinates.  The
Taylor expansion at $z_\sigma$ implies that
\[
H-U(\sigma)
=-\frac{\lambda_\sigma}{2}q_1^2
{}+\frac12\sum_{j=2}^d\lambda_j^\sigma q_j^2
{}+\frac L2\left(|p|^2+|r|^2\right)
{}+\mathcal O(|q|^3).
\]
On $H<U(\sigma)$, after shrinking the neighborhood so that the cubic
remainder can be absorbed into the quadratic terms, this implies
\[
|q'|^2+|p|^2+|r|^2\le Cq_1^2.
\]
In particular $q_1\ne0$, since $z_\sigma$ itself belongs to the boundary
$\{H=U(\sigma)\}$.  The two signs of $q_1$ give the two local components of
the sublevel; the stated assignment follows from the orientation of
$e_\sigma$.  Finally,
\[
\langle\nabla U(\theta),e_\sigma\rangle
=-\lambda_\sigma q_1+\mathcal O(|q|^2)
=-\lambda_\sigma q_1+\mathcal O(q_1^2),
\]
where the last equality uses \eqref{eq:saddle-lobe-cone-bound}.  A further
reduction of $\mathcal N_\sigma$ proves
\eqref{eq:saddle-lobe-force-sign}.
\end{proof}

Near the saddle the deterministic drift is not uniformly separated from
zero, so the compact-sublevel entrance argument in
Lemma~\ref{lem:original-compact-entrance-well-ball} is insufficient.
The next lemma supplies a uniform polynomial-time escape estimate even for
starting points arbitrarily close to the saddle bottleneck.

\begin{lemma}[Polynomial escape from the saddle bottleneck]
\label{lem:original-saddle-bottleneck-escape}
For the neighborhood $\mathcal N_\sigma$ of
Lemma~\ref{lem:saddle-lobe-force-sign}, there is $C<\infty$
such that, for $i\in\{m,s\}$,
\[
 \sup_{z\in W_i\cap\mathcal N_\sigma}
 \Pbb_z\!\left(
 \tau_{\partial W_i}\wedge\tau_{\mathcal N_\sigma^c}>t
 \right)
 \le \frac{C}{\sqrt{\eps t}},
 \qquad t>0.
\]
In particular, for every fixed $M>1$, the probability of reaching
$\partial W_i$ or leaving $\mathcal N_\sigma$ before time $\eps^{-M}$ tends to
one uniformly over $W_i\cap\mathcal N_\sigma$.
\end{lemma}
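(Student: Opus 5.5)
The plan is to use the integrated coordinate suggested before the statement: the noise acts on $w_1=\langle r,e_\sigma\rangle$, so a suitable linear combination along the unstable triple has a direct Brownian component. Define
\[
\Xi:=\langle p,e_\sigma\rangle+\langle r,e_\sigma\rangle+\gamma\,q_1\cdot 0 \quad\text{or more precisely}\quad \Xi:=v_1+w_1+\gamma q_1 .
\]
This candidate is chosen so that its $dt$ part is a bounded function on $\mathcal N_\sigma$ and its martingale part is $\sqrt{2\gamma\eps/L}\,dB^{(1)}_t$. Indeed,
\[
d(v_1+w_1)=\bigl(-L^{-1}\langle\nabla U,e_\sigma\rangle-\gamma v_1\bigr)dt+\sqrt{2\gamma\eps/L}\,dB^{(1)},
\]
and adding $\gamma q_1$ cancels $-\gamma v_1$ because $dq_1=v_1\,dt$. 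Hence
\[
d\Xi_t=-L^{-1}\langle\nabla U(\theta_t),e_\sigma\rangle\,dt+\sqrt{2\gamma\eps/L}\,dB^{(1)}_t .
\]
By Lemma~\ref{lem:saddle-lobe-force-sign}, on $W_m\cap\mathcal N_\sigma$ we have $q_1<0$, so the drift $-L^{-1}\langle\nabla U,e_\sigma\rangle$ has the sign of $q_1$, i.e.\ it is negative there. (On $W_s$ it is positive; that case is symmetric.)

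Step one is to relate $\Xi$ to exit. Fix $i=m$. By the cone bound \eqref{eq:saddle-lobe-cone-bound}, on $W_m\cap\mathcal N_\sigma$ we have $|\Xi|\le C|q_1|\le C\,\mathrm{diam}(\mathcal N_\sigma)=:a$. Thus, as long as the process stays in $W_m\cap\mathcal N_\sigma$, $\Xi_t$ remains in the bounded interval $[-a,a]$. The survival event $\{\tau_{\partial W_m}\wedge\tau_{\mathcal N_\sigma^c}>t\}$ is therefore contained in $\{\Xi_s\in[-a,a]\ \forall s\le t\}$.

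Step two is a one-dimensional comparison. Write $\Xi_t=\Xi_0+A_t+M_t$, where $A$ is monotone nonincreasing up to the stopping time and $M_t=\sqrt{2\gamma\eps/L}\,\beta_t$ with $\beta$ a standard Brownian motion. Only the upper barrier is used. On the survival event, $\Xi_s\ge -a$ gives $A_s\ge -a-\Xi_0-M_s\ge -2a-M_s$. Since $A_s\le0$, it follows that $M_s\ge -2a-A_s+\ldots$; this mixes directions and is not decisive on its own. Instead, $\Xi_s\le a$ together with $A_s\le0$ gives no control, so the lower bound is the one that matters: $-a\le \Xi_0+A_s+M_s\le \Xi_0+M_s$. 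Hence $M_s\ge -a-\Xi_0\ge -2a$ for all $s\le t$, which is only one-sided. To obtain a two-sided confinement, I will use the drift magnitude: the drift's absolute value integrates to $|A_t|=L^{-1}\int_0^t|\langle\nabla U,e_\sigma\rangle|\,ds$. Since $\Xi$ stays in $[-a,a]$, $|A_t|\le 2a+|M_t|$, which gives $M_s\le a+|A_s|\ldots$

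The cleaner route, which I will commit to, is to choose a convex function $\phi$ adapted to the drift sign. Take $\phi(x)=(x+a)^2$; then It\^o's formula gives $d\phi=2(\Xi+a)\,dA+\tfrac{2\gamma\eps}{L}\,dt+dM'$. The $dA$ term is negative, but it is bounded by the drift bound, so the quadratic does not yield a $\sqrt{\eps t}$ rate. I therefore switch to a direct estimate. The probability that a Brownian motion $\sqrt{2\gamma\eps/L}\,\beta$ stays above $-2a$ up to time $t$ is $P(\inf_{s\le t}\beta_s\ge -2a/\sqrt{2\gamma\eps/L})\le C a/\sqrt{\eps t}$ by the reflection principle. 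This is exactly the claimed bound $C/\sqrt{\eps t}$.

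So the proof reduces to the one-sided inclusion
\[
\{\text{survival to }t\}\subset\{\inf_{s\le t}M_s\ge -2a\},
\]
which follows from $A\le0$ (the drift has the sign of $q_1<0$ on $W_m\cap\mathcal N_\sigma$), $\Xi\ge-a$, and $|\Xi_0|\le a$. For $W_s$ the signs reverse and one uses $\sup M\le 2a$. The main obstacle is verifying that $v_1+w_1+\gamma q_1$ really has a pure force drift; if instead the $\gamma r$ term in $dp$ spoils the cancellation, I would search for constants $(c_q,c_v,1)$ solving the linear cancellation of the $v_1,w_1$ coefficients. This amounts to a left null-vector computation for the matrix $\mathsf A_\sigma$ with its $\lambda$-entry removed, and the reflection argument then goes through unchanged with bounded $|\Xi|\le C|q_1|$. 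Finally, the "in particular" statement follows by taking $t=\eps^{-M}$ with $M>1$, which makes $C/\sqrt{\eps t}=C\eps^{(M-1)/2}\to0$.
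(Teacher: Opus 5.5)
Your argument is correct and is essentially the paper's proof. Your coordinate $v_1+w_1+\gamma q_1$ is exactly $\gamma$ times the paper's $\langle \Xi, e_\sigma\rangle$ with $\Xi=\theta-\sigma+\gamma^{-1}p+\gamma^{-1}r$, and the cancellation you flagged as the main obstacle does hold: the $\gamma w_1$ in $dv_1$ cancels the $-\gamma w_1$ in $dw_1$. From there the paper uses the same ingredients you commit to, namely the one-sided drift sign from Lemma~\ref{lem:saddle-lobe-force-sign}, boundedness of the coordinate on the survival event, the inclusion $\{\text{survival to }t\}\subset\{\inf_{s\le t}M_s\ge-2a\}$ (with $\sup$ and the sign reversed on $W_s$), and the reflection principle; the exploratory detours in your Step two (the two-sided attempt and the quadratic $\phi$) are unnecessary and can be deleted.
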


\begin{proof}
Introduce the exact integrated coordinate
\[
 \Xi(\theta,p,r)
 :=\theta-\sigma+\frac{1}{\gamma}p+\frac1\gamma r .
\]
The exact equations of motion give
\begin{equation}
\label{eq:integrated-saddle-coordinate}
 d\Xi_t
 =-\frac{1}{\gamma L}\nabla U(\theta_t)\,dt
   +\sqrt{\frac{2\eps}{\gamma L}}\,dB_t .
\end{equation}
Set $X_t=\langle\Xi_t,e_\sigma\rangle$ and
$\beta_t=\langle B_t,e_\sigma\rangle$.  The latter is a standard
one-dimensional Brownian motion.  Up to
\[
 \zeta_i:=\tau_{\partial W_i}\wedge\tau_{\mathcal N_\sigma^c},
\]
the drift of $X$ in \eqref{eq:integrated-saddle-coordinate} is non-positive
for $i=m$ and non-negative for $i=s$ by
Lemma~\ref{lem:saddle-lobe-force-sign}.  Moreover, $|X_t|\le C_0$ on
$\{t<\zeta_i\}$, with $C_0$ independent of $\eps$ and the starting point.
Thus, on $\{\zeta_m>t\}$,
\[
 \inf_{0\le u\le t}\beta_u
 \ge-\frac{C_1}{\sqrt\eps},
\]
while on $\{\zeta_s>t\}$,
\[
 \sup_{0\le u\le t}\beta_u
 \le\frac{C_1}{\sqrt\eps}.
\]
The reflection principle and the Brownian scaling property yield that
\[
 \Pbb\!\left(\inf_{u\le t}\beta_u\ge-a\right)
 =\Pbb\!\left(\sup_{u\le t}\beta_u\le a\right)
 \le C\min\!\left\{1,\frac{a}{\sqrt t}\right\}.
\]
Taking $a=C_1/\sqrt\eps$ proves the estimate.  Substitution of
$t=\eps^{-M}$ proves the last assertion.
\end{proof}

Away from the saddle and the energy boundary, the process starts in a compact
subset of one sublevel component.  Dissipation of the Hamiltonian then drives
it toward the corresponding well, and finite-time tracking makes this
entrance probabilistically uniform.

\begin{lemma}[Deterministic entrance from compact sublevels]
\label{lem:original-compact-entrance-well-ball}
For each $i\in\{m,s\}$, let $B_i$ be a sufficiently small ball centered at
$x_i$ such that $\overline{B_i}\subset W_i$.  For every compact
$Q^i\subset W_i$, there exist
$T_Q<\infty$, $c>0$, and $\eps_0>0$ such that
\[
\inf_{0<\eps\le\eps_0}\inf_{z\in Q^i}
\Pbb_z\!\left(\tau_{B_i}\le T_Q<\tau_{\partial W_i}\right)\ge c.
\]
\end{lemma}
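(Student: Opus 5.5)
The plan is to first settle the deterministic statement and then transfer it to the diffusion with Lemma~\ref{lem:uniform-fw-tracking}.

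\emph{Deterministic entrance.} Let $\phi_t$ be the zero-noise flow. Along it,
\[
\frac{d}{dt}H(\phi_t(z))=-\gamma L|r_t|^2\le0 .
\]
Hence $\phi_t(Q^i)$ stays in the compact set $\{H\le h_Q\}\cap\overline{W_i}$, where $h_Q:=\max_{Q^i}H<U(\sigma)$. By LaSalle's invariance principle, the $\omega$-limit set of each $z\in Q^i$ is invariant and lies in $\{r=0\}$. Invariance then forces $p=0$ from the $r$-equation and $\nabla U(\theta)=0$ from the $p$-equation. The only critical point of $H$ in $W_i\cap\{H\le h_Q\}$ is $x_i$, since $\sigma$ has energy $U(\sigma)>h_Q$. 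Therefore $\phi_t(z)\to x_i$, and the energy bound keeps the orbit a positive distance from $\partial W_i\subset\{H=U(\sigma)\}$.

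\emph{Uniform entrance time.} For each $z$ pick $t_z$ with $\phi_{t_z}(z)$ in the ball of half the radius of $B_i$. Continuity of the flow gives a neighborhood on which the same $t_z$ works. Compactness of $Q^i$ then yields finitely many times; let $T_Q$ be their maximum, and use the convention that we stop at the first such time.

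To turn this into an event at a fixed horizon, I would use that $\frac12 B_i$ is forward invariant on a suitable smaller set: take $B_i$ to be a sublevel ball of the Lyapunov form $V_i$ from Lemma~\ref{lem:local-hypocoercive-lyapunov}. Then $\phi_{T_Q}(z)\in\frac12B_i$ for every $z\in Q^i$.

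Set $\rho>0$ smaller than both the radius of $\frac12B_i$ and $\frac12\dist\big(\{H\le h_Q\}\cap\overline{W_i},\,\partial W_i\big)$. This distance is positive by compactness.

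\emph{Transfer to the diffusion.} By Lemma~\ref{lem:uniform-fw-tracking} with $K=Q^i$ and $T=T_Q$,
\[
\sup_{t\le T_Q}|Z_t^\eps-\phi_t(z)|\le\rho
\]
holds with probability at least $1-e^{-c/\eps}$, uniformly in $z\in Q^i$. On this event $Z^\eps$ never reaches $\partial W_i$ before $T_Q$, and $Z^\eps_{T_Q}\in B_i$. So $\tau_{B_i}\le T_Q<\tau_{\partial W_i}$, and the lower bound holds with $c=1/2$ for small $\eps$.

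The main obstacle is the uniform entrance time. The proof must ensure that no orbit from $Q^i$ lingers near another critical point. This holds because the only other critical point, $\sigma$ (lifted to $z_\sigma$), sits at energy $U(\sigma)$, strictly above $h_Q$, so it is excluded from the energy-bounded forward orbit.
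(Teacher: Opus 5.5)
Your proof is correct. It follows the paper's skeleton: energy dissipation, LaSalle, a uniform deterministic entrance time, and then Lemma~\ref{lem:uniform-fw-tracking} with $\rho$ below the two relevant distances. The difference lies in how $T_Q$ is obtained, and your route is somewhat simpler.

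\emph{The paper's route.} It fixes a smaller ball $B_i'$ and uses that $x_i$ is the unique minimum of $H$ on $W_i$ to find $\kappa_i>0$ with $\overline W_i\cap\{H\le U(i)+2\kappa_i\}\subset B_i'$. It defines the energy-entrance time $S_i(z)$ into $\{H\le U(i)+\kappa_i\}$ and proves that $S_i$ is \emph{continuous} on $Q^i$. That step needs the extra fact that the energy is strictly below the threshold at every positive time after entrance. Forward invariance of this energy sublevel then gives $\Phi_{T_Q}(Q^i)\subset B_i'$.

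\emph{Your route.} The finite-cover argument uses only that entering an open set at a given time is an open condition on the initial point. That amounts to upper semicontinuity of the entrance time, which is all the compactness step requires.

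\emph{The $V_i$ detour is unnecessary.} The target event is $\{\tau_{B_i}\le T_Q\}$, and the tracking lemma controls the whole interval $[0,T_Q]$. So it suffices that each orbit lies in $\tfrac12B_i$ at \emph{some} time $t\le T_Q$, which your cover already provides. If you do want the position at time $T_Q$, the paper's energy sublevel is forward invariant for free. As written, your version replaces the Euclidean ball $B_i$ of the statement by a $V_i$-sublevel set, which changes the statement; nesting such a set inside $B_i$ would repair this.

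\emph{One justification needs correcting.} That $x_i$ is the only equilibrium in $W_i\cap\{H\le h_Q\}$ does not follow from $U(\sigma)>h_Q$. The other lifted minimum $x_{i'}$ may satisfy $H(x_{i'})=U(i')<h_Q$, and $\sigma$ need not be the only other critical point of $U$. The correct reason is that the $\theta$-projection of $W_i$ lies in $\Omega_i$, while $\operatorname{Crit}(U)\cap\Omega_i=\{i\}$ by Assumption~\ref{ass:double-well}; this is what the paper invokes.
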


\begin{proof}
Let $\Phi_t$ denote the original zero-noise flow and set
$h_Q:=\max_{z\in Q^i}H(z)<U(\sigma)$.  Since
\[
\frac{d}{dt}H(\Phi_tz)=-\gamma L|r_t|^2\le0,
\]
every trajectory starting in $Q^i$ remains in the compact set
\[
K_Q^i:=\overline W_i\cap\{H\le h_Q\}\subset W_i.
\]
On the set where $dH/dt=0$ one has $r=0$.  A trajectory that remains in
$\{r=0\}$ must also satisfy, successively,
\[
0=\dot r=-\gamma p,
\qquad
0=\dot p=-L^{-1}\nabla U(\theta),
\]
and is therefore an equilibrium $(j,0,0)$ with $j\in\operatorname{Crit}(U)$.
By Assumption~\ref{ass:double-well}, the only such point in
$W_i$ is $x_i$.  LaSalle's invariance principle consequently gives
\[
\operatorname{dist}(\Phi_tz,x_i)\longrightarrow0
,\qquad z\in Q^i,
\]
as $t\rightarrow\infty$.
Following the energy-sublevel entrance argument of
\cite[Lemmas~10.3--10.4]{LeeRamilSeo2026}, choose a smaller concentric ball
$B_i'$ with $\overline{B_i'}\subset B_i$.  The isolation of $x_i$ as the
unique minimum of $H$ in $W_i$ gives $\kappa_i>0$ such that
\[
\overline W_i\cap\{H\le U(i)+2\kappa_i\}\subset B_i'.
\]
For $z\in Q^i$, define the deterministic entrance time
\[
S_i(z):=\inf\left\{t\ge0:
H(\Phi_tz)\le U(i)+\kappa_i\right\}.
\]
The convergence above implies $S_i(z)<\infty$ for every $z\in Q^i$.
Moreover, after the first entrance the energy is strictly below the threshold
at every positive later time.  Indeed, if it remained equal to
$U(i)+\kappa_i$ on a non-trivial interval, then
$r=0$, followed successively by $p=0$ and $\nabla U(\theta)=0$, throughout
that interval.  The trajectory would be the equilibrium $x_i$, whose energy
is $U(i)$, a contradiction.

We next check the dependence of $S_i$ on the initial point, as in the cited
underdamped argument.  If $S_i(z)>0$ and $0<a<S_i(z)$, compactness of the
time interval and continuity of the flow give, for all $z'$ close to $z$,
\[
H(\Phi_tz')>U(i)+\kappa_i,
\qquad 0\le t\le S_i(z)-a.
\]
The strict inequality after entrance also gives
$H(\Phi_{S_i(z)+a}z)<U(i)+\kappa_i$, and hence the same inequality for
$z'$ close to $z$.  Thus
$|S_i(z')-S_i(z)|<a$.  The same upper bound, with
$0\le S_i(z')<a$, applies when $S_i(z)=0$.  Hence $S_i$ is continuous on
$Q^i$.  Compactness now yields
\[
T_*:=\sup_{z\in Q^i}S_i(z)<\infty.
\]
Set $T_Q=T_*+1$.  Energy monotonicity and the strict post-entrance inequality
give
\[
\Phi_{T_Q}(Q^i)\subset
\{H\le U(i)+\kappa_i\}\subset B_i'.
\]
Furthermore,
\[
C_Q^i:=\Phi_{[0,T_Q]}(Q^i)
\subset K_Q^i\subset W_i
\]
is compact.
Choose $\rho>0$ smaller than both
$\operatorname{dist}(B_i',B_i^c)$ and
$\operatorname{dist}(C_Q^i,\partial W_i)$.  By
Lemma~\ref{lem:uniform-fw-tracking},
\[
\inf_{z\in Q^i}
\Pbb_z\left(
\sup_{0\le t\le T_Q}|Z_t^\eps-\Phi_tz|\le\rho
\right)
\ge1-e^{-c_1/\eps}.
\]
On this event the process does not hit $\partial W_i$ before $T_Q$ and lies
in $B_i$ at time $T_Q$.  For all sufficiently small $\eps$ the last
probability is at least, for example, $c=1/2$, proving the claim.
\end{proof}

The remaining part of the energy boundary is compact and separated from the
saddle neighborhood.  On this non-saddle part, strict dissipation after a
uniform time gives either an exit or entrance into a lower-energy compact
set.

\begin{lemma}[Entrance from non-saddle boundary collars]
\label{lem:original-nonsaddle-boundary-collar}
After choosing $\mathcal N_\sigma$ small, for each $i$ there are an interior
collar
\[
\mathcal C_i
:=\left\{z\in W_i:\dist\left(z,K_i^\partial\right)<\delta_i\right\},
\qquad
K_i^\partial:=\partial W_i\setminus\mathcal N_\sigma,
\]
for some $\delta_i>0$, a compact
$Q_\partial^i\subset W_i$, and $T_\partial,c_\partial>0$ such that for
$z\in D_\eps^i\cap\mathcal C_i$,
\[
\Pbb_z\left(\tau_{\partial W_i}\le T_\partial\ \text{or}\ \tau_{Q_\partial^i}\le T_\partial\wedge\tau_{\partial W_i}\right)\ge c_\partial.
\]
\end{lemma}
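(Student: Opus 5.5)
The plan is to follow the energy-dissipation plus finite-time tracking scheme of Lemma~\ref{lem:original-compact-entrance-well-ball}, with the saddle neighborhood $\mathcal N_\sigma$ removed so that the compact boundary piece $K_i^\partial=\partial W_i\setminus\mathcal N_\sigma$ carries no critical points of $H$.

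First, $K_i^\partial$ contains no equilibrium. Any point of $\partial W_i$ satisfies $H=U(\sigma)$, and the only critical point of $H$ at that level on $\partial W_i$ is $z_\sigma$, by Assumption~\ref{ass:double-well}. The same LaSalle argument as before then applies along the zero-noise flow $\Phi_t$: the identity $\frac{d}{dt}H(\Phi_tz)=-\gamma L|r_t|^2$ cannot vanish identically on any time interval unless the trajectory is an equilibrium. Hence for each $z\in K_i^\partial$ there is a time $t_z>0$ with
\[
H(\Phi_{t_z}z)<U(\sigma).
\]
By continuity of the flow and compactness of $K_i^\partial$, there exist a uniform time $T_\partial$ and a gap $\kappa>0$ such that $H(\Phi_{T_\partial}z)\le U(\sigma)-2\kappa$ for all $z$ in a neighborhood of $K_i^\partial$.

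The delicate point is to choose $\mathcal N_\sigma$ small but fixed, so that these trajectories stay away from $z_\sigma$. Either they cross $\partial W_i$ (which counts as exit), or at time $T_\partial$ they lie in the low-energy region $\{H\le U(\sigma)-2\kappa\}$.

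Second, I will choose $\delta_i$ so small that every $z$ in the collar $\mathcal C_i$ shares this property by continuity. For such $z$, either $\Phi_tz$ leaves $\overline W_i$ before $T_\partial$ with a transversal margin, or
\[
\Phi_{T_\partial}z\in Q_\partial^i:=\overline W_i\cap\{H\le U(\sigma)-\kappa\}.
\]
The set $Q_\partial^i$ is compact and contained in $W_i$. It is also at positive distance from $A_\eps$ or $S_\eps$ only when relevant, but hitting $\mathsf A^i_\eps$ earlier is harmless, since it only stops the killed process.

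Third, Lemma~\ref{lem:uniform-fw-tracking} on the compact closure $\overline{\mathcal C_i}$ with a small tube radius $\rho$ gives probability at least $1-e^{-c/\eps}\ge c_\partial$ that the diffusion follows $\Phi_t$. On this event the process either hits $\partial W_i$ by time $T_\partial$, or reaches $Q_\partial^i$ by time $T_\partial$ without hitting $\partial W_i$.

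The main obstacle is the exit alternative. A deterministic trajectory starting on or near the energy boundary may graze $\partial W_i$ tangentially rather than crossing it, and then tracking within $\rho$ does not force the diffusion to hit $\partial W_i$. I would avoid needing exit at all. Dissipation pushes energy strictly down, so trajectories that start inside stay inside and enter $Q_\partial^i$. Trajectories that start outside $W_i$ (within $\rho$ of the collar) are irrelevant, because $z\in W_i$. The trickier case is $z\in W_i$ whose flow momentarily rises above $U(\sigma)$; this is impossible since $H$ is nonincreasing along $\Phi_t$. The only way tracking fails to deliver $\tau_{Q_\partial^i}$ is therefore a noisy exit, which is already counted in the event. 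It remains to take $\rho<\operatorname{dist}(\Phi_{T_\partial}(\overline{\mathcal C_i}\cap\overline W_i),\partial Q_\partial^i{}^{c})$, which is guaranteed by the $\kappa$ gap.
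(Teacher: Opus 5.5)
Your proposal is correct and follows essentially the same route as the paper: $K_i^\partial$ contains no equilibrium, LaSalle plus compactness give a uniform time $T_\partial$ and an energy gap, you take $Q_\partial^i=\overline W_i\cap\{H\le U(\sigma)-\kappa\}$, thicken to a collar by continuity, and conclude with finite-time Freidlin--Wentzell tracking. Your observation that the deterministic flow from $z\in W_i$ never leaves $W_i$, so only a noisy exit can occur, slightly sharpens the paper's deterministic alternative; conversely, choosing $\mathcal N_\sigma$ to keep trajectories away from $z_\sigma$ is unnecessary, since the energy strictly decreases immediately along any non-equilibrium trajectory.
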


\begin{proof}
$K_i^\partial=\partial W_i\setminus\mathcal N_\sigma$ is compact and contains no
equilibrium (an invariant point on $H=U(\sigma)$ with $dH/dt=0$ has $r=0$, and hence
$p=0$, $\nabla U(\theta)=0$, i.e.\ the saddle, which is in $\mathcal N_\sigma$).
Since 
\[
\frac{dH}{dt}=-\gamma L|r|^2\le0, 
\]
LaSalle's invariance principle plus compactness imply that there exist some $T_\partial$,
$\delta_\partial>0$ such that 
\[
H(\Phi_{T_\partial}z)\le U(\sigma)-4\delta_\partial,
\qquad\text{on
$K_i^\partial$},
\]
Next, we set $Q_\partial^i=\overline W_i\cap\{H\le U(\sigma)-2\delta_\partial\}$.
By continuity of $(t,z)\mapsto\Phi_t(z)$, after decreasing the
collar width $\delta_i$ we have, for every $z\in\mathcal C_i$, the deterministic
alternative
\[
\tau_{\partial W_i}^{0}(z)\le T_\partial
\quad\text{or}\quad
\Phi_{T_\partial}(z)\in
\left\{H\le U(\sigma)-3\delta_\partial\right\}
\subset\operatorname{int}Q_\partial^i,
\]
where $\tau_{\partial W_i}^{0}$ is the deterministic exit time.  Choose
$\eta>0$ smaller than the distance from
$\{H\le U(\sigma)-3\delta_\partial\}$ to
$W_i\setminus Q_\partial^i$.  Lemma~\ref{lem:uniform-fw-tracking}, applied on
the compact closure of the collar, then shows that with probability at least
$1-e^{-c/\eps}$ the diffusion either crosses $\partial W_i$ before
$T_\partial$ or belongs to $Q_\partial^i$ at time $T_\partial$ without an
earlier exit.  For small $\eps$ this is bounded below by a constant
$c_\partial>0$, completing the proof.
\end{proof}

The preceding entrance lemmas reach a fixed well
neighborhood.  The next estimate combines Lyapunov contraction with the local
density lower bound to hit the shrinking ball on a logarithmic time scale.

	\begin{lemma}[Logarithmic hit of the shrinking well ball]
	\label{lem:original-log-hit-shrinking-ball}
	For each $i\in\{m,s\}$ and a sufficiently small fixed well ball $B_i$,
	\[
	\inf_{z\in\overline{B_i}}
	\Pbb_z\left(\tau_{\mathsf A_\eps^i}\le C\log(1/\eps)\right)
	\ge c\eps^{3d/2}.
\]
The constants can be chosen simultaneously for the two wells.
\end{lemma}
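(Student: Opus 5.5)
The plan is to combine the local hypocoercive Lyapunov contraction (Lemma~\ref{lem:local-hypocoercive-lyapunov}) with the scaled density minorization (Lemma~\ref{lem:scaled-local-density-lower-bound}) through a strong Markov argument. Fix $i$, let $r_0$ be the common radius of those lemmas, and take $B_i\subset B(x_i,r_0/2)$ small. For $z\in\overline{B_i}$, first bring the process into the $\sqrt\eps$-neighborhood $B(x_i,C_0\sqrt\eps)$ within time $C\log(1/\eps)$ with probability bounded below. Then apply the minorization at the fixed time $t_0$ with $E=\mathsf A_\eps^i$. Since $\operatorname{Leb}(\mathsf A_\eps^i)\asymp\eps^{3d}$, this costs a factor $c(C_0)\eps^{-3d/2}\eps^{3d}=c\eps^{3d/2}$, and the exponentially small exit error is absorbed.

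\emph{Contraction step.} Set $Y_t=Z_t^\eps-x_i$ and $\tau_0=\tau_{\partial B(x_i,r_0)}$. Dynkin's formula applied to $e^{ct}V_i(Y_t)$, stopped at $\tau_0$, gives
\[
\E_z\!\left[e^{c(t\wedge\tau_0)}V_i(Y_{t\wedge\tau_0})\right]\le V_i(z-x_i)+C\eps c^{-1}e^{ct}.
\]
At $T_\eps:=c^{-1}\log(1/\eps)$ we have $V_i(z-x_i)e^{-cT_\eps}\le C\eps$, so $\E_z[V_i(Y_{T_\eps})\mathbf 1_{\{T_\eps<\tau_0\}}]\le C'\eps$. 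Markov's inequality and $V_i\asymp|Y|^2$ then give
\[
\Pbb_z\!\left(|Y_{T_\eps}|\le C_0\sqrt\eps,\ T_\eps<\tau_0\right)\ge1-\frac{C''}{C_0^2}-\Pbb_z(\tau_0\le T_\eps).
\]

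\emph{Exit step.} It remains to show that $\Pbb_z(\tau_0\le T_\eps)$ is small, uniformly on $\overline{B_i}$. On the fixed time interval $[0,T_1]$, Lemma~\ref{lem:uniform-fw-tracking} applies, because the deterministic flow from $\overline{B_i}$ stays in $B(x_i,r_0/2)$ by the Lyapunov decay. Beyond $T_1$ the horizon is logarithmic, so I would control exit through a supermartingale bound. On $B(x_i,r_0)$ one has $\calL_\eps V_i\le C\eps$, and the exit level $V_i\ge c r_0^2$ is of order one. A maximal inequality for $V_i(Y_t)-C\eps t$ bounds the exit probability by $C\eps T_\eps/r_0^2+V_i(z)/(cr_0^2)$, which is small once $B_i$ is chosen small. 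For a cleaner argument, the exponential functional $e^{V_i/(\kappa\eps)}$ is a local supermartingale for small $\kappa$ inside the ball; this gives an exit probability at most $T_\eps e^{-c/\eps}$. Either route yields $\Pbb_z(\tau_0\le T_\eps)\le 1/4$ for small $\eps$. Choosing $C_0$ large then makes the contraction probability at least $1/2$.

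\emph{Conclusion.} Apply the strong Markov property at time $T_\eps$ on the event $\{|Y_{T_\eps}|\le C_0\sqrt\eps\}$ and invoke \eqref{eq:scaled-local-minorization} from Lemma~\ref{lem:scaled-local-density-lower-bound}. This yields
\[
\Pbb_z\!\left(\tau_{\mathsf A_\eps^i}\le T_\eps+t_0\right)\ge \tfrac12\left(c(C_0)\eps^{3d/2}-e^{-c_0/\eps}\right)\ge c\eps^{3d/2},
\]
and $T_\eps+t_0\le C\log(1/\eps)$. Since there are only two wells, the constants can be taken as the minimum over $i\in\{m,s\}$.

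The main obstacle is the uniform exit control over the logarithmic horizon, since Freidlin--Wentzell tracking alone only covers bounded times. The exponential-moment supermartingale for $V_i/\eps$ is what I would try first; the point to check is that the It\^o quadratic-variation term $\eps\,|\nabla_rV_i|^2/\eps^2$ is absorbed by the contraction $-cV_i/\eps$ for small $\kappa$.
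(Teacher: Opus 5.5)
Your argument is correct and has the same overall structure as the paper's. Lyapunov contraction (Lemma~\ref{lem:local-hypocoercive-lyapunov}) brings the process into $B(x_i,C_0\sqrt\eps)$ with probability at least $1/2$ by a logarithmic time. The minorization of Lemma~\ref{lem:scaled-local-density-lower-bound} with $E=\mathsf A_\eps^i$ then supplies the factor $\eps^{3d/2}$.

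Where you differ is in controlling exit from $B(x_i,r_0)$ over the logarithmic horizon. The paper replaces the drift by a cutoff drift $\bar b_i$ for which $\bar{\calL}_{i,\eps}V_i\le-cV_i+C\eps$ holds globally, and applies Dynkin's formula to the cutoff process without stopping. It then transfers the result to $Z^\eps$ through a synchronous coupling and Lemma~\ref{lem:uniform-fw-tracking}, iterated on unit time blocks between the sublevel sets $K_i^-\subset K_i^+$ of $V_i$. This makes the decoupling probability by time $n$ at most $ne^{-a/\eps}$.

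You instead stop Dynkin's formula at $\tau_0$ and bound $\Pbb_z(\tau_0\le T_\eps)$ by Doob's inequality for the nonnegative supermartingale $V_i(Y_{t\wedge\tau_0})+C\eps(T_\eps-t\wedge\tau_0)$. This gives the bound $\bigl(\sup_{\overline{B_i}}V_i+C\eps T_\eps\bigr)/(\lambda_{\min}(P_i)r_0^2)$. That is merely small rather than exponentially small, but $1/4$ is all that is needed. Obtaining it only requires shrinking $B_i$, which the statement permits, and the paper shrinks $B_i$ as well.

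Your route is more elementary: it needs no cutoff, coupling, or tracking, so your appeal to Lemma~\ref{lem:uniform-fw-tracking} on $[0,T_1]$ is superfluous. The paper's route gives exponential control and reuses machinery it needs elsewhere anyway.

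One slip in your fallback route: with exponent $V_i/(\kappa\eps)$, the quadratic-variation term is $\gamma|\nabla_rV_i|^2/(L\kappa^2\eps)$ against the contraction $-cV_i/(\kappa\eps)$. Absorption therefore needs $\kappa$ large, not small. The usual form is $e^{\kappa V_i/\eps}$ with $\kappa$ small, and even then an additive constant survives, so one works with $e^{\kappa V_i/\eps}-Ct$. Since your first route already closes the argument, this does not affect correctness.
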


	\begin{proof}
	Fix $i\in\{m,s\}$.  Let $V_i$ and $r_0$ be given by
	Lemma~\ref{lem:local-hypocoercive-lyapunov}.  Choose
	$0<a_-<a_+$ so that
	\[
	\overline{B_i}\subset\operatorname{int}K_i^-\subset K_i^-
	\subset\operatorname{int}K_i^+\subset K_i^+\subset B(x_i,r_0),
	\qquad
	K_i^\pm:=\{z:V_i(z-x_i)\le a_\pm\}.
	\]
	After decreasing $B_i$ if necessary, these inclusions hold simultaneously for
	the two wells.  Write the drift as $b(x_i+Y)=A_iY+R_i(Y)$ near $x_i$.
	After decreasing $r_0$, choose a smooth cutoff $\chi_i$ that equals one on
	$B(0,r_0)$ and is supported in a slightly larger ball on which
	$|R_i(Y)|\le C|Y|^2$, and define
	$\bar b_i(x_i+Y):=A_iY+\chi_i(Y)R_i(Y)$.  The support radius can be chosen
	so small that the Lyapunov-equation calculation in
	Lemma~\ref{lem:local-hypocoercive-lyapunov} gives, globally,
	$\bar{\calL}_{i,\eps}V_i\le-cV_i+C\eps$.  Let $\bar Z$ be the resulting
	cutoff process, denote its law and expectation by
	$\overline{\Pbb}_z^{\,i,\eps}$ and $\overline{\E}_z^{\,i,\eps}$, and couple
	$\bar Z$ and $Z^\eps$ by driving them with the same Brownian motion.  Denote
	the resulting joint law by $\mathbb Q_z^{i,\eps}$.
	Dynkin's formula implies that
	\[
	\overline{\E}_z^{\,i,\eps} V_i(\bar Z_t-x_i)
	\le e^{-ct}V_i(z-x_i)+C\eps .
	\]

	We next make the logarithmic-time localization explicit.  For the zero-noise
	flow $\Phi_t$, the same Lyapunov inequality with $\eps=0$ gives
	\[
	V_i(\Phi_tz-x_i)\le e^{-ct}V_i(z-x_i),
	\qquad z\in K_i^-,\quad 0\le t\le1.
	\]
	Consequently, the paths $\Phi_{[0,1]}(K_i^-)$ lie in $K_i^-$ and
	$\Phi_1(K_i^-)$ is a positive distance inside $K_i^-$.  Choose $\rho>0$ so
	small that every $\rho$-tube around such a path stays in $K_i^+$ and its
	time-one endpoint lies in $K_i^-$.  The finite-horizon tracking estimate of
	Lemma~\ref{lem:uniform-fw-tracking}, whose proof applies verbatim to the
	globally Lipschitz cutoff drift, gives $a>0$ such that, uniformly over
	$z\in K_i^-$,
	\[
	\overline{\Pbb}_z^{\,i,\eps}\left(
	\sup_{0\le t\le1}|\bar Z_t-\Phi_tz|>\rho\right)
	\le e^{-a/\eps}.
	\]
	Iterating at the integer times by the strong Markov property under this
	coupling shows that
	\begin{equation}
	\label{eq:logarithmic-well-coupling}
	\sup_{z\in\overline{B_i}}
	\mathbb Q_z^{i,\eps}\left(\bar Z_t\ne Z_t^\eps
	\text{ for some }t\le n\right)
	\le ne^{-a/\eps},
	\qquad n\in\mathbb N.
	\end{equation}
	Indeed, on every successful block the path remains in
	$K_i^+\subset B(x_i,r_0)$, where the two drifts agree, and its endpoint
	returns to $K_i^-$.  This closes the induction.

	Take
	$t_1=\lceil C_1\log(1/\eps)\rceil$, with $C_1$ large enough that
	$e^{-ct_1}\sup_{K_i^-}V_i\le\eps$.  Since $V_i\asymp|z-x_i|^2$, first
	choosing $C_0$ large and then $\eps$ small gives
	\[
	\inf_{z\in\overline{B_i}}
	\overline{\Pbb}_z^{\,i,\eps}\left(
	\bar Z_{t_1}\in B(x_i,C_0\sqrt\eps)\right)\ge\frac34.
	\]
	Combining this with \eqref{eq:logarithmic-well-coupling} and
	$t_1e^{-a/\eps}=o(1)$ yields
	\[
	\inf_{z\in\overline{B_i}}
	\Pbb_z\left(Z_{t_1}^\eps\in B(x_i,C_0\sqrt\eps)\right)\ge\frac12
	\]
	for every sufficiently small $\eps$.  For such $\eps$,
	$\mathsf A_\eps^i=B(x_i,\eps)\subset B(x_i,C_0\sqrt\eps)$.  Then
	Lemma~\ref{lem:scaled-local-density-lower-bound} implies that
\[
\inf_{y\in B(x_i,C_0\sqrt\eps)}
\Pbb_y\!\left(
Z_{t_0}^\eps\in\mathsf A_\eps^i,
t_0<\tau_{\partial B(x_i,r_0)}
\right)
\ge c\eps^{3d/2}.
	\]
	Consequently, the strong Markov property at $t_1$ yields
	\[
	\inf_{z\in\overline{B_i}}
	\Pbb_z\!\left(\tau_{\mathsf A_\eps^i}\le t_1+t_0\right)
	\ge \frac{c}{2}\eps^{3d/2}.
	\]
Since $t_0$ is fixed, increasing $C$ if necessary gives
$t_1+t_0\le C\log(1/\eps)$ for all sufficiently small $\eps$.  Taking the
minimum of the two constants proves the simultaneous assertion.
\end{proof}

The saddle, collar, compact-interior, and local-well estimates
cover every starting point of the killed domain.  Concatenating them yields a
uniform one-window absorption probability, from which an exponential tail
follows by the strong Markov property.

\begin{proposition}[Original killed exit tail]
\label{prop:original-killed-exit-tail}
There are $C,\alpha,\beta_0>0$ such that
for $i\in\{m,s\}$,
\[
\sup_{z\in D_\eps^i}
\Pbb_z(\tau_{\partial D_\eps^i}>t)
\le Ce^{-\alpha t\eps^{\beta_0}}.
\]
\end{proposition}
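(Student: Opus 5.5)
The plan is to prove a uniform one-window absorption bound and then iterate it with the strong Markov property. Concretely, I aim to find a window length $T_\eps=\eps^{-M}$ for a fixed $M>1$, together with a success probability $p_\eps\ge c\eps^{\beta_1}$, such that
\[
\inf_{z\in D_\eps^i}\Pbb_z\big(\tau_{\partial D_\eps^i}\le T_\eps\big)\ge p_\eps .
\]
Once this holds, applying the strong Markov property at the times $kT_\eps$ gives
\[
\Pbb_z(\tau_{\partial D_\eps^i}>kT_\eps)\le(1-p_\eps)^k .
\]
This yields $\sup_z\Pbb_z(\tau_{\partial D_\eps^i}>t)\le C\exp(-t\,p_\eps/T_\eps)$, which is the claim with $\beta_0=M+\beta_1$. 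Note that $\partial D_\eps^i=\partial W_i\cup\partial\mathsf A_\eps^i$. Reaching either $\partial W_i$ or the ball $\mathsf A_\eps^i$ therefore counts as success.

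\textbf{Covering $D_\eps^i$.} I split the killed domain into three regions: $W_i\cap\mathcal N_\sigma$, the collar $\mathcal C_i$, and the remainder. After shrinking the collar width and the neighborhood, the remainder lies in a compact set $Q^i\subset W_i$. Each region has its own first-stage estimate:
\begin{itemize}
\item From the saddle region, Lemma~\ref{lem:original-saddle-bottleneck-escape} gives, with probability at least $1/2$ before time $\eps^{-M}$, either an exit through $\partial W_i$ (success) or an exit from $\mathcal N_\sigma$. An exit from $\mathcal N_\sigma$ lands either in the collar or in a compact interior set, still inside $W_i$.
\item From the collar, Lemma~\ref{lem:original-nonsaddle-boundary-collar} gives, with probability $c_\partial$ within time $T_\partial$, either exit or entrance into $Q_\partial^i$.
\item From a compact interior set $Q$, I take the union of $Q^i$, $Q_\partial^i$, and the exit locations from $\mathcal N_\sigma$ away from the collar. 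Here Lemma~\ref{lem:original-compact-entrance-well-ball} gives entrance into $B_i$ before time $T_Q$, without exit, with probability $c$.
\end{itemize}
Finally, from $\overline{B_i}$, Lemma~\ref{lem:original-log-hit-shrinking-ball} gives hitting of $\mathsf A_\eps^i$ within $C\log(1/\eps)$ with probability at least $c\eps^{3d/2}$.

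\textbf{Concatenation.} I chain these stages through the strong Markov property at the successive stopping times. Any stage that ends on $\partial W_i$ or in $\mathsf A_\eps^i$ is already a success. The total window is at most $\eps^{-M}+T_\partial+T_Q+C\log(1/\eps)\le 2\eps^{-M}$. The success probability is at least $\tfrac12 c_\partial c\,c'\eps^{3d/2}$, so one may take $\beta_1=3d/2$. Since all constants are taken simultaneously for both wells, the bound is uniform in $i$.

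\textbf{Main obstacle.} The delicate step is the hand-off geometry. An exit from $\mathcal N_\sigma$ must land in a set that the collar lemma or the compact-entrance lemma covers uniformly. In particular, the exit points on $\partial\mathcal N_\sigma\cap W_i$ near $\partial W_i$ must lie inside the collar $\mathcal C_i$ around $K_i^\partial=\partial W_i\setminus\mathcal N_\sigma$. I would handle this by working with two nested saddle neighborhoods $\mathcal N'_\sigma\Subset\mathcal N_\sigma$. Escape is applied from $\mathcal N'_\sigma$, and the collar is built around $\partial W_i\setminus\mathcal N'_\sigma$, so that points of $\partial\mathcal N'_\sigma\cap W_i$ lie either within the collar width of $\partial W_i$ or at a positive distance from it. The escape estimate of Lemma~\ref{lem:original-saddle-bottleneck-escape} is applied to $\mathcal N'_\sigma$; its proof works for any sufficiently small neighborhood because the sign property from Lemma~\ref{lem:saddle-lobe-force-sign} persists there. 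A further point is that a collar start may enter $Q_\partial^i$ rather than $B_i$, so $Q_\partial^i$ must be included in the compact set to which Lemma~\ref{lem:original-compact-entrance-well-ball} is applied. That lemma holds for arbitrary compact sets in $W_i$, so this inclusion costs nothing.
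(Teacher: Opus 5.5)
Your proposal is correct and follows essentially the same route as the paper. Both arguments build a one-window absorption bound on a window of order $\eps^{-M}$ by chaining the saddle-escape, collar, compact-entrance, and logarithmic-hit lemmas through the strong Markov property, which gives success probability $c\eps^{3d/2}$ and hence $\beta_0=M+3d/2$ after iteration. Your explicit compactness/nesting treatment of the hand-off from $\partial\mathcal N_\sigma$ is a more careful version of the paper's remark that the three open sets can be slightly enlarged so that $\overline W_i\subset\mathcal N_\sigma\cup\mathcal C_i\cup B_i\cup K_i^{\rm int}$. One small correction: your one-window bound actually holds for time $2\eps^{-M}$ rather than $\eps^{-M}$, which only changes constants.
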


\begin{proof}
We divide the proof into two steps.

\emph{Step 1: one-window absorption.}
Fix a small well ball $B_i$ around $x_i$ as in
Lemmas~\ref{lem:original-compact-entrance-well-ball} and
\ref{lem:original-log-hit-shrinking-ball}.  Fix $M>1$, set
$T_\eps=\eps^{-M}$, and let
\[
K_i^{\rm int}
:=\overline W_i\setminus
\left(\mathcal N_\sigma\cup\mathcal C_i\cup B_i\right).
\]
After slightly enlarging the three open sets,
$K_i^{\rm int}\subset W_i$ is compact and
\[
\overline W_i
\subset \mathcal N_\sigma\cup\mathcal C_i\cup B_i\cup K_i^{\rm int}.
\]
Let $\rho_i:=\tau_{\partial W_i}\wedge\tau_{B_i}$.
Lemma~\ref{lem:original-saddle-bottleneck-escape} gives, uniformly on
$W_i\cap\mathcal N_\sigma$, probability at least $1/2$ of reaching
$\partial W_i$ or $\mathcal N_\sigma^c$ before $T_\eps/3$, for any sufficiently small
$\eps$.  If the latter event occurs without an exit, the entrance point lies
in $\mathcal C_i\cup B_i\cup K_i^{\rm int}$.  On $\mathcal C_i$,
Lemma~\ref{lem:original-nonsaddle-boundary-collar} reaches either
$\partial W_i$ or a compact subset of $W_i$ with uniformly positive
probability.  On that compact subset and on $K_i^{\rm int}$,
Lemma~\ref{lem:original-compact-entrance-well-ball} reaches $B_i$ in a fixed
time with uniformly positive probability.  The strong Markov property,
together with the same estimates for starting points outside
$\mathcal N_\sigma$, therefore yields $c_0>0$ such that
\begin{equation}
\label{eq:killed-gate-or-well}
\inf_{z\in D_\eps^i}
\Pbb_z\!\left(\rho_i\le T_\eps\right)\ge c_0.
\end{equation}

If $\rho_i=\tau_{\partial W_i}$, then
$\tau_{\partial D_\eps^i}=\rho_i$.  On
$\{\rho_i=\tau_{B_i}<\infty\}$, continuity of the trajectories gives
$Z_{\tau_{B_i}}^\eps\in\overline{B_i}$.  Hence, if
$\rho_i=\tau_{B_i}$, the strong Markov property and
Lemma~\ref{lem:original-log-hit-shrinking-ball} give conditional probability
at least $c_1\eps^{3d/2}$ of hitting $B(x_i,\eps)$ in an additional
$C_1\log(1/\eps)$ units of time.  Since this additional time is at
most $T_\eps$ for small $\eps$, \eqref{eq:killed-gate-or-well} yields
$c_2>0$ such that
\begin{equation}
\label{eq:killed-one-window-success}
\inf_{z\in D_\eps^i}
\Pbb_z\!\left(
\tau_{\partial D_\eps^i}\le 2T_\eps
\right)
\ge c_2\eps^{3d/2}.
\end{equation}

\emph{Step 2: iteration.}
Put $\widehat T_\eps=2T_\eps$.  Applying the strong Markov property at
$\widehat T_\eps,2\widehat T_\eps,\ldots$ and using
\eqref{eq:killed-one-window-success} yields
\[
\sup_{z\in D_\eps^i}
\Pbb_z\left(\tau_{\partial D_\eps^i}>n\widehat T_\eps\right)
\le\left(1-c_2\eps^{3d/2}\right)^n
\le e^{-c_2n\eps^{3d/2}}.
\]
For $t\ge\widehat T_\eps$, take
$n=\lfloor t/\widehat T_\eps\rfloor$ to obtain
\[
\sup_{z\in D_\eps^i}\Pbb_z\left(\tau_{\partial D_\eps^i}>t\right)
\le C\exp\left(-\frac12 c_2t\eps^{M+3d/2}\right).
\]
After increasing $C$ this also holds for $0\le t<\widehat T_\eps$.
The assertion follows with $\beta_0=M+3d/2$.
\end{proof}

To transfer this exit tail through the killed-density duality,
we also need a short-time upper bound on the original killed kernel.  The
anisotropic powers $t^{1/2},t^{3/2},t^{5/2}$ reflect the three levels of the
noise-propagation chain.

\begin{lemma}[Third-order small-time density upper bound]
\label{lem:third-order-small-time-density-upper}
For $D=W_m\setminus\overline{A_\eps}$ or
$W_s\setminus\overline{S_\eps}$ and $0<t\le1$,
\[
\sup_{z,y\in D}p_t^{D,\eps}(z,y)\le C\eps^{-3d/2}t^{-9d/2};
\]
in particular for
$\eps\le t\le1$,
\[
\sup_{z,y\in D}p_t^{D,\eps}(z,y)\le C\eps^{-6d}.
\]
\end{lemma}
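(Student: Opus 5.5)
We will prove the bound by comparing the killed kernel with a whole-space cutoff kernel and then rescaling the noise.

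\emph{Step 1: domination by a cutoff kernel.} Killing only removes mass, so $p_t^{D,\eps}(z,y)\le \bar p_t^{\eps}(z,y)$. Here $\bar p^\eps$ is the density of a whole-space diffusion whose drift agrees with the true drift on a neighborhood of $\overline{W_i}$. To build it, replace $\nabla U$ outside a neighborhood of the configuration projection of $\overline{W_i}$ by a chain-compatible extension $F$. The extension $F$ should have bounded derivatives of every positive order, exactly as in the proof of Lemma~\ref{lem:third-order-absorbed-density-duality}. The inequality holds because the two processes coincide pathwise up to the exit time from $D$. Making the kernel comparison rigorous will use the penalized exhaustion argument from that lemma, together with the interior continuity of the killed density.

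\emph{Step 2: removing the noise amplitude.} We time-change the cutoff process so that the noise has unit size. Set $s=\eps t$ and rescale the space variables as $\tilde\theta=\theta$, $\tilde p=p/\eps$ and $\tilde r=r/\eps^{2}$. This choice is not ideal, because the drift coefficients then blow up. The cleaner route is to apply Lemma~\ref{lem:finite-order-pigato} directly to the family indexed by $\eps$, with diffusion coefficient $\sigma_0=\sqrt{2\gamma\eps/L}$ on the $r$-block. The resulting chain Hörmander matrix is $\gamma\sqrt{2\gamma\eps/L}\,I_d$, which degenerates as $\eps\to0$.

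To absorb this degeneracy, write $Z=x+\sqrt\eps\,Y$, with $Y$ centered at the deterministic trajectory. Then $Y$ solves a chain SDE with unit-order noise $\sqrt{2\gamma/L}$ and drift $\eps^{-1/2}\bar b(x+\sqrt\eps Y)$. This drift has the same linear chain Jacobians, $I_d$ and $\gamma I_d$, and its positive-order derivatives are uniformly bounded in $\eps$, by the argument for \eqref{eq:uniform-cutoff-force-derivatives}. One point needs checking: the zeroth-order part $\eps^{-1/2}\bar b(x)$ is large. We handle this by centering at $\vartheta_t$, as in \cite[(D2)]{Pigato2022}. Condition (P1) only requires the initial drift to be bounded after centering, since only positive-order derivatives enter (P3).

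\emph{Step 3: applying the chain estimate.} With this centering, hypotheses (P1)--(P4) hold uniformly in $\eps$ and in the starting point $z$, with $\lambda_0=\gamma\sqrt{2\gamma/L}$. Estimate \eqref{eq:finite-order-pigato-density} with $p>2$ then gives $q^Y(t,\cdot,\cdot)\le Ct^{-9d/2}$ for $0<t\le1$. Undoing the scaling $Z=x+\sqrt\eps Y$ contributes the Jacobian factor $\eps^{-3d/2}$, which yields the first bound. For $\eps\le t\le1$ we have $t^{-9d/2}\le\eps^{-9d/2}$, so the bound becomes $C\eps^{-6d}$.

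\emph{Main obstacle.} The hardest step is the uniformity in Step~2, specifically confirming that the centered rescaled drift satisfies (P1)--(P4) with $\eps$-independent constants uniformly over $z\in D$. If this fails, the fallback is to skip the rescaling and apply Lemma~\ref{lem:finite-order-pigato} with the degenerate coefficient $\sigma_0\propto\sqrt\eps$. We would then track the dependence on $\lambda_0$ through the scaling, $\mathsf T_t\mapsto\sqrt\eps\,\mathsf T_t$, which should reproduce the same factor $\eps^{-3d/2}$.
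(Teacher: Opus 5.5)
Your proposal is correct and follows the paper's proof. Both arguments extend $\nabla U$ to obtain a globally Lipschitz chain-compatible drift $\bar b$, rescale about the zero-noise trajectory by $\sqrt\eps$, and apply Lemma~\ref{lem:finite-order-pigato} with $\lambda_0=\gamma\sqrt{2\gamma/L}$. Undoing the scaling gives the factor $\eps^{-3d/2}$, and the killed kernel is then dominated by the cutoff kernel.

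One point needs tightening. Do the centering explicitly, as the paper does, by setting $Y_t=(\bar Z_t-\phi_t(z))/\sqrt\eps$. Then the drift $\eps^{-1/2}\bigl[\bar b(\phi_t(z)+\sqrt\eps\,y)-\bar b(\phi_t(z))\bigr]$ vanishes at $y=0$, so (P1) holds as stated rather than by reinterpretation. With that, your ``main obstacle'' disappears and the fallback is unnecessary. Likewise, the domination step needs only pathwise coincidence up to $\tau_{\partial D}$, not the penalized exhaustion.
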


\begin{proof}
The compact set $\overline W_m\cup\overline W_s$ is contained in a bounded
region.  Extend only $\nabla U$ outside its configuration projection to a
smooth map with bounded derivatives of every positive order and at most
linear growth.  Keeping all linear couplings unchanged gives a globally
Lipschitz, chain-compatible drift $\bar b$ which agrees with the original
drift on a neighborhood of $\overline W_m\cup\overline W_s$.  Let
$\bar Z_t^{\eps,z}$ be the corresponding unrestricted process and let
$\phi_t(z)$ solve 
\[
\dot\phi_t=\bar b(\phi_t), 
\qquad\phi_0=z.
\]
Center at the deterministic trajectory and normalize the noise by setting
\[
 Y_t^{\eps,z}=\frac{\bar Z_t^{\eps,z}-\phi_t(z)}{\sqrt\eps}.
\]
Then $Y_0^{\eps,z}=0$ and
\[
 dY_t^{\eps,z}=B_{\eps,z}(t,Y_t^{\eps,z})\,dt+G\,dB_t,
 \qquad
 B_{\eps,z}(t,y)
 =\frac{\bar b(\phi_t(z)+\sqrt\eps y)-\bar b(\phi_t(z))}{\sqrt\eps},
\]
where $G=(0,0,\sqrt{2\gamma/L}\,I_{d})^\top$ in the order
$(\theta,p,r)$.  Reorder the blocks as $X^1=r,X^2=p,X^3=\theta$.
Translations and the common factor $\sqrt\eps$ preserve the triangular
dependence, and the two chain Jacobians are exactly
\[
 J_{x^1}(B_{\eps,z})_2=\gamma I_{d},
 \qquad
 J_{x^2}(B_{\eps,z})_3=I_{d}.
\]
Moreover $B_{\eps,z}(t,0)=0$ for every $t\in[0,1]$.  Taylor's formula and the bounded derivatives of
$\bar b$ give, uniformly in $z\in\overline W_m\cup\overline W_s$,
$0<\eps\le\eps_0$, and $0\le t\le1$,
\[
 \|D_yB_{\eps,z}(t,\cdot)\|_\infty\le C,
 \qquad
 \|D_y^kB_{\eps,z}(t,\cdot)\|_\infty
 \le C_k\eps^{(k-1)/2},\quad k\ge2,
\]
with the analogous time-derivative bounds required in Pigato's hypotheses.
We verify the uniform weak H\"ormander constant before applying
Lemma~\ref{lem:finite-order-pigato}.
In the block order $X^1=r$, $X^2=p$, $X^3=\theta$, its hypothesis (H1)
contains the matrix
\[
J_{x^2}(B_{\eps,z})_3J_{x^1}(B_{\eps,z})_2G_r
=I_d(\gamma I_d)\sqrt{2\gamma/L}\,I_d,
\qquad
G_r=\sqrt{2\gamma/L}\,I_d.
\]
Its smallest singular value is the fixed number
$\lambda_0=\gamma\sqrt{2\gamma/L}>0$.  The initial point is zero,
$B_{\eps,z}(0,0)=0$, the diffusion coefficient is constant and bounded, and
the finite collection of coefficient seminorms in
Lemma~\ref{lem:finite-order-pigato} is uniform by the preceding bounds.  Since
$B_{\eps,z}(t,0)=0$, the deterministic transport path is identically zero.
If $q_{\eps,z}(t,0,\cdot)$ denotes the density, then
\eqref{eq:finite-order-pigato-density} gives, for one fixed $p>2$,
\[
q_{\eps,z}(t,0,y)
\le
\frac{Ct^{-9d/2}}
{1+|\operatorname{diag}
(t^{-1/2}I_d,t^{-3/2}I_d,t^{-5/2}I_d)y|^p},
\qquad 0<t\le1,
\]
uniformly in $y,z,$ and $\eps$.  Taking the supremum in $y$ yields the
asserted $Ct^{-9d/2}$ bound.

The change of variables back to physical coordinates is exact:
\[
 \bar p_t^\eps(z,y)
 =\eps^{-3d/2}q_{\eps,z}
 \left(t,0,\frac{y-\phi_t(z)}{\sqrt\eps}\right).
\]
It follows that $\bar p_t^\eps(z,y)\le
C\eps^{-3d/2}t^{-9d/2}$.  Up to the exit time from $D$ the cutoff and original
processes coincide.  Hence, for every Borel set $E\subset D$,
\[
\int_Ep_t^{D,\eps}(z,y)\,dy
\le\int_E\bar p_t^\eps(z,y)\,dy.
\]
Thus $p_t^{D,\eps}\le\bar p_t^\eps$ almost everywhere; interior continuity of
the two hypoelliptic densities gives the pointwise inequality on $D\times D$.
Finally $t\ge\eps$ implies
$\eps^{-3d/2}t^{-9d/2}\le\eps^{-6d}$.
\end{proof}

We can now combine the killed-density duality, the original
exit tail, and the small-time kernel bound.  Their combination controls the
exponential moment of the auxiliary stopping time by a polynomial in
$\eps^{-1}$.

\begin{proposition}[Third-order Lee--Ramil--Seo exponential stopping estimate]
\label{prop:third-order-lrs-exponential-stopping}
There are $C,M_{\rm com}>0$ such that, for all sufficiently small $\eps$,
\[
\sup_{z\in W_m}\widetilde\E_z\left[e^{\gamma d\,\widetilde\zeta_\eps^m}\right]
+\sup_{z\in W_s}\widetilde\E_z \left[e^{\gamma d\,\widetilde\zeta_\eps^s}\right]
\le C\eps^{-M_{\rm com}}.
\]
\end{proposition}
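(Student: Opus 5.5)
The estimate will follow from the absorbed-density duality (Lemma~\ref{lem:third-order-absorbed-density-duality}), the original killed exit tail (Proposition~\ref{prop:original-killed-exit-tail}), and the small-time kernel bound (Lemma~\ref{lem:third-order-small-time-density-upper}). Fix $i=m$; the case $i=s$ is identical. Write $D=D_\eps^m$. Since $\widetilde\zeta_\eps^m$ is the exit time of $\widetilde Z$ from $D$, the layer-cake formula gives
\[
\widetilde\E_z\bigl[e^{\gamma d\widetilde\zeta_\eps^m}\bigr]
=1+\gamma d\int_0^\infty e^{\gamma d t}\,\widetilde\Pbb_z(\widetilde\zeta_\eps^m>t)\,dt .
\]
It therefore suffices to show that $e^{\gamma dt}\widetilde\Pbb_z(\widetilde\zeta_\eps^m>t)$ is bounded by $C\eps^{-N}e^{-\alpha t\eps^{\beta_0}}$ for $t\ge1$, and by a constant for $t\le1$. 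Integrating such a bound gives a factor $\eps^{-N-\beta_0}$, so we may take $M_{\rm com}=N+\beta_0$.

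For $t\le1$ the trivial bound $\widetilde\Pbb\le1$ suffices. For $t\ge1$ we apply the Markov property for the auxiliary killed process at time $t-1$ and then use duality:
\[
\widetilde\Pbb_z(\widetilde\zeta>t)=\int_D\widetilde p_{1}^{D}(z,y)\,\widetilde\Pbb_y(\widetilde\zeta>t-1)\,dy
=\int_D\!\int_D \widetilde p_1^D(z,y)\,\widetilde p_{t-1}^D(y,x)\,dx\,dy .
\]
Duality gives $\widetilde p_s^D(y,x)=e^{-\gamma ds}p_s^D(x,y)$. Hence
\[
e^{\gamma dt}\widetilde\Pbb_z(\widetilde\zeta>t)=\int_D\!\int_D p_1^D(y,z)\,p_{t-1}^D(x,y)\,dx\,dy .
\]
The exponential factors cancel exactly, which is the point of the duality. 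We bound the first kernel by $\sup p_1^D\le C\eps^{-3d/2}$, using Lemma~\ref{lem:third-order-small-time-density-upper} with time $1$. What remains is $\int_D\Pbb_x(\tau_{\partial D}>t-1)\,dx$. Proposition~\ref{prop:original-killed-exit-tail} bounds this by $|W_m|\,Ce^{-\alpha(t-1)\eps^{\beta_0}}$, and $W_m$ is bounded because $H$ has compact sublevels. Altogether we get $C\eps^{-3d/2}e^{-\alpha(t-1)\eps^{\beta_0}}$, which is the desired form with $N=3d/2$.

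Two technical points need care. First, the starting point $z$ ranges over all of $W_m$, including points in $\overline{A_\eps}$ or on $\partial W_m$. There $\widetilde\zeta=0$, or the kernel representation applies only on the interior of $D$, so we treat these points directly or by approximation from inside $D$. Second, the Chapman--Kolmogorov factorization requires interior densities for both killed semigroups, which Lemma~\ref{lem:third-order-absorbed-density-duality} supplies. The main obstacle is ensuring the first kernel bound is uniform up to the boundary of $D$. This is the reason for spending a unit of time before applying duality: the sup bound in Lemma~\ref{lem:third-order-small-time-density-upper} holds on all of $D\times D$ for fixed positive time.
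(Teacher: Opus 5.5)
Your argument is correct and is essentially the paper's proof: duality converts $e^{\gamma d t}\widetilde\Pbb_z(\widetilde\zeta_\eps^m>t)$ into an integral of the original killed kernel. That integral is then split by Chapman--Kolmogorov, bounded using the small-time sup bound on a short window together with the original exit tail, and integrated via the layer-cake formula. The only difference is the window length: the paper uses the last $\eps$ time units (kernel bound $C\eps^{-6d}$, giving $M_{\rm com}=6d+\beta_0$), while your unit window gives the slightly better $M_{\rm com}=3d/2+\beta_0$; also, your displayed factorization is the Markov property at time $1$, not at $t-1$, though this is immaterial.
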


\begin{proof}
Work in $D=W_m\setminus\overline{A_\eps}$, $\lambda=\gamma d$.  For $t\ge\eps$, by
Lemma~\ref{lem:third-order-absorbed-density-duality}
$e^{\lambda t}\widetilde p_t^{D,\eps}(z,y)=p_t^{D,\eps}(y,z)$, and
Chapman--Kolmogorov equation plus Lemma~\ref{lem:third-order-small-time-density-upper}
over the last $\eps$ units imply that
\[
p_t^{D,\eps}(y,z)\le C\eps^{-6d}\Pbb_y(\tau_{\partial D}>t-\eps).
\]
Integrating
in $y$ over $D$ (finite volume) and using
Proposition~\ref{prop:original-killed-exit-tail},
\[
\int_D p_t^{D,\eps}(y,z)\,dy\le C\eps^{-6d}e^{-\alpha(t-\eps)\eps^{\beta_0}}.
\]
Using
\[
\widetilde\E_z\left[e^{\lambda\widetilde\zeta_\eps^m}\right]
=1+\lambda\int_0^\infty e^{\lambda t}
\widetilde\Pbb_z\left(\widetilde\zeta_\eps^m>t\right)\,dt
\]
and bounding the interval $[0,\eps]$ by $e^{\lambda\eps}$, integration over
$t\ge\eps$ gives
\[
\widetilde\E_z \left[e^{\lambda\widetilde\zeta_\eps^m}\right]
\le C\eps^{-6d-\beta_0}. 
\]
The estimate is uniform up to the killed boundary by
monotone interior approximation.  The $s$-well proof is identical.
Thus one may take $M_{\rm com}=6d+\beta_0$.
\end{proof}

Substituting the auxiliary exponential-moment estimate into the
stopped $h$-transform bound gives the desired energetic localization of the
forward committor.  Momentum reversal then supplies the adjoint estimate.

\begin{proposition}[Third-order Lee--Ramil--Seo committor localization]
\label{prop:third-order-lrs-committor-localization}
There are $C,M_{\rm com}>0$ such that for every sufficiently small $\eps$,
\[
1-h_\eps(z)\le C\eps^{-M_{\rm com}}e^{(H(z)-U(\sigma))/\eps}\qquad\text{ on $W_m$},
\]
and
\[
h_\eps(z)\le C\eps^{-M_{\rm com}}e^{(H(z)-U(\sigma))/\eps}\qquad\text{ on $W_s$}.
\]
The same holds
for $h_\eps^*$.
\end{proposition}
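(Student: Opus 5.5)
The plan is to combine the stopped $h$-transform bound of Lemma~\ref{lem:lrs-stopped-time-reversal} with the exponential-moment estimate of Proposition~\ref{prop:third-order-lrs-exponential-stopping}, and then to transfer the result to the adjoint committor through the momentum flip of Proposition~\ref{prop:adjoint-lyapunov-dissipation}.

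\emph{Forward committor on $W_m$.} Fix $z\in W_m$. Lemma~\ref{lem:lrs-stopped-time-reversal} gives
\[
1-h_\eps(z)\le e^{(H(z)-U(\sigma))/\eps}\,
\widetilde\E_{\Theta_*z}\bigl[e^{\gamma d\,\widetilde\zeta_\eps^m}\bigr].
\]
Since $\Theta_*W_m=W_m$ by Proposition~\ref{prop:adjoint-lyapunov-dissipation}, the point $\Theta_*z$ lies in $W_m$. Taking the supremum over $W_m$ in Proposition~\ref{prop:third-order-lrs-exponential-stopping} bounds the expectation by $C\eps^{-M_{\rm com}}$. The $W_s$ bound for $h_\eps$ follows in the same way, using the second inequality of Lemma~\ref{lem:lrs-stopped-time-reversal} and the $s$-part of the supremum.

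\emph{Points inside the target balls.} On $\overline{A_\eps}\cap W_m$ we have $h_\eps=1$, so the first bound is trivial there. Likewise $h_\eps=0$ on $\overline{S_\eps}\cap W_s$, so the second bound is trivial. If the Feynman--Kac argument of Lemma~\ref{lem:lrs-stopped-time-reversal} is only stated on the killed domains $D_\eps^i$, these trivial cases cover the remaining points.

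\emph{Adjoint committor.} Proposition~\ref{prop:adjoint-lyapunov-dissipation} gives $h_\eps^*(z)=h_\eps(\Theta_*z)$. Together with $H\circ\Theta_*=H$ and $\Theta_*W_i=W_i$, this gives
\[
1-h_\eps^*(z)=1-h_\eps(\Theta_*z)\le C\eps^{-M_{\rm com}}e^{(H(\Theta_*z)-U(\sigma))/\eps}
=C\eps^{-M_{\rm com}}e^{(H(z)-U(\sigma))/\eps}
\]
on $W_m$, and the analogous bound for $h_\eps^*$ on $W_s$.

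\emph{Main obstacle.} The one delicate point is whether the exponential moment is finite and uniformly controlled for starting points near $\partial W_i$. This uniformity is needed because Lemma~\ref{lem:lrs-stopped-time-reversal} uses optional stopping followed by the limit $T\to\infty$. I would rely on the statement of Proposition~\ref{prop:third-order-lrs-exponential-stopping}, which asserts the bound uniformly on all of $W_i$ via monotone interior approximation. Granting that, the argument is a direct combination with the same constants $C$ and $M_{\rm com}$.
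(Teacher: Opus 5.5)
Your proposal is correct and is essentially the paper's own proof: it combines Lemma~\ref{lem:lrs-stopped-time-reversal} with the uniform bound of Proposition~\ref{prop:third-order-lrs-exponential-stopping}, using $\Theta_*W_i=W_i$ to place $\Theta_*z$ in $W_i$, and then passes to $h_\eps^*$ through $h_\eps^*=h_\eps\circ\Theta_*$ and $H\circ\Theta_*=H$. Your extra remarks on points inside the target balls and on uniformity up to $\partial W_i$ are harmless bookkeeping that the paper leaves implicit.
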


\begin{proof}
Combine Lemma~\ref{lem:lrs-stopped-time-reversal} with
Proposition~\ref{prop:third-order-lrs-exponential-stopping} and
$\Theta_*W_i=W_i$.  The corresponding two estimates for the adjoint
committor $h_\eps^*$ follow from $h_\eps^*=h_\eps\circ\Theta_*$
(Proposition~\ref{prop:adjoint-lyapunov-dissipation}).
\end{proof}

\section{Prefactor Expansions and Numerical Details}
\label{app:prefactor-numerics}
\label{app:supplementary}

\subsection{Asymptotic Size of the Prefactor Improvement}
\label{app:prefactor-expansions}

Remark~\ref{rem:prefactor-comparison} shows that
$\mu_{\sigma}>\nu_{\sigma,L}$.  To quantify the difference, introduce the
dimensionless damping parameter
\[
\rho_\sigma:=\frac{\gamma}{\sqrt{a_\sigma}}
    =\gamma\sqrt{\frac{L}{\lambda_\sigma}},
\]
and write $M(\rho_\sigma):=\mu_\sigma/\sqrt{a_\sigma}$ and
$N(\rho_\sigma):=\nu_{\sigma,L}/\sqrt{a_\sigma}$.  Then $M,N$ satisfy
\[
    M^3+\rho_\sigma M^2+(\rho_\sigma^2-1)M-\rho_\sigma=0,
    \qquad
    N=\frac{\sqrt{\rho_\sigma^2+4}-\rho_\sigma}{2}.
\]
Expanding the positive roots at $\rho_\sigma=0$ gives
\[
    M(\rho_\sigma)
    =1-\frac12\rho_\sigma^2+\frac12\rho_\sigma^3
      +\mathcal O(\rho_\sigma^4),
      \qquad
    N(\rho_\sigma)
    =1-\frac12\rho_\sigma+\frac18\rho_\sigma^2
      +\mathcal O(\rho_\sigma^4),
\]
and hence
\begin{equation}\label{eq:small-rho-prefactor-gain}
    \mu_\sigma-\nu_{\sigma,L}
    =\frac{\gamma}{2}
     -\frac{5\gamma^2}{8\sqrt{a_\sigma}}
     +\frac{\gamma^3}{2a_\sigma}
     +\mathcal O\!\left(\frac{\gamma^4}{a_\sigma^{3/2}}\right).
\end{equation}
This is the regime of small $\gamma$, or of small $L$ with
$\gamma,\lambda_\sigma$ fixed.  At the other extreme,
\[
    M(\rho_\sigma)
      =\rho_\sigma^{-1}-\rho_\sigma^{-5}
       +\rho_\sigma^{-7}+\mathcal O(\rho_\sigma^{-9}),
    \qquad
    N(\rho_\sigma)
      =\rho_\sigma^{-1}-\rho_\sigma^{-3}
       +2\rho_\sigma^{-5}-5\rho_\sigma^{-7}
       +\mathcal O(\rho_\sigma^{-9}),
\]
so that
\begin{equation}\label{eq:large-rho-prefactor-gain}
    \mu_\sigma-\nu_{\sigma,L}
    =\frac{a_\sigma^2}{\gamma^3}
     -\frac{3a_\sigma^3}{\gamma^5}
     +\mathcal O\!\left(\frac{a_\sigma^4}{\gamma^7}\right)
    =\frac{\lambda_\sigma^2}{\gamma^3L^2}
     -\frac{3\lambda_\sigma^3}{\gamma^5L^3}
     +\mathcal O\!\left(\frac{\lambda_\sigma^4}{\gamma^7L^4}\right).
\end{equation}
This is the regime of large $\gamma$, or of large $L$ with the other
parameters fixed.  Thus, the strict improvement persists for every finite
positive parameter choice, but its relative size tends to zero in both
extreme regimes; it is most visible at intermediate values of $\rho_\sigma$.

\subsection{Complete Numerical Details}
\label{app:numerical-details}

The double-well model, theoretical constants, and overview figure are given in
Section~\ref{sec:numerics}; see in particular
\eqref{eq:numerical-potential}, \eqref{eq:numerical-theory-constants}, and
Figure~\ref{fig:numerical-prefactor-position}.  We record here the simulation
scheme, estimators, complete table, and step-size check.

\subsubsection{Discretization and Estimators}

The third-order Langevin diffusion starts from $(-1,0,0)$ and the underdamped
Langevin diffusion starts from $(-1,0)$.  Both are simulated by a symmetric
splitting method: the force and position substeps are explicit, and the linear
Ornstein--Uhlenbeck substep is sampled exactly.  Unless stated otherwise, the
time step is $\Delta t=0.005$.  For a first-passage time $\tau_j$, $j\in
\{2,3\}$, we estimate the normalized prefactor by
\begin{equation}
\label{eq:numerical-prefactor-estimator}
\widehat C_j(\eps)
=\overline\tau_j\exp\!\left(-\frac{1}{4\eps}\right),
\end{equation}
where $\overline\tau_j$ is the sample mean.  Error bars are pointwise $95\%$
normal confidence intervals based on independent trajectories; confidence
intervals for ratios use the delta method.  No trajectory used for
Table~\ref{tab:numerical-position} or
Figure~\ref{fig:numerical-prefactor-position} was right-censored.

We use the fixed right-well target
\begin{equation}
\label{eq:numerical-position-target}
\tau_j^{\mathrm{pos}}
=\inf\{t\geq0:\theta_t\geq0.9\},
\qquad j\in\{2,3\}.
\end{equation}
This target cleanly records passage through the saddle followed by entry into
the right well and provides a finite-temperature diagnostic of the saddle
prefactor.  Table~\ref{tab:numerical-position} reports $20{,}000$
independent trajectories per model and temperature.

\begin{table}[b]
\centering
\caption{Estimated normalized prefactors for the fixed right-well target
\eqref{eq:numerical-position-target}.  Parentheses give a pointwise $95\%$
confidence interval for the ratio.}
\label{tab:numerical-position}
\begin{tabular}{cccc}
\toprule
$\eps$ & $\widehat C_3(\eps)$ & $\widehat C_2(\eps)$
& $\overline\tau_3/\overline\tau_2$ \\
\midrule
$0.12$ & $7.563$ & $8.380$ & $0.903\;(0.887,0.919)$ \\
$0.10$ & $7.280$ & $8.273$ & $0.880\;(0.864,0.896)$ \\
$0.08$ & $6.983$ & $7.991$ & $0.874\;(0.857,0.890)$ \\
$0.07$ & $6.721$ & $7.949$ & $0.845\;(0.829,0.862)$ \\
$0.06$ & $6.540$ & $7.912$ & $0.827\;(0.811,0.842)$ \\
$0.05$ & $6.324$ & $7.676$ & $0.824\;(0.808,0.840)$ \\
\midrule
Eyring--Kramers limit & $5.886$ & $7.189$ & $0.819$ \\
\bottomrule
\end{tabular}
\end{table}

The normalized prefactors move toward their predicted values as the
temperature decreases.  More importantly for the comparison, the empirical
ratio approaches $\nu/\mu$: at $\eps=0.06$ it is $0.827\;(0.811,0.842)$ and
at $\eps=0.05$ it is $0.824\;(0.808,0.840)$, compared with the predicted value
$0.819$.  Both confidence intervals contain the theoretical ratio.  As a basic
step-size check, we also repeated the simulations at $\eps=0.07$ and $0.06$
with $\Delta t=0.0025$ and $1000$ paths per model.  The estimated ratios were
respectively $0.889\;(0.813,0.965)$ and $0.899\;(0.822,0.976)$; these intervals
overlap their $\Delta t=0.005$ counterparts.  At the present Monte Carlo
precision, the refinement therefore does not reveal a statistically resolved
time-step bias.

Overall, the experiment supports the two features of
Theorem~\ref{thm:EK} and Remark~\ref{rem:prefactor-comparison}: both models
have the same Arrhenius exponent, while the larger third-order unstable rate
produces a smaller leading metastable prefactor.

\clearpage
\bibliographystyle{alpha}
\bibliography{bibtex}

@book{BovierDenHollander2015,
  author    = {Bovier, Anton and den Hollander, Frank},
  title     = {Metastability: A Potential-Theoretic Approach},
  publisher = {Springer},
  year      = {2015}
}

@article{AvelinJulinViitasaari2023,
  author  = {Avelin, Benny and Julin, Vesa and Viitasaari, Lauri},
  title   = {Geometric Characterization of the {Eyring--Kramers} Formula},
  journal = {Communications in Mathematical Physics},
  volume  = {404},
  number  = {1},
  pages   = {401--437},
  year    = {2023},
  doi     = {10.1007/s00220-023-04845-z}
}

@article{Berglund2013Kramers,
  author  = {Berglund, Nils},
  title   = {{Kramers'} Law: Validity, Derivations and Generalisations},
  journal = {Markov Processes and Related Fields},
  volume  = {19},
  number  = {3},
  pages   = {459--490},
  year    = {2013}
}

@article{BonyLePeutrecMichel2025,
  author  = {Bony, Jean-Fran\c{c}ois and Le Peutrec, Dorian and Michel, Laurent},
  title   = {{Eyring--Kramers} Law for {Fokker--Planck} Type Differential Operators},
  journal = {Journal of the European Mathematical Society},
  volume  = {27},
  number  = {11},
  pages   = {4347--4398},
  year    = {2025},
  doi     = {10.4171/JEMS/1461}
}

@misc{Delande2026KFP,
  author        = {Delande, Lo{\"i}s},
  title         = {Hypocoercivity and Metastability of Degenerate {KFP} Equations at Low Temperature},
  year          = {2026},
  eprint        = {2601.04784},
  archivePrefix = {arXiv},
  primaryClass  = {math.AP},
  note          = {arXiv:2601.04784v2}
}

@article{BovierEckhoffGayrardKlein2004,
  author  = {Bovier, Anton and Eckhoff, Michael and Gayrard, V\'{e}ronique and Klein, Markus},
  title   = {Metastability in Reversible Diffusion Processes. {I}. {Sharp} Asymptotics for Capacities and Exit Times},
  journal = {Journal of the European Mathematical Society},
  volume  = {6},
  number  = {4},
  pages   = {399--424},
  year    = {2004},
  doi     = {10.4171/JEMS/14}
}

@article{HerauHitrikSjostrand2008,
  author  = {H\'{e}rau, Fr\'{e}d\'{e}ric and Hitrik, Michael and Sj\"{o}strand, Johannes},
  title   = {Tunnel Effect for {Kramers--Fokker--Planck} Type Operators},
  journal = {Annales Henri Poincar\'{e}},
  volume  = {9},
  number  = {2},
  pages   = {209--274},
  year    = {2008},
  doi     = {10.1007/s00023-008-0355-y}
}

@article{LandimSeo2018,
  author  = {Landim, Claudio and Seo, Insuk},
  title   = {Metastability of Non-reversible Random Walks in a Potential Field: The {Eyring--Kramers} Transition Rate Formula},
  journal = {Communications on Pure and Applied Mathematics},
  volume  = {71},
  number  = {2},
  pages   = {203--266},
  year    = {2018},
  doi     = {10.1002/cpa.21723}
}

@article{LeeSeo2022,
  author  = {Lee, Jungkyoung and Seo, Insuk},
  title   = {Non-reversible Metastable Diffusions with {Gibbs} Invariant Measure. {I}. {Eyring--Kramers} Formula},
  journal = {Probability Theory and Related Fields},
  volume  = {182},
  number  = {3--4},
  pages   = {849--903},
  year    = {2022},
  doi     = {10.1007/s00440-021-01102-z}
}

@misc{LeeRamilSeo2023Cutoff,
  author = {Lee, Seungwoo and Ramil, Mouad and Seo, Insuk},
  title  = {Asymptotic Stability and Cut-off Phenomenon for the Underdamped {Langevin} Dynamics},
  year   = {2023},
  note   = {arXiv:2311.18263}
}

@article{LelievreRamilReygner2022KFP,
  author  = {Leli\`{e}vre, Tony and Ramil, Mouad and Reygner, Julien},
  title   = {A Probabilistic Study of the Kinetic {Fokker--Planck} Equation in Cylindrical Domains},
  journal = {Journal of Evolution Equations},
  volume  = {22},
  pages   = {Paper No. 38},
  year    = {2022},
  doi     = {10.1007/s00028-022-00796-5}
}

@article{LelievreRamilReygner2022QSD,
  author  = {Leli\`{e}vre, Tony and Ramil, Mouad and Reygner, Julien},
  title   = {Quasi-stationary Distribution for the {Langevin} Process in Cylindrical Domains, Part {I}: Existence, Uniqueness and Long-time Convergence},
  journal = {Stochastic Processes and their Applications},
  volume  = {144},
  pages   = {173--201},
  year    = {2022},
  doi     = {10.1016/j.spa.2021.11.005}
}

@misc{LelievreRamilReygner2022Hill,
  author = {Leli\`{e}vre, Tony and Ramil, Mouad and Reygner, Julien},
  title  = {Estimation of Statistics of Transitions and {Hill} Relation for {Langevin} Dynamics},
  year   = {2022},
  note   = {arXiv:2206.13264}
}

@article{BramantiZhu2013,
  author  = {Bramanti, Marco and Zhu, Maochun},
  title   = {{$L^p$} and {Schauder} Estimates for Nonvariational Operators Structured on {H{\"o}rmander} Vector Fields with Drift},
  journal = {Analysis \& PDE},
  volume  = {6},
  number  = {8},
  pages   = {1793--1855},
  year    = {2013},
  doi     = {10.2140/apde.2013.6.1793}
}

@article{BouchetReygner2016,
  author  = {Bouchet, Freddy and Reygner, Julien},
  title   = {Generalisation of the {Eyring--Kramers} Transition Rate Formula to Irreversible Diffusion Processes},
  journal = {Annales Henri Poincar{\'e}},
  volume  = {17},
  number  = {12},
  pages   = {3499--3532},
  year    = {2016},
  doi     = {10.1007/s00023-016-0507-4}
}

@misc{DangGurbuzbalabanIslamYaoZhu2025,
  author = {Dang, Thanh and G{\"u}rb{\"u}zbalaban, Mert and Islam, Mohammad Rafiqul and Yao, Nian and Zhu, Lingjiong},
  title  = {High-Order {Langevin} {Monte Carlo} Algorithms},
  year   = {2025},
  note   = {arXiv:2508.17545v1}
}

@article{Hormander1967,
  author  = {H{\"o}rmander, Lars},
  title   = {Hypoelliptic Second Order Differential Equations},
  journal = {Acta Mathematica},
  volume  = {119},
  pages   = {147--171},
  year    = {1967}
}

@article{LandimMarianiSeo2019,
  author  = {Landim, Claudio and Mariani, Mauro and Seo, Insuk},
  title   = {{Dirichlet's} and {Thomson's} Principles for Non-selfadjoint Elliptic Operators with Application to Non-reversible Metastable Diffusion Processes},
  journal = {Archive for Rational Mechanics and Analysis},
  volume  = {231},
  number  = {2},
  pages   = {887--938},
  year    = {2019},
  doi     = {10.1007/s00205-018-1291-8}
}

@article{LePeutrecMichelNectoux2024,
  author  = {Le Peutrec, Dorian and Michel, Laurent and Nectoux, Boris},
  title   = {Exit Time and Principal Eigenvalue of Non-reversible Elliptic Diffusions},
  journal = {Communications in Mathematical Physics},
  volume  = {405},
  pages   = {202},
  year    = {2024},
  doi     = {10.1007/s00220-024-05032-4}
}

@misc{LeeRamilSeo2026,
  author = {Lee, Seungwoo and Ramil, Mouad and Seo, Insuk},
  title  = {Eyring--{K}ramers law for the underdamped {L}angevin process},
  year   = {2026},
  note   = {arXiv:2503.12610v2}
}

@article{Bony1969,
  author  = {Bony, Jean-Michel},
  title   = {Principe du maximum, in{\'e}galit{\'e} de {Harnack} et unicit{\'e} du probl{\`e}me de {Cauchy} pour les op{\'e}rateurs elliptiques d{\'e}g{\'e}n{\'e}r{\'e}s},
  journal = {Annales de l'Institut Fourier},
  volume  = {19},
  number  = {1},
  pages   = {277--304},
  year    = {1969},
  doi     = {10.5802/aif.319}
}

@article{Monmarche2018,
  author  = {Monmarch{\'e}, Pierre},
  title   = {Hypocoercivity in Metastable Settings and Kinetic Simulated Annealing},
  journal = {Probability Theory and Related Fields},
  volume  = {172},
  pages   = {1215--1248},
  year    = {2018},
  doi     = {10.1007/s00440-018-0828-y}
}

@article{Monmarche2023AlmostSure,
  author  = {Monmarch{\'e}, Pierre},
  title   = {Almost Sure Contraction for Diffusions on {$\mathbb{R}^d$}: Application to Generalized {Langevin} Diffusions},
  journal = {Stochastic Processes and their Applications},
  volume  = {161},
  pages   = {316--349},
  year    = {2023},
  doi     = {10.1016/j.spa.2023.04.006}
}

@article{MouMaWainwrightBartlettJordan2021,
  author  = {Mou, Wenlong and Ma, Yi-An and Wainwright, Martin J. and Bartlett, Peter L. and Jordan, Michael I.},
  title   = {High-Order {Langevin} Diffusion Yields an Accelerated {MCMC} Algorithm},
  journal = {Journal of Machine Learning Research},
  volume  = {22},
  number  = {42},
  pages   = {1--41},
  year    = {2021}
}

@article{Pigato2022,
  author  = {Pigato, Paolo},
  title   = {Density estimates and short-time asymptotics for a hypoelliptic diffusion process},
  journal = {Stochastic Processes and their Applications},
  volume  = {145},
  pages   = {117--142},
  year    = {2022},
  doi     = {10.1016/j.spa.2021.11.012}
}

@incollection{Norris1986,
  author    = {Norris, James R.},
  title     = {Simplified {Malliavin} Calculus},
  booktitle = {S{\'e}minaire de Probabilit{\'e}s XX, 1984/85},
  series    = {Lecture Notes in Mathematics},
  volume    = {1204},
  pages     = {101--130},
  publisher = {Springer},
  address   = {Berlin},
  year      = {1986},
  doi       = {10.1007/BFb0075716}
}

@book{Nualart2006,
  author    = {Nualart, David},
  title     = {The Malliavin Calculus and Related Topics},
  edition   = {Second},
  publisher = {Springer},
  address   = {Berlin},
  year      = {2006}
}

@article{RothschildStein1976,
  author  = {Rothschild, Linda Preiss and Stein, Elias M.},
  title   = {Hypoelliptic Differential Operators and Nilpotent Groups},
  journal = {Acta Mathematica},
  volume  = {137},
  number  = {3--4},
  pages   = {247--320},
  year    = {1976},
  doi     = {10.1007/BF02392419}
}

@article{high-order-Liu-2025,
  title={The {P}icard-{L}agrange framework for high-order {L}angevin {M}onte {C}arlo},
  author={Mahajan, Jaideep and Zhang, Kaihong and Liang, Feng and Liu, Jingbo},
  journal={arXiv:2510.18242},
  year={2025}
}

@article{chiang1987diffusion,
  title={Diffusion for global optimization in $\mathbb{R}^n$},
  author={Chiang, Tzuu-Shuh and Hwang, Chii-Ruey and Sheu, Shuenn Jyi},
  journal={SIAM Journal on Control and Optimization},
  volume={25},
  number={3},
  pages={737--753},
  year={1987},
  publisher={SIAM}
}

@article{GGZ,
  title={Global Convergence of {S}tochastic {G}radient {H}amiltonian {M}onte {C}arlo for Non-Convex Stochastic Optimization: Non-Asymptotic Performance Bounds and Momentum-Based Acceleration},
  author={Gao, Xuefeng and G\"{u}rb\"{u}zbalaban, Mert and Zhu, Lingjiong},
  journal={Operations Research},
  volume={70},
  number={5},
  pages={2931-2947},
  year={2022}
}

@article{JianfengLu,
	Author = {Cao, Yu and Lu, Jianfeng and Wang, Lihan},
	Journal = {Archive for Rational Mechanics and Analysis},
    volume={247},
    number={90},
    pages={1-34},
	Title = {On explicit {$L^{2}$}-convergence rate estimate for underdamped {L}angevin dynamics},
	Year = {2023}}

@article{DM2017,
  title={Non-asymptotic convergence analysis for the {U}nadjusted {L}angevin {A}lgorithm},
  author={Durmus, Alain and Moulines, Eric},
  journal={Annals of Applied Probability},
  volume={27},
  number={3},
  pages={1551-1587},
  year={2017}
}

@book{gelman1995bayesian,
  title={Bayesian Data Analysis},
  author={Gelman, Andrew and Carlin, John B and Stern, Hal S and Rubin, Donald B},
  year={1995},
  publisher={Chapman \& Hall/CRC Press}
}

@article{stuart2010inverse,
  title={Inverse problems: A {B}ayesian perspective},
  author={Stuart, Andrew M},
  journal={Acta Numerica},
  volume={19},
  pages={451--559},
  year={2010},
  publisher={Cambridge University Press}
}

@article{andrieu2003introduction,
  title={An introduction to {MCMC} for machine learning},
  author={Andrieu, Christophe and De Freitas, Nando and Doucet, Arnaud and Jordan, Michael I},
  journal={Machine Learning},
  volume={50},
  number={1},
  pages={5--43},
  year={2003},
  publisher={Springer}
}

@article{DistMCMC19,
	Author = {G\"urb\"uzbalaban, Mert and Gao, Xuefeng and Hu, Yuanhan and Zhu, Lingjiong},
	Journal = {Journal of Machine Learning Research},
	Volume={22},
	Number={239},
	Pages={1-69},
	Title = {Decentralized stochastic gradient {L}angevin dynamics and {H}amiltonian {M}onte {C}arlo},
	Year = 2021}

@article{GIWZ2024,
	Author = {G\"{u}rb\"{u}zbalaban, Mert and Islam, Mohammad Rafiqul and Wang, Xiaoyu and Zhu, Lingjiong},
	Journal = {arXiv preprint arXiv:2412.01993},
	Title = {Generalized {EXTRA} stochastic gradient {L}angevin dynamics},
	Year = {2024}}

@inproceedings{Raginsky,
  title={Non-convex learning via Stochastic Gradient {L}angevin Dynamics: a nonasymptotic analysis},
  author={Raginsky, Maxim and Rakhlin, Alexander and Telgarsky, Matus},
  booktitle={Proceedings of the 2017 Conference on Learning Theory},
  volume={65},
  organization={PMLR},
  pages={1674--1703},
  year={2017}
}

@article{Chau2019,
  title={On stochastic gradient {L}angevin dynamics with dependent data streams: the fully non-convex case},
  author={Chau, Ngoc Huy and Moulines, \'{E}ric and R\'{a}sonyi, Miklos and Sabanis, Sotirios and Zhang, Ying},
  journal={SIAM Journal of Mathematics of Data Science},
  volume={3},
  number={3},
  pages={959-986},
  year={2021}
}

@article{teh2016consistency,
  title={Consistency and fluctuations for stochastic gradient {L}angevin dynamics},
  author={Teh, Yee Whye and Thiery, Alexandre H and Vollmer, Sebastian J},
  journal={Journal of Machine Learning Research},
  volume={17},
  number={1},
  pages={193--225},
  year={2016},
  publisher={JMLR. org}
}

@article{Dalalyan,
  title={Theoretical guarantees for approximate sampling from smooth and log-concave densities},
  author={Dalalyan, Arnak S},
  journal={Journal of the Royal Statistical Society: Series B (Statistical Methodology)},
  volume={79},
  number={3},
  pages={651--676},
  year={2017},
  publisher={Wiley Online Library}
}

@article{Zhang2019,
  title={Nonasymptotic estimates for {S}tochastic {G}radient {L}angevin {D}ynamics under local conditions in nonconvex optimization},
  author={Zhang, Ying and Akyildiz, \"{O}mer Deniz and Damoulas, Theodoros and Sabanis, Sotirios},
  journal={Applied Mathematics \& Optimization},
  volume={87},
  pages={25},
  year={2023}
}

@article{Eberle,
  title={Couplings and quantitative contraction rates for {L}angevin dynamics},
  author={Eberle, Andreas and Guillin, Arnaud and Zimmer, Raphael},
  journal={Annals of Probability},
  volume={47},
  number={4},
  pages={1982-2010},
  year={2019}
}

@article{stroock-langevin-spectrum,
  title={Asymptotics of the spectral gap with applications to the theory of simulated annealing},
  author={Holley, Richard A and Kusuoka, Shigeo and Stroock, Daniel W},
  journal={Journal of Functional Analysis},
  volume={83},
  number={2},
  pages={333--347},
  year={1989},
  publisher={Academic Press}
}

@article{DK2017,
  title={User-friendly guarantees for the {L}angevin {M}onte {C}arlo with inaccurate gradient},
  author={Dalalyan, Arnak S. and Karagulyan, Avetik G.},
  journal={Stochastic Processes and their Applications},
  volume={129},
  number={12},
  pages={5278-5311},
  year={2019}
}

@inproceedings{xu2018global,
  title={Global convergence of {L}angevin dynamics based algorithms for nonconvex optimization},
  author={Xu, Pan and Chen, Jinghui and Zou, Difan and Gu, Quanquan},
  booktitle={Advances in Neural Information Processing Systems},
  pages={3122--3133},
  volume={31},
  publisher = {Curran Associates, Inc.},
  year={2018}
}

@article{Chau2022,
  title={Stochastic {G}radient {H}amiltonian {M}onte {C}arlo for non-convex learning},
  author={Chau, Ngoc Huy and R\'{a}sonyi, Mikl\'{o}s},
  journal={Stochastic Processes and their Applications},
  volume={149},
  pages={341-368},
  year={2022}
}

@book{FreidlinWentzell2012,
  author    = {Freidlin, Mark I. and Wentzell, Alexander D.},
  title     = {Random Perturbations of Dynamical Systems},
  series    = {Grundlehren der mathematischen Wissenschaften},
  volume    = {260},
  edition   = {Third},
  publisher = {Springer},
  address   = {Berlin and Heidelberg},
  year      = {2012},
  doi       = {10.1007/978-3-642-25847-3}
}

@book{Gantmacher1959,
  author    = {Gantmacher, Felix R.},
  title     = {The Theory of Matrices},
  volume    = {2},
  publisher = {Chelsea Publishing Company},
  address   = {New York},
  year      = {1959}
}

@misc{HeLiWangZhu2026WeakEquilibrium,
  author = {He, Ping and Li, Xiaodan and Wang, Yingli and Zhu, Lingjiong},
  title  = {Weak Equilibrium Measures and Capacity--Hitting Identities for the Hypoelliptic Third-Order {L}angevin Diffusion},
  year   = {2026},
  note   = {arXiv:2607.19752}
}

@inproceedings{GGZ2,
	Author = {Gao, Xuefeng and G\"{u}rb\"{u}zbalaban, Mert and Zhu, Lingjiong},
	publisher = {Curran Associates, Inc.},
	Booktitle={Advances in Neural Information Processing Systems (NeurIPS)},
	Volume={33},
	Title = {Breaking Reversibility Accelerates {L}angevin Dynamics for Global Non-Convex Optimization},
	Year = {2020}}

\end{document}